%


%
\documentclass[11pt,a4paper]{article}
%


%

\usepackage{a4wide,amsfonts,amsmath,latexsym,amsthm,amssymb,euscript,eufrak,graphicx,units,mathrsfs,setspace}

\usepackage[pdftex]{color}
\definecolor{darkblue}{rgb}{0.3,0.3,0.7}

\usepackage{float}
\newfloat{figure}{H}{lof}
\floatname{figure}{\figurename}

\usepackage[french,english]{babel}
\usepackage[T1]{fontenc}
\usepackage[utf8]{inputenc}
\usepackage{graphicx}
\usepackage{upgreek}
\usepackage{marginnote}

\DeclareMathAlphabet{\eufrak}{U}{}{}{}  
\SetMathAlphabet\eufrak{normal}{U}{euf}{m}{n}
\SetMathAlphabet\eufrak{bold}{U}{euf}{b}{n}

\newtheorem{prop}{Proposition}[section]

\newtheorem{theorem}[prop]{Theorem}
\newtheorem{lemma}[prop]{Lemma}
\newtheorem{corollary}[prop]{Corollary}
\newtheorem{proposition}[prop]{Proposition}

\newtheorem{assumption}[prop]{Assumption}

\theoremstyle{definition}

\newtheorem{remark}[prop]{Remark}
\newtheorem{definition}[prop]{Definition}

\numberwithin{equation}{section}

\providecommand{\pg}{\geqslant}      
\providecommand{\pp}{\leqslant}      
\renewcommand{\textless}{\langle}  
\renewcommand{\textgreater}{\rangle}  
\newcommand{\f}[1]{\EuScript{F}_{#1}}   
\newcommand{\g}[1]{\EuScript{G}_{#1}} 
\newcommand{\dom}[1]{\mathrm{dom \ }{#1}} 
\newcommand{\var}{\mathrm{var}} 
\newcommand{\id}[2]{\{{#1}:{#2}\}} 

\def\D{\Delta} 
\def\F{\EuScript{F}} 
\def\Gm{\mathrm G} 
\def\Cm{\mathrm C} 
\def\Wm{\mathrm W} 
\def\Pm{\mathrm P} 
\def\Xm{\mathrm X} 
\def\Dm{\mathrm D} 
\def\wDm{\widetilde{\mathrm D}} 
\def\Um{\mathrm U} 
\def\Hm{\mathrm H} 
\def\J{\mathrm J} 
\def\Jt{\mathrm J}  
\def\Y{\mathrm Y} 
\def\Zm{\mathrm Z} 
\def\Fm{\mathrm F} 
\def\Gm{\mathrm G} 
\def\Km{\mathrm K} 
\def\Rm{\mathrm R} 
\def\Zt{\overline{\mathrm Z}} 
\def\Ztm{\widetilde{\mathrm Z}} 
\def\Yt{\overline{\mathrm Y}} 
\def\Xm{\mathrm X} 
\def\Nm{\mathrm N} 
\def\V{\mathrm V} 
\def\B{\mathrm B} 
\def\Xm{\mathrm X} 
\def\Tm{\mathrm T} 
\def\DD{\mathbf D} 
\def\P{\mathbf P} 
\def\gV{\mathbf V} 
\def\Q{\mathbf Q} 
\def\hk{\mathfrak h} 
\def\l{\ell} 
\def\d{\mathrm d} 
\def\St{\overline{\mathrm S}} 
\def\Vt{\overline{\mathrm V}} 
\def\S{\mathrm S} 
\def\Xb{\mathbb X} 
\def\L{\mathrm L} 
\def\E{\mathrm E} 
\def\A{\mathrm A} 
\def\B{\mathrm B} 
\def\vp{\varphi} 
\def\rem{\mathrm{rem}} 
\def\tk{\tau} 
\def\fk{\mathfrak f} 
\def\gk{\mathfrak g} 
\def\rk{\mathfrak r} 
\def\uk{\mathfrak u} 
\def\vk{\mathfrak v} 
\def\car{\mathbf 1} 
\def\esp#1{\mathbf E\left[#1\right]} 
\def\esph#1{\widehat{\mathbf E}\left[#1\right]} 

\def\cyl{\mathcal S}
\def\I{\mathcal{J}}
\def\R{\mathbf{R}}
\def\N{\mathbf{N}}
\def\Z{\mathbf{Z}}
\def\O{\Omega} 

\def\T{\mathbf T}
\def\1{\textbf{1}}


\newcommand{\be}{\begin{equation}}
\newcommand{\ee}{\end{equation}}
\newcommand{\bde}{\begin{displaymath}}
\newcommand{\ede}{\end{displaymath}}
\newcommand{\beq}{\begin{eqnarray*}}
\newcommand{\eeq}{\end{eqnarray*}}

\newcommand{\beqa}{\begin{eqnarray}}
\newcommand{\eeqa}{\end{eqnarray}}
\newcommand{\bel }{\left\{\begin{array}{ll}}
\newcommand{\eel}{\cr \end{array} \right.}

\newcommand{\bex}{\begin{ex} \rm }
\newcommand{\eex}{\end{ex}}





\definecolor{ying}{rgb}{0.8, 0.0, 0.04}

\date{}

\begin{document}

\title{\textbf{Malliavin calculus for marked binomial processes: portfolio optimisation in the trinomial model and compound Poisson approximation}}
\author{Hélène Halconruy\footnote{Mathematics Research Unit, Luxembourg University, Luxembourg. \textbf{e}-mail: helene.halconruy$\at$uni.lu}}
\maketitle

\begin{abstract}
\noindent
In this paper we develop a stochastic analysis for marked binomial processes, that can be viewed as the discrete analogues of marked Poisson processes. The starting point is the statement of a chaotic expansion for square-integrable (marked binomial) functionals, prior to the elaboration of a Markov-Malliavin structure within this framework. We take advantage of the new formalism to deal with two main applications. First,  we revisit the Chen-Stein method for the (compound) Poisson approximation which we perform in the paradigm of the built Markov-Malliavin structure, before studying in the second one the problem of portfolio optimisation in the trinomial model.
\end{abstract}


 \allowdisplaybreaks

\noindent
\textbf{Keywords:}  Marked binomial process, chaotic decomposition, Mehler's formula, Mallliavin calculus, Gamma calculus, Stein's method, discrete market model, portfolio optimisation, compound Poisson approximation.

\section{Introduction}

This paper is motivated by two applications: the compound Poisson approximation by a revisited Chen-Stein method and a problem of portfolio optimisation in the trinomial model. Apparently unrelated, they are both possible by-products of Malliavin calculus. The eponymous theory designed by Paul Malliavin in the 70's was initially elaborated to provide an infinite dimensional differential calculus for the Wiener space, and further extended to other settings. Thus one can find in the literature stochastic variational calculus for Gaussian processes in general (see Nualart \cite{nualart.book}, Nourdin and Peccati \cite{Nourdin:2012fk}), Poisson processes (see Bichteler \textit{et al.}  \cite{bichteler1987malliavin} for a variational approach, Nualart and Vives \cite{nualart88_1} or Privault \cite{privault93} for a chaotic approach),
Lévy processes (see Nualart and Schoutens \cite{NualartSchoutens}), Rademacher processes (see Privault \cite{Privault_stochastica}), and more recently for independent random variables (developed independently by Duerinckx, Gloria and Otto in \cite{DuerinckxGloriaOtto2020}, Decreusefond and Halconruy in \cite{DecreusefondHalconruy}).
Even if the multiplicity of approaches and the variety of canonical spaces on which Malliavin calculus operates appeared at first glance to be an obstacle to a complete unifying theory, one can find nevertheless a common terminology for all these formalisms around the notions of Malliavin operators (gradient $\Dm$, divergence $\delta$, Ornstein-Uhlenbeck operator $\L$ and semi-group $(\Pm_t)_{t\in\R_+}$) and the fundamental relationship between the gradient operator and divergence (defined as the gradient adjoint): the integration by parts formula. In fact, this is part of a deeper structure in which the $\L$ operator would be, by virtue of its dual role, the cornerstone: as the Ornstein-Uhlenbeck operator, it generates an underlying Markovian structure, and, as the Laplacian operator, it satisfies $\L=-\delta \Dm$. This raises, via the integration by parts formula, a Dirichlet form $\mathcal E$, so that any Malliavin equipment comes with a Dirichlet structure (see Decreusefond \cite{DecreusefondSDM}). Besides, the form $\mathcal E$ appears as the energy function associated to the \textit{carré du champ} operator $\Gamma$. The naturally emerging Gamma-Malliavin structure provides an ideal framework for future applications. Indeed the operator $\Gamma$ is one efficient to deduce quantitative limits by means of... \\

... \textit{Stein's method}, that frames the first application we have investigated. Initially designed to quantify the errors in the normal approximation by sums of random variables
having a stationary dependence structure, Stein's method stood out as one (not to say the one) efficient way to derive distance bounds between two probability measures (referred to as initial distribution and target distribution) with respect to a certain probabilistic metrics. It seems to be split into
two stages; the first is to take advantage of the target law characterisation to convert the original problem into that of bounding some functional of the initial space, whereas the second one aims at developing tools to tackle with this new expression. This latter can be performed by using exchangeable pairs (see Barbour and Chen \cite{barbour_introduction}) or other forms of couplings such as zero- or size-biased couplings. In a path-breaking work, Nourdin and Peccati (see
\cite{NourdinPeccati2009},\cite{Nourdin:2012fk}) showed that the transformation step can be advantageously made simple using integration by parts in the sense of Malliavin calculus, and by the same gave an intersection to the two theories. \\This approach is efficient provided there exists a Malliavin gradient on the initial space. This was handled to derive bounds for normal approximation by functionals acting on spaces equipped with a Malliavin structure such as Wiener chaoses (see Nourdin and Peccati \cite{Nourdin_2008}, with Reinert \cite{NourdinPeccatiReinert2009}, \cite{Nualart2014finitechaos}, V{\'\i}quez \cite{Viquez2018normal}), Poisson functionals (Lachièze-Rey and Peccati \cite{lachiezereypeccati},  Peccati \textit{et al.} \cite{peccati2010}, Schulte \cite{Schulte2016Kolomogorov}), functionals of Rademacher (see Reinert \textit{et al.} \cite{NourdinPeccatiReinert}, Zheng \cite{Zheng2017NormalAA}). On the other hand, Stein's method was developed to other target distributions; its most famous declination is undoubtedly the \textit{Chen-Stein} method that deals with (compound) Poisson approximation (see Chen \cite{Chen1974}). This was first performed using location arguments by the introduction of "neighbourhood of dependence" sets to deal with Poisson approximation (see Arratia \textit{et al} \cite{ArratiaGoldsteinGordon1990} and \cite{ArratiaGoldsteinGordon1989Twomoments}, Barbour \textit{et al} \cite{BarbourLarsJanson1992}) or compound Poisson one (see Barbour \textit{et al} \cite{BarbourChenLoh1992}, \cite{BarbourUtev1998}). Instigated first by Peccati in \cite{Peccati2011chen}, the Chen-Stein method was also combined to Malliavin calculus to provide Poisson approximation bounds for point processes (see Torrisi \cite{Torrisi2017Poisson}), Rademacher functionals (Krokowski \cite{Krokowski2015}, Privault and Torrisi \cite{PrivaultTorrisi2015}), or multiple integrals (see Bourguin and Peccati \cite{BourguinPeccati2016}). Last, as mentioned above, the Stein-Malliavin method was recently improved by exploiting the underlying Markovian structure (and in particular by using the carré du champ operator) to get quantitative limits by overcoming possible combinatorial difficulties that may arise from the use of multiplication formulae on configuration spaces. Finding its theoretical roots in the innovative works of Azmoodeh, Campese and Poly \cite{AzmoodehCampesePoly}, Azmoodeh \textit{et al.} \cite{AzmoodehMalicetMijoulePoly2016}, Ledoux \cite{Ledoux2012}, this approach successfully succeeds exploiting  the powerful techniques of Markov generators within Gamma-calculus. \\
Our first application takes place in this landscape. We propose a new point of view to address the problem of Poisson (respectively compound Poisson) approximation for the longest perfect head run in a coin tossing experiment (respectively for the occurrence of a rare word in a DNA sequence) within the Chen-Stein method. Indeed, these were treated so far by means of identified "dependent neighbours"; here, we handle these problems through a Stein-Markov-Malliavin method based on the keystone operator $\L$ (we cannot really take advantage of the operator carré du champ within our framework) for marked binomial processes.
\vspace{3pt}\\

The second motivation to elaborate a stochastic variational calculus for marked binomial processes, comes from one famous scope of Malliavin calculus: finance.
One of the most successful area of financial mathematics deals with option pricing and more generally valuation of contingent contracts. A claim is a non-negative random variable $\Fm$ often assumed to be square-integrable that models the \textit{payoff} (the value of the option at expiry) at a fixed-term maturity $T$ of some asset. The simple example is given by a European call (resp. put) option based on the asset $\S$ with expiration date $T$ and strike price $K$, defined by $\Fm=(\S_T-K)_{+}$ (respectively $\Fm=(K-\S_T)_{+}$). The claim is said to be \textit{attainable} or \textit{duplicable} if there exists a self-financing portfolio of value $\Fm$ at expiry, and \textit{redundant} if this replication is only based on the existing assets. In \textit{complete} markets, all claims are reachable. The Cox-Ross-Rubinstein (CRR for short) model is the simplest discrete example of complete market (see Cox, Ross and Rubinstein \cite{cox1979option}, Rendleman and Bartter \cite{RendlemanBartter1979}). The so-called Fundamental Asset Pricing Theorem (FAPT) asserts that an arbitrage-free market is complete if and only if there exists a unique probability measure, equivalent to the initial under which the discounted price process is a martingale (in a discrete time setting see Jacod \cite{jacod1998local}, Schachermayer \cite{Schachermayer1992hilbert}). The \textit{prime} or \textit{fair price}, equal to the initial value of the replicating portfolio and more generally to its value at any time can be written as the (conditional) expectation with respect to this unique \textit{martingale measure} (for the seminal paper see Harrison and Kreps \cite{HarrisonKreps1979}). Besides, an explicit formula of the replicating strategy in terms of Malliavin derivative can be provided by the application of Clark-Ocone formula in the CRR model (see chapter one in Privault \cite{Privault_stochastica} and \cite{Privault2013market}). Several approaches were developed to address the problem of hedging in an incomplete market, as the trinomial model we are focusing on in this paper. To name but a few, one consists in "completing" the market by introducing new securities in an equilibrium approach (see Hakansson \cite{Hakansson1979}, Boyle and Tan \cite{BoyleTan}). Within the no-arbitrage framework, another approach consists in exhibiting the so-called \textit{minimal martingale measure} from the set of equivalent martingale measures. It is related to risk-minimizing strategies (Föllmer and Sondermann \cite{FollmerSondermann}, Schweizer \cite{Schweizer1991semimartingales}, \cite{Schweizer1995}) or portfolio expected utility maximization under constraints (Delbaen and Schachermayer \cite{Delbaen}, Frittelli \cite{Frittelli2000}, Runggaldier \cite{runggaldier2006}). 
In the application we choose to develop, we are less concerned by stating a valuation formula than to determine the optimized portfolio composition of minimal risk (in a sense to be defined) for an a priori non-attainable claim. When the claim is reachable, we get then a hedging formula, that has not been done - to our knowledge - in the trinomial frame. \\
Our initial aim was to transpose the criteria stated by Föllmer and Sondermann in \cite{FollmerSondermann} into the frame of the trinomial model (underpinned by sequences of $\{-1,0,1\}$-valued independent random variables) in order to determine the less risky approximating portfolio and derive the explicit expression of the corresponding optimizing strategy from the Clark formula stated for independent random variables (see Decreusefond and Halconruy \cite{DecreusefondHalconruy} Theorem 3.3). The lack of a martingale representation theorem in that latter framework, made it impossible to derive a hedging formula from Clark's (as done in binomial or Black-Scholes model); this led us to replace the trinomial model with what we called \textit{a ternary model}, also composed of two assets (a riskless and a risky asset) and  subtended by a marked binomial process where the mark space consists of two elements that indicate whether the price sequence (of the risky asset) rises or falls. Besides, if the probabilities of the occurrence of jumps and marks are properly defined, the ternary model is in fact equivalent in law to the trinomial model, so that all results "on expectation" hold in this latter. 
\vspace{3pt}\\

The interest of this work is therefore twofold; from a theoretical point of view, is built up there a variational calculus for binomial processes within a unifying Markov-Malliavin structure including pre-existing theories (in the Wiener or Poisson space). From an application angle, this new formalism is prone to offer an alternative point of view on Chen-Stein method and to produce an explicit portfolio optimisation formula in the trinomial model that had - to the best of our knowledge - not been done (in that way) so far for multiple periods.
\vspace{3pt}\\

The paper is structured as follows. In section \ref{Binomial_marked_analysis_sec} we give some elements of stochastic analysis for marked binomial processes and state a chaotic expansion result for any square-integrable binomial functional. From the successive development of a variational calculus in $\L^1$ and $\L^2$-contexts, we formalize a Markov-Malliavin structure for marked binomial processes in Section \ref{Gamma-Malliavin structure for marked binomial processes} whereas the Section \ref{Functional_identities_sec} is devoted to the statement of a Girsanov theorem and a Clark formula within this framework.The Section \ref{Applications_sec} is dedicated to the two main applications of our formalism: the (compound) Poisson approximation by Chen-Stein method and the portfolio optimisation in the trinomial model. Almost all proofs are postponed to Section \ref{Proofs_sec}.

\section{Stochastic analysis for marked binomial processes, part I}\label{Binomial_marked_analysis_sec}

\noindent
The first part of this present section is devoted to the introduction of notation and the main object of interest, the \textit{marked binomial process}. Throughout, $(\O,\mathcal A,\P)$ will be an abstract probability space assumed to be wide enough to support all random objects in question. 

\subsection{Framework}
Consider the measurable space $(\Xb,\mathcal X)$ where $\Xb=\N\times \E$, and $(\E,\mathcal B(\E))$, the \textit{mark space}, is a Borel space. Without any other indication, $\E$ will designate a countable (possibly finite) subset of $\Z$. Nevertheless, our construction may be extended to any subset of $\R$ and we provide later additional elements to formalize it. Denote by $\mathfrak N_{\Xb}$ (respectively $\widehat{\mathfrak N}_{\Xb}$) the space of simple, integer-valued, $\sigma$-finite (respectively finite) measures on $\Xb$. 
\noindent
Let $\EuScript N^{\Xb}$ be the smallest $\sigma$-field of subsets of $\mathfrak N_\Xb$ such that the mapping $\chi\in\mathfrak N_\Xb\mapsto \chi(\A)$ is measurable for all $\A\in\mathcal X$. \\
\noindent 
A \textit{point process} (respectively finite point process)  - or random counting measure -  is a random element $\eta$ in $\mathfrak N_\Xb$ (respectively in $\widehat{\mathfrak N}_{\Xb}$) that satisfies $\eta(\A)\in\Z_+\cup\{\infty\}$ for all $\A\in\mathcal X$. In its very definition $(\E,\mathcal B(\E))$ is a very simple Polish space endowed with its Borel $\sigma$-field so that we may and will assume that any element $\eta$ of $\mathfrak N_\Xb$  is \textit{proper}, i.e. can be $\P$-a.s. written as
\begin{equation}\label{Proper_process_def_eq}
\eta=\sum_{n=1}^{\eta(\Xb)}\delta_{\Xm_n},
\end{equation}
where $\{\Xm_n, \, n\pg 1\}$ denotes a countable collection of $\Xb$-valued random elements, and for $x\in\Xb$, $\delta_x$ is the Dirac measure at $x$. For a complete exposé on the subject of point processes, the reader can refer to the monograph of Last and Penrose (\cite{LastPenrose2017}, section 6.1) or Last \cite{Last2016} from that our presentation is largely inspired.
A \textit{binomial marked process} is a particular point process $\eta$ defined as follows; let $\lambda\in(0,1)$, and consider a Bernoulli process (see for instance Decreusefond and Moyal \cite{DecreusefondMoyal2012}, definition 6.6) of parameter $\lambda$, described by a sequence of jump times $(\Tm_t)_{t\in\Z_+}$, such that for any $t\in\Z_+$, the $t$-th arrival time $\Tm_t$ is defined by $\Tm_0=0$ and $\Tm_t=\sum_{s=1}^t\xi_s$, and where the inter-arrival variables $\{\xi_t,\,t\in\N\}$ are independent and identically distributed by a geometric law of parameter $\lambda$. In analogy with marked Poisson processes (see Last and Penrose \cite{LastPenrose2017}, chapter 7), we can set that $\eta$ is $\P$-a.s. represented as
\begin{equation}\label{Proper_process_bin_def_eq}
\eta=\sum_{t=1}^{\infty}\car_{\{\Tm_t<\infty\}}\delta_{(\Tm_t,\V_t)},
\end{equation}
where $\{\V_t,\, t\in\N\}$ is a collection of $\E$-valued random elements such that almost surely $\eta(\Tm_t,\V_t)=1$, for $\Tm_t<\infty$, and that are independent of the underlying jump process $\Nm=(\Nm_t)_{t\in\Z_+}$ defined by $\Nm_0=0$ and
$\Nm_t=\sum_{s \in\N} \car_{\{\Tm_s\pp t\}}$. By a slight abuse of notation, we shall write $(t,k)\in\eta$ in order to indicate that the point $(t,k)\in\Xb$ is charged by the random measure $\eta$. Note that for any $t\in\N$, $\Nm_t$ is a binomial random variable of mean $\lambda t$. \\
\noindent 
We may and will assume that $\mathcal A=\sigma(\eta)=:\f{}$ where  $\f{}=(\f{t})_{t\in\N}$ is the canonical filtration defined from $\eta$ by
\begin{equation*}
\f{0}:=\{\emptyset,\Omega\} \quad \text{and} \quad \f{t}:=\sigma\left\{\sum_{(s,k)}\eta(s,k),\, s\pp t,\, k\in\mathcal \E\right\}.
\end{equation*}
\noindent
Let $\Q$ be the common distribution of the $\V_t$ and $\P_\eta=\P\circ\eta^{-1}$ be the image measure of $\P$ under $\eta$ on the space $(\mathfrak N_\Xb,\EuScript N^{\Xb})$ i.e. the distribution of $\eta$; its compensator - the intensity of $\eta$ - is the measure $\nu$ defined on $\mathcal X$ by 
\begin{equation*}
\nu(\A)=\sum_{(t,k)\in\A}\sum_{s\in\N}\Big(\lambda\delta_{s}(\{t\})\otimes \sum_{\l\in\E}\Q(\{\l\})\delta_{\l}(\{k\})\Big)\; ; \; \A\in\mathcal X.
\end{equation*}
Throughout, we denote by $\R(\mathfrak N_\Xb)$ the class of real-valued measurable functions $\fk$ on $\mathfrak N_\Xb$ and by $\mathcal L^0(\O):=\mathcal L^0(\O,\mathcal A)$ the class of real-valued measurable functions $\Fm$ on $\O$.  Since $\mathcal A=\sigma(\eta)$, for any $\Fm\in\mathcal L^0(\O)$, there exists a function $\mathfrak f\in\R(\mathfrak N_\Xb)$ such that $\Fm=\mathfrak f(\eta)$. The function $\mathfrak f$ is called a \textit{representative} of $\Fm$ and is $\P\otimes\eta^{-1}$-a.s. uniquely defined. By default, the representative of a random variable $\Fm\in\mathcal L^0(\O)$ will be noted by the corresponding gothic lowercase letter, $\fk$. Last, for $p\in\N$, we define $\L^p(\P):=\L^p(\O,\mathcal A,\P)$ the set of $p$-integrable functions on $\O$ with respect to $\P$.
\begin{remark}
The marked binomial process $\eta$ can be equivalently written as 
$\sum_{t\in\N}\delta_{(\D\Nm_t,\Wm_t)},$
where for any $t\in\N$ the random variables $\Delta \Nm_t$ and $\Wm_t$ are defined by
\begin{equation*}
\Delta \Nm_t=\Nm_{t}-\Nm_{t-1}=\displaystyle\sum_{k\in \E}\car_{\{(t,k)\in\eta\}} \quad \text{and} \quad \Wm_t= \sum_{k\in\E}k\car_{\{(t,k)\in\eta\}}.
\end{equation*}
The variables $\Delta \Nm_t$ and $\Wm_t$ thus defined play a major part and we will often refer to them. Indeed, $\Delta \Nm_t$ indicates whether there is a jump at time $t$, and, if so, the variable  $\Wm_t$ gives its corresponding mark $k$. 
\end{remark}
\noindent
If $(\V_t)_{t\in\N}$ is a sequence of independent $\R$-valued random variables with common distribution $\Q$ and that are independent of the process $\Nm$, we can define the \textit{compound binomial process} $\Y=(\Y_t)_{t\in\N}$ of intensity $\nu$ by
\begin{equation}\label{Compound_pp_def_eq}
\Y_t:=\sum_{s=1}^{\Nm_t}\V_s.\\
\end{equation}
The corresponding  compensated process denoted $\overline \Y=(\overline\Y_t)_{t\in\N}$ defined by
\begin{equation}\label{Compensated_compound_pp_def_eq}
\overline\Y_t:=\Big(\sum_{s=1}^{\Nm_t}\V_s\Big)-\lambda \Q(\{k\}) t \; ; \; t\in\N,
\end{equation}
is a $\F$-martingale. In their very definitions, $\eta$, $\Y$ and $\Yt$ are the discrete analogues of the marked, compound Poisson and compensated compound Poisson processes.

\subsection{Integration with respect to a binomial marked process}
\noindent A process $u=(u_{(t,k)})_{(t,k)\in\Xb}$ is a measurable random variable defined on $(\mathfrak N(\Xb)\times\Xb,\F\otimes \mathcal X)$ that can be written $u=\sum_{(t,k)\in\Xb}\uk(\eta,(t,k))\car_{(t,k)}$, where $\{\uk(\eta,(t,k)),\,(t,k)\in\Xb\}$ is a family of measurable functions from $\mathfrak N_{\Xb}\times\Xb$ to $\R$ and $\uk$ is called the \textit{representative} of $u$. As for random variables, the representative of a process will be noted by a Gothic letter. For instance, considering a process $r$, its representative will be denoted by $\rk$.
The following assumption holds throughout this subsection. 
\begin{assumption}\label{assumptionR} There exists a discrete-time process $\Rm=(\Rm_{(t,k)},\,t\in\Z_+,k\in\E)$ where $\Rm_{(t,k)}=\rk(\eta,(t,k))$ is defined on $(\O,\F,\P)$ and that satisfies the following hypotheses:
\begin{enumerate}
\item The process $\Big(\sum_{k\in\E} \Rm_{(t,k)}\Big)_{t\in\Z_+}$ is a $\F$-martingale,
\item The family $\mathcal R=\{\D\Rm_{(t,k)}, \, t\in\N,\, k\in\E\}$ is orthogonal for the scalar product  $(\Xm,\Y)\in \L^2(\P)\mapsto \esp{\Xm \Y}$ and $\D\Rm_{(t,k)}$ and $\D\Rm_{(s,k)}$ are identically distributed for all $t,s\in\N$, $k\in\E$. We denote $\esp{(\D\Rm_{(t,k)})^2}=:\kappa_k$ for any $(t,k)\in\Xb$.
\end{enumerate}
\end{assumption}

\subsubsection{Stochastic integrals}

\noindent
Throughout the paper, we adopt the following set notations; we denote $\id{a}{b}:=\{a,\dots,b\}$ for any $a,b\in\Z$ such that $a<b$, and $\Xb_t:=\id{1}{t}\times\E$ for any $t\in\N$. By convention, $\id{1}{0}=\emptyset$.
Any $n$-tuple of $\Xb^n$ can be denoted by bold letters; for instance, $(\mathbf t_n,\mathbf k_n)=\big((t_1,k_1),\cdots,(t_n,k_n)\big)$. For any $\A\in\mathcal X$, we denote $\A^{n,<}=\{(\mathbf t_n,\mathbf k_n)\in\Xb^{n}\,: \,t_1<\cdots<t_n\},$ the corresponding time-ordered set, and
$\A^{n,\neq}=\{(\mathbf t_n,\mathbf k_n)\in\A^{n}\,: \,\forall i\neq j,\,  t_i\neq t_j\}$, the set with pairwise distinct (in time) elements.\\

\noindent
We denote by $\L^2(\P\otimes\nu)$ the Hilbert space of processes that are square-integrable with respect to the measure $\P\otimes\nu$, for which we define the corresponding inner product and norm by
\begin{equation*}
 \textless u,v\textgreater_{\L^2(\P\otimes\nu)}=\mathbf E\Big[\int_{\Xb}\uk(\eta,(t,k))\vk(\eta,(t,k))\,\d\nu(t,k)\Big] \;\text{and}\;\|u\|_{\L^2(\P\otimes\nu)}^2=\mathbf E\Big[\int_{\Xb}\uk(\eta,(t,k))^2\,\d\nu(t,k)\Big].
\end{equation*}

\begin{definition}
The set of \textit{simple processes}, denoted by $\mathcal U$ is the set of random variables of the form
\begin{equation}\label{simpleprocess}
u=\sum_{(t,k)\in\Xb_T}\uk(\eta,(t,k))\car_{(t,k)},
\end{equation}
where $T\in\N$,  and $\uk$ is the representative of $u$.
Let $\mathcal P$ denote the subspace of $\mathcal U$ made of simple predictable processes  i.e. of the form \eqref{simpleprocess} where $\uk(\eta,(t,\cdot))$ is $\f{t-1}$-measurable for any $t\in\id{1}{T}$.
\end{definition}

\begin{proposition}\label{Isometry_integral_prop}
Any process $u\in\mathcal U$ of representative $\uk$ is integrable with respect to the process $\Rm$ by the formula
\begin{equation*}
\Jt_1(u\,;\mathcal R)=\sum_{(t,k)\in\Xb}\uk(\eta,(t,k))\D\Rm_{(t,k)}.
\end{equation*}
The so-called $\mathcal R$-stochastic integral  $\Jt_1(u\,;\,\mathcal R)$ of $u$ extends to square-integrable predictable processes via the (conditional) isometry formula
\begin{equation}\label{isometry_proc_eq}
\esp{\big|\Jt_1\big(\car_{[t,\infty)}u\,;\mathcal R\big)\big|^2\,\big|\,\f{t-1}}=\esp{\big\|\car_{[t,\infty)}u\big\|_{\L^2(\Xb,\tilde\nu)}^2\,\big|\,\f{t-1}},
\end{equation}
where $\tilde \nu$ is the measure on $\Xb$ defined by $\tilde \nu(\{(t,k)\})=\kappa_k\nu(\{(t,k)\}))$, for any $(t,k)\in\Xb$.
\end{proposition}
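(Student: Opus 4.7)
The plan is to follow the classical Itô-type construction: first verify the isometry on simple predictable processes, then extend by density to square-integrable predictable processes.

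First, I would fix $u\in\mathcal P$ of the form \eqref{simpleprocess} and write
\[
\Jt_1(\car_{[t,\infty)}u;\mathcal R)=\sum_{s\pg t}\sum_{k\in\E}\uk(\eta,(s,k))\,\D\Rm_{(s,k)}.
\]
Squaring this finite sum and grouping terms by diagonal versus off-diagonal indices yields
\[
\big|\Jt_1(\car_{[t,\infty)}u;\mathcal R)\big|^2=\sum_{(s,k)\pg t}\uk(\eta,(s,k))^2(\D\Rm_{(s,k)})^2+\sum_{(s,k)\neq(s',k'),\,s,s'\pg t}\uk(\eta,(s,k))\uk(\eta,(s',k'))\D\Rm_{(s,k)}\D\Rm_{(s',k')}.
\]
The main work is to show that, after conditioning on $\f{t-1}$, every off-diagonal term vanishes while every diagonal term contributes $\kappa_k$.

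Second, I would split the off-diagonal sum into two cases. For $s<s'$, the factor $\uk(\eta,(s,k))\uk(\eta,(s',k'))\D\Rm_{(s,k)}$ is $\f{s'-1}$-measurable by predictability of $u$, so conditioning on $\f{s'-1}$ and invoking the martingale property of $\big(\sum_{k}\Rm_{(\cdot,k)}\big)$ together with the fact that the time-$s'$ increments of $\eta$ are independent of $\f{s'-1}$ (so that $\esp{\D\Rm_{(s',k')}\,|\,\f{s'-1}}=\esp{\D\Rm_{(s',k')}}=0$) kills that term. For $s=s'$ with $k\neq k'$, conditioning on $\f{s-1}$ makes $\uk(\eta,(s,k))\uk(\eta,(s,k'))$ a constant, so by independence of the time-$s$ increments from $\f{s-1}$ and the orthogonality part of Assumption \ref{assumptionR}, the conditional expectation $\esp{\D\Rm_{(s,k)}\D\Rm_{(s,k')}\,|\,\f{s-1}}=\esp{\D\Rm_{(s,k)}\D\Rm_{(s,k')}}=0$. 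The diagonal terms are handled by the same independence argument combined with $\esp{(\D\Rm_{(s,k)})^2}=\kappa_k$, giving
\[
\esp{\uk(\eta,(s,k))^2(\D\Rm_{(s,k)})^2\,|\,\f{t-1}}=\esp{\uk(\eta,(s,k))^2\kappa_k\,|\,\f{t-1}},
\]
which is exactly $\esp{\|\car_{\{(s,k)\}}u\|_{\L^2(\Xb,\tilde\nu)}^2\,|\,\f{t-1}}$ in view of $\tilde\nu(\{(s,k)\})=\kappa_k\nu(\{(s,k)\})$.

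Third, I would close the argument by a density/completion step. The inequality $\esp{|\Jt_1(u;\mathcal R)|^2}=\|u\|_{\L^2(\Xb,\tilde\nu)}^2$ obtained by taking $t=1$ shows that $\Jt_1(\cdot;\mathcal R):\mathcal P\to\L^2(\P)$ is an isometry into $\L^2(\P)$ once $\mathcal P$ is equipped with the norm associated to $\P\otimes\tilde\nu$. Since simple predictable processes are dense in the space of square-integrable predictable processes (truncate in time and approximate $\uk$ by $\f{s-1}$-measurable step functions), the integral extends uniquely to this space, and the conditional identity \eqref{isometry_proc_eq} passes to the limit by $\L^2$-continuity of conditional expectation. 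The only delicate point is the conditional (as opposed to unconditional) version of the isometry, which is why the independence of the time-$s$ increments of $\eta$ from $\f{s-1}$ must be used explicitly in the second step above.
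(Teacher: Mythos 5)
Your proposal is correct and follows essentially the same route as the paper: expand the square of the finite sum, kill the off-diagonal terms for distinct times via conditioning on $\f{s'-1}$ and the centred/martingale property of the increments, kill the same-time cross terms via the orthogonality in Assumption \ref{assumptionR}, reduce the diagonal to $\kappa_k$, and then extend by a Cauchy/density argument over truncated simple predictable processes. The paper states the computation in polarized form (for a pair $u,v\in\mathcal P$), but the content and the use of the hypotheses are identical to yours.
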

\subsubsection{Multiple integrals}

\noindent
In order to define (multiple) stochastic integrals, we work in a space of  symmetrical functions. Our construction follows closely that depicted by Privault (see \cite{Privault_stochastica}, chapter 6); in a certain sense we transpose it into our context.  The space $\L^2(\Xb,\nu)^{\circ 0}$ is by convention identified to $\R$; let thus for any $f\in\L^2(\Xb,\nu)^{\circ 0}$, $\Jt_0(f_0)=f_0$. 
\begin{definition}
For $n\in\N$, let $\L^2(\Xb,\nu)^{\circ n}$ denote the subspace of $\L^2(\Xb,\nu)^{\otimes n}=\L^2(\Xb,\nu)^n$ composed of the functions $f_n\in\R(\Xb^{n})$ symmetric in their $n$ variables, i.e. such that for any permutation $\tau$ of $\{1,\dots,n\}$, $f_n\big((t_{\tau(1)},k_{\tau(1)}),\cdots,(t_{\tau(n)},k_{\tau(n)})\big)=f_n\big((t_1,k_1),\cdots,(t_n,k_n)\big)$,
for all $(t_1,k_1),\dots,(t_n,k_n)\in \Xb$. The space $\L^2(\Xb,\nu)^{\circ n}$ is endowed by the scalar product
\begin{align*}
\textless f_n,g_n\textgreater_{\L^2(\Xb,\nu)^{\circ n}}
&=n!\int_{\Xb^{n,<}}\,f_n(\mathbf t_n,\mathbf k_n)\,g_n(\mathbf t_n,\mathbf k_n)\,\d\nu^{\otimes n}(\mathbf t_n,\mathbf k_n),
\end{align*}
where  the tensor measure $\nu^{\otimes n}$ is defined on $ \Xb^{n,\neq}$ by $\nu^{\otimes n}=\bigotimes_{i=1}^n\,\nu$.
\end{definition}

\noindent
The multiple stochastic integral can be defined on $\mathcal C_c(\Xb^n,\R)$, the set of continuous functions with compact support on $\Xb^n$ and extended by isometry to $\L^2(\Xb,\nu)^{\circ n}$.
\begin{proposition}\label{Isometry_mul_prop}
The $\mathcal R$-stochastic integral of order $n$ is the application defined on $\mathcal C_c(\Xb^n,\R)$ by
\begin{equation}\label{Jn_mult_int_rec_eq}
\Jt_n(f_n\,;\mathcal R)=n\displaystyle\sum_{(t,k) \in \Xb}\Jt_{n-1}(f_n(\star,(t,k)))\,\D\Rm_{(t,k)},
\end{equation}
where $"\star"$ denotes the first $n-1$ variables of $f_n((t_1,k_1),\dots,(t_n,k_n))$.
It can equivalently be written as
\begin{equation}\label{Jn_mult_int_eq}
\Jt_n(f_n\,;\mathcal R)
=n!\sum_{(\mathbf t_n,\mathbf k_n)\in \Xb^{n}}\,f_n(\mathbf t_n,\mathbf k_n)\,\prod_{i=1}^{n}\D\Rm_{(t_i,k_i)}.
\end{equation}
Besides, it satisfies the isometry formula: for any $f_n\in \L^2(\Xb,\nu)^{\circ n}$, $g_m\in \L^2(\Xb,\nu)^{\circ  m}$
\begin{equation}\label{isometry_eq}
\esp{\Jt_n(f_n\,;\,\mathcal R)\Jt_m(g_m\,;\,\mathcal R)}=\car_{\{n\}}(m)n!\textless f_n,g_n \textgreater_{\L^2(\Xb,\tilde\nu)^{\circ n}},
\end{equation}
so that its domain can be extended to $\L^2(\Xb,\mathcal X,\tilde\nu)^{\circ n}\simeq \L^2(\Xb,\mathcal X,\nu)^{\circ n}$.
\end{proposition}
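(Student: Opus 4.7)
The argument has three ingredients: the equivalence of the recursive definition \eqref{Jn_mult_int_rec_eq} with the closed form \eqref{Jn_mult_int_eq} on $\mathcal C_c(\Xb^n,\R)$; the isometry \eqref{isometry_eq} on that class; and the continuous extension of $\Jt_n$ to $\L^2(\Xb,\nu)^{\circ n}$. Throughout, the sum $\sum_{(\mathbf{t}_n,\mathbf{k}_n)\in\Xb^n}$ in \eqref{Jn_mult_int_eq} is understood on the time-pairwise-distinct set $\Xb^{n,\neq}$; the potential diagonal contributions do not arise in the recursion below.

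\textbf{Step 1 (equivalence of the two forms).} Argue by induction on $n$. The case $n=1$ is Proposition \ref{Isometry_integral_prop}. Assume \eqref{Jn_mult_int_eq} holds up to rank $n-1$, take a symmetric $f_n\in\mathcal C_c(\Xb^n,\R)$, and apply the inductive hypothesis to the section $f_n(\star,(t,k))$, which is symmetric in its $n-1$ remaining variables, obtaining
\[
\Jt_{n-1}\bigl(f_n(\star,(t,k));\mathcal R\bigr)=(n-1)!\sum_{(\mathbf{t}_{n-1},\mathbf{k}_{n-1})\in\Xb^{n-1,\neq}}f_n\bigl((\mathbf{t}_{n-1},\mathbf{k}_{n-1}),(t,k)\bigr)\prod_{i=1}^{n-1}\D\Rm_{(t_i,k_i)}.
\]
Multiplying by $n\,\D\Rm_{(t,k)}$ and summing over $(t,k)\in\Xb$ produces the prefactor $n\cdot(n-1)!=n!$ and appends $(t,k)$ as the $n$-th coordinate, yielding \eqref{Jn_mult_int_eq}.

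\textbf{Step 2 (isometry).} Using \eqref{Jn_mult_int_eq} and Fubini,
\[
\esp{\Jt_n(f_n;\mathcal R)\Jt_m(g_m;\mathcal R)}=n!\,m!\sum_{\substack{(\mathbf{t}_n,\mathbf{k}_n)\in\Xb^{n,\neq}\\(\mathbf{s}_m,\mathbf{l}_m)\in\Xb^{m,\neq}}}f_n(\mathbf{t}_n,\mathbf{k}_n)\,g_m(\mathbf{s}_m,\mathbf{l}_m)\,\esp{\prod_{i=1}^n\D\Rm_{(t_i,k_i)}\prod_{j=1}^m\D\Rm_{(s_j,l_j)}}.
\]
Within $\Xb^{n,\neq}$ the coordinates $t_i$ are pairwise distinct; combined with the independence of the Bernoulli skeleton $\Nm$ and of the iid mark family $\{\V_t\}_{t\in\N}$ across times, the $\{\D\Rm_{(t_i,k_i)}\}_{i=1}^n$ are mutually independent (same for the $(s_j,l_j)$'s). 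Since each $\D\Rm_{(t,k)}$ is centred, the joint expectation vanishes unless every factor on the left pairs with one on the right, which forces $n=m$ and the existence of a permutation $\sigma\in\mathfrak S_n$ with $(s_i,l_i)=(t_{\sigma(i)},k_{\sigma(i)})$. Summing over the $n!$ such permutations, applying Assumption \ref{assumptionR}(ii) to compute $\prod_i\esp{(\D\Rm_{(t_i,k_i)})^2}=\prod_i\kappa_{k_i}$, using the symmetry of $g_n$, and reorganising the sum via $\Xb^{n,\neq}=n!\cdot\Xb^{n,<}$ together with $\tilde\nu(\{(t,k)\})=\kappa_k\nu(\{(t,k)\})$, one recovers exactly $n!\,\langle f_n,g_n\rangle_{\L^2(\Xb,\tilde\nu)^{\circ n}}$.

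\textbf{Step 3 (extension) and main obstacle.} The isometry from Step 2 makes $\Jt_n$ a bounded linear operator of norm $\sqrt{n!}$ from $\mathcal C_c(\Xb^n,\R)\cap\L^2(\Xb,\tilde\nu)^{\circ n}$ into $\L^2(\P)$, so it extends uniquely by continuity to the closure. On the countable discrete space $\Xb$ with $\kappa_k>0$ for every $k\in\E$, compactly supported symmetric functions are dense in both $\L^2(\Xb,\nu)^{\circ n}$ and $\L^2(\Xb,\tilde\nu)^{\circ n}$, which are identified via the measure equivalence asserted in the statement. The main technical obstacle I anticipate is the combinatorial analysis in Step 2: Assumption \ref{assumptionR} only provides pairwise orthogonality, while the cancellation of higher-order mixed products of $\D\Rm$'s demands genuine independence. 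This is secured precisely by the time-distinctness restriction $\Xb^{n,\neq}$ inherited from Step 1 — on disjoint times, the independence built into the marked binomial structure upgrades orthogonality to full mutual independence, so that every unmatched higher moment collapses to a product of second moments of matched pairs.
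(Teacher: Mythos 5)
Your proposal follows essentially the same route as the paper: an inductive unfolding of the recursion to get the closed form, then the isometry by expanding the product, invoking independence of the increments $\D\Rm_{(t,k)}$ at pairwise distinct times together with their being centred to kill all unmatched terms (the paper isolates a single unmatched time $s_{j_0}\in\mathbf s_m\setminus\mathbf t_n$ and factors out $\esp{\D\Rm_{(s_{j_0},k_{j_0})}}=0$, which is your pairing argument in compressed form), and finally extension by density. The one point you should not wave away is your claim that ``the potential diagonal contributions do not arise in the recursion'': in \eqref{Jn_mult_int_rec_eq} the outer index $(t,k)$ ranges over all of $\Xb$ and therefore does hit times already occupied by the inner integral, so the recursion as literally written produces diagonal terms $\D\Rm_{(t_i,k_i)}\D\Rm_{(t_i,l)}$. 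The paper's proof avoids this by silently replacing $f_n(\star,(t,k))$ with a truncation of its first $n-1$ time variables to $\id{1}{t-1}$ (the $\pi^n_{(t,k)}f_n$ in the displayed computation), which yields the time-ordered sum $n!\sum_{\Xb^{n,<}}$; with your convention $n!\sum_{\Xb^{n,\neq}}$ the isometry constant comes out as $(n!)^3\langle f_n,g_n\rangle_{\circ n}$ rather than $n!\langle f_n,g_n\rangle_{\circ n}$, given that the $\circ n$ inner product already carries an $n!\int_{\Xb^{n,<}}$. This is a normalisation defect inherited from an inconsistency between the paper's statement and its own proof, but your write-up needs either the truncation or the drop of one factor of $n!$ to make Steps 1 and 2 compatible.
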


\noindent 
Up to now, if no need to specify, $\L^2(\Xb)^{\circ n}$ could indifferently designate $\L^2(\Xb,\mathcal X,\nu)^{\circ n}$ or $\L^2(\Xb,\mathcal X,\tilde\nu)^{\circ n}$. This subsection ends up with two Lemmas that will be useful to state the chaotic expansion theorem.

\begin{lemma}\label{Int_tensor_lem}
For any $g\in \L^2(\Xb)$ and $f_n\in \L^2(\Xb)^{\circ n}$,
\begin{multline*}
\Jt_{n+1}(g\circ f_n\,;\mathcal R)
=n\sum_{(t,k)\in\Xb}\Jt_n\big(f_n(\star,(t,k))\circ g(\cdot)\car_{\id{1}{t-1}^n}(\star,\cdot)\,;\mathcal R\big)\D\Rm_{(t,k)}\\
+ \sum_{(t,k)\in\Xb}g(t,k)\Jt_n(f_n\car_{\id{1}{t-1}^n}\,;\mathcal R)\D\Rm_{(t,k)},
\end{multline*}
where $\circ$ designates the symmetric tensor product and satisfies for $(\mathbf t_n,\mathbf k_n)\in\Xb^n$,
\begin{equation*}
g\circ f_n(\mathbf t_{n+1},\mathbf k_{n+1})=\frac{1}{n+1}\sum_{i=1}^{n+1}g(t_i,k_{i})f_n^{\neg i}(\mathbf t_n,\mathbf k_n),
\end{equation*}
with for $i\in\id{1}{n},\,$ $f_n^{\neg i}(\mathbf t_n,\mathbf k_n)=f_n\big((t_1,k_1),\cdots,(t_{i-1},k_{i-1}),(t_{i+1},k_{i+1}),(t_n,k_n)\big).$
\end{lemma}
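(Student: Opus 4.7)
The plan is to iterate the recursive identity (2.6) once, applied to the symmetric $(n+1)$-variable function $g \circ f_n$, and then re-express the resulting inner $n$-variable integrand by unfolding the symmetric tensor product. Concretely, specialising (2.6) yields
$$\Jt_{n+1}(g \circ f_n;\mathcal R) = (n+1) \sum_{(t,k) \in \Xb} \Jt_n\bigl((g \circ f_n)(\star, (t,k))\bigr)\, \D\Rm_{(t,k)},$$
so everything reduces to identifying the symmetric function of $n$ variables obtained from $g \circ f_n$ by freezing the last argument at $(t,k)$.

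Using the explicit definition of $g \circ f_n$, I would then split the defining sum according to whether the variable fed into $g$ is the frozen point $(t,k)$ or one of the $n$ free variables $(s_i, \ell_i)$. The first alternative yields $\tfrac{1}{n+1}\, g(t,k)\, f_n(\star)$, while the remaining $n$ alternatives together give $\tfrac{1}{n+1} \sum_{i=1}^n g(s_i, \ell_i)\, f_n\bigl((s_1,\ell_1), \ldots, \wh{(s_i, \ell_i)}, \ldots, (s_n, \ell_n), (t,k)\bigr)$. Since $f_n$ is symmetric in its $n$ arguments, the partial function $(s_1, \ldots, s_{n-1}) \mapsto f_n(s_1, \ldots, s_{n-1}, (t,k))$ lies in $\L^2(\Xb)^{\circ(n-1)}$, and the second alternative is, by the very definition of $\circ$, exactly $\tfrac{n}{n+1}\, \bigl(f_n(\cdot,(t,k)) \circ g(\cdot)\bigr)(\star)$.

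Substituting this decomposition back into the recursion and using linearity of $\Jt_n$, the factor $1/(n+1)$ cancels the leading $(n+1)$ and the two contributions split into the two sums displayed in the lemma. The indicators $\car_{\id{1}{t-1}^n}$ arise at precisely this step: the inner $\Jt_n$ appearing in the recursion (2.6) is the iterated stochastic integral in which the $n$ remaining time coordinates are implicitly constrained to be strictly less than $t$, so that the full sum over $\Xb^{n+1}$ respects the time-ordering convention underlying the definition of $\Jt_{n+1}$. Making this constraint explicit on the star variables is exactly the restriction $\car_{\id{1}{t-1}^n}$ appearing in the statement.

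The main obstacle I anticipate is purely bookkeeping: one must carry the ordering restriction correctly through the recursion and keep track of the fact that the symbol $\circ$ on the left-hand side denotes an $(n+1)$-fold symmetrisation, whereas the $\circ$ inside the first sum on the right denotes an $n$-fold one, of $g$ with the $(t,k)$-frozen element of $\L^2(\Xb)^{\circ(n-1)}$. Once these two are disentangled, the identity follows from the elementary combinatorial decomposition above together with linearity of the multiple integral.
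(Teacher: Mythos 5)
Your proposal is correct and follows essentially the same route as the paper: the paper expands $\Jt_{n+1}(g\circ f_n)$ via the explicit ordered sum \eqref{Jn_mult_int_eq} and splits the symmetrisation index according to whether $g$ receives the last (largest-time) coordinate or one of the first $n$, which is exactly your dichotomy of "frozen point feeds $g$" versus "free point feeds $g$" phrased through the one-step recursion \eqref{Jn_mult_int_rec_eq}. The coefficient bookkeeping $(n+1)\cdot\tfrac{1}{n+1}$ and $(n+1)\cdot\tfrac{n}{n+1}$ and the appearance of the indicator $\car_{\id{1}{t-1}^n}$ from the time-ordering are handled the same way in both arguments.
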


\begin{lemma}\label{Cond_int_lem}
For any $(t,n)\in\N^2$, $f_n\in\L^2(\Xb)^{\circ n}$,
\begin{equation*}\label{condinteq}
\esp{\Jt_n(f_n\,;\mathcal R)\,|\,\f{t}}=\Jt_n(f_n\car_{\id{1}{t}}\,;\mathcal R).
\end{equation*}
\end{lemma}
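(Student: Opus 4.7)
The idea is to exploit the explicit time-ordered representation of the multiple integral and isolate the role of the maximal time coordinate, which carries the martingale information. Throughout, I use $f_n \car_{\id{1}{t}^n}$ to mean the symmetric function obtained by multiplying $f_n(\mathbf t_n,\mathbf k_n)$ by $\prod_{i=1}^n \car_{\{t_i \leq t\}}$.

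The first step is to pass to the time-ordered domain via the symmetry of $f_n$, rewriting
\begin{equation*}
\Jt_n(f_n\,;\mathcal R) \;=\; n!\sum_{(\mathbf t_n,\mathbf k_n)\in \Xb^{n,<}}\,f_n(\mathbf t_n,\mathbf k_n)\,\prod_{i=1}^{n}\D\Rm_{(t_i,k_i)},
\end{equation*}
and then split this sum according to whether $t_n \leq t$ or $t_n > t$: call these contributions $S_{\leq t}$ and $S_{> t}$ respectively. Since $t_1 < \cdots < t_n$, the condition $t_n \leq t$ is equivalent to all $t_i \leq t$, so
\begin{equation*}
S_{\leq t} \;=\; n!\sum_{(\mathbf t_n,\mathbf k_n)\in (\id{1}{t}\times\E)^{n,<}} f_n(\mathbf t_n,\mathbf k_n)\,\prod_{i=1}^{n}\D\Rm_{(t_i,k_i)} \;=\; \Jt_n(f_n\car_{\id{1}{t}^n}\,;\mathcal R),
\end{equation*}
which is manifestly $\f{t}$-measurable, so it is invariant under $\esp{\cdot \mid \f{t}}$.

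It remains to show $\esp{S_{> t} \mid \f{t}} = 0$. For each term in $S_{>t}$, the ordering gives $t_1<\cdots<t_{n-1}<t_n$ with $t_n>t$, so the first $n-1$ factors $\prod_{i=1}^{n-1}\D\Rm_{(t_i,k_i)}$ are $\f{t_n-1}$-measurable. Conditioning on $\f{t_n-1}$ first (and then using the tower property, which is legitimate since $\f{t} \subseteq \f{t_n-1}$), the term reduces to $f_n(\mathbf t_n,\mathbf k_n)\,\prod_{i=1}^{n-1}\D\Rm_{(t_i,k_i)} \cdot \esp{\D\Rm_{(t_n,k_n)} \mid \f{t_n-1}}$. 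I would then invoke the martingale-difference property of each individual $\D\Rm_{(s,k)}$ to kill this factor.

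The main obstacle is precisely justifying $\esp{\D\Rm_{(s,k)} \mid \f{s-1}} = 0$ for each individual pair $(s,k)$, since Assumption \ref{assumptionR}(1) only asserts this for the sum $\sum_{k\in\E}\D\Rm_{(s,k)}$. I would close this gap by combining Assumption \ref{assumptionR}(1) with the $\L^2$-orthogonality and stationarity in time of the family $\mathcal R$ (part (2) of the assumption): stationarity plus the pairwise orthogonality $\esp{\D\Rm_{(s,k)}\D\Rm_{(s',k)}} = 0$ for $s \neq s'$ forces $\esp{\D\Rm_{(s,k)}}^2 = 0$, and together with the natural independence of $\sigma(\D\Rm_{(s,\cdot)})$ from $\f{s-1}$ underlying the construction of $\eta$ (the mark $\V_s$ and the increment $\D\Nm_s$ being independent of $\f{s-1}$), this yields the desired conditional centering. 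An alternative, perhaps cleaner route is an induction on $n$ using Lemma \ref{Int_tensor_lem} to rewrite $\Jt_n(f_n\car_{\id{1}{t}^n}\,;\mathcal R)$ as an iterated stochastic integral of predictable processes, to which the martingale character of Proposition \ref{Isometry_integral_prop} applies at each layer; the terminal value is $\Jt_n(f_n\,;\mathcal R)$, and the claim then follows from the optional stopping / martingale property at time $t$.
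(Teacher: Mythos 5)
Your proof is correct and follows essentially the same route as the paper's: write $\Jt_n(f_n\,;\mathcal R)$ in its time-ordered form, keep the $\f{t}$-measurable part where all $t_i\pp t$ (which is exactly $\Jt_n(f_n\car_{\id{1}{t}}\,;\mathcal R)$), and annihilate the remaining terms through the conditional centering of the increments whose times exceed $t$. The point you single out as the main obstacle --- that each individual $\D\Rm_{(s,k)}$, and not only the sum over $k$, is (conditionally) centered --- is precisely what the paper disposes of in one line by invoking the independence across distinct times of the \emph{centered} variables $\D\Rm_{(t_i,k_i)}$, so your derivation of $\esp{\D\Rm_{(s,k)}}=0$ from orthogonality, stationarity in time and independence is a legitimate (and slightly more careful) way of closing the same gap.
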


\subsection{Chaotic decomposition}

\noindent
This subsection is devoted to the statement of a chaotic decomposition for any square-integrable \textit{marked binomial functional}, that are random variables of the form
\begin{equation}\label{functional}
\Fm=f_0\car_{\{\eta(\mathbb X)=0\}}+\sum_{n \in\N}\sum_{(\mathbf t_n,\mathbf k_n)\in\Xb^n}\,\car_{\{\eta(\mathbb X)=n\}}f_n(\mathbf t_n,\mathbf k_n)\prod_{i=1}^{n}\car_{\{(t_i,k_i)\in\eta\}},
\end{equation}
where any function $f_n$ is an element of $\L^1(\nu)^{\circ n}$, that is the subspace of $\L^1(\nu)^{\otimes n}:=\L^1(\Xb,\mathcal X,\nu)^{\otimes n}\\=\L^1(\Xb,\mathcal X,\nu)^{n}$ composed of the functions symmetric in their $n$ variables. We introduce the space of cylindrical functions, which is dense in $\L^2(\P)$.
\begin{definition}
A functional $\Fm$ is \textit{cylindrical} if there exists $T\in\N$ such that
\begin{equation}\label{Cylindrical_def_eq}
\Fm=f_0\car_{\{\eta(\mathbb X)=0\}}+\sum_{n \in\N}\sum_{(\mathbf t_n,\mathbf k_n)\in\Xb_T^n}\,\car_{\{\eta(\mathbb X)=n\}}f_n(\mathbf t_n,\mathbf k_n)\prod_{i=1}^{n}\car_{\{(t_i,k_i)\in\eta\}},
\end{equation}
where $\Xb_T=\{(t,k)\in\Xb\,:\,t\pp T\}$.
\end{definition}
\noindent

\noindent
Within Assumption \ref{assumptionR}, let $\mathcal H_0:=\R$ and for any $n\in\N$, $\mathcal H_n$ be the subspace of $\L^2(\P)$ made of integrals of order $n\pg 1$:
\begin{equation*}
\mathcal H_n:=\left\{\Jt_n(f_n) \; ; \; f_n\in\L^2(\Xb)^{\circ n}  \right\},
\end{equation*}
where $\Jt_n(f_n):=\Jt_n(f_n\,;\mathcal R)$, and  called \textit{chaos of order $n$}. In what follows $\mathcal L^0(\P,\f{t})$ denotes the set of $\f{t}$-measurable random variables.
\begin{lemma}\label{chaos_L0_lem}
For any $t\in\N$,
\begin{equation}\label{chaos_L0_lem_eq}
\mathcal L^0(\P,\f{t})=(\mathcal H_0\oplus\cdots\oplus\mathcal H_t)\bigcap \mathcal L^0(\P,\f{t}).
\end{equation}
\end{lemma}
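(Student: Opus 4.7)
The inclusion $\supseteq$ is automatic, so I focus on $\subseteq$ and argue by induction on $t\in\N$. The base case $t=0$ is trivial since $\f{0}=\{\emptyset,\Omega\}$ gives $\mathcal L^0(\P,\f{0})=\R=\mathcal H_0$.

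For the inductive step, take $\Fm\in\mathcal L^0(\P,\f{t})$ and first condition on what happens at the $t$-th step, writing
\[
\Fm = \Fm^{(0)}\car_{\{\Delta\Nm_t=0\}} + \sum_{k\in\E}\Fm^{(k)}\car_{\{(t,k)\in\eta\}},
\]
where the $\f{t-1}$-measurable variables $\Fm^{(0)}$ and $\Fm^{(k)}$ are obtained by freezing the state at time $t$ in a representative of $\Fm$. I then invoke Assumption \ref{assumptionR} together with the martingale property of $\sum_{k'}\Rm_{(\cdot,k')}$ to see that $\{1\}\cup\{\Delta\Rm_{(t,k')}:k'\in\E\}$ is a linearly independent family spanning the algebra of functions of the event at time $t$; consequently each indicator above can be expanded as a linear combination in this family. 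Substituting and regrouping by time-$t$ content, $\Fm$ splits as
\[
\Fm = \Gm + \sum_{k'\in\E}\Hm_{k'}\Delta\Rm_{(t,k')},
\]
with $\Gm,\Hm_{k'}\in\mathcal L^0(\P,\f{t-1})$, so the induction hypothesis places $\Gm$ and each $\Hm_{k'}$ in $\bigoplus_{n=0}^{t-1}\mathcal H_n$.

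The final step is to promote each product $\Hm_{k'}\Delta\Rm_{(t,k')}$ into $\bigoplus_{n=0}^{t}\mathcal H_n$. Writing $\Hm_{k'}=\sum_{n=0}^{t-1}\Jt_n(h_n^{(k')})$, Lemma \ref{Cond_int_lem} allows one to take each kernel $h_n^{(k')}$ with support in $\Xb_{t-1}^n$. I then apply Lemma \ref{Int_tensor_lem} with $g=\car_{(t,k')}$ (so that $\Jt_1(g)=\Delta\Rm_{(t,k')}$): the first sum in that identity vanishes because producing a non-zero term would require $s\leq t-1$ (from the support of $h_n^{(k')}$) while the restriction $\car_{\id{1}{s-1}^n}$ combined with $g$ being concentrated at time $t$ would force $t\leq s-1$, a contradiction. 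The second sum collapses to the single contribution at $(s,l)=(t,k')$, delivering
\[
\Jt_{n+1}(\car_{(t,k')}\circ h_n^{(k')}) = \Jt_n(h_n^{(k')})\,\Delta\Rm_{(t,k')}\in\mathcal H_{n+1}.
\]

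The main obstacle is this last support-tracking argument in Lemma \ref{Int_tensor_lem}: one must check that multiplying an $\f{t-1}$-measurable chaos of order $n$ by a fresh increment $\Delta\Rm_{(t,k')}$ at time $t$ lands exactly in order $n+1$, with no spill-over into lower orders. A secondary technical point is the basis property of $\{1\}\cup\{\Delta\Rm_{(t,k')}:k'\in\E\}$ used to decompose the time-$t$ indicators, which rests on the $L^2$-orthogonality in Assumption \ref{assumptionR}(2) combined with the centering encoded in Assumption \ref{assumptionR}(1).
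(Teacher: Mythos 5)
Your argument is correct, but it is not the route the paper takes. The paper's proof is a basis-plus-dimension-count: it observes that the products $\prod_{i=1}^{s}\D\Rm_{(t_i,k_i)}$ with $t_1<\cdots<t_s\pp t$ form an orthogonal family lying in $\mathcal H_s\cap\mathcal L^0(\P,\f{t})$ (each such product being $\Jt_s$ of an elementary indicator kernel), and then concludes by checking that the span of this family and $\mathcal L^0(\P,\f{t})$ both have dimension $1+\sum_{s=1}^{t}|\E|^s\binom{t}{s}=(1+|\E|)^t$. You instead run an induction on $t$, peel off the time-$t$ information by conditioning, expand the time-$t$ indicators in the affine basis $\{1\}\cup\{\D\Rm_{(t,k')}\}$, and then use Lemma \ref{Int_tensor_lem} with $g=\car_{(t,k')}$ to show that multiplying an $\f{t-1}$-measurable chaos of order $n$ by the fresh increment $\D\Rm_{(t,k')}$ lands exactly in $\mathcal H_{n+1}$; your support-tracking argument for why the first sum in that lemma vanishes is right. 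What each approach buys: the paper's counting argument is shorter and immediately exhibits an orthogonal basis of $\mathcal L^0(\P,\f{t})$ adapted to the chaos grading, but it is genuinely a finite-dimensional argument (the count $(1+|\E|)^t$ presupposes $|\E|<\infty$, with a further exhaustion needed for countable $\E$); your induction is more constructive, makes the ``no spill-over into lower chaoses'' point explicit rather than leaving it to orthogonality and dimension matching, and is the natural template for the martingale/predictable-representation statements that follow. One small caveat: the spanning property of $\{1\}\cup\{\D\Rm_{(t,k')}:k'\in\E\}$ does not follow from Assumption \ref{assumptionR} alone — it uses the concrete construction of $\mathcal R$ from $\mathcal Z$ via the invertible triangular matrix $\mathfrak M$ in \eqref{family_R_eq}--\eqref{matrix_basis_eq}, which guarantees that the span of $\{1,\D\Rm_{(t,k')}\}$ equals that of $\{1,\car_{\{(t,k)\in\eta\}}\}$; the paper's own proof relies on the same concrete structure, so this is a shared implicit hypothesis rather than a gap in your argument.
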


\noindent
As a direct consequence, any random variable $\Fm\in \mathcal L^0(\P,\f{t})$ can be expressed as
\begin{equation*}
\Fm=\esp{\Fm}+\sum_{n=1}^{t} \Jt_n\big(f_n\car_{\id{1}{t}^n}\big).
\end{equation*}
This also means that the space of cylindrical functions coincides with the linear space spanned by multiple stochastic integrals i.e.
\begin{align*}
\cyl
&=\mathrm{Span}\bigg\{\bigcup_{n\in\Z_+}\mathcal H_n \bigg\}.
\end{align*}

\noindent
The completion of $\cyl$ in $\L^2(\P)$ is denoted by the sum $\displaystyle\bigoplus_{n\in\Z_+}\,\mathcal H_n.$
We can state the main theorem of this section and provide a chaotic decomposition for any square-integrable random variable. 
\begin{theorem}\label{Chaos_R_th}
The space of square-integrable marked binomial functionals is provided with a chaotic decomposition
\begin{equation}\label{Chaos_L2_prop}
\L^2(\P)=\bigoplus_{n\in\Z_+}\,\mathcal H_n.
\end{equation}
In other terms, any random variable $\Fm\in \L^2(\P)$ can be expanded in a unique way as
\begin{equation}\label{Chaos_R_eq}
\Fm=\esp{\Fm}+\sum_{n\in\N}\Jt_n(f_n).
\end{equation}
\end{theorem}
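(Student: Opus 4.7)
The plan is to combine martingale convergence on the canonical filtration $(\f{t})_{t\in\N}$ with Lemma~\ref{chaos_L0_lem}, which already furnishes the chaotic decomposition at each finite level $t$. Since $\mathcal A=\sigma(\eta)$ and $\bigcup_{t\in\N}\f{t}$ generates $\mathcal A$, for any $\Fm\in\L^2(\P)$ the martingale $\Fm_t:=\esp{\Fm\,|\,\f{t}}$ converges to $\Fm$ in $\L^2(\P)$ as $t\to\infty$. By Lemma~\ref{chaos_L0_lem} applied to the $\f{t}$-measurable (hence square-integrable) variable $\Fm_t$, one has
\begin{equation*}
\Fm_t=\esp{\Fm}+\sum_{n=1}^{t}\Jt_n\big(f_n^{(t)}\car_{\id{1}{t}^n}\big),
\end{equation*}
for some symmetric kernels $f_n^{(t)}\in\L^2(\Xb)^{\circ n}$ supported in $\Xb_t^n$, the constant term being $\esp{\Fm_t}=\esp{\Fm}$.

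Next I would establish the compatibility of these kernels across levels. Applying the tower property $\esp{\Fm_{t+1}\,|\,\f{t}}=\Fm_t$, together with the conditional truncation Lemma~\ref{Cond_int_lem}, forces $f_n^{(t+1)}\car_{\id{1}{t}^n}=f_n^{(t)}$ almost everywhere with respect to $\nu^{\otimes n}$, for every $n\pp t$. Consequently, the stabilised pointwise limit $f_n:=\lim_{t\to\infty}f_n^{(t)}\car_{\id{1}{t}^n}$ defines a symmetric function on $\Xb^n$ with $f_n\car_{\id{1}{t}^n}=f_n^{(t)}$. Using the isometry formula \eqref{isometry_eq} and the orthogonality of the chaoses it entails, one obtains the Pythagorean identity
\begin{equation*}
\|\Fm_t\|_{\L^2(\P)}^2=\esp{\Fm}^2+\sum_{n=1}^t n!\,\|f_n\car_{\id{1}{t}^n}\|_{\L^2(\Xb,\tilde\nu)^{\circ n}}^2\pp \|\Fm\|_{\L^2(\P)}^2,
\end{equation*}
from which monotone convergence in $t$ yields $\sum_{n\in\N}n!\,\|f_n\|^2_{\L^2(\Xb,\tilde\nu)^{\circ n}}<\infty$.

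This summability guarantees that each $\Jt_n(f_n)$ is a well-defined element of $\mathcal H_n$ and that the series $\esp{\Fm}+\sum_{n\in\N}\Jt_n(f_n)$ converges in $\L^2(\P)$. By Lemma~\ref{Cond_int_lem} it coincides with $\Fm_t$ in the limit, and by uniqueness of limits in $\L^2(\P)$ one concludes the expansion \eqref{Chaos_R_eq}. For uniqueness, if $\Fm$ admitted two such expansions $\esp{\Fm}+\sum_n\Jt_n(f_n)=\esp{\Fm}+\sum_n\Jt_n(g_n)$, the isometry \eqref{isometry_eq} applied term-by-term to the difference would yield $n!\|f_n-g_n\|^2=0$ for every $n$, so the kernels agree $\tilde\nu^{\otimes n}$-a.e.

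The step I expect to be the most delicate is the compatibility of the kernels $f_n^{(t)}$ across $t$: verifying that the truncation Lemma~\ref{Cond_int_lem} exactly matches the tower property on each chaos requires identifying uniquely the components in $\mathcal H_n$ of $\f{t}$-measurable functionals, which implicitly rests on the fact that $\mathcal H_0\oplus\cdots\oplus\mathcal H_t$ is in direct sum (itself a byproduct of the isometry). Once this bookkeeping is secured, the remainder of the argument is a standard combination of Doob's martingale convergence in $\L^2$ with Parseval's identity in the orthogonal Fock-type decomposition.
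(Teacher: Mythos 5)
Your proof is correct and takes essentially the same route as the paper's, which simply defers to Privault's Proposition 1.5.3 combined with Lemma~\ref{chaos_L0_lem}: approximate $\Fm$ in $\L^2(\P)$ by the martingale $\esp{\Fm\,|\,\f{t}}$, apply the finite-level decomposition, and pass to the limit using the orthogonality of the chaoses. Your write-up merely makes explicit the kernel-compatibility and Bessel-inequality bookkeeping that the paper's one-line citation leaves implicit.
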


\begin{proof}
The proof follows closely that of Proposition 1.5.3 in Privault \cite{Privault_stochastica} by combining Lemma \ref{chaos_L0_lem} and the density of $\mathcal S$ in $\L^2(\P)$.
\end{proof}

\begin{corollary}
For any $\Fm,\Gm\in\L^2(\P)$,
\begin{equation*}
\mathrm{cov}(\Fm,\Gm)=\sum_{n\in\N}n!\textless f_n,g_n\textgreater_{\L^2(\Xb,\tilde\nu)^{\otimes n}}.
\end{equation*}
\end{corollary}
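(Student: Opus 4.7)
The plan is to unpack the covariance using the chaos expansions of $\Fm$ and $\Gm$ provided by Theorem \ref{Chaos_R_th}, then reduce everything to the isometry formula \eqref{isometry_eq}. More precisely, write
\begin{equation*}
\Fm=\esp{\Fm}+\sum_{n\in\N}\Jt_n(f_n),\qquad \Gm=\esp{\Gm}+\sum_{n\in\N}\Jt_n(g_n),
\end{equation*}
so that $\Fm-\esp{\Fm}=\sum_{n\pg 1}\Jt_n(f_n)$ and similarly for $\Gm$, the convergence being understood in $\L^2(\P)$. Then
\begin{equation*}
\mathrm{cov}(\Fm,\Gm)=\esp{(\Fm-\esp{\Fm})(\Gm-\esp{\Gm})}=\esp{\Big(\sum_{n\pg 1}\Jt_n(f_n)\Big)\Big(\sum_{m\pg 1}\Jt_m(g_m)\Big)}.
\end{equation*}

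Next, I would justify exchanging the expectation with the double summation. The cleanest way is to truncate: set $\Fm_N=\esp{\Fm}+\sum_{n=1}^N\Jt_n(f_n)$ and $\Gm_N=\esp{\Gm}+\sum_{n=1}^N\Jt_n(g_n)$. These truncations are in the linear span of $\mathcal H_0,\dots,\mathcal H_N$ (finite sums), so one can legitimately expand the product and take expectation term by term to obtain, thanks to \eqref{isometry_eq},
\begin{equation*}
\mathrm{cov}(\Fm_N,\Gm_N)=\sum_{n=1}^{N}n!\,\textless f_n,g_n\textgreater_{\L^2(\Xb,\tilde\nu)^{\circ n}}.
\end{equation*}

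Since $\Fm_N\to \Fm$ and $\Gm_N\to \Gm$ in $\L^2(\P)$ by Theorem \ref{Chaos_R_th}, the bilinear map $(\Fm,\Gm)\mapsto \mathrm{cov}(\Fm,\Gm)$ is continuous on $\L^2(\P)\times \L^2(\P)$ (it is bounded by $\|\cdot\|_{\L^2(\P)}^2$ via Cauchy--Schwarz), hence $\mathrm{cov}(\Fm_N,\Gm_N)\to \mathrm{cov}(\Fm,\Gm)$. On the other hand, the orthogonality of the chaoses (i.e.\ the indicator $\car_{\{n\}}(m)$ in \eqref{isometry_eq}) ensures that the partial sums on the right-hand side form a Cauchy sequence bounded by $\|\Fm\|_{\L^2(\P)}\|\Gm\|_{\L^2(\P)}$, so that the series $\sum_{n\in\N} n!\,\textless f_n,g_n\textgreater_{\L^2(\Xb,\tilde\nu)^{\circ n}}$ converges absolutely and equals the limit of the covariances.

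There is essentially no obstacle: the result is a direct corollary of the orthogonality of distinct chaoses combined with the Hilbert-space completeness used in Theorem \ref{Chaos_R_th}. The only point deserving a line of care is the passage to the limit in the double sum, which I would handle by truncating both expansions simultaneously and invoking continuity of the inner product, as sketched above.
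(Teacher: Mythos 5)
Your argument is correct and follows the same route as the paper, which simply notes the result is immediate from the chaotic expansion \eqref{Chaos_R_eq} and the isometry formula; your truncation-plus-continuity step just makes explicit the interchange of expectation and summation that the paper leaves implicit. No issues.
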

\begin{proof}
Immediate using \eqref{Chaos_R_eq}  together with Proposition \ref{Isometry_integral_prop}.
\end{proof}
\begin{remark}
The chaotic decomposition for marked binomial processes is not generated - as in the framework of normal martingales (including Brownian motion, Poisson and Rademacher processes) - from the increments of the compensated underlying process $\overline\Y$ \eqref{Compensated_compound_pp_def_eq} itself but in terms of stochastic integrals with respect to an auxiliary process $\Rm$ satisfying both a martingale and an orthogonality properties (Assumption \ref{assumptionR}). This can be explained by the absence of normal martingales associated to the compound binomial process $\overline\Y$. Indeed, by transposing the remark p. 95 in Privault \cite{Privault_stochastica} into our framework, the quadratic variation of the compensated compound $\F$-martingale  $\Yt$ satisfies
\begin{equation*}
[\Yt,\Yt]_t=\frac{1}{{\sqrt{\lambda\var [\V_1]}}}\sum_{s=1}^{\Nm_t}|\V_s|^2=\frac{1}{{\sqrt{\lambda\var [\V_1]}}}\sum_{s=1}^t\,|\V_{\Nm_s}|\,\D \Yt_s+\frac{\esp{\V_1}}{{\sqrt{\var [\V_1]}}}\sum_{s=1}^t\,|\V_{\Nm_s}|,
\end{equation*}
does not allow to find a square-integrable $\F$-adapted process $(\phi_t)_{t\in\R_+}$ solution of  the \textit{structure equation}
\begin{equation*}
[\Yt,\Yt]_t=t+\sum_{s=1}^t \phi_s\,\D\Yt_s,
\end{equation*}
when $\V$ is not deterministic. This structural reason explains the lack of usual chaotic decomposition with respect to the increments of the compensated initial process. 
\end{remark}
\noindent
Despite previous remark, we can nevertheless provide a \textit{pseudo}-chaotic (not orthogonal) decomposition related to the process $\Y$.
In order to do that, we introduce the process $\Zm=(\Zm_{(t,k)} \,;\, (t,k)\in\Xb)$ which increments are defined by the family $\mathcal Z=\{\Delta \Zm_{(t,k)}\,;\, (t,k)\in\Xb\}$ with
\begin{equation}\label{Defintion_DZ_eq}
\Delta \Zm_{(t,k)}=\car_{\{(\Delta \Nm_t,\V_{\Nm_t})=(1,k)\}}-\lambda \Q(\{k\})=\car_{\{(\Delta \Nm_t,\Wm_{t})=(1,k)\}}-\lambda\Q(\{k\})\; ; \; (t,k)\in\Xb. 
\end{equation}
\noindent
The definition of $\mathcal Z$ is quite natural since the process $\Yt=(\Yt_{t})_{t\in\N}$ can be equivalently written
\begin{equation}\label{compoundgeom}
\Yt_t=\sum_{s\pp t}\sum_{k\in\E}k\Delta \Zm_{(s,k)}.
\end{equation}
For any \label{key}$T\in\N$, define $\mathcal Z_T=\{\Delta \Zm_{(t,k)}\,;\, (t,k)\in\Xb_T\}$. This family is not orthogonal; however, the finite dimension of the related spanned space, being equal to
\begin{equation*}
1+\sum_{s=1}^T\,|\E|^s\times\binom{T}{s}=(|\E|+1)^T=:\overline{\mathfrak m},
\end{equation*}
enables to derive from $\mathcal Z_T$ an orthogonal family, $\mathcal R_T=\{\D\Rm_{(t,k)}\,;\, (t,k)\in\Xb_T\}$. Assume that $\E=\{k^1,\cdots,k^{\overline{\mathfrak m}}$); then, the Gram-Schmidt process provides
\begin{equation}\label{family_R_eq}
\D\Rm_{0}=1,\quad \D\Rm_{(t,k^1)}=\D\Zm_{(t,k^1)} \quad \text{and}\qquad   \D\Rm_{(t,k^n)}=\D \Zm_{(t,k^n)}-\sum_{j=1}^{n-1}\frac{\esp{\D\Zm_{(1,k^n)}\D\Rm_{(1,k^j)}}}{\esp{(\D\Rm_{(1,k^j)})^2}}\D\Rm_{(t,k^j)},
\end{equation}
for $n\in\id{1}{\overline{\mathfrak m}}$, by noting that the random variables $\D\Rm_{(t,k)}$ (respectively $\D\Zm_{(t,k)}$) and $\D\Rm_{(1,k)}$ (respectively $\D\Zm_{(1,k)}$) are identically distributed and that for any $s\in\id{1}{t-1}$,
$\esp{\D\Rm_{(s,k)}\,\Delta \Zm_{(t,1)}}=\esp{\D\Rm_{(s,k)}\esp{\D \Zm_{(t,1)}|\f{s}}}=0$.
In fact, for any $t\in\id{1}{T}$, $(\D\Zm_{(t,k)}, \, k\in\E)$ is the image of $(\D\Rm_{(t,k)}, \, \, k\in\E)$ by the linear transformation of associated to the $\overline{\mathfrak m}\times \overline{\mathfrak m}$ triangular matrix 
\begin{equation}\label{matrix_basis_eq}
\mathfrak M=(\mathfrak m_{ij})_{i,j\in\id{1}{\overline{\mathfrak m}}}=
\begin{pmatrix}
1 & 0 & \cdots & 0 \\
\gamma_{21} & 1 & \cdots & 0 \\
\vdots  & \vdots  & \ddots & \vdots  \\
\gamma_{n1}  &\gamma_{n2}  & \cdots & 1
\end{pmatrix},
\end{equation}
where  $\gamma_{ij}:=\esp{\D\Zm_{(1,k^i)}\D\Rm_{(1,k^j)}}/\esp{(\D\Rm_{(1,k^j)})^2}$. As the matrix $\mathfrak M$ is invertible, for any $t\in\id{1}{T}$, $(\D\Rm_{(t,k)}, \, k\in\E)$ is obtained through the product of $\mathfrak M^{-1}$ by the vector $(\D\Zm_{(t,k)}, \, \, k\in\E)$. Moreover, since the linear transformation it stands for is then bijective, the family $\mathcal R$ can be constructed in a similar fashion when $\E$ is countable not finite. Thus, the process $\Rm$ which increments are defined by the family $\mathcal R$ satisfies Assumption \ref{assumptionR}.
\begin{remark}
It seems to be possible to construct such a family $\mathcal R$ even if  $\E$ is not countable (take for instance $\E=\R$), by drawing inspiration from the design of the \textit{orthogonal power jump process} for Lévy processes, in Di Nunno, Oksendal and Proske \cite{DiNunnoOksendalProske2004}. Transposing it into our framework, that would give: define for any $n\in\N$, 
\begin{equation*}
\D\Zm_{t}^{(n)}=\Xm_t^{(n)}-\mathbf E\big[\Xm_t^{(n)}\big]:=\sum_{s\in\id{1}{t}}(\D\Y_s)^{n}-\mathbf E\bigg[\sum_{s\in\id{1}{t}}(\D\Y_s)^{n}\bigg],
\end{equation*}
and the family $\mathcal R$ by $\D\Rm_{0}=1$, and
\begin{equation*}
\D\Rm_{t}^{(n)}=\Xm_t^{(n)}+\sum_{j=1}^{n-1}\gamma_{nj}\Xm_t^{(j)},
\end{equation*}
where the $\gamma_{nj}$ are real numbers such that the processes of  the collection $\{(\D\Rm_{t}^{(n)})_{t\in\N},\; n\in\N\}$ are \textit{strongly orthogonal martingales}, i.e. for any $t\in\N$, the product $\D\Rm^{(n)}\D\Rm^{(m)}$ is a uniformly integrable martingale for all $(n,m)\in\N^2$, $m\neq n$. 
\end{remark}
\begin{remark}
Let the $\mathcal Z$-stochastic integral of order $n\in\N$ be the application on $\L^2(\Xb)^{\circ n}$ such that for any $f_n\in\L^2(\Xb)^{\circ n}$,
\begin{equation*}
\Jt_n(f_n\,;\mathcal Z)
:=\sum_{(\mathbf t_n,\mathbf k_n)\in \Xb^{n}}\,f_n(\mathbf t_n,\mathbf k_n)\,\prod_{i=1}^{n}\D\Zm_{(t_i,k_i)}.
\end{equation*}
Considering the application $\car_{(\mathbf t_n,\mathbf k_n)}^{<}\,:\,(\mathbf s_n,\mathbf l_n)\in\Xb^{n,<}\mapsto\car_{(\mathbf t_n,\mathbf k_n)}(\mathbf s_n,\mathbf l_n)$, we retrieve the remarkable and usual identity
\begin{equation*}
\Jt_n\big(\car_{(\mathbf t_n,\mathbf k_n)}^{<}\,;\mathcal Z\big)=\prod_{i=1}^{n}\Delta \Zm_{(t_i,k_i)} \; ; \; n\in\N.
\end{equation*}
This is of key importance; it basically means that we can reconstruct the signal $\Y$ by means of the stochastic integral of elementary functions defined on $\Xb^n$. In particular for $n=1$, this gives $\Jt_1(\car_{(t,k)}\,;\mathcal Z)=\D\Zm_{(t,k)}$, that appears as a reminiscence of the Wiener integral.
\end{remark}
\noindent
We can thus state the following result.
\begin{proposition}\label{Chaos_Z_prop}
Any random variable $\Fm\in \L^2(\P)$ can be expressed as
\begin{equation}\label{Chaos_Z_eq}
\Fm=\esp{\Fm}+\sum_{n\in\N}\Jt_n(g_n,;\mathcal Z).
\end{equation}
In particular if $|\E|=\overline{\mathfrak m}$ such that $\E=\{k^1,\dots,k^{\overline{\mathfrak m}}\}$, any function $g_n$ is explicitely given by
\begin{equation*}
g_n(\mathbf{t}_n,\mathbf k_n)=\sum_{i_1=p_1}^{\overline{\mathfrak m}}\cdots\sum_{i_n=p_n}^{\overline{\mathfrak m}}\Big(\prod_{j=1}^n\mathfrak m_{k^{i_j}k^{p_j}}^{-1}\Big) f_n((t_1,k_1^{i_1}),\dots,(t_n,k_n^{i_n})),
\end{equation*}
where for any $j\in\{1,\dots,n\}$, $p_j$ denotes the element of $\{1,\dots,m\}$ such that $k_j=k^{p_j}\in\E$, and for notation purposes, $\mathfrak m_{k^i,k^j}^{-1}$ designate the $(i,j)$-th entry of matrix $\mathfrak M^{-1}$, the inverse of matrix $\mathfrak M$ defined by \eqref{matrix_basis_eq}.
\end{proposition}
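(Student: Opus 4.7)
The plan is to bootstrap directly off Theorem~\ref{Chaos_R_th}: start with the (orthogonal) $\mathcal R$-chaos expansion $\Fm=\esp{\Fm}+\sum_n\Jt_n(f_n;\mathcal R)$ of $\Fm\in\L^2(\P)$, and then pass to $\mathcal Z$ by substituting, in every product $\prod_i\D\Rm_{(t_i,k_i)}$ appearing in \eqref{Jn_mult_int_eq}, the expression of each $\D\Rm_{(t,k^j)}$ as a linear combination of the $\D\Zm_{(t,k^i)}$. The existence of such a representation is immediate from the construction \eqref{family_R_eq}: since the transfer matrix $\mathfrak M$ of \eqref{matrix_basis_eq} is (lower-triangular and) invertible, its inverse $\mathfrak M^{-1}$ provides, time slice by time slice,
\[
\D\Rm_{(t,k^j)}=\sum_{i=1}^{\overline{\mathfrak m}}\,\mathfrak m_{k^jk^i}^{-1}\,\D\Zm_{(t,k^i)},
\]
so that each $\Jt_n(f_n;\mathcal R)$ belongs to the linear span of the $n$-fold products of $\D\Zm$'s. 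Summing over $n$ delivers \eqref{Chaos_Z_eq}; note that uniqueness is \emph{not} asserted, in accordance with the remark preceding the statement that $\mathcal Z$ fails to be orthogonal.

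For the explicit formula, I would work at the level of a single chaos. Write $k_i=k^{p_i}$ and plug the displayed expansion of $\D\Rm_{(t_i,k^{p_i})}$ into the representation \eqref{Jn_mult_int_eq} of $\Jt_n(f_n;\mathcal R)$; by lower-triangularity of $\mathfrak M^{-1}$, the inner summation over each index runs only from $p_i$ up to $\overline{\mathfrak m}$ (indeed, $\mathfrak m_{k^jk^i}^{-1}=0$ for $i>j$, which after the relabeling described below becomes the constraint $i_j\geq p_j$). Distributing the product yields
\[
\prod_{i=1}^n\D\Rm_{(t_i,k^{p_i})}=\sum_{l_1,\dots,l_n}\Big(\prod_{i=1}^{n}\mathfrak m_{k^{p_i}k^{l_i}}^{-1}\Big)\prod_{i=1}^{n}\D\Zm_{(t_i,k^{l_i})}.
\]
Interchanging the outer summation (over the arguments $(\mathbf t_n,\mathbf k_n)$ of $f_n$) with the inner one (over $\mathbf l_n$), then relabeling $(p_j)\leftrightarrow(l_j)$, the coefficient in front of $\prod_i\D\Zm_{(t_i,k^{p_i})}$ is precisely the candidate $g_n(\mathbf t_n,\mathbf k_n)$ given in the statement. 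Reassembling the expansion over $n$ produces \eqref{Chaos_Z_eq}.

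The only genuine bookkeeping difficulty is to keep the indexing consistent: it must be checked that, after the Fubini-type swap, the summation indices of the resulting $g_n$ range from $p_j$ to $\overline{\mathfrak m}$ (upper-triangular pattern of the transposed column-slice of $\mathfrak M^{-1}$) rather than from $1$ to $p_j$, which is the reversal that occurs when one switches the roles of ``input'' and ``output'' mark in the change of basis. Beyond this, the argument is purely linear-algebraic, so there is no serious obstacle; in particular, the symmetry of $g_n$ (needed for $\Jt_n(\,\cdot\,;\mathcal Z)$ to be well-defined on $\L^2(\Xb)^{\circ n}$) is inherited from that of $f_n$ by the symmetric way in which the products of $\mathfrak m_{k^{i_j}k^{p_j}}^{-1}$ enter the formula, and one may alternatively symmetrize at the end if preferred.
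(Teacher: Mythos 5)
Your proposal is correct and follows essentially the same route as the paper: expand $\Fm$ in the orthogonal $\mathcal R$-chaos, substitute the triangular change of basis $\D\Rm_{(t,k^j)}=\sum_{p\pp j}\mathfrak m_{k^jk^p}^{-1}\D\Zm_{(t,k^p)}$ coming from \eqref{family_R_eq}--\eqref{matrix_basis_eq}, distribute the products and swap the (finite) summations over mark indices to read off $g_n$. The only cosmetic difference is that the paper first restricts to cylindrical functionals and then invokes density of $\cyl$ in $\L^2(\P)$, a reduction you leave implicit but which causes no difficulty here.
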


\subsection{Doléans exponentials}
Define for any $h\in\L^2(\Xb)$ the exponential vector by
\begin{equation}\label{Doleans_def_eq}
\xi(h)=\esp{\xi(h)}+\sum_{n\in\N}\frac{1}{n!}\Jt_n(h^{\otimes n}).
\end{equation}
\noindent
The family $(\xi_t(h))_{t\in\N}$ defined by $\xi_t(h)=\xi(h\car_{\id{1}{t}})$ can be viewed as a discrete Doléans exponential solution of the equation in differences
\begin{equation*}
\xi_t(h)-\xi_{t-1}(h)=\xi_{t-1}(h)\sum_{k\in\E}g(t,k)\D\Zm_{(t,k)}, \; t\in\N,
\end{equation*}
where $g\in\L^2(\Xb)$ is given in the following theorem.
\begin{proposition}\label{Doleans_prop}
For any $h\in\L^2(\Xb)$, the discrete Doléans exponential defined by \eqref{Doleans_def_eq} can be written as
\begin{equation}\label{Doleans_eq}
\xi(h)
=\esp{\xi(h)}\prod_{t\in\N}\Big(1+\sum_{k\in\E}g(t,k)\big(\car_{(t,k)}-\lambda\Q(\{k\})\big)\Big),
\end{equation}
where $g$ is the element of $\L^2(\Xb)$ such that
$\Jt_1(g\,;\mathcal Z)=\Jt_1(h).$
\end{proposition}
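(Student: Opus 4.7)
The plan is to prove \eqref{Doleans_eq} by induction on a truncation $T\in\N$, matching a finite partial product with the corresponding truncation of the chaotic series \eqref{Doleans_def_eq}, and then passing to the $\L^2$-limit.

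I would first introduce
\[
\xi_T(h):=\esp{\xi(h)}+\sum_{n=1}^{T}\frac{1}{n!}\Jt_n(h^{\otimes n}\car_{\id{1}{T}^n};\mathcal R),
\]
which is a finite sum thanks to the strictly-ordered representation \eqref{Jn_mult_int_eq}. Separating in $\id{1}{T+1}^{n,<}$ the strictly ordered tuples that end at time $T+1$ from those already contained in $\id{1}{T}^{n,<}$, and noting that the former are exactly ordered tuples of length $n-1$ inside $\id{1}{T}$ with the time $T+1$ appended at the top, the symmetrization and telescoping built into \eqref{Jn_mult_int_eq} yield the one-step identity
\[
\xi_{T+1}(h)-\xi_T(h)=\xi_T(h)\sum_{k\in\E} h(T+1,k)\D\Rm_{(T+1,k)}.
\]

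Next, the hypothesis $\Jt_1(g;\mathcal Z)=\Jt_1(h;\mathcal R)$ combined with the time-local nature of the triangular change of basis \eqref{matrix_basis_eq} --- each $\D\Rm_{(t,\cdot)}$ is a linear combination only of the $\D\Zm_{(t,\cdot)}$ at the \emph{same} time $t$, and distinct time blocks are stochastically independent --- forces the slotwise identity $\sum_{k\in\E} h(t,k)\D\Rm_{(t,k)}=\sum_{k\in\E} g(t,k)\D\Zm_{(t,k)}$ for every $t\in\N$. Substituted in the recursion this gives
\[
\xi_{T+1}(h)=\xi_T(h)\Big(1+\sum_{k\in\E} g(T+1,k)\D\Zm_{(T+1,k)}\Big),
\]
and induction from the initial condition $\xi_0(h)=\esp{\xi(h)}$ produces the finite product representation for every $T\in\N$.

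Finally, I would pass to the limit $T\to\infty$: on the series side, the isometry \eqref{isometry_eq} applied chaos by chaos yields $\L^2$-convergence $\xi_T(h)\to\xi(h)$, while on the product side, the factors are independent with mean one, which makes the partial products a martingale. The main obstacle is the $\L^2$-control of the infinite product: this requires exploiting independence across time blocks to bound the variance of the tail product uniformly in $T$ by the tail of $\sum_n \frac{1}{n!}\|h^{\otimes n}\|_{\L^2(\Xb,\tilde\nu)^{\circ n}}^2$, after which the uniqueness of $\L^2$-limits identifies the two expressions and yields \eqref{Doleans_eq}.
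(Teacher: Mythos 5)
Your proposal is correct and follows essentially the same route as the paper: both arguments establish the one-step difference equation $\xi_{t}(h)-\xi_{t-1}(h)=\xi_{t-1}(h)\sum_{k\in\E}h(t,k)\D\Rm_{(t,k)}$ for the time-truncated exponential and identify its solution with the partial product, the only cosmetic difference being that you obtain the recursion by splitting the ordered sums in \eqref{Jn_mult_int_eq} directly, whereas the paper routes the computation through Lemma \ref{Int_tensor_lem} with an auxiliary truncation in the chaos order. Your closing $\L^2$-martingale control of the tail product is in fact slightly more explicit than the paper's "provided the series and product converge".
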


\section{Stochastic analysis for marked binomial processes, part II}\label{Gamma-Malliavin structure for marked binomial processes}

The section  is organised as follows; the first subsection is dedicated to the development of a $\L^1$-theory for binomial marked processes which starting point is a Mecke-type formula. In the just following part, are provided some elements of Malliavin calculus whereas in the third subsection, the tools of $\L^1$ and $\L^2$ theories are gathered to formalize a unified Markov-Malliavin structure.

\subsection{$\L^1$-theory:  the Mecke and Mehler's formulas}
\subsubsection{The Mecke formula and difference operators on $\L^1$}

\noindent
The following Lemma is the analogue of the Mecke formula for marked binomial processes. 
\begin{lemma}\label{Mecke_formula_th}
Let $\eta$ be a marked binomial process on $\Xb$ with intensity measure $\nu$. Then for any real-valued, non-negative, $\Xb\times\mathfrak N_\Xb$-measurable function $\uk$,
\begin{equation}\label{Mecke_formula_eq}
\mathbf E\Bigg[\sum_{(t,k)\in\eta}\uk(\eta,(t,k))\Bigg]=\esp{\int_{\Xb}\uk\big(\pi_{t}(\eta)+\delta_{(t,k)},(t,k)\big)\d\nu(t,k)}.
\end{equation}
where the application $\pi_t\,:\,\mathfrak N_\Xb\rightarrow\mathfrak N_\Xb$ is the restriction of $\eta$ to $\g{t}:=\sigma\big\{\sum_{(s,k)}\eta(s,k), s\neq t, k\in\E\big\}$ i.e.
\begin{equation}
\pi_t(\eta)=\displaystyle\sum_{s\neq t}\sum_{k\in\E}\,\eta(s,k).
\end{equation}
\end{lemma}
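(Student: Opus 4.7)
My plan is to exploit the ``time-slot'' representation of $\eta$: since $\Nm$ is a Bernoulli($\lambda$) process and the marks $\V_s$ are i.i.d.\ with law $\Q$ independently of $\Nm$, the sequence $\big((\Delta\Nm_t, \Wm_t)\big)_{t\in\N}$ is itself i.i.d., each slot taking the value $(0,0)$ with probability $1-\lambda$ and $(1,k)$ with probability $\lambda\Q(\{k\})$ for $k\in\E$. In particular, $(\Delta\Nm_t, \Wm_t)$ is independent of $\pi_t(\eta)$, which is a measurable function of the slots at times $s\neq t$.

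Granting this, I would first rewrite the left-hand side of \eqref{Mecke_formula_eq} as a double sum over $(t,k)\in\Xb$ by using the simplicity of $\eta$:
\begin{equation*}
\sum_{(t,k)\in\eta}\uk(\eta,(t,k)) \; = \; \sum_{t\in\N}\sum_{k\in\E}\car_{\{(\Delta\Nm_t, \Wm_t) = (1,k)\}}\,\uk(\eta,(t,k)).
\end{equation*}
On the event $\{(\Delta\Nm_t, \Wm_t) = (1,k)\}$ the configuration splits as $\eta = \pi_t(\eta) + \delta_{(t,k)}$, so the summand coincides with $\car_{\{(\Delta\Nm_t, \Wm_t) = (1,k)\}}\,\uk(\pi_t(\eta)+\delta_{(t,k)},(t,k))$. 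Taking expectations (swapping sum and expectation by Tonelli, since $\uk\pg 0$) and using the independence above, each term factorises as
\begin{equation*}
\P\big((\Delta\Nm_t, \Wm_t) = (1,k)\big)\,\esp{\uk(\pi_t(\eta)+\delta_{(t,k)},(t,k))} \; = \;\lambda\Q(\{k\})\,\esp{\uk(\pi_t(\eta)+\delta_{(t,k)},(t,k))}.
\end{equation*}
Summing over $(t,k)\in\Xb$ and recognising that $\nu(\{(t,k)\}) = \lambda\Q(\{k\})$ yields the right-hand side of \eqref{Mecke_formula_eq}.

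The only step of real substance is the independence of $(\Delta\Nm_t, \Wm_t)$ from $\pi_t(\eta)$. The cleanest route is to argue at the level of the i.i.d.\ time slots, as above, rather than from the inter-arrival representation \eqref{Proper_process_bin_def_eq} in which $\Wm_t$ appears as $\V_{\Nm_t}$ indexed by a random integer --- a form in which the required independence is less transparent. Once the formula holds for $\uk\pg 0$, the extension to general integrable $\uk$ is the usual decomposition $\uk = \uk^+ - \uk^-$.
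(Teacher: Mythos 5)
Your proof is correct, and it takes a genuinely different route from the paper's. The paper passes through the mixed binomial representation $\varpi=\sum_{j=1}^{\mathrm K}\delta_{\V_j}$: it conditions on the total number of points $\mathrm K=n$ and invokes the Georgii--Nguyen--Zessin formula for binomial processes (i.e.\ the exchangeability of the i.i.d.\ sample points) before re-indexing the sum by $(t,k)$. You instead exploit the product structure over time slots: the pairs $(\Delta\Nm_t,\Wm_t)$ are i.i.d., the value $(1,k)$ occurring with probability $\lambda\Q(\{k\})=\nu(\{(t,k)\})$, and the slot at time $t$ is independent of $\pi_t(\eta)$, which is a function of the other slots. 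Your factorisation is then elementary, needs no external citation, and applies directly on all of $\Xb=\N\times\E$ by Tonelli, whereas the paper's proof first restricts to a finite horizon $\Xb_T$. The one point worth spelling out is the claim that the slots are i.i.d.\ with the stated law: in the representation \eqref{Proper_process_bin_def_eq} the mark sitting at a jump time $t$ is $\V_{\Nm_t}$, so one should note that, conditionally on the jump process $\Nm$, the indices $\Nm_t$ ranging over the jump times are pairwise distinct, whence the marks attached to the jump times are conditionally i.i.d.\ with law $\Q$ independent of the jump pattern; this yields exactly the slot law you describe. You flag this yourself and the verification is a one-liner, so the proof is complete. The trade-off is that the paper's route generalises to point processes lacking a time-slot product structure, while yours is tailored to --- and cleaner for --- the marked binomial case.
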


\noindent
\begin{remark}
Clearly, the formula \eqref{Mecke_formula_eq} continues to hold provided the process $u$ of representative $\uk$ belongs to $\L^1(\P\otimes\nu)$. Furthermore, we can state
\begin{equation}\label{Mecke_formula_eq2}
\mathbf E\bigg[\sum_{(t,k)\in\eta}\uk\big(\eta-\delta_{(t,k)},(t,k)\big)\bigg]=\esp{\int_{\Xb_T}\uk\big(\pi_t(\eta),(t,k)\big)\d\nu(t,k)}.
\end{equation} 
\end{remark}
\noindent
The applications  defined on $\mathfrak N_\Xb\times \Xb$, and expressed for any $\big(\eta,(t,k)\big)\in\mathfrak N_\Xb\times \Xb$ by
\begin{equation}\label{jumpapplicationseq}
\eta\mapsto\pi_{t}(\eta)+ \delta_{(t,k)} \quad\text{and}\quad \eta\mapsto\pi_{t}(\eta),
\end{equation}
can be interpreted as the applications  acting on $\eta$ respectively by forcing a jump of height $k$ at time $t$ or forbidding any jump at time $t$. 
\noindent
As a  reminiscence of Poisson space theory, define $\Dm^+$ the \textit{add-one cost operator} for any $\Fm\in\mathcal L^0(\O)$ by
\begin{equation}\label{Def_D+_eq}
\Dm_{(t,k)}^+\Fm:=\mathfrak f(\pi_{t}(\eta)+\delta_{(t,k)} )-\mathfrak f(\pi_{t}(\eta)).
\end{equation}
The difference operator $\Dm^+$ measures the effect of adding a point $(t,k)\in\Xb$ to $\eta$ compared to the process truncated at time $t$. 
The product formula can be easily deduced from this expression of $\Dm$ and is strongly reminiscent to that existing in the Poisson setting (see for instance Privault \cite{Privault_stochastica}, Proposition 6.4.8).
For $\Fm,\Gm\in\L^1(\P)$ of respective representatives $\fk$ and $\gk$,
\begin{equation}\label{Product_D_formula_eq}
\Dm_{(t,k)}^+(\Fm\Gm)=\fk(\pi_{t}(\eta))(\Dm_{(t,k)}^+\Gm)+\gk(\pi_{t}(\eta))(\Dm_{(t,k)}^+\Fm)+(\Dm_{(t,k)}^+\Fm)(\Dm_{(t,k)}^+\Gm).
\end{equation}
\begin{remark}
By definition, given $k\in\E$, for any $t\in\N$ the random variables $\pi_{t}(\eta)+\delta_{(t,k)}$ and $\pi_{t}(\eta)$ are $\g{t}$-measurable; so does $\Dm_{(t,k)}^+\Fm$. 
\end{remark}

\noindent
In a similar way the operator $\Dm^{-}$ is defined for any $\Fm\in\mathcal L^{0}(\O)$ by
\begin{equation}\label{D^-def}
\Dm_{(t,k)}^-\Fm:=\fk(\eta)-\fk(\eta-\delta_{(t,k)}),
\end{equation}  
if $(t,k)\in\eta$, and is equal to zero otherwise.
The operator $\Dm^-$ may be interpreted as a \textit{remove-one cost operator}: if the point $(t,k)$ was charged by $\eta$, this is removed by the action of $\Dm_{(t,k)}^-$. 
The operator $\Dm^-$ satisfies the product formula: for $\Fm,\Gm\in\mathcal L^0(\O)$,
\begin{equation}\label{productformulaD^-eq}
\Dm_{(t,k)}^-(\Fm\Gm)=\Fm(\Dm_{(t,k)}^-\Gm)+\Gm(\Dm_{(t,k)}^-\Fm)-(\Dm_{(t,k)}^-\Fm)(\Dm_{(t,k)}^-\Gm).
\end{equation} 
\noindent 
Define on $\L^1(\P\otimes\nu)$ the operator $\tilde\delta$ such that for any process $u\in\L^1(\P\otimes\nu)$ of representative $\uk$,
\begin{equation}\label{Deltatilde_def_eq}
\tilde\delta(u):=\sum_{(t,k)\in\eta}\uk(\eta,(t,k))-\int_\Xb\uk(\eta,(t,k))\d\nu(t,k).
\end{equation}
As $\pi_t(\eta)+\delta_{(t,k)}=\eta$  if $(t,k)\in\eta$, we can additionally introduce the operator $\widetilde\L$ on $\mathcal L^0(\O)$ such that
\begin{multline}\label{DeltaDF_eq}
\widetilde \L\Fm:=-\tilde\delta(\Dm^+\Fm)=-\sum_{(t,k)\in\eta}\big[\fk(\pi_t(\eta)+\delta_{(t,k)})-\fk( \eta)\big]+\int_\Xb\big[\fk(\pi_t(\eta)+\delta_{(t,k)})-\fk(\pi_t(\eta))\big]\d\nu(t,k)\\
=-\sum_{(t,k)\in\eta}\big[\fk(\eta)-\fk(\eta-\delta_{(t,k)})\big]+\int_\Xb\big[\Dm_{(t,k)}^+\Fm\big]\,\d\nu(t,k)\\=\sum_{(t,k)\in\eta}\big[\Dm_{(t,k)}^-\Fm\big]+\int_\Xb\big[\Dm_{(t,k)}^+\Fm\big]\,\d\nu(t,k),
\end{multline}
for any $\Fm\in\mathcal L^0(\O)$. The Mecke equation \eqref{Mecke_formula_eq} ensures that this definition does not depend $\P$-a.s. on the choice of the representative. We get the following "almost"-$\L^1$-integration by parts formula.
\begin{proposition}\label{IPP_L1_prop}
For any predictable process $u\in\mathcal L^0(\O\times\N)$ and $\Fm\in\mathcal L^0(\O)$,
\begin{equation*}
\esp{\int_\Xb\Dm^+\Fm\,\uk(\eta,(t,k))\,\d\nu(t,k)}=\mathbf E[\Fm\widetilde\delta(u)]+\esp{\int_\Xb\overline\Dm_{t}\Fm\,\uk(\eta,(t,k))\,\d\nu(t,k)}.
\end{equation*}
where the operator $\overline{\Dm}$ is defined on $\mathcal L^0(\O)$ by
\begin{equation*}
\overline{\Dm}_t(\Fm)=\fk(\eta)-\fk(\pi_t(\eta)) \; ; \; t\in\N.
\end{equation*}
\end{proposition}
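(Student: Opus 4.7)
The strategy is to run the computation in reverse: start from the middle term $\mathbf E[\Fm\widetilde\delta(u)]$, expand $\widetilde\delta$ via its definition \eqref{Deltatilde_def_eq}, and apply the Mecke-type identity of Lemma \ref{Mecke_formula_th} to the sum over atoms of $\eta$. This should naturally produce on one side a $\nu$-integral of $\Dm^+\Fm$ and on the other the correction involving $\overline{\Dm}_t\Fm$.

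Concretely, I first write
$$\mathbf E[\Fm\widetilde\delta(u)] = \mathbf E\Bigl[\sum_{(t,k)\in\eta}\fk(\eta)\uk(\eta,(t,k))\Bigr] - \mathbf E\Bigl[\int_{\Xb}\fk(\eta)\uk(\eta,(t,k))\,\d\nu(t,k)\Bigr].$$
Applied to the integrand $(\eta,(t,k))\mapsto \fk(\eta)\uk(\eta,(t,k))$, the Mecke formula \eqref{Mecke_formula_eq} transforms the first sum into an expectation of $\int_{\Xb}\fk(\pi_t(\eta)+\delta_{(t,k)})\,\uk(\pi_t(\eta)+\delta_{(t,k)},(t,k))\,\d\nu(t,k)$. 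Predictability of $u$ is the critical input at this stage: since $\uk(\cdot,(t,k))$ is $\f{t-1}$-measurable hence $\g{t}$-measurable, one has $\uk(\pi_t(\eta)+\delta_{(t,k)},(t,k))=\uk(\pi_t(\eta),(t,k))=\uk(\eta,(t,k))$, so the $\uk$ factor is unaffected by the shift. Combining with the subtracted integral term, I obtain
$$\mathbf E[\Fm\widetilde\delta(u)] = \mathbf E\Bigl[\int_{\Xb}\bigl(\fk(\pi_t(\eta)+\delta_{(t,k)}) - \fk(\eta)\bigr)\uk(\eta,(t,k))\,\d\nu(t,k)\Bigr].$$

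To conclude, I use the elementary algebraic decomposition
$$\fk(\pi_t(\eta)+\delta_{(t,k)}) - \fk(\eta) = \bigl[\fk(\pi_t(\eta)+\delta_{(t,k)}) - \fk(\pi_t(\eta))\bigr] - \bigl[\fk(\eta) - \fk(\pi_t(\eta))\bigr] = \Dm_{(t,k)}^+\Fm - \overline{\Dm}_t\Fm,$$
which comes directly from \eqref{Def_D+_eq} and the definition of $\overline{\Dm}_t$. Substituting in the previous display and rearranging yields the announced identity. The only point that needs care is the signed character of the integrand when invoking Lemma \ref{Mecke_formula_th} (stated for non-negative $\uk$): I expect to handle this by splitting $\fk\,\uk$ into positive and negative parts, under the implicit integrability hypotheses required for both sides of the claimed formula to be finite; this is routine and is the only genuinely technical step of the argument.
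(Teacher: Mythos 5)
Your proof is correct and is essentially the paper's own argument read in the opposite direction: the paper starts from $\mathbf E\big[\int_\Xb(\Dm_{(t,k)}^+\Fm-\overline{\Dm}_t\Fm)\,\uk(\eta,(t,k))\,\d\nu(t,k)\big]$ and arrives at $\mathbf E[\Fm\widetilde\delta(u)]$ via the same three ingredients you use — the identity $\Dm_{(t,k)}^+\Fm-\overline{\Dm}_t\Fm=\fk(\pi_t(\eta)+\delta_{(t,k)})-\fk(\eta)$, the Mecke formula, and predictability of $u$ to replace $\uk(\pi_t(\eta)+\delta_{(t,k)},(t,k))$ by $\uk(\eta,(t,k))$. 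Your closing remark on extending Lemma \ref{Mecke_formula_th} to signed integrands is the content of the remark immediately following that lemma in the paper, so nothing further is needed.
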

\begin{remark}
This latter can be rewritten as 
\begin{equation}\label{IPP_tildeD_eq}
\mathbf E\big[\textless\widetilde \Dm\Fm,u\textgreater_{\L^2(\Xb,\nu)}\big]=\mathbf E\big[\Fm\widetilde\delta(u)\big],
\end{equation}
where $\widetilde \Dm_{(t,k)}\Fm=\Dm_{(t,k)}^+\Fm-\overline{\Dm}_t\Fm=\fk(\pi_t(\eta)+\delta_{(t,k)})-\fk(\eta)$.
The operator $\widetilde\Dm$ is the exact discrete analogue of the usual gradient on Poisson space. In that latter case, provided the intensity measure of the Poisson point process is diffuse, $\Dm^+$ and $\widetilde\Dm$ are equal $\P\otimes\nu$ almost surely. That does not hold here, and we justify our choice to define the add-one-cost operator by $\Dm^+$ and not via $\widetilde \Dm$ in the perspective to combine $\L^1$ and $\L^2$ later on through \eqref{Grad_difference_eq}. This remark is crucial to understand why Gamma calculus can not perform within our framework. Indeed, let us introduce the operator $\widetilde\Gamma$  defined for any random variables $\Fm,\Gm\in\mathcal L^0(\O)$ such that
$(\Dm^+\Fm)(\Dm^+\Gm)\in\L^1(\P\otimes\nu)$, 
by $\widetilde\Gamma(\Fm,\Gm)=1/2\big[\widetilde \L(\Fm\Gm)-\Fm(\widetilde \L\Gm)-\Gm(\widetilde \L\Fm)\big]$. By combining the product rules \eqref{Product_D_formula_eq} and \eqref{productformulaD^-eq}, we obtain 
\begin{align}
\widetilde\Gamma(\Fm,\Gm)
&=\frac{1}{2}\bigg[\int_{\Xb}(\Dm_{(t,k)}^+\Fm)(\Dm_{(t,k)}^+\Gm)\,\d\nu(t,k)+\int_{\Xb}(\Dm_{(t,k)}^-\Fm)(\Dm_{(t,k)}^-\Gm)\,\d\eta(t,k)\nonumber\\
&\qquad-\int_{\Xb}(\Dm_{(t,k)}^+\Fm)(\overline{\Dm}_t\Gm)\,\d\nu(t,k)-\int_{\Xb}(\Dm_{(t,k)}^+\Gm)(\overline{\Dm}_t\Fm)\,\d\nu(t,k)\bigg],\label{Gamma_0_def}
\end{align}	
whereas, as a consequence of the Mecke formula we can prove that for any $\Fm,\Gm\in\mathcal L^0(\O)$ of respective representatives $\fk$ and $\gk$ such that
$\Fm\gk(\pi_t(\eta)+\delta_{\cdot}),\,\fk(\pi_t(\eta)+\delta_{\cdot})\Gm,\,\fk(\pi_t(\eta)+\delta_{\cdot})\gk(\pi_t(\eta)+\delta_{\cdot})\in\L^1(\P\otimes\nu)$, we have
\begin{multline*}\label{L1_IPP_eq}
-\mathbf E\big[\widetilde\Gamma(\Fm,\Gm)\big]
=\frac{1}{2}\Big[\mathbf E[\Fm(\widetilde \L\Gm)]+\mathbf E[\Gm(\widetilde \L\Fm)]\Big]
\\=\frac{1}{2}\esp{\left(\int_{\Xb}(\Dm_{(t,k)}^+\Gm)(\overline\Dm_{t}\Fm)\d\nu(t,k)+\int_{\Xb}(\Dm_{(t,k)}^+\Fm)(\overline\Dm_{t}\Gm)\d\nu(t,k)\right)},
\end{multline*}
from which it is not possible to draw an $\L^1$-integration by parts formula since $\mathbf E[\Fm(\widetilde \L\Gm)]\neq\mathbf E[\Gm(\widetilde \L\Fm)]$. As a result, the possibility to combine $\L^1$ and $\L^2$ theories in regards of the carré du champ operator $\Gamma$ seems compromised; their connection can at best come at the level of the operator $\L$.
\end{remark}

\subsection{$\L^2$-theory: Malliavin operators}

\noindent
From the chaotic decomposition that equips the space $\L^2(\P)$, we define the Malliavin operators, \textit{gradient}, \textit{divergence}, \textit{number operator}, and the \textit{Ornstein-Uhlenbeck semi-goup}. 

\subsubsection{Gradient}

\noindent
As one way to develop it, we  introduce the Malliavin derivative as the \textit{annihilation operator} acting on the space $\L^2(\P)$ seen in terms of its chaotic expansion \eqref{Chaos_L2_prop}.
\begin{definition}
Let $\DD_0$ be the set of random variables $\Fm\in\L^2(\P)$ whose decomposition \eqref{Chaos_R_eq} satisfies 
\begin{equation}\label{Dom_D_condition_chaos_eq}
\sum_{n=1}^{\infty}nn!\|f_n\|_{\L^2(\Xb)^{\otimes n}}^2<\infty.
\end{equation}
Let the linear, unbounded, closable operator $\Dm\,:\, \DD_0\rightarrow \L^2(\P\otimes\nu)$ be defined for any element $\Jt_n(f_n)$ of $\mathcal H_n$ by
\begin{equation}\label{Gradient_def_eq}
\Dm_{(t,k)}\Jt_n(f_n)=n\,\Jt_{n-1}\big(f_n(\star,(t,k))\car_{\id{1}{n-1}^{n,<}}\big).
\end{equation}
\end{definition}

%
%

\subsubsection{Divergence}

\noindent
Let $\mathcal U$ be the space
\begin{equation}\label{cylindrical_2_eq}
\mathcal U=\bigg\{\sum_{n\in\id{0}{T}}\Jt_n(f_{n+1}(\star,\cdot))\,;\, f_{n+1}\in\L^2(\Xb)^{\circ n}\otimes\L^2(\Xb),\, n\in\id{0}{T},\, T\in\N\bigg\}.
\end{equation}
The operator divergence is introduced as the \textit{creation operator} acting on $\L^2(\P)$, that can be, thanks to Theorem \ref{Chaos_R_th}, understood as a Fock space.

\begin{definition}\label{Divergence_creation_def}
Let the linear, unbounded, closable operator $\delta\,:\, \dom\delta\rightarrow \L^2(\P)$ whose domain $\dom\delta$ (that will be described later) contains the set of processes which expansion is of the form $\sum_{n\in\Z_+}\Jt_n(f_n(\star,\cdot))$ and satisfies
\begin{equation*}
\sum_{n\in\Z_+}(n+1)!\|\bar{f}_{n+1}\|_{\L^2(\Xb)^{n+1}}<\infty,
\end{equation*}
and that is defined for any element $\Jt_n(f_{n+1}(\star,\cdot))$ of $\mathcal U$ by
\begin{equation}\label{Divergence_creation_eq}
\delta\big(\Jt_n(f_{n+1}(\star,\cdot))\big):=\Jt_{n+1}(\bar f_{n+1}),
\end{equation}
where
\begin{equation*}
\bar f_{n+1}=\frac{1}{n+1}\sum_{i=1}^{n+1}f_{n+1}\big((t_1,k_1),\cdots,(t_{i-1},k_{i-1}),(t_{i+1},k_{i+1}),\cdots,(t_{n+1},k_{n+1}),(t_i,k_i)\big).
\end{equation*}
\end{definition}
\noindent 
In the setting of classical Malliavin calculus, the divergence of adapted processes coincides with the Itô integral. We get the analogue in our context, where the role of the Itô integral is played by the $\mathcal R$-integral. Indeed, the equality $\delta(\car_\A)=\eta(\A)-\nu(\A)$ holds  for any $\A\in\mathcal X$ and leads for any $u\in\mathcal U$ to
\begin{equation}\label{delta_u_adapted_eq}
\delta (u)=\Jt_1(u)=\sum_{(t,k)\in\Xb}\uk(\eta,(t,k))\D\Rm_{(t,k)}.
\end{equation}
This property holds for any $\P\otimes\nu$-square integrable process $u$. Let $u=\Jt_{n-1}(f_{n}(\star,\cdot))$ for some $f_n\in\L^2(\Xb)^{n}$; the adaptedness of $u$ implies that $f_n(\star,(t,k))=g_n(\star,(t,k))\car_{\id{1}{t-1}^n}$ for some $g_n\in\L^2(\Xb)^{n}$. The result follows by writing 
\begin{equation*}
\delta(u)=\Jt_n(\bar f_{n+1})=n\sum_{(t,k)\in\Xb}\Jt_{n+1}\big(\bar g_n(\star,(t,k))\car_{\id{1}{t-1}^n}\big)\,\D\Rm_{(t,k)}=\sum_{(t,k)\in\Xb}\uk(\eta,(t,k))\D\Rm_{(t,k)}.
\end{equation*}
\noindent 
To state the property of closability of the gradient we need an integration by parts formula, appearing as a duality relation between $\Dm$ and $\delta$. Here is its version restricted to cylindrical functionals and simple processes.
\begin{proposition}[Integration by parts formula on $\mathcal S\times\mathcal U$]\label{IPP_S_prop}
For any $(\Fm,u)\in\cyl\times\mathcal U$,
\begin{equation}\label{IPP_S_eq}
\esp{\Fm\delta u}=\esp{\textless\Dm\Fm, u\textgreater_{\L^2(\Xb,\tilde\nu)}}.
\end{equation}
\end{proposition}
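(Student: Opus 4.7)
The plan is to exploit bilinearity to reduce \eqref{IPP_S_eq} to a single pair of multiple stochastic integrals. Since the cylindrical functionals $\cyl$ are spanned by $\{\Jt_n(f_n)\,:\,n\in\Z_+,\,f_n\in\L^2(\Xb)^{\circ n}\}$ and the set $\mathcal U$ in \eqref{cylindrical_2_eq} is spanned by elements of the form $\Jt_m(g_{m+1}(\star,\cdot))$ with $g_{m+1}\in\L^2(\Xb)^{\circ m}\otimes \L^2(\Xb)$, it suffices to check the identity for $\Fm=\Jt_n(f_n)$ and $u=\Jt_m(g_{m+1}(\star,\cdot))$, and both sides of \eqref{IPP_S_eq} must evaluate to the same number.

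For the left-hand side, Definition \ref{Divergence_creation_def} gives $\delta u=\Jt_{m+1}(\bar g_{m+1})$, and the isometry \eqref{isometry_eq} yields
\[
\esp{\Fm\,\delta u}=\car_{\{n\}}(m+1)\,n!\,\textless f_n,\bar g_n\textgreater_{\L^2(\Xb,\tilde\nu)^{\circ n}}.
\]
The full symmetry of $f_n$ in its $n$ variables allows me to replace $\bar g_n$ by $g_n$ inside the inner product. Hence the LHS vanishes unless $n=m+1$, in which case it equals $n!\,\textless f_n,g_n\textgreater_{\L^2(\Xb,\tilde\nu)^{\circ n}}$.

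For the right-hand side, the gradient formula \eqref{Gradient_def_eq} yields
$\Dm_{(t,k)}\Fm=n\,\Jt_{n-1}\big(f_n(\star,(t,k))\,\car_{\id{1}{n-1}^{n,<}}\big)$. Unfolding the inner product and swapping expectation and sum (justified because only finitely many $\L^2$-kernels are involved) gives
\[
\esp{\textless \Dm\Fm,u\textgreater_{\L^2(\Xb,\tilde\nu)}}=n\int_\Xb \esp{\Jt_{n-1}\big(f_n(\star,(t,k))\big)\,\Jt_m\big(g_{m+1}(\star,(t,k))\big)}\,\d\tilde\nu(t,k).
\]
A pointwise application of \eqref{isometry_eq} inside the integral shows that this expression vanishes unless $m=n-1$, in which case Fubini collapses it to $n!\,\textless f_n,g_n\textgreater_{\L^2(\Xb,\tilde\nu)^{\circ n}}$, matching the LHS.

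The main subtlety is bookkeeping around the two symmetrization conventions: the kernel $g_{m+1}$ is only partially symmetric (in its first $m$ arguments), while $\bar g_{m+1}$ is its full symmetrization that appears in the definition of $\delta$. The agreement of both sides then rests on the principle that pairing a fully symmetric kernel $f_n$ with either $g_n$ or $\bar g_n$ produces the same number in $\L^2(\Xb,\tilde\nu)^{\circ n}$. This is precisely what makes $\delta$, restricted to adapted processes, extend the $\mathcal R$-stochastic integral, and act as the formal adjoint of $\Dm$ with respect to the weighted measure $\tilde\nu$.
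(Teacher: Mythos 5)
Your proof is correct and follows essentially the same route as the paper's: reduce by bilinearity to $\Fm=\Jt_n(f_n)$ and $u=\Jt_m(g_{m+1}(\star,\cdot))$, apply the gradient formula \eqref{Gradient_def_eq} and the isometry \eqref{isometry_eq}, and use the symmetry of $f_n$ to identify the pairing with $g_{m+1}$ and with its symmetrization $\bar g_{m+1}$ appearing in the definition of $\delta$. The only difference is presentational (you evaluate the two sides separately rather than in a single chain of equalities), so there is nothing further to add.
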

\begin{corollary}[Closability]\label{Closability_corollary}
The operator $\Dm$ is closable from $\L^2(\P)$ to $\L^2(\P\otimes\nu)$.
\end{corollary}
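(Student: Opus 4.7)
The plan is to deduce closability of $\Dm$ from the existence of a densely defined formal adjoint, namely $\delta$, via the duality relation \eqref{IPP_S_eq}. This is the standard recipe for showing that an unbounded operator with a densely defined adjoint is closable.

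First I would observe that $\cyl$ is dense in $\L^2(\P)$ (by Theorem \ref{Chaos_R_th}) and that the set $\mathcal U$ of simple processes is dense in $\L^2(\P\otimes\tilde\nu)$, hence also in $\L^2(\P\otimes\nu)$ since, thanks to Assumption \ref{assumptionR}, the two measures $\nu$ and $\tilde\nu$ are equivalent on $\Xb$ (the Radon–Nikodym density being $(t,k)\mapsto\kappa_k>0$). Consequently, convergence in $\L^2(\P\otimes\nu)$ and $\L^2(\P\otimes\tilde\nu)$ can be interchanged throughout the argument, at least for the purposes of testing against elements of $\mathcal U$, since we shall only pair an element of $\L^2(\P\otimes\nu)$ with a bounded, compactly-supported process $u\in\mathcal U$, for which $\langle\,\cdot\,,u\rangle_{\L^2(\Xb,\tilde\nu)}$ is a continuous linear functional on both spaces.

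The core step is the standard closability argument. Let $(\Fm_n)_{n\in\N}$ be a sequence in $\cyl$ such that $\Fm_n\to 0$ in $\L^2(\P)$ and $\Dm\Fm_n\to U$ in $\L^2(\P\otimes\nu)$ for some $U\in\L^2(\P\otimes\nu)$. I must show that $U=0$, $\P\otimes\nu$-almost everywhere. For any $u\in\mathcal U$, the integration by parts formula of Proposition \ref{IPP_S_prop} yields
\begin{equation*}
\esp{\textless \Dm\Fm_n,u\textgreater_{\L^2(\Xb,\tilde\nu)}}=\esp{\Fm_n\,\delta u}.
\end{equation*}
By \eqref{delta_u_adapted_eq} together with the isometry \eqref{isometry_proc_eq}, $\delta u\in\L^2(\P)$, so the right-hand side tends to $0$ as $n\to\infty$ by Cauchy–Schwarz. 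The left-hand side, again by Cauchy–Schwarz, converges to $\esp{\textless U,u\textgreater_{\L^2(\Xb,\tilde\nu)}}$. Therefore
\begin{equation*}
\esp{\textless U,u\textgreater_{\L^2(\Xb,\tilde\nu)}}=0 \qquad \text{for every } u\in\mathcal U.
\end{equation*}

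By the density of $\mathcal U$ in $\L^2(\P\otimes\tilde\nu)$, the linear functional on the right vanishes on a dense subset, so $U=0$ in $\L^2(\P\otimes\tilde\nu)$, hence also in $\L^2(\P\otimes\nu)$ by equivalence of the two measures. This proves the closability of $\Dm$. The only subtle point, and what I would expect to be the main obstacle, is the consistent bookkeeping between the reference measure $\nu$ (in which the target space of $\Dm$ is stated) and the weighted measure $\tilde\nu$ (which appears naturally in the duality pairing \eqref{IPP_S_eq}); this is resolved by the positivity of the weights $\kappa_k$ in Assumption \ref{assumptionR}, which makes $\nu$ and $\tilde\nu$ mutually equivalent and the corresponding $\L^2$-spaces isomorphic as Hilbert spaces.
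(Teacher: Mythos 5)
Your proposal is correct and follows essentially the same route as the paper: the standard closability argument via the duality \eqref{IPP_S_eq} with $\delta$, testing the limit $U$ against simple processes, using that $\delta u\in\L^2(\P)$, and concluding $U=0$ by density of $\mathcal U$, with the passage between $\nu$ and $\tilde\nu$ handled by equivalence of the two norms. No substantive difference from the paper's proof.
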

\noindent
By adjunction the operator $\delta$ is also closable from $\L^2(\P\otimes\nu)$ to $\L^2(\P)$. Thus the domain $\DD$ of $\Dm$ is the closure of $\cyl$ with respect to the norm
\begin{equation*}
\|\Fm\|_{\DD}:=\left(\|\Fm\|_{\L^2(\P)}^2+\|\Dm\Fm\|_{\L^2(\P\otimes\tilde\nu)}^2\right)^{1/2},
\end{equation*}
whereas the domain of $\delta$ is given by
\begin{equation*}
\dom{\delta}=\left\{u\in\L^2(\P\otimes\nu):\exists c>0,\forall\Fm\in\DD,\,|\textless \Dm\Fm,u \textgreater|_{\L^2(\P\otimes\nu)}\pp c\|\Fm\|_{\L^2(\P)}\right\}.
\end{equation*}
The integration by parts formula can be thus extended to the respective domains of $\Dm$ and $\delta$ to get what appears as a generalised commutation property: for any $\Fm\in\DD$, $u\in\dom\delta$,
\begin{equation}\label{IPP_gen_eq}
\esp{\Fm\,\delta u}=\esp{\textless\Dm\Fm, u\textgreater_{\L^2(\Xb,\tilde\nu)}}.
\end{equation}

\subsubsection{The Ornstein-Uhlenbeck structure}

\noindent 
This section is devoted to the construction of an Ornstein-Uhlenbeck structure around the eponymous semi-group $(\Pm_\tk)_{\tk\in\R_+}$ and its generator $\L$. Define the \textit{Ornstein-Uhlenbeck semi-group} by its action on the chaotic decomposition: for any $\Fm\in\L^2(\P)$ decomposed as \eqref{Chaos_R_eq},
\begin{equation*}
\Pm_\tk\Fm=\sum_{n\in\Z_+}e^{-n\tk}\Jt_n(f_n).
\end{equation*}

\begin{proposition}
The domain of the Ornstein-Uhlenbeck operator $\L$  (also called \textit{number operator}) is the set of random variables $\Fm\in\L^2(\P)$ which chaotic decomposition satisfies  \eqref{Dom_D_condition_chaos_eq} (in particular $\dom\L\subset\DD_0$). For any $\Fm\in\dom\L$ of expansion \eqref{Chaos_R_eq},
\begin{equation*}
\L\Fm=-\sum_{n\in\N}n\Jt_n(f_n).
\end{equation*}
It satisfies the remarkable identity: $\Fm\in\dom\L$ if and only if $\Fm\in\DD$ and $\Dm\Fm\in\dom\delta$ and, in this case, $\L\Fm=-\delta\Dm\Fm$. 
\end{proposition}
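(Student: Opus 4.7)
The overall plan is to exploit the chaotic decomposition \eqref{Chaos_L2_prop} as a simultaneous diagonalising basis for $\Pm_\tk$, $\L$, $\Dm$ and $\delta$, and to identify the three descriptions of $\L$ -- semi-group generator, multiplier $-n$ on $\mathcal H_n$, and $-\delta\Dm$ -- on a common domain. As a preliminary I would check that $(\Pm_\tk)_{\tk\in\R_+}$ is a strongly continuous contraction semigroup on $\L^2(\P)$: for $\Fm=\esp{\Fm}+\sum_n\Jt_n(f_n)$, the isometry \eqref{isometry_eq} gives $\|\Pm_\tk\Fm\|_{\L^2(\P)}^2=\esp{\Fm}^2+\sum_n e^{-2n\tk}n!\|f_n\|^2\pp\|\Fm\|_{\L^2(\P)}^2$, the semigroup property is immediate from $e^{-n(\tk_1+\tk_2)}=e^{-n\tk_1}e^{-n\tk_2}$, and strong continuity follows from dominated convergence on the chaotic series.

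Next I would compute the generator from its definition $\L\Fm=\lim_{\tk\downarrow 0}(\Pm_\tk\Fm-\Fm)/\tk$. Writing the difference quotient as $\sum_n\big((e^{-n\tk}-1)/\tk\big)\Jt_n(f_n)$ and using the uniform bound $|(e^{-n\tk}-1)/\tk|\pp n$ together with \eqref{isometry_eq}, dominated convergence shows that the $\L^2(\P)$-limit exists if and only if $\sum_n n^2 n!\|f_n\|^2<\infty$, in which case it equals $-\sum_n n\Jt_n(f_n)$. This yields both the description of $\dom\L$ (the summability $\sum n^2 n!\|f_n\|^2<\infty$ clearly implies \eqref{Dom_D_condition_chaos_eq}, hence $\dom\L\subset\DD_0$) and the explicit chaotic expression of $\L\Fm$.

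The factorisation $\L=-\delta\Dm$ is then checked chaos-by-chaos. From \eqref{Gradient_def_eq}, $\Dm\Jt_n(f_n)$ is $n$ times an $(n-1)$-fold integral built from the partial evaluation $f_n(\star,\cdot)$; applying the creation formula \eqref{Divergence_creation_eq} produces $n\Jt_n(\bar f_n)$, and since $f_n\in\L^2(\Xb)^{\circ n}$ is already symmetric, $\bar f_n=f_n$. Thus $\delta\Dm\Jt_n(f_n)=n\Jt_n(f_n)=-\L\Jt_n(f_n)$; the identity extends by linearity and closability to the whole of $\dom\L$. For the equivalence, if $\Fm\in\dom\L$ the summability $\sum n^2 n!\|f_n\|^2<\infty$ entails $\Fm\in\DD$ and forces $\delta\Dm\Fm=\sum n\Jt_n(f_n)\in\L^2(\P)$, so that $\Dm\Fm\in\dom\delta$ by \eqref{IPP_gen_eq}; conversely, if $\Fm\in\DD$ and $\Dm\Fm\in\dom\delta$, expanding $\delta\Dm\Fm$ chaotically and invoking orthogonality forces $\sum n^2 n!\|f_n\|^2<\infty$.

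The main delicacy is technical: the divergence is initially defined through the creation formula \eqref{Divergence_creation_eq} on the cylindrical subspace $\mathcal U$, and one must justify that $\Dm\Fm$ for $\Fm\in\dom\L$ lies in the closure on which the extension applies and that the resulting object coincides with $-\L\Fm$; this amounts to a careful use of the integration by parts formula \eqref{IPP_gen_eq}, combined with the density of $\cyl$ in $\L^2(\P)$ and of $\mathcal U$ in $\L^2(\P\otimes\nu)$, and to tracking the weighting difference between $\nu$ and $\tilde\nu$ on which the isometry \eqref{isometry_eq} is stated.
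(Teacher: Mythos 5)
Your argument is correct and in fact more complete than the paper's, which disposes of the whole proposition in one line: it verifies $\L\Fm=-\delta\Dm\Fm$ for $\Fm=\Jt_n(f_n)$ ``using \eqref{delta_u_adapted_eq}'' and extends to $\DD$ by closability of $\Dm$, taking the chaotic action $\L\Jt_n(f_n)=-n\Jt_n(f_n)$ and the domain description essentially as definitions. You instead derive the generator honestly from the semigroup via the difference quotient $(\Pm_\tk\Fm-\Fm)/\tk$ and dominated convergence on the chaotic series, which is a genuinely different (and more informative) route: it produces the multiplier $-n$ and the domain condition simultaneously, rather than postulating them. Your verification of $\delta\Dm\Jt_n(f_n)=n\Jt_n(f_n)$ through the creation formula \eqref{Divergence_creation_eq} and the symmetry $\bar f_n=f_n$ is also the cleaner of the two: the paper's appeal to \eqref{delta_u_adapted_eq} is delicate, since $\Dm\Jt_n(f_n)$ is not adapted and a naive application of that identity together with \eqref{Jn_mult_int_rec_eq} would lose the factor $n$; your symmetrization argument avoids this pitfall. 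One point worth making explicit: you characterise $\dom\L$ by $\sum_n n^2\,n!\|f_n\|^2<\infty$, whereas the statement literally refers to \eqref{Dom_D_condition_chaos_eq}, i.e.\ $\sum_n n\,n!\|f_n\|^2<\infty$; your condition is the correct one (it is what makes $\L\Fm\in\L^2(\P)$ and what the equivalence with $\Fm\in\DD$, $\Dm\Fm\in\dom\delta$ forces), and it is the only reading compatible with the parenthetical ``in particular $\dom\L\subset\DD_0$'', so you have silently repaired an imprecision in the statement rather than introduced an error. The closing remarks on extending $\delta$ beyond $\mathcal U$ and on the $\nu$ versus $\tilde\nu$ weighting are exactly the technical points one must track; nothing in them threatens the argument.
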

\begin{proof}
The identity $\L\Fm=-\delta\Dm\Fm$ can be stated first for $\Fm=\Jt_n(f_n)$ with $f_n\in\L^2(\Xb)^n$, using \eqref{delta_u_adapted_eq} and then extended to $\DD$ by closability of the operator $\Dm$. 
\end{proof}

\noindent
The inverse of the number operator, denoted $\L^{-1}$ is defined on the subspace of $\L^2(\P)$ made of random variables with null expectation, and that is given, for any $\Fm$ written as \eqref{Chaos_R_th}, by
\begin{equation}\label{inverseL}
\L^{-1}\Fm=-\sum_{n\in\N}\frac{1}{n}\Jt_n(f_n).
\end{equation}
\subsection{From combination of $\L^1$ and $\L^2$ theories to a unified Markov-Malliavin structure}
As seen in the previous subsections, the operators $\Dm^+/\Dm,\,\widetilde\delta/\delta$ and $\widetilde\L/\L,$ have a meaning either in $\L^1$ or $\L^2$ context. In this section we combine $\L^1$ and $\L^2$ theories to formalize a unified Markov-Malliavin structure.
\subsubsection{Operators $\Dm^+$ and $\Dm$: Stroock's formula}
\noindent
Within additional hypotheses, the operators $\Dm^+$ and $\Dm$ coincide. The very definition of the domain of the operator $\Dm$ and the chaotic decomposition ensure that if $\Fm\in\DD$, then $\Dm\Fm\in\L^2(\P)$. The following lemma provides the reciprocal, as well as a more tractable expression of the Malliavin derivative, in terms of a difference operator acting on $\L^2(\P)$. 
\begin{proposition}\label{Grad_difference_prop}
Let $\Fm\in\L^2(\P)$. If  $\Dm^+\Fm\in\L^2(\P\otimes\nu)$, then $\Fm\in\DD$. Moreover, 
\begin{equation}\label{Grad_difference_eq}
\Dm\Fm=\Dm^+\Fm\;;\; \; \P\otimes\nu\mathrm{-a.s.}
\end{equation}
\end{proposition}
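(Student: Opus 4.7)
The plan is to first verify that $\Dm^+$ and $\Dm$ coincide on the dense class of cylindrical functionals, and then to extend to arbitrary $\Fm \in \L^2(\P)$ with $\Dm^+\Fm \in \L^2(\P\otimes\nu)$ by an approximation-and-closability argument.

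For the cylindrical step, fix $\Fm\in\cyl$ and use Lemma~\ref{chaos_L0_lem} to write $\Fm = \esp{\Fm} + \sum_{n=1}^{T} \Jt_n(f_n)$ with each $f_n$ supported in $\Xb_T^{n,<}$. The key observation is that for $s\neq t$ the increment $\D \Rm_{(s,l)}$ depends on $\eta$ only through the time-$s$ data $(\D\Nm_s,\Wm_s)$, hence is invariant under the shift $\eta\mapsto\pi_t(\eta)+\delta_{(t,k)}$; consequently only those monomials in the expansion \eqref{Jn_mult_int_eq} of $\Jt_n(f_n)$ that carry a factor $\D \Rm_{(t,\cdot)}$ produce a nonzero $\Dm^+_{(t,k)}$-contribution. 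Time-ordering forces at most one index $j$ with $s_j=t$, and combining the Gram-Schmidt relation \eqref{family_R_eq} with the elementary identity $\Dm^+_{(t,k)}\D\Zm_{(t,l)}=\car_{\{l=k\}}$, one reorganises the surviving terms into the expression of $\Dm_{(t,k)}\Jt_n(f_n)$ provided by \eqref{Gradient_def_eq}. Linearity then extends the equality to all $\Fm\in\cyl$.

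For the extension step, set $\Fm_T:=\esp{\Fm\mid \f{T}}$. By Lemma~\ref{chaos_L0_lem} each $\Fm_T$ is cylindrical, so $\Fm_T\in\DD$ with $\Dm\Fm_T=\Dm^+\Fm_T$ by the cylindrical step. The next ingredient is the commutation
\begin{equation*}
\Dm^+_{(t,k)}\Fm_T \;=\; \car_{\{t\pp T\}}\,\esp{\Dm^+_{(t,k)}\Fm \mid \f{T}},
\end{equation*}
derived by writing both sides through the Mecke formula \eqref{Mecke_formula_eq} applied to a representative of $\Fm$, the case $t>T$ being trivial since $\Fm_T$ is insensitive to modifications of $\eta$ at time $t$. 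Martingale convergence yields $\esp{\Dm^+_{(t,k)}\Fm\mid\f{T}}\to \Dm^+_{(t,k)}\Fm$ in $\L^2(\P)$ for $\nu$-a.e.\ $(t,k)$, and conditional Jensen together with the standing hypothesis $\Dm^+\Fm\in\L^2(\P\otimes\nu)$ provides $\L^2(\P\otimes\nu)$-domination. Dominated convergence then gives $\Dm^+\Fm_T\to\Dm^+\Fm$ in $\L^2(\P\otimes\nu)$; in particular $\{\Dm\Fm_T\}_T$ is Cauchy, and the closability of $\Dm$ (Corollary~\ref{Closability_corollary}) together with $\Fm_T\to\Fm$ in $\L^2(\P)$ forces $\Fm\in\DD$ with $\Dm\Fm=\Dm^+\Fm$ in $\L^2(\P\otimes\nu)$.

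The main difficulty lies in the cylindrical step: $\Dm^+$ acts naturally on the non-orthogonal jumps $\D\Zm$, whereas the chaos is written in terms of the Gram-Schmidt-orthogonalised $\D\Rm$, so the matching requires careful inversion of the matrix $\mathfrak M$ from \eqref{matrix_basis_eq} and exploiting the cancellations provided by time-ordering. The commutation with conditional expectation in the extension step is the other technical point, but once the $\f{T}$-measurability of the representatives is handled properly it reduces cleanly to the Mecke formula.
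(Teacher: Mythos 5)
Your argument has the same two-part structure as the paper's proof: on cylindrical functionals you compute $\Dm^+_{(t,k)}\Jt_n(f_n)$ directly, noting that $\Dm^+_{(t,k)}$ annihilates every monomial of the expansion \eqref{Jn_mult_int_eq} without a time-$t$ factor and evaluates the unique surviving one so as to recover \eqref{Gradient_def_eq}, and you then pass to general $\Fm$ with $\Dm^+\Fm\in\L^2(\P\otimes\nu)$ through the closability of $\Dm$ (Corollary \ref{Closability_corollary}), exactly as the paper does. Your extension step, with $\Fm_T=\esp{\Fm\mid\f{T}}$ and the $\L^2(\P\otimes\nu)$-convergence of $\Dm^+\Fm_T$, is merely a concrete instantiation of the paper's one-line ``density argument'', and the single point you leave compressed --- reorganising the surviving time-$t$ terms through $\mathfrak M^{-1}$ so that the $\D\Zm$-computation matches the $\D\Rm$-chaos --- is precisely the step the paper itself asserts without detail (via the identity of Remark \ref{Grad_J1_rem}), so the two arguments are at the same level of completeness.
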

\begin{remark}\label{Grad_J1_rem}
We can retrieve one of the specific identities existing in the Gaussian and Poisson spaces: for any process $u\in\mathcal U$, $\Dm_{(t,k)}\Jt_1(h)=h(t,k).$
Let $\rk_{(t,k)}$ be the representative of the process $\Rm$.
Applying \eqref{Gradient_def_eq} to $\Fm=\Jt_1(h)$ gives
\begin{equation*}
\Dm_{(t,k)}\Jt_1(h)=\sum_{s\in\N}\sum_{\ell\in\E}h(s,\l)\,\big[\rk(\pi_{t}(\eta)+\delta_{(t,k)},(s,\l))-\rk(\eta,(s,\l))\big]=h(t,k),
\end{equation*}
by noting that $\rk(\pi_{t}(\eta)+\delta_{(t,k)},(s,\l))-\rk(\pi_{t}(\eta),(s,\l))=\car_{\{(t,k)\}}((s,\l))$.
\end{remark}

\noindent 
The integrands of multiple integrals appearing in the chaotic decomposition of $\Fm$ can be expressed in terms of iterative Malliavin derivatives of $\Fm$. This entails the useful following lemmas. In fact, the operator $\Dm^+$ can be canonically iterated by letting $\Dm^{(1)}=\Dm^+$ and defining the $n$-th ($n\in\N$) \textit{difference operator} by the recursion formula $\Dm^{(n)}=\Dm^+(\Dm^{(n-1)})$. We get explicitly for any $\Fm\in\mathcal L^0(\O)$,
\begin{equation}\label{Mult_D_eq}
\Dm_{(\mathbf t_n,\mathbf k_n)}^{(n)}\Fm
\nonumber=\Dm_{(t_1,k_1)}^+\big(\Dm_{(t_2,k_2),\dots,(t_n,k_n)}^{(n-1)}\Fm\big)=\sum_{\J\subset [n]}(-1)^{n-|\J|}\Fm\Big(\pi^{[n]}(\cdot)+\sum_{{j\in\J}}\delta_{(t_j,k_j)}\Big),
\end{equation}
where $\pi^{[n]}:=\bigcirc_{t=1}^n\pi_t$.
This satisfies the remarkable identities that lead to the expression of the functions $f_n$ in \eqref{Chaos_R_eq} in terms of  the $n$-th difference operator, called \textit{Stroock's formula}.

\begin{lemma}\label{Mult_D_esp_lem}
For any $\Fm\in\L^2(\P)$,
\begin{equation*}
\esp{\Dm_{(\mathbf t_n,\mathbf k_n)}^{(n)}\Fm}=\esp{\Fm\prod_{i=1}^n\frac{\D\Rm_{(t_i,k_i)}}{\kappa_i}}.
\end{equation*}
\end{lemma}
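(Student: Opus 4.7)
The plan is to reduce to chaos elements via the decomposition of Theorem \ref{Chaos_R_th}, and verify the identity separately on each $\mathcal H_m$. Both sides of the claimed equality are linear in $\Fm$: the right-hand side is continuous with respect to the $\L^2(\P)$-norm of $\Fm$ by Cauchy--Schwarz (the increments $\D\Rm_{(t,k)}$ being bounded, hence lying in every $\L^p$), while the left-hand side is continuous in the graph norm of the $n$-fold iterate of $\Dm$, which by Proposition \ref{Grad_difference_prop} coincides with $\Dm^{(n)}=(\Dm^+)^n$ on the suitable domain. It is therefore enough to establish the identity for $\Fm = \Jt_m(f_m)$ with $m \in \Z_+$ and $f_m$ symmetric.

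Iterating the annihilation formula \eqref{Gradient_def_eq} $n$ times (together with Remark \ref{Grad_J1_rem}, which provides the derivation identity $\Dm_{(t,k)}\D\Rm_{(s,l)} = \car_{\{(t,k)\}}((s,l))$) gives
\begin{equation*}
\Dm^{(n)}_{(\mathbf t_n,\mathbf k_n)}\, \Jt_m(f_m) \;=\; \frac{m!}{(m-n)!}\,\Jt_{m-n}\bigl(f_m(\star,(t_1,k_1),\ldots,(t_n,k_n))\bigr)
\end{equation*}
for $m\geq n$ and $0$ otherwise. Taking expectation and using that $\esp{\Jt_p(\cdot)}=0$ for every $p \geq 1$, the left-hand side of the lemma reduces to $n!\, f_n(\mathbf t_n,\mathbf k_n)$ when $m=n$ and vanishes otherwise.

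For the right-hand side, I would expand $\Jt_m(f_m)$ through \eqref{Jn_mult_int_eq} as a multiple sum of products $\prod_j \D\Rm_{(s_j,l_j)}$ weighted by $f_m$, multiply by $\prod_{i=1}^n \D\Rm_{(t_i,k_i)}$, and take expectation. The orthogonality of $\mathcal R$ in Assumption \ref{assumptionR}, combined with the $\F$-martingale property of $\bigl(\sum_{k}\Rm_{(t,k)}\bigr)_{t}$, implies that only pairings matching all $m$ inner indices against the $n$ outer ones survive. When $m<n$, some factor $\D\Rm_{(t_i,k_i)}$ remains unmatched and the expectation vanishes; when $m>n$, one notes that $\prod_{i=1}^n\D\Rm_{(t_i,k_i)}$ lies in $\bigoplus_{p\leq n}\mathcal H_p$, while $\Jt_m(f_m)\in\mathcal H_m$ is orthogonal to every $\mathcal H_p$ with $p<m$ by \eqref{isometry_eq}, so the expectation is again zero. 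When $m=n$, only permutations $\sigma$ with $(s_j,l_j)=(t_{\sigma(j)},k_{\sigma(j)})$ contribute, and by symmetry of $f_n$ each of the $n!$ such permutations yields $f_n(\mathbf t_n,\mathbf k_n)\prod_i \kappa_{k_i}$; summing and dividing by $\prod_i \kappa_{k_i}$ produces $n!\,f_n(\mathbf t_n,\mathbf k_n)$, in agreement with the left-hand side.

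The delicate step will be an accurate bookkeeping of diagonal contributions, namely terms in which two or more of the $(s_j,l_j)$ (or of the $(t_i,k_i)$) coincide: unlike in the Poisson or Wiener setting, the squared increments $(\D\Rm_{(t,k)})^2$ do not collapse to a deterministic constant, so the orthogonality-based pairing argument above must be complemented by an analysis over the ordered simplex $\Xb^{n,<}$, using the recursive representation of $\Jt_n$ furnished by Lemma \ref{Int_tensor_lem} to verify that the diagonal pieces cancel consistently with the combinatorial prefactors arising from \eqref{Gradient_def_eq}.
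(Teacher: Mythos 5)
Your argument is correct, but it follows a genuinely different route from the paper. The paper does not decompose $\Fm$ into chaoses at all: it reduces, by density, to Dol\'eans exponentials $\Fm=\xi(h)$, computes the left-hand side from the subset expansion $\xi(h)=\esp{\xi(h)}+\sum_m\sum_{|\J|=m}\prod_{i\in\J}h(t_i,k_i)\D\Rm_{(t_i,k_i)}$ together with the independence of the increments across distinct times, and computes the right-hand side from the product representation $\prod_s\bigl(1+\sum_k h(s,k)\D\Rm_{(s,k)}\bigr)$ and orthogonality; both sides come out as $\prod_i h(t_i,k_i)$ and one concludes by density of the exponential family. Your reduction to single chaos elements $\Jt_m(f_m)$, combined with the iterated annihilation formula \eqref{Gradient_def_eq} (legitimately identified with $\Dm^{(n)}=(\Dm^+)^n$ via Proposition \ref{Grad_difference_prop}, whose proof is independent of this lemma, so there is no circularity) and with chaos orthogonality \eqref{isometry_eq}, buys a more structural statement: the lemma becomes the biorthogonality of the chaoses against the monomials $\prod_i\D\Rm_{(t_i,k_i)}/\kappa_{k_i}$, which makes the derivation of Stroock's formula immediate. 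The paper's route buys shorter computations, since the product form of $\xi(h)$ factorizes over times and no combinatorial prefactors appear.

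Two remarks on your write-up. First, the ``delicate step'' you defer at the end is essentially vacuous: the multiple integrals $\Jt_m(f_m)$ are by construction sums over $\Xb^{m,<}$ (pairwise distinct times), so no diagonal pairings ever arise inside the expansion, and the identity is only ever invoked for external tuples $(\mathbf t_n,\mathbf k_n)$ with pairwise distinct $t_i$ (this is the setting of Stroock's formula); for such tuples $\prod_i\D\Rm_{(t_i,k_i)}$ is exactly an element of $\mathcal H_n$, as recorded in the proof of Lemma \ref{chaos_L0_lem}, and your orthogonality argument is already complete. Second, to pass from finite chaos sums to general $\Fm\in\L^2(\P)$ you need the map $\Fm\mapsto\esp{\Dm^{(n)}_{(\mathbf t_n,\mathbf k_n)}\Fm}$ to be continuous for the $\L^2(\P)$-norm (in which the chaos series converges), not merely for a graph norm; this holds because each term $\fk\bigl(\pi^{[n]}(\eta)+\sum_{j\in\J}\delta_{(t_j,k_j)}\bigr)$ in \eqref{Mult_D_eq} has a law absolutely continuous with respect to $\P_\eta$ with bounded density (each atom of the finitely many modified coordinates having probability bounded away from zero). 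The paper's density passage requires the same observation, so this is a shared, easily repaired ellipsis rather than a defect of your approach.
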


\begin{lemma}\label{Stroock_formula_esp_lem}
For any $\Fm,\Gm\in\L^2(\P)$,
\begin{equation*}\label{Stroock_formula_esp_eq}
\esp{\Fm\Gm}=\esp{\Fm}\esp{\Gm}+\sum_{n\in\N}\frac{1}{n!}\textless\mathbf E[\Dm^{(n)}\Fm],\mathbf E[\Dm^{(n)}\Gm]\textgreater_{\L^2(\Xb,\nu)^{\otimes n}}.
\end{equation*}
\end{lemma}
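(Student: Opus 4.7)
The strategy is to combine the chaotic expansion of $\Fm$ and $\Gm$ (Theorem \ref{Chaos_R_th}) with the covariance formula of the preceding Corollary, identifying the chaos kernels through Lemma \ref{Mult_D_esp_lem}. First, I would invoke Theorem \ref{Chaos_R_th} to write $\Fm=\esp{\Fm}+\sum_{n\in\N}\Jt_n(f_n)$ and $\Gm=\esp{\Gm}+\sum_{n\in\N}\Jt_n(g_n)$, so that the preceding Corollary immediately delivers
$$
\esp{\Fm\Gm}-\esp{\Fm}\esp{\Gm}=\sum_{n\in\N} n!\,\langle f_n,g_n\rangle_{\L^2(\Xb,\tilde\nu)^{\otimes n}}.
$$
The remaining task is to rewrite each summand as $\tfrac{1}{n!}\langle\esp{\Dm^{(n)}\Fm},\esp{\Dm^{(n)}\Gm}\rangle_{\L^2(\Xb,\nu)^{\otimes n}}$.

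The key step is a Stroock-type identification of the kernels $f_n$ in terms of $\esp{\Dm^{(n)}\Fm}$. Starting from Lemma \ref{Mult_D_esp_lem}, namely $\esp{\Dm^{(n)}_{(\mathbf t_n,\mathbf k_n)}\Fm}=\esp{\Fm\prod_{i=1}^n \D\Rm_{(t_i,k_i)}/\kappa_{k_i}}$, I would substitute the chaotic decomposition of $\Fm$ under the expectation. The contribution of $\esp{\Fm}$ vanishes since each $\D\Rm_{(t_i,k_i)}$ is centred (consequence of item 1 of Assumption \ref{assumptionR}). For the summands $\Jt_m(f_m)$ with $m\neq n$, observe that the product $\prod_{i=1}^n\D\Rm_{(t_i,k_i)}$ is, up to symmetrisation of the indicator $\car_{\{(\mathbf t_n,\mathbf k_n)\}}$, an element of the $n$-th chaos $\mathcal H_n$; the isometry \eqref{isometry_eq} then kills all cross-chaos terms. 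The surviving $m=n$ summand can be evaluated explicitly via formula \eqref{Jn_mult_int_eq} together with \eqref{isometry_eq}, yielding an explicit combinatorial relation between $\esp{\Dm^{(n)}_{(\mathbf t_n,\mathbf k_n)}\Fm}$ and $f_n(\mathbf t_n,\mathbf k_n)$, involving an $n!$ and the weights $\kappa_{k_i}$. An analogous identity holds for $\Gm$.

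Plugging these two Stroock identifications into the right-hand side of the claim yields a sum over $n$ of inner products of $f_n$ and $g_n$, with weights built from the $\kappa_{k_i}$ and from $\nu^{\otimes n}$; the $1/\kappa_{k_i}$ factors arising in Lemma \ref{Mult_D_esp_lem} are precisely what convert $\nu^{\otimes n}$ into $\tilde\nu^{\otimes n}$ via the relation $\tilde\nu(\{(t,k)\})=\kappa_k\,\nu(\{(t,k)\})$, so the expression matches the first display and the proof concludes. The main obstacle is this constant bookkeeping: the $n!$ from \eqref{Jn_mult_int_eq}, the symmetrisation of $f_n$, the $\kappa_{k_i}$ weights hidden in $\tilde\nu$, and the $1/\kappa_{k_i}$ normalisation of Lemma \ref{Mult_D_esp_lem} must intervene simultaneously so that the $\L^2(\Xb,\tilde\nu)^{\otimes n}$-inner product produced by the covariance formula lines up with the $\L^2(\Xb,\nu)^{\otimes n}$-inner product displayed in the statement.
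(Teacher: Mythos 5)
Your argument is correct in its architecture but follows a genuinely different route from the paper. The paper proves the identity first for Dol\'eans exponentials $\Fm=\xi(f)$, $\Gm=\xi(g)$: it computes $\esp{\Fm\Gm}-\esp{\Fm}\esp{\Gm}$ directly from the product representation \eqref{Doleans_eq} (using $\esp{\D\Rm_{(t,k)}\D\Rm_{(s,\l)}}=\kappa_k\car_{\{(t,k)=(s,\l)\}}$), computes $\esp{\Dm^{(n)}_{\mathrm I_n}\Fm}=\prod_j f(t_j,k_j)$ for such exponentials, matches the two sums over subsets $\J\subset\id{1}{T}$, and then extends to $\L^2(\P)$ by density of the exponential family. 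You instead work with a general $\Fm\in\L^2(\P)$ from the outset: you invoke the chaotic decomposition and the covariance corollary to reduce the claim to the identification $f_n=\tfrac{1}{n!}\esp{\Dm^{(n)}\Fm}$, and you derive that identification from Lemma \ref{Mult_D_esp_lem} together with the orthogonality of the chaoses. This is legitimate and non-circular (Lemma \ref{Mult_D_esp_lem}, the isometry and the covariance corollary all precede the statement), and it has the merit of making the Stroock identification explicit; in effect you prove the content of Proposition \ref{Stroock_formula_lem} as an intermediate step, whereas the paper goes the other way round and uses the present lemma to prove Proposition \ref{Stroock_formula_lem}. What the paper's route buys is that all computations are on products of independent factors, so no cross-chaos or symmetrisation bookkeeping is needed; what your route buys is that no density/closure argument is required at the end.

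One caveat on the constant-tracking you rightly single out as the main obstacle. With $\esp{\Dm^{(n)}\Fm}=n!\,f_n$ and $\esp{\Dm^{(n)}\Gm}=n!\,g_n$, the right-hand side of the statement becomes $\sum_n n!\,\textless f_n,g_n\textgreater_{\L^2(\Xb,\nu)^{\otimes n}}$, while the covariance corollary produces $\sum_n n!\,\textless f_n,g_n\textgreater_{\L^2(\Xb,\tilde\nu)^{\otimes n}}$; the $1/\kappa_{k_i}$ normalisations in Lemma \ref{Mult_D_esp_lem} cancel exactly against the $\kappa_{k_i}$ coming from $\esp{(\D\Rm_{(t_i,k_i)})^2}$, so they do \emph{not} leave a residual factor converting $\nu^{\otimes n}$ into $\tilde\nu^{\otimes n}$ as you assert. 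Your chain of identities therefore lands on the inner product with respect to $\tilde\nu^{\otimes n}$, not $\nu^{\otimes n}$. This agrees with the final display of the paper's own proof (which is written with $\tilde\nu$) and indicates that the $\nu$ in the statement should be read as $\tilde\nu$; so your proof is sound, but the sentence explaining how the weights reconcile should be corrected rather than waved through.
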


\begin{proposition}[Stroock's formula]\label{Stroock_formula_lem}
Let $\Fm\in\L^2(\P)$. Then, $\Dm^{(n)}\Fm\in\L^2(\P\otimes\nu^{\otimes n})$ for any $n\in\N$, and $\Fm$ admits a chaotic decomposition of the form \eqref{Chaos_R_th} with $f_0=\esp{\Fm}$ and 
\begin{equation}\label{Stroock_formula_eq}
f_n((\mathbf t_n,\mathbf k_n))=\frac{1}{n!}\esp{\Dm^{(n)}_{(\mathbf t_n,\mathbf k_n)}\Fm}\; ; \; \forall (\mathbf t_n,\mathbf k_n)\in\Xb^n.
\end{equation}
\end{proposition}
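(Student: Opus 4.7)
My strategy is to combine the chaotic expansion of $\Fm$ from Theorem \ref{Chaos_R_th} with the explicit expectation formula of Lemma \ref{Mult_D_esp_lem}, thus sidestepping any explicit $n$-fold iteration of the annihilation rule \eqref{Gradient_def_eq}. By Theorem \ref{Chaos_R_th} I write $\Fm = \esp{\Fm} + \sum_{m \in \N} \Jt_m(f_m)$ with uniquely determined symmetric kernels $f_m \in \L^2(\Xb)^{\circ m}$, and Lemma \ref{Mult_D_esp_lem} yields
$$\esp{\Dm^{(n)}_{(\mathbf t_n, \mathbf k_n)}\Fm} \;=\; \esp{\Fm \prod_{i=1}^n \frac{\D\Rm_{(t_i, k_i)}}{\kappa_{k_i}}}.$$

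The next step is to recognise the product $\prod_{i=1}^n \D\Rm_{(t_i, k_i)}$ as a multiple $\mathcal R$-integral of order $n$ applied to an elementary indicator kernel, in direct analogy with the identity $\Jt_n(\car_{(\mathbf t_n, \mathbf k_n)}^{<}\,;\,\mathcal Z) = \prod_i \D\Zm_{(t_i, k_i)}$ noted after Proposition \ref{Chaos_Z_prop}. Plugging the chaotic expansion of $\Fm$ into the right-hand side and invoking the isometry formula \eqref{isometry_eq}, every cross-term $\esp{\Jt_m(f_m)\Jt_n(\cdot)}$ with $m \neq n$ vanishes; the surviving $m = n$ contribution evaluates to $n!\, f_n(\mathbf t_n, \mathbf k_n)$, the $\kappa_{k_i}$ factors arising from the measure $\tilde\nu$ cancelling exactly those in the denominator above. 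This yields the desired identity $f_n = (1/n!)\,\esp{\Dm^{(n)}\Fm}$.

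To establish the $\L^2$-integrability of $\Dm^{(n)}\Fm$, I would first verify the claim on $\cyl$, where each $\Dm^{(n)}\Fm$ is a finite sum of chaos elements and hence trivially lies in $\L^2(\P\otimes\nu^{\otimes n})$. For general $\Fm \in \L^2(\P)$ I would approximate by the partial sums $\Fm_N = \esp{\Fm} + \sum_{m=1}^N \Jt_m(f_m)$, apply Proposition \ref{Grad_difference_prop} together with the annihilation rule \eqref{Gradient_def_eq} to express each $\Dm^{(n)}\Fm_N$ as a finite chaos sum, use the isometry \eqref{isometry_eq} to control the $\L^2$-norms of the increments $\Dm^{(n)}\Fm_N - \Dm^{(n)}\Fm_{N'}$ by tail sums of kernel norms of the $f_m$'s, and identify the resulting $\L^2$-limit with $\Dm^{(n)}\Fm$ via the closability of the iterated difference operator.

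The principal obstacle I anticipate is that the kernel-norm estimate in the approximation step requires summability of terms of the form $[m!/(m-n)!]^2(m-n)!\|f_m\|^2$, which is strictly stronger than the $\L^2(\P)$ condition $\sum_m m!\|f_m\|^2 < \infty$; reconciling this either by restricting to an appropriate smaller domain or by exploiting specific features of the discrete intensity $\nu$ is the key technical point. A secondary difficulty is the bookkeeping of symmetrizations and orderings: the annihilation rule naturally produces indicators of strictly ordered tuples, while Lemma \ref{Mult_D_esp_lem} furnishes an unordered symmetric object, and matching the combinatorial factors so that the identity holds on all of $\Xb^n$ (not merely on $\Xb^{n,<}$) demands careful accounting.
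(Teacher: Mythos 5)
Your derivation of the kernel identity is correct and pivots on exactly the same ingredient as the paper's proof: Lemma \ref{Mult_D_esp_lem}, which converts $\esp{\Dm^{(n)}_{(\mathbf t_n,\mathbf k_n)}\Fm}$ into $\esp{\Fm\prod_i\D\Rm_{(t_i,k_i)}/\kappa_{k_i}}$, followed by the isometry \eqref{isometry_eq} to kill the cross-chaoses and extract $n!\,f_n(\mathbf t_n,\mathbf k_n)$. The only real difference is organisational: you take the expansion $\Fm=\esp{\Fm}+\sum_m\Jt_m(f_m)$ as already granted by Theorem \ref{Chaos_R_th} and merely identify the kernels, whereas the paper (following Last and Penrose) first verifies the identity on Doléans exponentials, then shows that $\S:=\sum_n (n!)^{-1}\Jt_n(\esp{\Dm^{(n)}\Fm})$ converges in $\L^2(\P)$ --- using Lemma \ref{Stroock_formula_esp_lem}, which gives $\sum_n(n!)^{-1}\|\esp{\Dm^{(n)}\Fm}\|_{\L^2(\Xb,\tilde\nu)^{\otimes n}}^2=\var(\Fm)<\infty$ --- and concludes $\S=\Fm$ by density of the exponentials, adding a uniqueness argument. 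Since Theorem \ref{Chaos_R_th} already supplies existence and uniqueness of the kernels, your shortcut is legitimate; the interchange of the infinite sum with the expectation is harmless because $\prod_i\D\Rm_{(t_i,k_i)}$ is a fixed element of $\L^2(\P)$ and the partial sums converge in $\L^2(\P)$. The ordering issue you flag is also benign: $\nu^{\otimes n}$ is only declared on $\Xb^{n,\neq}$ and both sides of \eqref{Stroock_formula_eq} are symmetric, so it suffices to check the identity on pairwise-distinct tuples.

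Your ``principal obstacle'' is, however, a genuine one --- but it is a defect of the statement rather than of your argument, and the paper's own proof does not address it at all. For $\Fm=\sum_m\Jt_m(f_m)$ one has $\|\Dm^{(n)}\Fm\|_{\L^2(\P\otimes\tilde\nu^{\otimes n})}^2=\sum_{m\geqslant n}\big[m!/(m-n)!\big]^2(m-n)!\,\|f_m\|^2$, which already for $n=1$ is the condition \eqref{Dom_D_condition_chaos_eq} defining $\DD_0$ and is strictly stronger than square-integrability of $\Fm$; so the blanket claim $\Dm^{(n)}\Fm\in\L^2(\P\otimes\nu^{\otimes n})$ fails for $\Fm\notin\DD_0$ and cannot be rescued by your approximation scheme. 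What does hold for every $\Fm\in\L^2(\P)$, and what the proof actually uses, is the weaker pair of facts that $\Dm^{(n)}_{(\mathbf t_n,\mathbf k_n)}\Fm\in\L^2(\P)$ for each fixed tuple (each of the $2^n$ terms in \eqref{Mult_D_eq} is $\fk$ evaluated at a configuration whose law is absolutely continuous, with bounded density, with respect to that of $\eta$) and that the deterministic kernels $\esp{\Dm^{(n)}\Fm}$ lie in $\L^2(\nu^{\otimes n})$ with square-summable normalised norms, by Lemma \ref{Stroock_formula_esp_lem}. Either restrict the first assertion of the proposition to the appropriate iterated domain, as you suggest, or replace it by the statement about $\esp{\Dm^{(n)}\Fm}$; with that emendation your proof is complete.
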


\subsubsection{$\widetilde\L$ and Ornstein-Uhlenbeck operators: Mehler's formula}

\noindent
In this part, we provide an integral representation of $(\Pm_\tk)_{\tk\in\R_+}$ in $\L^1(\P)$, called \textit{Mehler's formula}. We proceed in a similar fashion as done in the Poisson setting by Last, Peccati and Schulte  \cite{LastPeccatiSchulte2016}. Let $\eta\in\mathfrak N_\Xb$.
Consider the binomial process $\Nm$ associated to $\eta$ and split it into two processes $\Nm^{(\gamma)}$ according to independent random draws of a Bernoulli random variable of mean $\gamma$. This means that any point charged by $\eta$ belongs to $\eta^{(\gamma)}$ with probability $\gamma$ and to $\eta^{(1-\gamma)}$ with probability $1-\gamma$. Important point: since the measure $\nu$ is not diffuse, we need to ensure that a point $(t,k)\in\eta$ cannot be simultaneously charged by $\eta^{(\gamma)}$ and $\eta^{(1-\gamma)}$. In other words, $(t,k)$ is either in the support of $\eta^{(\gamma)}$ or in that of $\eta^{(1-\gamma)}$. Considering $\eta$ as a proper process via Definition \eqref{Proper_process_def_eq}, let $\eta_{\mathrm K^\tk}$ be the $\mathrm K^\tk$-\textit{marking} (see Last and Penrose \cite{LastPenrose2017}, definition 5.3) of $\eta$ defined by
\begin{equation*}
\eta_{\mathrm K^\tk}=\sum_{t=1}^{\Nm}\delta_{((\Tm_t,\V_t),\varepsilon_t^\tk)},
\end{equation*}
where $(\varepsilon_t^\tk)_{t\in\N}$ is a sequence of variables which conditional distribution given $\{\Nm=n\}$ (for $n\in\N$) \textbf{and} $\{(\Tm_t,\V_t),\, t\in\id{1}{n}\}$, is that of independent random variables defined by $\varepsilon_t^{\tk}=\car_{\{\theta_t\leqslant \tk\}}$,
and $(\theta_t)_{t\in\N}$ is a sequence of independent exponential random variables of mean $1$. We can prove that $\eta_{\mathrm K^\tk}$ is a binomial process on $\Xb\times\{0,1\}$ of intensity measure $\nu\otimes\mathrm K^\tk$. Denote also
\begin{equation}\label{wtk_def_eq}
\eta^{\tk,0}:=\eta_{\mathrm K^\tk}(\cdot\times \{0\}) \quad \text{and} \quad \eta^{\tk,1}:=\eta_{\mathrm K^\tk}(\cdot\times \{1\}),
\end{equation}
that are (not independent) binomial processes with respective intensities $e^{-\tk}\nu$ and $(1-e^{-\tk})\nu$. To see it, one can use the Laplace characterisation of binomial processes, that can be found in Last and Penrose (see \cite{LastPenrose2017}, exercise 3.5): the Laplace transform of a mixed binomial process (which definition is given by \eqref{Mixing_bin_def_eq}) with mixing measure $\mathbf K$ and sampling distribution $\Q$ is the function defined on $\R_+(\Xb)$, the set of measurable functions from $\Xb$ to $\R_+$, by 
\begin{equation*}
\EuScript L_{\eta}(f)=\mathcal G_{\mathbf K}\Big(\int e^{-f}\,\d\Q\Big) \; ; \; f\in\R_+(\Xb),
\end{equation*}
where $\mathcal G_{\mathbf K}(x):=\sum_{n\in\Z_+}\mathbf K(\{n\})x^n,$ for $ x\in[0,1]$. Then, for any $f\in\R_+(\Xb)$,
\begin{equation*}
\EuScript L_{\eta^{\tk,0}}(f)=\sum_{n\in\Z_+}\mathbf K(\{n\})\bigg(\sum_{k\in\E}e^{-f(k,0)}\,e^{-\tau}\Q(\{k\})\bigg)^n=\mathcal G_{\mathbf K}\Big(\int_{\E\times\{0,1\}} e^{-f}\,(e^{-\tau}\d\delta_0\otimes\d\Q)\Big),
\end{equation*}
so that $\eta^{\tk,0}$ is a binomial process with  intensity $e^{-\tk}\nu$. The computation of the Laplace transform of $\eta^{\tk,0}+\eta^{\tk,1}$ suffices to see that the two processes are not independent. Nevertheless, we have $\eta^{\tk,0}+\eta^{\tk,1}=\eta$.  The formula below is very similar to the one existing in the Poisson case that can be found in the work of Last, Peccati and Schulte \cite{LastPeccatiSchulte2016} or in its original formulation in Privault (see \cite{Privault_stochastica}, Lemma 6.8.1). The main difference lies in the presence here of the random variable $\varepsilon$. Implicitly defined in the thinning appearing in Mehler's formula for Poisson processes, it is explicitly required here to guarantee that a same point can not be weighted simultaneously by $\eta^{\tk,0}$ and $\tilde\eta$.
\begin{proposition}\label{Mehler_formula_prop}
Let $\eta\in\widehat{\mathfrak N}_\Xb$ and $\Fm\in\L^1(\P)$ of representative $\fk$. For any $\tk\in\R_+$,
\begin{equation}\label{Mehler_formula_prop_eq}
\Pm_\tk\Fm=\Pm_\tk\fk(\eta^{\tk,0}+\eta^{\tk,1})=\int\esp{\fk(\eta^{\tk,0}+\varepsilon^\tk\tilde\eta)\big|\eta}\Pi_{\nu}(\d\tilde\eta)\; ; \;  \P\mathrm{-a.s.},
\end{equation}
where $\Pi_{\nu}$ denotes the distribution of a marked binomial process of intensity measure $\nu$ and $\tilde\eta$ is a point process which distribution given $\eta$ follows the rule:
\begin{equation}\label{Law_tilde_eta}
\P((t,k)\in\tilde\eta\,|\,(t,k)\notin\eta)=\lambda\Q(\{k\})\quad\text{and}\quad \P((t,k)\notin\tilde\eta\,|\,(t,k)\in\eta)=1-\lambda\Q(\{k\}).
\end{equation}
\end{proposition}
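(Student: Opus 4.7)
The plan is to prove Mehler's formula by first verifying it on each chaos $\mathcal H_n$ and then extending to $\L^1(\P)$ by density and $\L^1$-contractivity. Both operators---$\Pm_\tk$ on the left and the map $R_\tk$ defined by the right-hand side of \eqref{Mehler_formula_prop_eq}---are $\L^1$-contractions: for $R_\tk$, this will follow from Jensen's inequality together with the fact, to be checked via a Laplace-transform computation along the lines of the one sketched in the paragraph preceding the proposition, that $\eta^{\tk,0}+\varepsilon^\tk\tilde\eta$ has distribution $\Pi_\nu$; for $\Pm_\tk$, it is a consequence of its diagonal action on the chaos together with standard Markovian semigroup arguments (or equivalently of the $\L^1$-theory of the generator $\widetilde\L$ developed in Section 3). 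Since cylindrical functionals are dense in $\L^1(\P)$ and each one has a finite chaos expansion by Lemma \ref{chaos_L0_lem}, it will then suffice to prove $\Pm_\tk \Jt_n(f_n) = R_\tk \Jt_n(f_n)$ for every $n \in \N$ and $f_n\in\L^2(\Xb)^{\circ n}$.

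The left-hand side equals $e^{-n\tk}\Jt_n(f_n)$ by the very definition of $\Pm_\tk$. For the right-hand side, I would rewrite
\begin{equation*}
\Jt_n(f_n)(\chi) = n!\sum_{(\mathbf t_n,\mathbf k_n)\in\Xb^{n,<}} f_n(\mathbf t_n,\mathbf k_n) \prod_{i=1}^n \D\Rm_{(t_i,k_i)}(\chi)
\end{equation*}
with $\chi=\eta^{\tk,0}+\varepsilon^\tk\tilde\eta$ and then compute the conditional expectation given $\eta$ term by term. The crucial single-increment identity to establish is
\begin{equation*}
\esp{\D\Rm_{(t,k)}(\eta^{\tk,0}+\varepsilon^\tk\tilde\eta) \mid \eta} = e^{-\tk}\,\D\Rm_{(t,k)}(\eta),
\end{equation*}
which, via the Gram--Schmidt formulas \eqref{family_R_eq}, reduces to the analogous statement for the centred indicators $\D\Zm_{(t,k')}=\car_{\{(t,k')\in\cdot\}}-\lambda\Q(\{k'\})$: the $\mathrm K^\tk$-marking keeps each existing point in $\eta^{\tk,0}$ with probability $e^{-\tk}$, while \eqref{Law_tilde_eta} inserts $(t,k')$ through $\tilde\eta$ on the remaining slots with probability $\lambda\Q(\{k'\})$, and the two contributions combine, after recentring, to give exactly $e^{-\tk}\D\Zm_{(t,k')}(\eta)$. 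Independence of the marking and of the $\tilde\eta$-insertions across the distinct time slots $t_1<\cdots<t_n$ then factorises the conditional expectation of the product, producing the global factor $e^{-n\tk}$ and matching the left-hand side.

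The hard part will be precisely this conditional computation, because the non-diffuse nature of $\nu$ means the naive thinning argument from the Poisson case fails: a pair $(t,k)$ already in $\eta^{\tk,0}$ must not be contributed a second time through $\tilde\eta$. This is the entire role of the Bernoulli variable $\varepsilon^\tk$ and of the coupling \eqref{Law_tilde_eta}, and establishing the distributional identity $\eta^{\tk,0}+\varepsilon^\tk\tilde\eta \stackrel{d}{=} \eta$ together with the single-increment identity above requires a careful bookkeeping of the marking procedure and of the insertion mechanism. Once these two ingredients are in hand, the factorisation of the product over the $n$ distinct time slots is a routine consequence of the orthogonality of $\mathcal R$ built into Assumption \ref{assumptionR} and of the time-wise independence of the marking and of $\tilde\eta$.
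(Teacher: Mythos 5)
Your argument is correct and is essentially the paper's own proof in a different parametrisation: the paper tests the identity on Doléans exponentials $\xi(h)$, written as finite products over time slots, whereas you test it on the multiple integrals $\Jt_n(f_n)$, but in both cases everything reduces to the same single-slot computation $\esp{\car_{\{(t,k)\in\eta^{\tk,0}+\varepsilon^\tk\tilde\eta\}}\,\big|\,\eta}=(1-e^{-\tk})\lambda\Q(\{k\})+e^{-\tk}\car_{\{(t,k)\in\eta\}}$ --- i.e.\ your single-increment identity for $\D\Zm$, hence for $\D\Rm$ via the Gram--Schmidt relations \eqref{family_R_eq} --- followed by factorisation over distinct times and a density argument. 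If anything you are slightly more explicit than the paper about the $\L^1$-contractivity needed for the final extension, which the paper dispatches with the remark that $\eta$ is finite.
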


\noindent
The first equality in \eqref{Mehler_formula_prop_eq} ensures that for any $\Fm\in\L^1(\P)$, $\tk\in\R_+$,
\begin{equation*}
\esp{\Pm_\tk\Fm}=\esp{\Fm},
\end{equation*}
while Jensen's inequality together with \eqref{Mehler_formula_prop_eq} imply the contractivity property of the semi-group: for any $p\in\N$,
\begin{equation}\label{Contractivity_PtF_eq}
\esp{|\Pm_\tk\Fm|^p}\pp\esp{|\Fm|^p}.
\end{equation}
\noindent
The semi-group $(\Pm_\tk)_{\tk\in\R_+}$ satisfies the usual commutation property:
\begin{proposition}\label{Commutation_PtF_prop}
For any $\Fm\in\L^2(\P)$, and $\tk\in\R_+$,
\begin{equation}\label{Commutation_PtF_eq}
\Dm\Pm_\tk\Fm=e^{-\tk}\Pm_\tk\Dm\Fm.
\end{equation}
\end{proposition}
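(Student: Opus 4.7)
The plan is to prove the commutation identity chaos by chaos, using the fact that both $\Pm_\tk$ and $\Dm$ act diagonally on the Wiener--Itô decomposition provided by Theorem \ref{Chaos_R_th}. First I would fix $\Fm\in\DD$ (which is the natural setting for the left-hand side to make sense when $\tk=0$; for $\tk>0$ and $\Fm\in\L^2(\P)$, a regularisation argument using the exponential decay $e^{-n\tk}$ will put $\Pm_\tk\Fm$ into $\DD_0$ automatically) and write its chaotic expansion
\begin{equation*}
\Fm=\esp{\Fm}+\sum_{n\in\N}\Jt_n(f_n),\qquad f_n\in\L^2(\Xb)^{\circ n}.
\end{equation*}

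Next, I would combine the definition of the Ornstein--Uhlenbeck semi-group, which multiplies the $n$-th chaos by $e^{-n\tk}$, with the action \eqref{Gradient_def_eq} of $\Dm$ on a multiple integral. Applied termwise this gives
\begin{equation*}
\Dm_{(t,k)}\Pm_\tk\Fm
=\sum_{n\in\N}ne^{-n\tk}\Jt_{n-1}\bigl(f_n(\star,(t,k))\car_{\id{1}{n-1}^{n,<}}\bigr),
\end{equation*}
while applying $\Dm$ first and then $\Pm_\tk$ yields
\begin{equation*}
\Pm_\tk\Dm_{(t,k)}\Fm
=\sum_{n\in\N}ne^{-(n-1)\tk}\Jt_{n-1}\bigl(f_n(\star,(t,k))\car_{\id{1}{n-1}^{n,<}}\bigr).
\end{equation*}
Multiplying the second identity by $e^{-\tk}$ and comparing with the first immediately gives \eqref{Commutation_PtF_eq}.

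The only non-trivial point, which I view as the main obstacle, is to justify the interchange of the operators $\Dm$ and $\Pm_\tk$ with the infinite chaos sums. I would handle this via the isometry \eqref{isometry_eq}: writing $\Fm^{(N)}=\esp{\Fm}+\sum_{n=1}^N\Jt_n(f_n)$, the partial sums converge to $\Fm$ in $\DD$ because $\sum_n nn!\|f_n\|^2<\infty$ (or trivially for $\Fm\in\L^2(\P)$ and $\tk>0$, because $\sum_n nn!e^{-2n\tk}\|f_n\|^2\leqslant\sup_n(ne^{-2n\tk})\sum_n n!\|f_n\|^2<\infty$ puts $\Pm_\tk\Fm^{(N)}$ into $\DD_0$ uniformly in $N$). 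The closability of $\Dm$ from $\L^2(\P)$ to $\L^2(\P\otimes\nu)$ obtained in Corollary \ref{Closability_corollary}, together with the boundedness of $\Pm_\tk$ as a contraction on each chaos, then legitimises passing to the limit $N\to\infty$ in both expressions. Once this is done, the identity on cylindrical functionals extends by density to all of $\DD$, and the proof is complete.
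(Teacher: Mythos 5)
Your proof is correct, but it takes a genuinely different route from the paper's. You argue chaos by chaos on the Fock space: since $\Pm_\tk$ multiplies the $n$-th chaos by $e^{-n\tk}$ and, by \eqref{Gradient_def_eq}, $\Dm_{(t,k)}$ sends $\Jt_n(f_n)$ to $n\,\Jt_{n-1}\big(f_n(\star,(t,k))\car_{\id{1}{n-1}^{n,<}}\big)$, the elementary identity $n e^{-n\tk}=e^{-\tk}\cdot n e^{-(n-1)\tk}$ gives the commutation on each $\mathcal H_n$, and your closability/contraction argument justifies passing to the infinite sum. The paper instead verifies the identity on the dense class of Doléans exponentials $\Fm=\xi(h)$: it uses Mehler's formula (Proposition \ref{Mehler_formula_prop}) to write $\Pm_\tk\xi(h)=\xi(e^{-\tk}h)$ as an explicit infinite product, then computes $\Dm_{(s,k)}$ of that product directly through the difference-operator representation $\Dm=\Dm^{+}$ of Proposition \ref{Grad_difference_prop}, finding $e^{-\tk}g(s,k)\,\Pm_\tk\fk(\pi_s(\eta))$ for $\Dm_{(s,k)}\Pm_\tk\Fm$ and $g(s,k)\,\Pm_\tk\fk(\pi_s(\eta))$ for $\Pm_\tk\Dm_{(s,k)}\Fm$. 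Your version is more elementary and self-contained --- it needs nothing beyond the spectral definitions of the two operators and Corollary \ref{Closability_corollary} --- and your remark that $\Pm_\tk$ maps $\L^2(\P)$ into $\DD_0$ for $\tk>0$ is a useful regularising observation the paper does not record. The paper's computation, for its part, exhibits the commutation pathwise on exponential vectors and displays the interplay between the $\L^1$ (Mehler, difference operator) and $\L^2$ (chaos) descriptions that is the theme of Section 3. The one point to keep explicit in your write-up is that the right-hand side $\Pm_\tk\Dm\Fm$ only makes literal sense for $\Fm\in\DD$; for general $\Fm\in\L^2(\P)$ the identity should be read at the level of chaos expansions, exactly as you indicate.
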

\noindent
The commutation property induces several and useful corollary results gathered in the following statement. Unfortunately, even if $\Fm\in\dom\L$ such that $\Dm\Fm\in\L^1(\P)$, we can not state $\L\Fm=\widetilde\L\Fm$ $\P$-almost surely. Indeed, on the one hand, $\delta(\Dm\Fm)=\sum_{(t,k)\in\Xb}(\Dm_{(t,k)}\Fm)\D\Rm_{(t,k)}$ whereas if  $\Dm\Fm\in\L^1(\P)$, $\tilde\delta(\Dm\Fm)=\sum_{(t,k)\in\Xb}(\Dm_{(t,k)}\Fm)\D\Zm_{(t,k)}$. Nevertheless, follows from \eqref{matrix_basis_eq} that
\begin{equation}\label{L_widetildeL_eq}
\L\Fm=-\sum_{(t,k)\in\Xb}(\Dm_{(t,k)}\Fm)\sum_{\l\in\E}\mathfrak m_{k\l}^{-1}\D\Zm_{(t,\l)}=\sum_{(t,\l)\in\Xb}\sum_{k\in\E}\mathfrak m_{k\l}^{-1}(\Dm_{(t,k)}\Fm)\D\Zm_{(t,\l)}=:\widetilde\L\widetilde\Fm,
\end{equation}
where $\widetilde\Fm$ is a square-integrable random variable such that $\Dm_{(t,\l)}\widetilde\Fm=\sum_{k\in\E}\mathfrak m_{k\l}^{-1}(\Dm_{(t,k)}^+\Fm)$ for any $(t,\l)\in\Xb$. This is well and uniquely defined provided $\mathbf E[\widetilde\Fm]$ is given; indeed as a consequence of Clark formula (see forthcoming section 4), the knowledge of $(\Dm_{(t,k)}\Fm,\, (t,k)\in\Xb)$ and $\esp{\Fm}$ provides the expression of $\Fm$ $\P$-almost surely. 
\begin{corollary}\label{Inverse_L_cor}
For any $\Fm\in\L^2(\P)$ such that $\esp{\Fm}=0$,
\begin{equation}\label{Inverse_L_eq}
\L^{-1}\Fm=-\int_{0}^{\infty}\Pm_{\tk}\Fm\,\d\tk,\qquad \P\otimes\nu-\mathrm{a.e.}
\end{equation}
Moreover,
\begin{equation}\label{Inverse_L2_eq}
-\Dm\L^{-1}\Fm=\int_{0}^{\infty}e^{-\tk}\Pm_{\tk}\Dm\Fm\,\d \tk, \qquad \P\otimes\nu-\mathrm{a.e.}
\end{equation}
\end{corollary}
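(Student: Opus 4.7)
The plan is to derive both identities directly from the action of $(\Pm_\tk)_{\tk\in\R_+}$ on the chaos decomposition, establishing them first on a single chaos and then extending to $\L^2(\P)$ by isometry and closability.

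For the first identity, I would start with $\Fm=\Jt_n(f_n)$ for $n\pg 1$. The spectral definition of the semigroup gives $\Pm_\tk \Jt_n(f_n)=e^{-n\tk}\Jt_n(f_n)$, whence $\int_0^{\infty}\Pm_\tk\Jt_n(f_n)\,\d\tk=\tfrac{1}{n}\Jt_n(f_n)=-\L^{-1}\Jt_n(f_n)$ by \eqref{inverseL}. For a general centred $\Fm=\sum_{n\pg 1}\Jt_n(f_n)$, the isometry \eqref{isometry_eq} yields $\|\Pm_\tk\Fm\|_{\L^2(\P)}^2=\sum_n e^{-2n\tk}n!\|f_n\|_{\L^2(\Xb,\tilde\nu)^{\circ n}}^2\pp e^{-2\tk}\|\Fm\|_{\L^2(\P)}^2$, so that $\tk\mapsto\Pm_\tk\Fm$ is Bochner-integrable in $\L^2(\P)$ over $\R_+$. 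Interchanging the sum and the integral chaos by chaos (legitimate by dominated convergence) then produces $-\sum_n\tfrac{1}{n}\Jt_n(f_n)$, which is exactly $\L^{-1}\Fm$.

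For the second identity I would apply $\Dm$ to both sides of (1) and use the commutation relation \eqref{Commutation_PtF_eq} to rewrite $\Dm\Pm_\tk\Fm=e^{-\tk}\Pm_\tk\Dm\Fm$. The task is then to justify passing $\Dm$ inside the Bochner integral. Closability of $\Dm$ (Corollary \ref{Closability_corollary}) combined with the contractivity bound \eqref{Contractivity_PtF_eq} applied to $\Dm\Fm$ provides the required control: the estimate $\int_0^{\infty}\|e^{-\tk}\Pm_\tk\Dm\Fm\|_{\L^2(\P\otimes\tilde\nu)}\,\d\tk\pp\|\Dm\Fm\|_{\L^2(\P\otimes\tilde\nu)}$ ensures that the right-hand side of (2) exists as a Bochner integral in $\L^2(\P\otimes\tilde\nu)$, and the equality follows by taking $\L^2$-limits of Riemann partial sums of $\int_0^M\Pm_\tk\Fm\,\d\tk$, to each of which $\Dm$ may be applied termwise.

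The main obstacle is the rigorous justification of these two interchanges. Both are handled by the same two-step scheme: verify the identities first on cylindrical $\Fm$ with a finite chaos expansion (where everything reduces to a finite sum) and then extend by the exponential decay of $\Pm_\tk$ on centred functionals. A subtlety worth mentioning is that, for (2), the statement implicitly requires $\Fm\in\DD$ so that $\Dm\Fm$ be meaningful, whereas for (1) square integrability and centredness suffice, since $\L^{-1}\Fm$ automatically belongs to $\DD$ by the chaos computation $\|\Dm\L^{-1}\Fm\|_{\L^2(\P\otimes\tilde\nu)}^2=\sum_n\tfrac{(n-1)!}{n}\|f_n\|^2\pp\|\Fm\|_{\L^2(\P)}^2$.
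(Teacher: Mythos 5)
Your proposal is correct and follows essentially the same route as the paper: the first identity is obtained by letting the semigroup act spectrally on each chaos ($\int_0^\infty e^{-n\tk}\,\d\tk=1/n$) and then passing to the limit in $\L^2(\P)$ using the isometry to control the tail of the expansion, and the second is obtained by applying $\Dm$ to the first and invoking the commutation property \eqref{Commutation_PtF_eq} together with contractivity \eqref{Contractivity_PtF_eq} to justify the interchange. Your closing remark about the implicit requirement that $\Dm\Fm$ be meaningful for \eqref{Inverse_L2_eq} is a fair observation that the paper leaves tacit.
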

\begin{remark}
The combination of Corollary \ref{Inverse_L_cor} with the contraction property of $(\Pm_\tk)_{t\in\R_+}$ enables to bound $\Dm\L^{-1}\Fm$  with respect to the norm of $\Dm\Fm$: 
$\;\|\Dm\L^{-1}\Fm\|_{\L^2(\P\otimes\nu)}\pp  \|\Dm\Fm\|_{\L^2(\P\otimes\nu)}$.
\end{remark}

\begin{remark}\label{E_singleton_remark}
In that case where $\E$ is a singleton, i.e. $\eta$ is a simple binomial process, we have $\D\Rm_t=\D\Zm_t$ for any $t\in\N$, so that $\widetilde\L=\L$ and as a result $\widetilde\Gamma=\Gamma$ $\P$-almost surely (by letting $\Gamma(\Fm,\Gm)=1/2[\L(\Fm\Gm)-\Fm(\L\Gm)-\Gm(\L\Fm)]$, for $\Fm,\Gm\in\dom\L$). We retrieve thus the coincidence of $\L^1$ operators and Malliavin's ones within the remarkable association of $\L^1$ and $\L^2$ theories, as well as the natural link between Malliavin and Gamma calculus on the paradigm of $\L^2$ framework; both are highlighted in the Poisson case by D\"obler and Peccati in \cite{DoblerPeccati2018}.
\end{remark}
\noindent
As a conclusion, the combination of $\L^1$ and $\L^2$ theories is embodied by the existence of a $\L^1$-correspondence $(\Dm^+,\tilde\delta,\widetilde\L,(\Pm_\tk)_\tk)$ to the tuple of random objects $(\Dm,\delta,\L,(\Pm_\tk)_\tk)$ that equips the space $(\mathfrak N_\Xb,\F,\P)$. Moreover, each element of $(\Dm,\delta,\L,(\Pm_\tk)_\tk)$  can be linked to one another of the tuple through one or a combination of the following identities and properties: the generalised integration by parts formula $\eqref{IPP_gen_eq}$, the identity $\L=-\delta\Dm$,  the generation of the semi-group $(\Pm_\tk)_{\tk}$ by $\L$, making it a Markov-Malliavin unified structure.
\subsection{Comparison with pre-existing theories in the Poisson and Rademacher settings}
It seems reasonable to ask: if we let the sequence $\V$ is deterministic constant equal to $1$ (respectively $\lambda=1$ and $\E=\{-1,1\}$), can we retrieve some element of stochastic analysis for Poisson processes on the real line (respectively Rademacher processes)?\\
\noindent  
Considering first the case where the sequence $\V$ be deterministic constant equal to $1$ leads to define the orthogonal family  $\mathcal Z^{\Pm}=\{\Delta \Zm_{t}^{\Pm}\,;\, t\in\N\}$ by
$\Delta \Zm_{t}^{\Pm}=\car_{\{\Delta \Nm_t=1\}}-\lambda,$ and the stochastic integral defined as the application $\J^\Pm_1 \,:\, f\in\L^2(\N,\lambda\#)\mapsto \J^\Pm_1(f)$ ($\#$ is the counting measure). The process $\eta$ is then the discrete analogue of the standard Poisson process on the real line. The gradient reads for $\Fm\in\dom\Dm^\Pm$ of representative $\fk$,
\begin{equation}\label{Gradient_Poisson_analogue_eq}
\Dm_{t}^{\Pm}\Fm= \fk(\pi_t(\eta )+\delta_{t})-\fk(\pi_t(\eta )),
\end{equation}
which is - up to a constant - a reminiscent of the gradient used by Decreusefond and Flint \cite{Decreusefondflint} on the Poisson space that is written (with corresponding notations): $\fk(\eta+\delta_{t})-\fk(\eta-\delta_{t}).$ Nevertheless the operator $\Dm^\Pm$ is different from the usual one for Poisson processes on the real line:  $\nabla_{t}\Fm=\fk(\eta+\delta_{t})-\fk(\eta)$. This definition is not suitable in the present context. Indeed, as stated by N. Privault (see \cite{Privault_stochastica}, proof of the proposition 6.4.7),
\begin{equation}\label{Annihilation_difference_eq}
\nabla_{t}\Jt_n(f_n)=\car_{\{t\notin\eta\}}\Jt_{n-1}(f_{n-1}(\star,t)),
\end{equation}
which is $\P$-almost surely equal to $\Jt_{n-1}(f_{n-1}(\star,t))$ since the intensity measure is diffuse in the Poisson case. This does not hold in our framework; the definition of the gradient \eqref{Gradient_Poisson_analogue_eq} is thus justified to guarantee \eqref{Annihilation_difference_eq}, in order to make the difference ($\Dm^+$) and annihilation ($\Dm$) operators coincide and thereby combine $\L^1$ and $\L^2$ theories.\\

\noindent 
Let now $\lambda=1$ and $\E=\{-1,1\}$. Basically, that means that the underlying binomial process jumps every time step. A Rademacher process $(\Xm_t)_{t\in\N}$ can be defined by letting $\Xm_t=\V_t\in\{-1,1\}$ and  $\Y_t:=(2pq)^{-1/2}(\D\Zm_{(t,1)}+\D\Zm_{(t,-1)})=(2pq)^{-1/2}(\Xm_t-p+q)$ where the $\D\Zm_{t,\cdot}$ are defined as usual by \eqref{Defintion_DZ_eq} and $p:=\P(\Xm_t=1)=1-q$. Thus $(\Y_t)_{t\in\N}$ is a $\F$-(normal) martingale. By properly defining the function $g$ on $\L^2(\P)$ such that $\Fm:=\Fm(\Xm_1,\dots,\Xm_T)=g(\sqrt{2pq}\Y_T)$ and $\overline\Dm_t\Fm:=\Dm_{(t,1)}g(\sqrt{2pq}\Y_t)-\Dm_{(t,-1)}g(\sqrt{2pq}\Y_t)$, we get
\begin{equation*}
\overline\Dm_t\Y_s=\frac{2}{\sqrt{2pq}}\car_{\{t\}}(s)=\sqrt{\frac{2}{pq}}\widehat\Dm\Y_s,
\end{equation*}
that is - up to a constant - the expression of the gradient $\widehat\Dm$ defined on the Rademacher space (see for instance Privault \cite{Privault_stochastica}, Proposition 1.6.2). All identities and formulas, such as the Clark formula and the predictable representation (see Privault \cite{Privault_stochastica}, chapter 1), are inherited by construction.

\section{Functional identities}\label{Functional_identities_sec}

\noindent
In this section we derive some functional identities from our formalism. In fact, we can get the analogues of almost all identities existing in the Wiener or Poisson spaces that use the similar "Markov-Malliavin" structure. In a perspective of the forthcoming applications in the trinomial model we have chosen to focus on and present only two of them: Girsanov theorem and the Clark formula (and some corollaries). 

\subsection{Girsanov theorem}

\noindent
We provide our construction with the analogue of Girsanov theorem, which is reminiscent of  that stated for compound Poisson processes (see Privault \cite{Privault2018jump}, Theorem 15.11).

\begin{theorem}[Girsanov theorem]\label{Girsanov_th}
Let $T\in\N$ and $\widetilde\P$ be a probability measure equivalent to $\P$ on $\f{T}$. Then, there exist $\tilde\lambda\in(0,1)$ and a measure $\widetilde\Q$ on $\E$ such that $\widetilde\P$ is of compensator $\nu_{\widetilde\P}:=\tilde\lambda\#\otimes\widetilde\Q$. Moreover, for any $t\in\id{1}{T}$,
\begin{equation*}
\frac{\d\widetilde\P}{\d\P}\Big|_{\f{t}}=\xi_t(h),
\end{equation*}
where, if $\E=\{k^i,\, i\in\Z\}$, $h$  is the element of $\L^2(\Xb)$ such that 
$\Jt_1(h)=\Jt_1(g\,;\mathcal Z)$ with
\begin{equation}\label{Girsanov_drift_eq}
g{(t,k^i)}=\Big(\frac{\tilde\lambda\widetilde\Q(\{k^i\})}{\lambda \Q(\{k^i\})}-\frac{1-\tilde\lambda}{1-\lambda}\Big),
\end{equation}
for all $(t,i)\in\id{1}{T}\times\Z$.
\end{theorem}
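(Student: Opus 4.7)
The plan is to proceed in two stages: first represent the Radon--Nikodym density as a discrete Doléans exponential via the predictable representation of Section \ref{Functional_identities_sec}, and second recover $(\tilde\lambda,\widetilde\Q,g)$ from the hypothesis that $\eta$ has compensator $\tilde\lambda\#\otimes\widetilde\Q$ under $\widetilde\P$.

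Set $L_T:=\d\widetilde\P/\d\P|_{\f{T}}$ and $L_t:=\esp{L_T\,|\,\f{t}}$ for $t\in\id{0}{T}$. Equivalence makes each $L_t$ strictly positive $\P$-a.s., and since $\f{T}$ is generated by the finitely many increments $\{(\D\Nm_s,\Wm_s),\,s\pp T\}$, the density $L_T$ may be assumed in $\L^2(\P)$ (truncate $L_T\wedge M$ and pass to the limit in $\L^1(\P)$ otherwise). The Clark formula, stated in Section \ref{Functional_identities_sec}, applied to $L_T$ yields a predictable representation $L_T=1+\sum_{(t,k)\in\Xb_T}\phi_{(t,k)}\D\Rm_{(t,k)}$, which combined with the positivity of $L$ and Proposition \ref{Doleans_prop} recasts $L$ as the discrete Doléans exponential
\[
L_t=\prod_{s=1}^{t}\Big(1+\sum_{k\in\E}g(s,k)\bigl(\car_{\{(s,k)\in\eta\}}-\lambda\Q(\{k\})\bigr)\Big),
\]
for some $g\in\L^2(\Xb)$ satisfying $\Jt_1(g\,;\mathcal Z)=\Jt_1(h)$.

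The function $g$ is then pinned down by matching one-step conditional laws. Bayes' rule expresses the $\f{t-1}$-conditional probability of $\{(\D\Nm_t,\Wm_t)=(1,k^i)\}$ under $\widetilde\P$ as $\lambda\Q(\{k^i\})$ times $L_t/L_{t-1}$ on that event, which from the Doléans form equals $1+g(t,k^i)-C$ with $C:=\sum_{j}g(t,k^j)\lambda\Q(\{k^j\})$; the compensator hypothesis then forces this probability to equal $\tilde\lambda\widetilde\Q(\{k^i\})$. The analogous computation on the no-jump event $\{\D\Nm_t=0\}$ gives $1-\tilde\lambda=(1-C)(1-\lambda)$, so $C=1-(1-\tilde\lambda)/(1-\lambda)$, and substituting back produces
\[
\frac{\tilde\lambda\widetilde\Q(\{k^i\})}{\lambda\Q(\{k^i\})}=g(t,k^i)+\frac{1-\tilde\lambda}{1-\lambda},
\]
which is precisely \eqref{Girsanov_drift_eq}. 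The same identity simultaneously shows that $g(t,k^i)$ is independent of $t$ and that $(\tilde\lambda,\widetilde\Q)$ is uniquely determined by the one-step marginals of $\widetilde\P$; existence follows from the compensator assumption, which imposes stationarity of these marginals.

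The main obstacle is the Doléans step: translating the additive predictable representation of $L_T$ furnished by Clark's formula into the multiplicative form of Proposition \ref{Doleans_prop} amounts to iterating the ratios $L_t/L_{t-1}$ along the natural filtration and recognising the result as a $\mathcal Z$-driven exponential, with the algebraic change of basis between $\mathcal R$ and $\mathcal Z$ encoded by \eqref{matrix_basis_eq}. Strict positivity of each factor $1+\sum_k g(t,k)\D\Zm_{(t,k)}$ is automatic from $\widetilde\P\sim\P$, and the square-integrability required to invoke Clark's formula can be obtained, if necessary, by a truncation argument followed by a limiting passage.
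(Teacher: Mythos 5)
Your second stage is essentially the paper's own computation in disguise: the paper identifies $\tilde\lambda=1-\beta$ and $\widetilde\Q(\{k\})=\alpha_k/\tilde\lambda$ from the one-step marginal and then verifies directly, via $\car_{\{(s,k^i)\in\eta\}}=\D\Zm_{(s,k^i)}+\lambda\Q(\{k^i\})$ and the identity $\sum_i g(s,k^i)\lambda\Q(\{k^i\})=1-(1-\tilde\lambda)/(1-\lambda)$, that the product of one-step densities equals $\xi_t(h)$. Your Bayes matching of the conditional laws on $\{(\D\Nm_t,\Wm_t)=(1,k^i)\}$ and on $\{\D\Nm_t=0\}$ reproduces exactly this algebra and correctly yields \eqref{Girsanov_drift_eq}, so that part is sound.

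There is, however, a genuine gap in the logical architecture. The theorem asserts, as a \emph{conclusion}, that for any equivalent $\widetilde\P$ there exist $\tilde\lambda$ and $\widetilde\Q$ making $\nu_{\widetilde\P}=\tilde\lambda\#\otimes\widetilde\Q$ the compensator; you take this as a \emph{hypothesis} ("recover $(\tilde\lambda,\widetilde\Q,g)$ from the hypothesis that $\eta$ has compensator $\tilde\lambda\#\otimes\widetilde\Q$ under $\widetilde\P$") and then close the circle with "existence follows from the compensator assumption." Nothing in your argument produces $\tilde\lambda$ and $\widetilde\Q$ from $\widetilde\P$ alone; the paper at least constructs them explicitly from the one-step marginal $\beta\delta_{\mathbf 0}+\sum_k\alpha_k\delta_{(1,k)}$. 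The same missing ingredient undermines your "main obstacle" step: the Clark formula gives $L_T=1+\sum_{(t,k)}\phi_{(t,k)}\D\Rm_{(t,k)}$ with a \emph{predictable, random} integrand, and positivity of $L$ only yields $L_t=L_{t-1}\bigl(1+\sum_k g_t(\omega,k)\D\Zm_{(t,k)}\bigr)$ with $g_t$ an $\f{t-1}$-measurable random function. Proposition \ref{Doleans_prop} applies to exponential vectors $\xi(h)$ with \emph{deterministic} $h$, so it cannot be invoked until one has shown that the one-step conditional density of $\widetilde\P$ is a deterministic function of the increment $(\D\Nm_t,\Wm_t)$ — which is precisely the product/stationarity structure you assumed rather than proved. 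To repair the proof you would either have to restrict the class of $\widetilde\P$ (as the paper implicitly does via its induction "since the increments are i.i.d."), or show directly that the conditional one-step densities $L_t/L_{t-1}$ are deterministic functions of the current increment, which is the real content of the first assertion.
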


\begin{corollary}\label{Girsanov_cor}
Let $\tilde\lambda\in(0,1)$ and $\widetilde\Q$ be a measure  on $\E$. Let $\varphi$ be the function defined on $\E$ 
\begin{equation}\label{Girsanov_varphi_eq}
\varphi=\frac{\tilde\lambda(1-\lambda)}{\lambda(1-\tilde\lambda)}\frac{\sum_{k\in\E}\widetilde \Q(\{k\})\,\car_{\{k\}}}{ \sum_{k\in\E} \Q(\{k\})\,\car_{\{k\}}}-1,
\end{equation}
Then, under the probability measure $\widetilde\P$ such that
\begin{equation*}
\d\widetilde\P=\left(\frac{1-\tilde\lambda}{1-\lambda}\right)^T\prod_{s=1}^{\Nm_T}(1+\varphi(\V_s))\,\d\P,
\end{equation*}
the process $\Y$ defined by \eqref{Compound_pp_def_eq}
is a compound binomial process on $\Xb_T$ of intensity measure $\nu_{\widetilde\P}:=\tilde\lambda\#\otimes\widetilde\Q$.
\end{corollary}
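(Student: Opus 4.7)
The strategy is to invoke Theorem \ref{Girsanov_th} with drift $g$ given by \eqref{Girsanov_drift_eq} and to check that the associated Doléans exponential $\xi_T(h)$ can be written in the explicit multiplicative form appearing in the corollary. Once this identification is made, the conclusion that $\Y$ becomes a compound binomial process of intensity $\tilde\lambda\#\otimes\widetilde\Q$ under $\widetilde\P$ is immediate from the theorem.

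By Proposition \ref{Doleans_prop}, the Doléans exponential admits the product representation
\[
\xi_T(h) = \mathbf E[\xi_T(h)] \prod_{t=1}^T\Bigl(1 + \sum_{k\in\E} g(t,k)\bigl(\car_{\{(t,k)\in\eta\}} - \lambda\Q(\{k\})\bigr)\Bigr).
\]
I would first check that the normalising constant equals $1$: the $T$ time blocks in the product are independent and each factor has mean $1$ because $\mathbf E[\car_{\{(t,k)\in\eta\}}]=\lambda\Q(\{k\})$, hence $\mathbf E[\xi_T(h)]=1$. I would then analyse each factor by conditioning on whether a jump occurs at time $t$. In the no-jump case $\Delta\Nm_t=0$ the indicators vanish and a direct computation using $\sum_k\Q(\{k\})=\sum_k\widetilde\Q(\{k\})=1$ yields
\[
1-\sum_{k\in\E}g(t,k)\lambda\Q(\{k\}) = 1-\tilde\lambda+\frac{(1-\tilde\lambda)\lambda}{1-\lambda} = \frac{1-\tilde\lambda}{1-\lambda}.
\]
In the jump case $\Delta\Nm_t=1$ with mark $\V_{\Nm_t}=k^i$, the indicator $\car_{\{(t,k)\in\eta\}}$ collapses to $\car_{\{k=k^i\}}$, and combining with the above simplification gives
\[
1+g(t,k^i)-\sum_{k\in\E}g(t,k)\lambda\Q(\{k\}) = \frac{\tilde\lambda\,\widetilde\Q(\{k^i\})}{\lambda\,\Q(\{k^i\})} = \bigl(1+\varphi(k^i)\bigr)\cdot\frac{1-\tilde\lambda}{1-\lambda},
\]
the last equality being a direct rewriting via \eqref{Girsanov_varphi_eq}.

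Multiplying the $T-\Nm_T$ no-jump factors with the $\Nm_T$ jump factors (and reindexing the latter by the jump rank $s=1,\dots,\Nm_T$, so that $\V_{\Nm_{t_s}}=\V_s$) produces
\[
\xi_T(h)=\Bigl(\frac{1-\tilde\lambda}{1-\lambda}\Bigr)^T\prod_{s=1}^{\Nm_T}\bigl(1+\varphi(\V_s)\bigr),
\]
which is the density announced in the corollary. Theorem \ref{Girsanov_th} then guarantees that the probability measure $\widetilde\P$ defined by this density on $\f T$ is equivalent to $\P$ and admits compensator $\nu_{\widetilde\P}=\tilde\lambda\#\otimes\widetilde\Q$, so that $\Y$ is a compound binomial process on $\Xb_T$ of intensity $\nu_{\widetilde\P}$ under $\widetilde\P$.

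The only real obstacle is the algebraic bookkeeping: one has to collect correctly the "no-jump" and "jump" contributions and exploit the cancellation that produces the constant prefactor $((1-\tilde\lambda)/(1-\lambda))^T$. The hypothesis that $\widetilde\Q$ is equivalent to $\Q$ (implicit in $\widetilde\P\sim\P$) ensures that $\varphi$ is well-defined on the support of $\Q$ and that the density stays strictly positive, so no integrability or sign issue arises.
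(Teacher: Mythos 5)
Your algebra is correct and your route is genuinely different from the paper's. The paper does not go through Theorem \ref{Girsanov_th} at all: it verifies the conclusion head-on by computing the generating function $\widetilde{\mathbf E}[s^{\Y_t}]$, conditioning on $\{\Nm_t=n\}$, using the binomial law of $\Nm_t$ and the independence of the marks, and watching the constants telescope to $\big(1-\tilde\lambda+\tilde\lambda\widetilde{\mathbf E}_{\Q}[s^{\V}]\big)^t$, which is the generating function of a compound binomial of parameters $(\tilde\lambda,\widetilde\Q)$. You instead identify the announced density with the Dol\'eans exponential $\xi_T(h)$ attached to the drift \eqref{Girsanov_drift_eq}; your case analysis is exactly the right computation (no-jump factor $\tfrac{1-\tilde\lambda}{1-\lambda}$, jump factor $\tfrac{\tilde\lambda\widetilde\Q(\{k^i\})}{\lambda\Q(\{k^i\})}=(1+\varphi(k^i))\tfrac{1-\tilde\lambda}{1-\lambda}$) and it does recover the product form of the corollary; in fact it is essentially the last display of the paper's proof of Theorem \ref{Girsanov_th} read backwards. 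What your approach buys is an explanation of where the density comes from; what the paper's buys is self-containment and a direct check of the law of $\Y$ rather than of the compensator of $\eta$. One caveat you should address: Theorem \ref{Girsanov_th} is stated in the direction ``given an equivalent $\widetilde\P$, there \emph{exist} $(\tilde\lambda,\widetilde\Q)$ with the stated compensator and density,'' so invoking it for the converse requires a word on why the parameters the theorem would extract from \emph{your} $\widetilde\P$ are the $(\tilde\lambda,\widetilde\Q)$ you started from. This is a one-line marginal computation from the one-step factor of the density, namely $\widetilde\P\big((1,k)\in\eta\big)=\tfrac{\tilde\lambda\widetilde\Q(\{k\})}{\lambda\Q(\{k\})}\cdot\lambda\Q(\{k\})=\tilde\lambda\widetilde\Q(\{k\})$ and $\widetilde\P\big(\eta(\{1\}\times\E)=0\big)=\tfrac{1-\tilde\lambda}{1-\lambda}\cdot(1-\lambda)=1-\tilde\lambda$, combined with the factorisation of the density across time steps (which gives independence of the increments under $\widetilde\P$, and hence the compound binomial structure of $\Y$ rather than just its one-dimensional marginals). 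With that remark added, your argument is complete.
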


\begin{remark}
 The perturbations described by the \textit{shift space} in Gaussian analysis (Cameron-Martin space for the Brownian motion in particular) act here on what characterizes the jumps: their occurrence and their height, respectively parametrised by $\lambda$ and $\Q$. The similar phenomenon is observed in the Poisson space.
\end{remark}

\subsection{Clark formula and corollaries}
The Brownian martingale representation theorem says that a martingale adapted to the filtration of a Brownian motion is in fact a stochastic integral. The Clark formula gives the expression of the integrand of this stochastic integral in terms of the Malliavin gradient of the terminal value of the martingale. We here have the analogue formula.
\begin{proposition}[Clark formula]\label{Clark_formula_prop} For any $\Fm\in\L^2(\P)$,
\begin{equation}\label{Clark_formula_eq}
\Fm=\esp{\Fm}+\sum_{(t,k)\in\Xb}\mathbf E\big[\Dm_{(t,k)} \Fm\,|\,\f{t-1}\big]\,\D\Rm_{(t,k)}.
\end{equation}
\end{proposition}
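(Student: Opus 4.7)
The plan is to apply the chaos expansion and verify the identity chaos by chaos. By Theorem~\ref{Chaos_R_th}, write $\Fm = \esp{\Fm} + \sum_{n\in\N} \Jt_n(f_n)$ in $\L^2(\P)$, so it suffices to prove the Clark identity for each $\Jt_n(f_n)$ and then pass to the limit through the orthogonal sum.

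For the chaos-wise identity, I would start from the time-ordered representation of the multiple integral,
\begin{equation*}
\Jt_n(f_n) = n!\sum_{(\mathbf t_n,\mathbf k_n)\in\Xb^{n,<}} f_n(\mathbf t_n,\mathbf k_n)\prod_{i=1}^{n}\D\Rm_{(t_i,k_i)},
\end{equation*}
and isolate the largest time index $(t_n,k_n)=(t,k)$. The remaining $(n-1)$-fold ordered sum has all time indices in $\id{1}{t-1}$, and therefore equals $n\,\Jt_{n-1}\bigl(f_n(\star,(t,k))\car_{\id{1}{t-1}^{n-1}}\bigr)$, which is $\f{t-1}$-measurable by construction. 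On the other hand, the gradient definition \eqref{Gradient_def_eq} together with the conditional integral Lemma~\ref{Cond_int_lem} yields
\begin{equation*}
\esp{\Dm_{(t,k)}\Jt_n(f_n)\,\big|\,\f{t-1}} = n\,\Jt_{n-1}\bigl(f_n(\star,(t,k))\car_{\id{1}{t-1}^{n-1}}\bigr),
\end{equation*}
so the two expressions coincide and the chaos-wise Clark formula holds.

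To conclude, I would pass to an arbitrary $\Fm\in\L^2(\P)$ by exploiting the isometry structure. The (conditional) isometry \eqref{isometry_proc_eq} together with orthogonality of the chaoses shows that both the map $\Fm \mapsto \Fm - \esp{\Fm}$ and the map $\Fm \mapsto \sum_{(t,k)}\esp{\Dm_{(t,k)}\Fm\mid\f{t-1}}\D\Rm_{(t,k)}$ are isometries from $\L^2(\P)$ (modulo constants) into the space of predictable processes in $\L^2(\P\otimes\tilde\nu)$, with common squared norm $\sum_n n!\,\|f_n\|^2_{\L^2(\Xb,\tilde\nu)^{\circ n}}$. Since the two maps agree on the dense subspace $\cyl$ of cylindrical functionals, they agree on all of $\L^2(\P)$, and the identity follows.

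The main obstacle is twofold: first, the bookkeeping around the indicator $\car_{\id{1}{t-1}^{n-1}}$ in the gradient formula, which must be tracked carefully to ensure that the integrand extracted by isolating the last time index in $\Xb^{n,<}$ is exactly the predictable projection appearing on the right-hand side; second, giving a clean meaning to $\esp{\Dm_{(t,k)}\Fm\mid\f{t-1}}$ for $\Fm$ outside the gradient domain $\DD_0$, which is why the final extension by $\L^2$-isometry (rather than a pointwise argument) is needed.
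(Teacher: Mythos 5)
Your proposal is correct and follows essentially the same route as the paper's proof: expand $\Fm$ in chaos, isolate the largest time index in each $\Jt_n(f_n)$ (equivalently, use the recursion \eqref{Jn_mult_int_rec_eq}), identify the resulting $\f{t-1}$-measurable integrand with $\esp{\Dm_{(t,k)}\Jt_n(f_n)\,|\,\f{t-1}}$ via the gradient definition \eqref{Gradient_def_eq} and Lemma \ref{Cond_int_lem}, and extend to all of $\L^2(\P)$ by density of $\cyl$ together with the norm-one boundedness of $\Fm\mapsto\esp{\Dm_\cdot\Fm\,|\,\f{\cdot-1}}$ (your isometry argument is the same extension step phrased slightly differently).
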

\begin{remark}\label{Boundness_D_rem}
The operator $\Fm\in\L^2(\P)\mapsto \big(\mathbf E[\Dm_{(t,k)}\Fm\,|\,\f{t-1}],\, (t,k)\in\Xb\big)$ is bounded with norm equal to $1$. Indeed, from \eqref{Clark_formula_eq} together with the isometry property \eqref{isometry_eq},
\begin{equation*}
\big\|\mathbf E[\Dm_{\cdot}\Fm\,|\,\f{\cdot-1}]\big\|_{\L^2(\P\otimes\tilde\nu)}=\big\|\Fm-\mathbf E[\Fm]\big\|_{\L^2(\P)}\pp \big\|\Fm-\mathbf E[\Fm]\big\|_{\L^2(\P)}^2+ \big(\mathbf E[\Fm]\big)^2=\|\Fm\|_{\L^2(\P)}^2,
\end{equation*}
with equality in case $\Fm=\Jt_1(f_1)$ for some $f_1\in\L^2(\Xb)$.
\end{remark}
\noindent
As a direct consequence of Lemma \ref{Cond_int_lem} and Clark formula \eqref{Clark_formula_eq} we get the following corollary. 
\begin{corollary}\label{Clark_conditionnal_lem}
For any $t\in\N$ and $\Fm\in\L^2(\P)$,
\begin{equation}\label{Clark_conditionnal_eq}
\Fm=\mathbf E\big[\Fm|\f{t}\big]+\sum_{s\pg t+1}\sum_{k\in\E}\mathbf E\big[\Dm_{(s,k)}\Fm\,|\,\f{s-1}\big]\D\Rm_{(s,k)}
\end{equation}
\end{corollary}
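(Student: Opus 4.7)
The plan is to derive the identity by applying Clark's formula (Proposition \ref{Clark_formula_prop}) to $\Fm - \mathbf{E}[\Fm|\f{t}]$ and then using Lemma \ref{Cond_int_lem} to control how the Malliavin gradient interacts with the conditional expectation on $\f{t}$. Since $\esp{\Fm - \mathbf{E}[\Fm|\f{t}]}=0$, Clark's formula gives
\begin{equation*}
\Fm - \mathbf{E}[\Fm|\f{t}] = \sum_{(s,k) \in \Xb}\mathbf{E}\bigl[\Dm_{(s,k)}(\Fm - \mathbf{E}[\Fm|\f{t}]) \,\big|\, \f{s-1}\bigr]\,\D\Rm_{(s,k)},
\end{equation*}
so it suffices to show that the summand coincides with $\mathbf{E}[\Dm_{(s,k)}\Fm \,|\, \f{s-1}]$ for $s \pg t+1$ and vanishes for $s \pp t$.

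Next, I would start from the chaotic expansion $\Fm = \esp{\Fm} + \sum_{n\in\N}\Jt_n(f_n)$. By Lemma \ref{Cond_int_lem}, the conditional expectation truncates the kernels: $\mathbf{E}[\Fm|\f{t}] = \esp{\Fm} + \sum_{n}\Jt_n(f_n\,\car_{\id{1}{t}^n})$. Applying the annihilation formula \eqref{Gradient_def_eq} termwise, the Malliavin derivative $\Dm_{(s,k)}\Fm$ has kernels of the form $n\,f_n(\star,(s,k))$ (with the appropriate time-ordering indicator), while $\Dm_{(s,k)}\mathbf{E}[\Fm|\f{t}]$ carries the additional factor $\car_{\id{1}{t}}(s)\,\car_{\id{1}{t}^{n-1}}(\star)$.

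The two regimes now follow by direct inspection. For $s \pg t+1$ the factor $\car_{\id{1}{t}}(s)$ vanishes, so $\Dm_{(s,k)}\mathbf{E}[\Fm|\f{t}] = 0$ and the Clark summand becomes exactly $\mathbf{E}[\Dm_{(s,k)}\Fm\,|\,\f{s-1}]\D\Rm_{(s,k)}$. For $s \pp t$, taking conditional expectation with respect to $\f{s-1}$ inserts, via Lemma \ref{Cond_int_lem}, the further factor $\car_{\id{1}{s-1}^{n-1}}(\star)$; since $s-1 \pp t-1 < t$, this factor absorbs $\car_{\id{1}{t}^{n-1}}(\star)$, which yields $\mathbf{E}[\Dm_{(s,k)}\Fm\,|\,\f{s-1}] = \mathbf{E}[\Dm_{(s,k)}\mathbf{E}[\Fm|\f{t}]\,|\,\f{s-1}]$ and the summand drops. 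Plugging these two computations back into the Clark expansion of $\Fm - \mathbf{E}[\Fm|\f{t}]$ gives the claimed identity.

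The only real difficulty is bookkeeping of the time-restriction indicators $\car_{\id{1}{s-1}}$ and $\car_{\id{1}{t}}$ while interchanging $\Dm$ and conditional expectation; once the comparison $\car_{\id{1}{s-1}}\pp\car_{\id{1}{t}}$ for $s\pp t$ is isolated, the two kernel identifications fall out immediately and no further calculation is needed.
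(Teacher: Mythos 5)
Your argument is correct and uses exactly the two ingredients the paper invokes — the Clark formula of Proposition \ref{Clark_formula_prop} and the kernel-truncation Lemma \ref{Cond_int_lem} — which is precisely how the paper justifies this corollary (it gives no further proof, calling it a direct consequence of those two results). Your bookkeeping of the indicators $\car_{\id{1}{t}}$ and $\car_{\id{1}{s-1}}$ in the two regimes $s\pg t+1$ and $s\pp t$ is sound, so the proposal is a valid elaboration of the paper's intended argument.
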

\noindent
We can state the analogue of the so-called Chernoff-Nash-Poincaré inequality of Gaussian analysis (see Chernoff \cite{Chernoff1981note}, Nash \cite{Nash1958continuity}). Our result is clearly a reminiscence of its counterpart in the Poisson space (see Last and Penrose \cite{LastPenrose2011}, Wu \cite{Wu:2000lr}) or for independent random variables
(see Decreusefond and Halconruy \cite{DecreusefondHalconruy}).
\begin{corollary}[Poincaré inequality]\label{Poincare}
For any $\Fm\in\L^2(\P)$,
\begin{equation*}
\mathrm{var}(\Fm)\pp\esp{\int_{\Xb}|\Dm_{(t,k)}\Fm|^2\,\d\tilde\nu(t,k)}.
\end{equation*}
\end{corollary}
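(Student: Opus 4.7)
The plan is to combine the Clark formula (Proposition~4.3) with the isometry of the $\mathcal R$-stochastic integral (Proposition~2.3), and then apply the conditional Jensen inequality. These are all results already established earlier in the paper, so the argument should be short.

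First, I would apply the Clark formula to rewrite $\Fm - \esp{\Fm}$ as the $\mathcal R$-stochastic integral
\begin{equation*}
\Fm - \esp{\Fm} = \sum_{(t,k)\in\Xb}\mathbf E\bigl[\Dm_{(t,k)}\Fm\,\bigl|\,\f{t-1}\bigr]\,\D\Rm_{(t,k)} = \Jt_1(u;\mathcal R),
\end{equation*}
where $u_{(t,k)} := \mathbf E[\Dm_{(t,k)}\Fm\mid\f{t-1}]$. This process is predictable by construction and lies in $\L^2(\P\otimes\tilde\nu)$ by the boundedness observation of Remark~4.4.

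Next, by the isometry formula \eqref{isometry_proc_eq} applied to the predictable process $u$ (with $t=1$ and then taking unconditional expectation), I obtain
\begin{equation*}
\var(\Fm) = \esp{|\Jt_1(u;\mathcal R)|^2} = \esp{\int_{\Xb}\bigl|\mathbf E[\Dm_{(t,k)}\Fm\mid\f{t-1}]\bigr|^2\,\d\tilde\nu(t,k)}.
\end{equation*}

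Finally, applying the conditional Jensen inequality pointwise in $(t,k)$,
\begin{equation*}
\bigl|\mathbf E[\Dm_{(t,k)}\Fm\mid\f{t-1}]\bigr|^2 \pp \mathbf E\bigl[|\Dm_{(t,k)}\Fm|^2\,\bigl|\,\f{t-1}\bigr],
\end{equation*}
and then taking expectation and integrating against $\tilde\nu$ yields the claimed bound. There is no serious obstacle; the only point requiring a bit of care is that the isometry formula is stated with the measure $\tilde\nu$ (which already incorporates the constants $\kappa_k = \esp{(\D\Rm_{(t,k)})^2}$), so the inequality is automatically with respect to $\tilde\nu$ rather than $\nu$, matching the statement. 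The whole argument is essentially a one-line consequence of the predictable representation provided by Clark's formula combined with the Pythagorean identity for orthogonal increments.
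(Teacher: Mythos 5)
Your argument is correct and coincides with the paper's own proof: both apply the Clark formula, use the orthogonality/isometry of the $\D\Rm_{(t,k)}$ (with the constants $\kappa_k$ absorbed into $\tilde\nu$) to compute $\mathrm{var}(\Fm)$ as an $\L^2(\P\otimes\tilde\nu)$-norm of the predictable projection, and conclude by the conditional Jensen inequality. No discrepancies to report.
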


\begin{remark}\label{Clark_Z_rem}
Assume $\E=\{k^1,\cdots,k^n\}$. 
The transposition of the Clark formula in terms of $\mathcal Z$-integrals can be easily deduced from \eqref{Clark_conditionnal_eq} together with \eqref{family_R_eq}; for any $\Fm\in\L^2(\P)$,
\begin{multline*}
\Fm=\mathbf E[\Fm]+\sum_{s\pg t+1}\sum_{j=1}^n\sum_{i=1}^j\mathfrak m_{k^jk^i}^{-1}\mathbf E\big[\Dm_{(s,k^j)}\Fm\,|\,\f{s-1}\big]\D \Zm_{(s,k^i)}\\
=\mathbf E[\Fm]+\sum_{s\pg t+1}\sum_{i=1}^n\mathbf E\big[\Dm_{(s,k^i)}^{\mathcal Z}\Fm\,|\,\f{s-1}\big]\D \Zm_{(s,k^i)},
\end{multline*}
where $\Dm_{(s,k^i)}^{\mathcal Z}\Fm=\sum_{j=i}^{n}\mathfrak m_{k^jk^i}^{-1}\Dm_{(s,k^j)}\Fm$.
\end{remark}
\section{Applications}\label{Applications_sec}
\subsection{Malliavin-Stein method for compound Poisson approximation}
\noindent
The Stein method, initially developed to quantify the rate of convergence in the Central Limit Theorem (see Stein \cite{stein1972}) and then for Poisson convergence
(see for instance Barbour, Lars and Janson \cite{BarbourLarsJanson1992}), has become a very popular not to say  the most famous procedure to assess distances between two probability measures of the form
\begin{equation}\label{probametric}
\mathrm{dist}_{\mathcal T}(\P,\Q)=\underset{h\in\mathcal T}\sup\Big|\int_{\mathfrak{F}} h\,\d\P-\int_{\mathfrak{F}} h\,\d\Q\Big|,
\end{equation}
where $\mathcal T$ is a class of real-valued test functions. The class $\mathcal T$ is furthermore separating, in the sense that if 
$\int_{\mathfrak{F}} h\,\d\P=\int_{\mathfrak{F}} h\,\d\Q$ for all $h\in\mathcal T$ if and only if $\Q=\P.$ In particular, if $\mathcal T=\{\car_\A,\, \A\in\mathcal X\}$ coincides with the total \textit{total-variation distance} and will be denoted $\mathrm{dist}_{\mathrm{TV}}$. 
The first one consists in converting  the difficult initial problem \eqref{probametric} by the more tractable expression
\begin{equation*}
\sup_{\varphi\in \mathcal K}\Big|\esp{ \L\varphi(\Y)}\Big|=\sup_{\varphi\in \mathcal K}\Big|\esp{ \L_1\varphi(\Y)+\L_2\varphi(\Y)}\Big|,
\end{equation*}
where $\Y$ is a random variable of law $\Q$, $\L$ and $\mathcal K$ are respectively the Stein operator and the Stein class associated to the target measure $\P$. The aim of the second step is to developp tools in order to transform $\L_1\varphi(\Xm)$ into $-\L_2\varphi(\Xm)+\text{remainder}$.
This remainder is what gives the bound of the distance and, in a problem of
convergence, provides its rate. Besides, Nourdin and Peccati showed in \cite{Nourdin_2008} that this transformation step can be  advantageously performed using integration by parts in the sense of Malliavin calculus.
In this section, we make use of our formalism to provide an analogue of the Stein-Malliavin criterion for the Poisson (respectively compound Poisson) approximation by binomial (respectively marked binomial) functionals with respect to the total variation distance. This is defined  for two $\Z_+$-random variables $\Xm$ and $\Y$ (the case of interest here), by
\begin{equation*}
\mathrm{dist}_{\mathrm{TV}}(\P_{\Xm},\P_{\Y})=\underset{\A\subset\Z_+}\sup\big|\P(\Xm\in\A)-\P(\Y\in\A)\big|.
\end{equation*}
First, we can state a result for the Poisson approximation, in the same spirit as Peccati \cite{Peccati2011chen}. Within the same framework, let $\EuScript P(\lambda_0)$ the Poisson law with parameter $\lambda_0$. Consider for a given function $\varphi\,:\,\Z_+\rightarrow\R$, $\nabla \varphi$ the \textit{forward difference} $\nabla \varphi:=\varphi(\cdot+1)-\varphi$, and $\nabla^2\varphi$ its second iteration $\nabla^2:=\nabla(\nabla\varphi)$, that satisfies the useful (as proved in particular in Peccati \cite{Peccati2011chen}, proof of Theorem 3.3) inequality: for all $a,k\in\Z_+$,
\begin{equation}\label{Peccati_ineq}
\big|\varphi(k)-\varphi(a)-\nabla\varphi(a)(k-a)\big|\pp\frac{\|\nabla^2\varphi\|}{2}|(k-a)(k-a-1)|.
\end{equation}
For any $\A\subset\Z_+$, we denote by $\varphi_{\A}\,:\,\Z_+\rightarrow\R$ the unique solution to the Chen-Stein equation
\begin{equation}\label{Stein_Poisson_eq}
\P(\EuScript P(\lambda_0))-\car_{\A}(k)=k\varphi_{\A}(k)-\lambda_0\varphi_{\A}(k+1)\; ; \; k\in\Z_+,
\end{equation}
satisfying the boundary condition $\nabla^2\varphi_{\A}(0)=0$. The function class $\mathcal K=\{\varphi_\A,\; \A\subset\Z_+\}$ fulfils the estimates (be also found for instance in Peccati \cite{Peccati2011chen}),
\begin{equation*}
\|\varphi\|_{\infty}\pp\min\Big(1,\sqrt{\frac{2}{e\lambda_0}}\Big),\; \|\nabla\varphi\|_{\infty}\pp\frac{1-e^{-\lambda_0}}{\lambda_0},\;\text{and}\;\|\nabla^2\varphi\|_{\infty}\pp\frac{2-2e^{-\lambda_0}}{\lambda_0^2},
\end{equation*}
where we have denoted $\|\varphi\|_{\infty}=\max_{\A\subset\Z_+}\|\nabla\varphi_\A\|_{\infty}$, $\|\nabla\varphi\|_{\infty}=\max_{\A\subset\Z_+}\|\nabla\varphi_\A\|_{\infty}$ and $\|\nabla^2\varphi\|_{\infty}=\max_{\A\subset\Z_+}\|\nabla^2\varphi_\A\|_{\infty}$.
\begin{theorem}\label{Approximation_Poisson_th} Consider $\lambda_0\in\R_+^*$ and let $\Fm$ be a square-integrable $\Z_+$-valued random variable such that  $\esp{\Fm}=\lambda_0$. Then,
\begin{align}\label{Approximation_Poisson_eq}
\nonumber
\mathrm{dist}_{\mathrm{TV}}\big(\P_{\Fm},\EuScript P(\lambda_0)\big)
&\pp\frac{1-e^{-\lambda_0}}{\lambda_0}\esp{\big|\lambda_0-\textless\widetilde\Dm\Fm,-\Dm\L^{-1}(\Fm-\esp{\Fm})\textgreater_{\L^2(\Xb,\nu)}\big)\big|}\\
&+\frac{1-e^{-\lambda_0}}{\lambda_0^2}\esp{\int_{\N}\big|(\widetilde\Dm_{t}\Fm)(\widetilde\Dm_{t}\Fm-1)\big|\big|\Dm_{t}\L^{-1}(\Fm-\esp{\Fm})\big|\,\nu(\d t)}.
\end{align}	
\end{theorem}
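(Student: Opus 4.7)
The argument follows the Stein--Malliavin scheme. Starting from the Chen--Stein equation \eqref{Stein_Poisson_eq} evaluated at $k=\Fm$ and integrated, for every $\A\subset\Z_+$,
\[
\P(\EuScript{P}(\lambda_0)\in\A)-\P(\Fm\in\A)=\E[\Fm\varphi_\A(\Fm)-\lambda_0\varphi_\A(\Fm+1)],
\]
so that $\mathrm{dist}_{\mathrm{TV}}(\P_\Fm,\EuScript{P}(\lambda_0))=\sup_\A|\E[\Fm\varphi_\A(\Fm)-\lambda_0\varphi_\A(\Fm+1)]|$. I would split this as $\E[(\Fm-\lambda_0)\varphi_\A(\Fm)]-\lambda_0\E[\nabla\varphi_\A(\Fm)]$, isolating the centred factor $\Fm-\lambda_0$ against the bounded test $\varphi_\A(\Fm)$.

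The second step is the Malliavin integration by parts. Since $\Fm-\lambda_0$ has zero mean, $\L^{-1}(\Fm-\lambda_0)$ is well defined via \eqref{inverseL}, and $\Fm-\lambda_0=-\delta\Dm\L^{-1}(\Fm-\lambda_0)$ by $\L=-\delta\Dm$. Applying the $\L^1$ duality \eqref{IPP_tildeD_eq} (which pairs $\widetilde\Dm$ with $\widetilde\delta$ against the natural compensator $\nu$), together with Proposition~\ref{Grad_difference_prop} that identifies $\Dm$ with $\Dm^+$, yields the key identity
\[
\E[(\Fm-\lambda_0)\varphi_\A(\Fm)]=\E\big[\langle\widetilde\Dm\varphi_\A(\Fm),\,-\Dm\L^{-1}(\Fm-\lambda_0)\rangle_{\L^2(\Xb,\nu)}\big].
\]
Because $\Fm$ is $\Z_+$-valued and $\widetilde\Dm_{(t,k)}\varphi_\A(\Fm)=\varphi_\A(\Fm+\widetilde\Dm_{(t,k)}\Fm)-\varphi_\A(\Fm)$, Peccati's discrete Taylor inequality \eqref{Peccati_ineq} linearises this gradient as
\[
\widetilde\Dm_{(t,k)}\varphi_\A(\Fm)=\nabla\varphi_\A(\Fm)\,\widetilde\Dm_{(t,k)}\Fm+R_{(t,k)},\qquad |R_{(t,k)}|\pp\tfrac12\|\nabla^2\varphi_\A\|_\infty\big|\widetilde\Dm_{(t,k)}\Fm(\widetilde\Dm_{(t,k)}\Fm-1)\big|.
\]
Plugging back and recombining with $-\lambda_0\E[\nabla\varphi_\A(\Fm)]$, the linear piece collapses into $\E[\nabla\varphi_\A(\Fm)(\lambda_0-\langle\widetilde\Dm\Fm,-\Dm\L^{-1}(\Fm-\lambda_0)\rangle_{\L^2(\Xb,\nu)})]$, and the residual yields the integrated error. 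Taking absolute values with the Stein bounds $\|\nabla\varphi\|_\infty\pp(1-e^{-\lambda_0})/\lambda_0$ and $\|\nabla^2\varphi\|_\infty\pp 2(1-e^{-\lambda_0})/\lambda_0^2$, then the supremum over $\A$, delivers \eqref{Approximation_Poisson_eq}.

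The main obstacle is reconciling the different flavours of gradient and divergence so that the inner product lives in $\L^2(\Xb,\nu)$ rather than $\L^2(\Xb,\tilde\nu)$, and so that the forward-difference increment of Peccati's inequality exactly matches the Malliavin increment. The $\L^2$-IBP \eqref{IPP_gen_eq} naturally produces $\tilde\nu$ and uses $\Dm$, while the discrete Taylor expansion requires $\Fm$ itself as the reference state---hence the need for $\widetilde\Dm$ (whose increment is $\varphi_\A(\Fm+\cdot)-\varphi_\A(\Fm)$) rather than $\Dm^+$ (whose increment is evaluated at the truncated configuration $\pi_t\eta$ and is therefore incompatible with the forward difference $\nabla$). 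Bridging the two goes through \eqref{IPP_tildeD_eq}, applicable because $\varphi_\A(\Fm)$ is bounded, and, in the general marked case, through the correspondence \eqref{L_widetildeL_eq} between $\L$ and $\widetilde\L$.
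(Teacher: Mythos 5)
Your proposal is correct and follows essentially the same route as the paper's proof: the same splitting of the Stein equation into $(\Fm-\lambda_0)\varphi_\A(\Fm)-\lambda_0\nabla\varphi_\A(\Fm)$, the same integration by parts through $\widetilde\L\widetilde\L^{-1}$ and \eqref{IPP_tildeD_eq}, the same linearisation of $\widetilde\Dm\varphi_\A(\Fm)$ via \eqref{Peccati_ineq} with $a=\fk(\eta)$ and $k=\fk(\eta)+\widetilde\Dm_t\Fm$, and the same use of the Stein factors. Your discussion of why $\widetilde\Dm$ (rather than $\Dm^+$) must carry the forward difference, and why the pairing lands in $\L^2(\Xb,\nu)$ rather than $\L^2(\Xb,\tilde\nu)$, matches the paper's appeal to the singleton-mark identification $\pi_t(\eta)+\delta_t=\eta+\delta_t$ and Remark \ref{E_singleton_remark}.
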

\noindent
The aim is now to provide such a bound for the compound Poisson approximation.
Let $\EuScript P\EuScript C(\lambda_0,\gV)$ denote the law of a compound Poisson variable of  parameters $(\lambda_0,\gV)$, that means it can be written as the distribution of the variable
\begin{equation*}
\sum_{i=1}^{\Nm^\Pm}\V_i,
\end{equation*}
where $\Nm^\Pm$ is a Poisson random variable of mean $\lambda_0$ and $\{\V_i,\,i\in\N\}$ is a family of independent non-negative random variables of distribution $\gV$. For any $\A\subset\Z_+$, denote  $\psi_\A$ the unique solution of the Chen-Stein equation
\begin{equation}\label{Stein_Compound_Poisson_eq}
\car_{\A}(\l)-\P(\EuScript P\EuScript C(\lambda_0,\gV))=\l\psi_{\A}(\l)-\int_{\Xb}k\psi_{\A}(\l+k)\,\d\nu(t,k)\; ; \; \l\in\Z_+.
\end{equation}
The function class $\mathcal K'=\{\varphi_\A,\; \A\subset\Z_+\}$ satisfies the following estimate (see Erhardsson \cite{Erhardsson2005}, Theorem 3.5) 
\begin{equation}\label{Stein_class_bound_psi}
\mathfrak d_{\EuScript P\EuScript C}:=\max_{\A\subset\Z_+}\|\psi_\A\|_{\infty}\,\vee\,\max_{\A\subset\Z_+}\|\nabla\psi_\A\|_{\infty}\pp\min\Big(1,\frac{1}{\lambda_0\gV(\{1\})}\Big)e^{\lambda_0}. 
\end{equation}
\begin{proposition}\label{Approximation_compound_Poisson_th} Consider $\lambda_0\in\R_+^*$ and $\gV$ a probability distribution on $\N$. Let $\V_1$ be a random variable of law $\gV$ and $\Fm$ a square-integrable $\Z_+$-valued random variable such that  $\esp{\Fm}=\lambda_0\esp{\V_1}$.
\begin{align}\label{Approximation_compound_Poisson_eq}
\nonumber
\mathrm{dist}_{\mathrm{TV}}\big(\P_{\Fm}\,,\,&\EuScript P\EuScript C(\lambda_0,\gV)\big)\\
\nonumber
&\pp\Big|\int_{\Xb}\big[\Dm^+\widetilde\L^{-1}(\fk(\eta)-\esp{\fk(\eta)})\psi_\A(\fk(\pi_t(\eta)+\delta_{(t,k)}))-k\psi_\A(\fk(\eta)+k)\big]\d\nu(t,k)\Big|\\
&\quad+\mathfrak d_{\EuScript P\EuScript C}\Big|\int_{\Xb}\big[\Dm^+\widetilde\L^{-1}(\fk(\eta)-\esp{\fk(\eta)})-k\big]\,\d\nu(t,k)\Big|
.
\end{align}	
\end{proposition}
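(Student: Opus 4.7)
The approach is the Chen--Stein--Malliavin recipe, transposed to the $\L^1$ Markov--Malliavin apparatus of Section~\ref{Gamma-Malliavin structure for marked binomial processes}. I begin by evaluating the Chen--Stein equation \eqref{Stein_Compound_Poisson_eq} at $\l = \fk(\eta) = \Fm$ and taking expectations, so that for each $\A \subset \Z_+$,
\begin{equation*}
\P(\Fm\in\A) - \P(\EuScript P\EuScript C(\lambda_0,\gV)\in\A) = \esp{\Fm\,\psi_\A(\Fm)} - \esp{\int_\Xb k\,\psi_\A(\Fm+k)\,\d\nu(t,k)},
\end{equation*}
reducing the problem to bounding the right-hand side uniformly in $\A$.

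Next I perform an $\L^1$ integration by parts. Setting $\Gm := \widetilde\L^{-1}(\Fm-\esp{\Fm})$, so $\widetilde\L\Gm = \Fm-\esp{\Fm}$, I substitute the explicit representation of $\widetilde\L$ given in \eqref{DeltaDF_eq} into $\esp{(\Fm-\esp{\Fm})\psi_\A(\Fm)}$ and apply the Mecke formula \eqref{Mecke_formula_eq} to convert the sum $\sum_{(t,k)\in\eta}\Dm^-_{(t,k)}\Gm \cdot \psi_\A(\Fm)$ into a $\nu$-integral. Exploiting that $\Dm^+_{(t,k)}\Gm$ is $\g{t}$-measurable and that $\Dm^-_{(t,k)}\Gm$ evaluated on $\pi_t(\eta)+\delta_{(t,k)}$ coincides with $\Dm^+_{(t,k)}\Gm$, the key identity
\begin{equation*}
\esp{(\Fm-\esp{\Fm})\,\psi_\A(\Fm)} = \esp{\int_\Xb \Dm^+_{(t,k)}\Gm\,\big[\psi_\A(\Fm) - \psi_\A(\fk(\pi_t(\eta)+\delta_{(t,k)}))\big]\,\d\nu(t,k)}
\end{equation*}
emerges, playing the role of the Malliavin--Stein bridge.

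Reinserting this into the first display and using the hypothesis $\esp{\Fm} = \lambda_0\esp{\V_1} = \int_\Xb k\,\d\nu(t,k)$ to merge the constant term $\esp{\Fm}\esp{\psi_\A(\Fm)}$ with the Stein integral, the difference $\P(\Fm\in\A)-\P(\EuScript P\EuScript C\in\A)$ splits (after rearranging) into
\begin{equation*}
\esp{\int_\Xb\big[\Dm^+_{(t,k)}\Gm\,\psi_\A(\fk(\pi_t(\eta)+\delta_{(t,k)})) - k\,\psi_\A(\Fm+k)\big]\,\d\nu(t,k)} + \esp{\psi_\A(\Fm)\int_\Xb\big(\Dm^+_{(t,k)}\Gm - k\big)\,\d\nu(t,k)}.
\end{equation*}
The first summand is exactly the integrand of the first term of the announced bound (after passing the $|\cdot|$ inside the outer expectation); for the second, the Stein estimate \eqref{Stein_class_bound_psi} furnishes $|\psi_\A(\Fm)|\pp\mathfrak d_{\EuScript P\EuScript C}$, which factors out to yield the $\mathfrak d_{\EuScript P\EuScript C}$-weighted term. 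The triangle inequality and the supremum over $\A\subset\Z_+$ close the argument.

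The main obstacle is the algebraic reconciliation in this last step: Malliavin integration by parts naturally produces $\psi_\A\big(\fk(\pi_t(\eta)+\delta_{(t,k)})\big)$, i.e.\ $\psi_\A$ evaluated at the \emph{forced-jump} configuration at $(t,k)$, whereas the Chen--Stein equation of the target compound Poisson features the pure numerical shift $\psi_\A(\Fm+k)$. The two agree only when $\fk$ is additive in the marks of $\eta$; in general their discrepancy must be absorbed into the $\mathfrak d_{\EuScript P\EuScript C}$-correction term. A secondary but indispensable bookkeeping issue is to tune the binomial compensator $\nu$ to the compound-Poisson parameters $(\lambda_0,\gV)$ through $\esp{\Fm}=\int_\Xb k\,\d\nu(t,k)$, so that the mean-level contributions cancel cleanly before the Stein estimate is applied.
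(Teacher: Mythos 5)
Your proposal is correct and follows essentially the same route as the paper: Stein equation, centring via $\esp{\Fm}=\int_\Xb k\,\d\nu(t,k)$, the $\L^1$ integration by parts $\esp{(\Fm-\esp{\Fm})\psi_\A(\Fm)}=\esp{\textless\Dm^+\widetilde\L^{-1}(\Fm-\esp{\Fm}),\widetilde\Dm\psi_\A(\Fm)\textgreater_{\L^2(\Xb,\nu)}}$ obtained through the Mecke formula together with the $\g{t}$-measurability of $\Dm^+\Gm$, and then the same add-and-subtract of $\psi_\A(\fk(\eta))$ to split off the $\mathfrak d_{\EuScript P\EuScript C}$-weighted term. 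The only discrepancy is a sign in that identity — your version, producing $\psi_\A(\Fm)-\psi_\A(\fk(\pi_t(\eta)+\delta_{(t,k)}))$, is actually the one consistent with $\widetilde\L=-\tilde\delta(\Dm^+\cdot)$ — and it is immaterial once absolute values are taken.
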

\begin{remark}
This result is only of interest in the case of the variable $\Fm$ is a marked binomial functional in the first chaos i.e. $\Fm=\J_1(f)$ for some $f\in\L^2(\Xb)$ (which corresponds to the framework of the application of subsection 5.3) and is no relevant for more complicated functionals. In this latter, we can provide a bound by means of a Taylor expansion and in terms of the iterated operator $\nabla^2$. That turns out to be sub-optimal in the first chaos case we will be interested in later, which justifies our choice not to present it.
\end{remark}

\subsection{Head run problems}
Consider a large number of  independent throws of a coin of  success (falling on face) probability $p\in(0,1)$. Whatever the value of $p$, there will be sequences where the coin will fall on face each time; this sequence is called a \textit{head run} and we aim at computing the probability that $\Um$, the lenght of the longest run of heads beginning in the first $n$ tosses, will be less to a test length $m\in\N$. The critical fact is that head runs occur in \textit{clumps}; indeed, if  there is the head of a run of length $m$ at position $i$, then with probability $p$, there will also be a run of length $m$  at position $i+1$. We need then to "declump" the sequences in order to count only the first count. To do that, let $(\Cm_i)_{i\in\N}$ be a sequence of independent and identically distributed Bernoulli variables of parameter $p$. Let $m$ be a fixed positive integer "test" value and consider the random variable
\begin{equation*}
\Um=\prod_{i=1}^m\Cm_i+\sum_{i=2}^n(1-\Cm_{i-1})\Cm_i\Cm_{i+1}\cdots\Cm_{i+m-1}
\end{equation*}
that gives the total number of clumps of runs of length $m$ or more. Note that $\esp{\Um}=p^m((n-1)(1-p)+1)=:\lambda_0$. Let $\Nm$ be a binomial process of intensity $p$. The random variable $\Um$ can be rewritten as
\begin{equation}\label{F_head_run}
\Um=\prod_{i=1}^m\D\Nm_i+\sum_{i=1}^{n-1}(1-\D\Nm_{i})\D\Nm_{i+1}\D\Nm_{i+2}\cdots\D\Nm_{i+m}=:\sum_{i=0}^{n-1}\Um_i,
\end{equation} 
so that it appears as a binomial functional with mean $p$ since the sequence $\V$ is here deterministic and constant equal to $1$. Its chaotic decomposition reads
\begin{equation*}
\Um=\esp{\Um}+\sum_{j=1}^{m+1}\Jt_j\big(f_j\car_{[n+m-1]}\big),
\end{equation*}
where, in particular, for $j\in\id{1}{m+1}$, $f_j(t_1,t_2,\cdots,t_j)=0,$ as soon as $\prod_{i=1}^{j-1}\car_{\{1\}}(t_{i+1}-t_i)=0$. Since $\Um\in\DD$, $\Dm\Um=\Dm^+\Um$ $\P\otimes\nu$-almost surely, and
\begin{equation*}
\Dm_t\Um=\prod_{i=1,i\neq t}^m\D\Nm_i+\sum_{i=1}^{n-1}\Big(\car_{[i+1,i+m]}(t)(1-\D\Nm_{i})\prod_{\l=1,i+\l\neq t}^{m}\D\Nm_{i+\l}+\car_{\{i\}}(t)\prod_{\l=1}^m\D\Nm_{i+\l}\Big).
\end{equation*}
\begin{theorem}\label{longest_head_run_th}
Let $\lambda_0=p^m((n-1)(1-p)+1)$. Then,
\begin{equation*}
\mathrm{dist}_{\mathrm{TV}}(\P_\Um,\EuScript P(\lambda_0))\pp p^{2m}[2(m-1)q^2+2mq+1]+(n-m+1)(1-p)p^{2m+1}\|\nabla\varphi\|_{\infty}.
\end{equation*}
\end{theorem}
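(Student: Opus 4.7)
The plan is to apply Theorem \ref{Approximation_Poisson_th} to the binomial functional $\Um$ with $\lambda_0=\esp{\Um}=p^m((n-1)(1-p)+1)$. Since the marks $\V_t$ are deterministic constants, the mark space $\E$ reduces to a singleton, so $\nu=p\,\#_{\N}$ and $\widetilde\Dm_t\Um=\car_{\{\D\Nm_t=0\}}\Dm_t\Um$.

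The key pathwise input is an explicit computation of $\Dm_t\Um$ via the decomposition \eqref{F_head_run}. Parametrising any configuration by the lengths $a_t,b_t$ of the maximal runs of heads immediately to the left and right of $t$, a direct tally of declumped runs of length at least $m$ yields
\begin{equation*}
\Dm_t\Um=\car_{\{a_t+b_t+1\ge m\}}-\car_{\{a_t\ge m\}}-\car_{\{b_t\ge m\}}\in\{-1,0,1\},
\end{equation*}
so that $\widetilde\Dm_t\Um(\widetilde\Dm_t\Um-1)=2\,\car_{\{\D\Nm_t=0,\,a_t\ge m,\,b_t\ge m\}}$, supported on an event of probability $qp^{2m}$ by independence of the disjoint windows $[t-m,t-1]$ and $[t+1,t+m]$.

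To bound the second summand of Theorem \ref{Approximation_Poisson_th}, I would invoke Corollary \ref{Inverse_L_cor} together with the $L^\infty$-contractivity of $(\Pm_\tau)_{\tau\in\R_+}$ derived from \eqref{Contractivity_PtF_eq} to get the pathwise estimate $|\Dm_t\L^{-1}(\Um-\esp{\Um})|\le\|\Dm_t\Um\|_\infty\le 1$, then sum over $t\in\id{1}{n+m-1}$ with care at the boundaries $t<m$ or $t>n-m+1$ (where only part of the windows $[t-m,t-1]$, $[t+1,t+m]$ lies in $\id{1}{n}$). A combinatorial enumeration of bulk and boundary cases, combined with absorbing the prefactor $(1-e^{-\lambda_0})/\lambda_0^2$, delivers the summand $p^{2m}[2(m-1)q^2+2mq+1]$, the constant arising from the explicit bulk/boundary count. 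For the first summand, I would expand $\textless\Dm\Um,-\Dm\L^{-1}(\Um-\esp{\Um})\textgreater_{\L^2(\Xb,\nu)}$ using the Mehler integral $\int_0^\infty e^{-\tau}\Pm_\tau\Dm_t\,\d\tau$ and the thinning formula \eqref{Mehler_formula_prop_eq}: the diagonal contribution recovers $\lambda_0$ up to local corrections, while the off-diagonal cross-terms, necessarily supported on pairs $(t,s)$ with $|t-s|\le m$ because $\Dm_t\Um$ depends on $\D\Nm_j$ only for $|j-t|\le m$, produce, by the local independence of the clump indicators, an $L^1$ residual of order $(n-m+1)(1-p)p^{2m+1}$; the prefactor $(1-e^{-\lambda_0})/\lambda_0\le\|\nabla\varphi\|_\infty$ then furnishes the remaining summand.

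The main obstacle is the first summand, whose treatment requires $L^1$ rather than merely $L^2$ control: the familiar identity $\esp{\textless\Dm\Um,-\Dm\L^{-1}(\Um-\esp{\Um})\textgreater}=\mathrm{var}(\Um)$ governs only the centred integral, whereas one needs a pathwise bound on $|\lambda_0-\textless\Dm\Um,-\Dm\L^{-1}(\Um-\esp{\Um})\textgreater|$. This forces a careful combinatorial inventory of overlapping $(m+1)$-windows together with Mehler-type estimates on $\Pm_\tau\Dm_t\Um$ so as to isolate the leading $(n-m+1)(1-p)p^{2m+1}$ contribution with the correct multiplicative constant.
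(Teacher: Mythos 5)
Your overall strategy coincides with the paper's (apply Theorem \ref{Approximation_Poisson_th} to $\Um$, exploiting that the mark space is a singleton so that $\Dm=\Dm^+$ and $\widetilde\Dm_t\Um=\car_{\{t\notin\eta\}}\Dm_t\Um$), and your pathwise formula $\Dm_t\Um=\car_{\{a_t+b_t+1\pg m\}}-\car_{\{a_t\pg m\}}-\car_{\{b_t\pg m\}}\in\{-1,0,1\}$ is correct and cleaner than the paper's decomposition $\Dm_t\Um=\A+\B-\Cm$; it gives at once $(\widetilde\Dm_t\Um)(\widetilde\Dm_t\Um-1)=2\,\car_{\{t\notin\eta,\;a_t\pg m,\;b_t\pg m\}}$, an event of probability $qp^{2m}$, which is exactly what the paper extracts through a longer case analysis of $\B$ and $\Cm$.

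The genuine gap is that you have interchanged the roles of the two summands of Theorem \ref{Approximation_Poisson_th}, and the interchange cannot be made to work. The second summand integrates the nonnegative site-local quantity $\esp{|(\widetilde\Dm_t\Um)(\widetilde\Dm_t\Um-1)|\,|\Dm_t\L^{-1}(\Um-\esp{\Um})|}$ against $\nu(\d t)=p\,\d t$ over $\sim n$ sites, each contributing order $qp^{2m}\cdot p$ by your own computation; it is therefore intrinsically of order $nqp^{2m+1}$ and produces the term $(n-m+1)(1-p)p^{2m+1}\|\nabla\varphi\|_{\infty}$, not the $n$-free quantity $p^{2m}[2(m-1)q^2+2mq+1]$ you assign to it — under the calibration $m=\log_{1/p}((n-1)q+1)+O(1)$ the two differ by a factor of order $np/m$, which is unbounded, so your claimed bound for that summand is false rather than merely mislabelled. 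Conversely, the polynomial $2(m-1)q^2+2mq+1$ arises in the paper from the \emph{first} summand via the exact identity $\esp{\textless\widetilde\Dm\Um,-\Dm\L^{-1}(\Um-\esp{\Um})\textgreater_{\L^2(\Xb,\nu)}}=\esp{\Um\,\L\L^{-1}(\Um-\esp{\Um})}=\var[\Um]$, followed by an explicit second-moment computation of $\Um=\sum_i\Um_i$ in which the covariances of the overlapping $(m+1)$-windows generate the coefficients in $q$ and $q^2$; that computation appears nowhere in your sketch, and the replacement you propose (Mehler cross-terms over pairs $|t-s|\pp m$, of order $(n-m+1)qp^{2m+1}$) again targets a quantity larger than the theorem's first term by the same unbounded factor. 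Your closing observation that the first summand demands control of $\esp{|\lambda_0-\textless\cdots\textgreater|}$ rather than of $|\lambda_0-\esp{\textless\cdots\textgreater}|$ is a fair criticism — the paper itself silently substitutes the latter for the former — but you do not close that gap either, and the route you indicate would not recover the stated constants.
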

\begin{remark}
The previous result gives an insight into the distribution of $\Tm_n$, the length of the longest head run. 
As explained in Arratia, Goldstein and Gordon \cite{ArratiaGoldsteinGordon1990}, as a consequence of the previous theorem, the distribution of $\Tm_n$ may be approximated as
\begin{equation*}
\P(\Tm_n<t)=\P(\Um=0)=e^{-\lambda_{0}}.
\end{equation*} 
The definition of a test lenght requires that $\lambda_{0}$ is bounded away from $0$ and $\infty$. In other words, this means the existence of a deterministic constant $c$ such that
\begin{equation*}
m=\log_{1/p}\big((n-1)(1-p)+1\big)+c.
\end{equation*} 
This assumption entails that the Poisson approximation in Theorem \ref{longest_head_run_th} is of order $1/n$ where the constant can be found by considering the $1/n$-order terms such as $np^{2m}$. We thus find the result of Arratia, Goldstein and Gordon in \cite{ArratiaGoldsteinGordon1990} who used the Chen-Stein method to deal with the local dependence structure of $\Um$.
\end{remark}	

\subsection{Number of occurrences of a word in a DNA sequence}

\noindent 
A DNA sequence can be represented by a finite series $\Xm_1\Xm_2\dots\Xm_n$ of characters taken from the alphabet $\mathcal A:=\{\A,\Cm,\Gm,\Tm\}$ where the four letters stand for the four bases \textit{adenine, cytosine, guanine and thymine.} The question of the identification of words $\Wm$ with unexpected frequencies is crucial in DNA sequence analysis, and in diagnostic issues in particular. In this example, we model the sequence $\Xm_1\Xm_2\dots\Xm_n$ with an homogeneous and stationary Markov chain of order $m$. The transition probability is given by the application $\theta$ defined on  $\mathcal A\times\mathcal A$, whereas the invariant probability measure is denoted by $\mu$. 
The aim is to compute the number of occurrences in the sequence of a given word $\Wm$ of size $h$ (with $h>m$) $\Wm=w_1w_2\cdots w_h$. Let $(\Zm_j)_{j\in\I}$ be the sequence defined by 
\begin{equation*}
\Zm_j=\car_{\{\Xm_j=w_j,\dots,\Xm_{j+h-1}=w_h\}},
\end{equation*}
where $\I=\{1,\dots,n-h+1\}$. Since the underlying Markov chain is homogeneous and stationary of invariant measure $\mu$, $\esp{\Zm_j}=\mu(\Wm)$ ($j\in\I$). The number of occurrences of the word $\Wm$ is then provided by the random variable 
\begin{equation*}
\mathfrak{T}(\Wm)=\sum_{j\in\I}\Zm_j,
\end{equation*}
whose asymptotic behaviour we want to analyse when $n$ goes to infinity and $h$ grows as $\log(n)$.
As explained in particular in Schbath \cite{Schbath1997}, the word $\Wm$ may appear in clumps. Indeed, if $\Wm$ has a periodic decomposition, its occurrences in the sequence can overlap. A $k$-clump is thus the occurrence of a concatenated word $\Cm$ composed of exactly $k$ overlapping occurrences of $\Wm$. For instance, if $\Wm=\A\Cm\Tm\A\A$, the sequence
\begin{equation*}
\Gm\underline{\A\Cm\Tm\A\A\Cm\Tm\A\A\A\Cm\Tm\A\A}\Tm\Gm\A\A\underline{\A\Cm\Tm\A\A}\Cm\Gm
\end{equation*}
has a $3$-clump at position $j=2$ and a $1$-clump at position $j=20$.
Especially when the word $\Wm$ can overlap, we must consider
$(\Ztm_j)_{j\in\I}$, the "declumped" sequence associated to $(\Zm_j)_{j\in\I}$, such that $\Ztm_j$ only counts occurrences that do not overlap the preceding one. Define for any $j\in\I$,
\begin{equation*}
\Ztm_j=\Zm_j(1-\Zm_{j-1})\cdots(1-\Zm_{j-h+1}).
\end{equation*}
\begin{remark}
In fact, as highlighted by Schbath (\cite{Schbath1997}, remark 2) it would be more rigorous from a practical point of view to consider the "observable" sequence $(\widehat\Zm_j)_{j\in\J}$ defined by $\widehat\Zm_1=\Zm_1$ and for any $i\in\{2,\dots,j-1\}$, $\widehat\Zm_j=\Zm_j\prod_{i=1}^j(1-\Zm_{i})$ and $\widehat\Zm_j=\Ztm_j$ otherwise, since $\Xm_0,\Xm_{-1},\dots,\Xm_{-h+2}$ may not be known. That being so, as the total variation distance between $\widehat{\mathfrak T}(\Wm)=\sum_{j\in\I} \widehat\Zm_j$ and $\widetilde{\mathfrak T}(\Wm)=\sum_{j\in\I} \Ztm_j$ is bounded by $2h\mu(\Wm)$, both distributions have the same asymptotic behaviour, so that the sequence $(\Ztm_j)_{j\in\I}$ can be used more conveniently.
\end{remark}
\noindent
Define for any $k\in\N$, the random variable $\overline {\mathfrak T}^{(k)}(\Wm)$ that gives the number of $k$-clumps, as well as for $(j,k)\in\I\times\N$, the random variable $\Zt_j^{(k)}$ that indicates if there is a $k$-clump at position $j$. In order to approximate $\mathfrak T(\Wm)$, write up to now $\mathfrak T(\Wm)=\sum_{k\in\N}k\overline {\mathfrak T}^{(k)}(\Wm)$ that can be well approximated (see for instance Barbour and Chryssaphinou \cite{BarbourChryssaphinou2001}, or Reinert and Schbath \cite{ReinertSchbath1998}) by the random variable 
\begin{equation*}
\overline{\mathfrak T}(\Wm):=\sum_{j\in\I}\sum_{k\in\N}k\Zt_j^{(k)},
\end{equation*}
and $\mathrm{dist}_{\mathrm{TV}}(\P_{\mathfrak T(\Wm)},\P_{\overline{\mathfrak T}(\Wm)})\pp 2h\mu(\Wm)$. Moreover, it appears (see Reinert and Schbath \cite{ReinertSchbath1998}) that for any $j\in\I$, $\Zt_j^{(k)}$ is a Bernoulli-distributed random variable of mean 
\begin{equation}\label{Proba_pk_DNA}
p_k=(1-\alpha)^2\alpha^{k-1}\mu(\Wm)
\end{equation}
where $\alpha$ can be written with respect to the principal periods of $\Wm$. The reader can find a explicit expression of $\alpha$ in the case of a first-order Markov chain in Schbath \cite{Schbath1997}, section 3.  
This last point suggests to approximate $\mathfrak T(\Wm)$ by $\overline{\mathfrak T}(\Wm)$ and, in order to get a marked binomial functional, by introducing the random variable
\begin{equation*}
\Hm=\sum_{j\in\I}\V_j\D\Nm_j,
\end{equation*}
where $(\V_j)_{j\in\N}$ is a sequence of independent and identically distributed random variables which the common geometric distribution $\mathbf V$ of parameter $(1-\alpha)$, where $\alpha$ appears in \eqref{Proba_pk_DNA}. In fact, $(1-\alpha)\alpha^{k-1}$ is the probability that the word $\Wm$ overlaps exactly $k$ times after having occurred at position $j$. The sequence $\V$ is also supposed to be independent of a Bernoulli process $(\D\Nm_j)_{j\in\I}$ of intensity  $(1-\alpha)\mu(\Wm)$ so that $\EuScript{P}\EuScript C(\lambda_0,\mathbf V)$ is exactly the P\'olya-Aeppli distribution of parameters $(\lambda_0,\alpha)$ where $\lambda_0=(n-h+1)(1-\alpha)\mu(\Wm)$. Some computations highlight that $\overline{\mathfrak T}(\Wm)$ and $\P_\Hm$ are identically distributed, so that $\mathrm{dist}_{\mathrm{TV}}(\P_{\overline{\mathfrak T}(\Wm)},\P_\Hm)=0$.
It remains to control $\mathrm{dist}_{\mathrm{TV}}\big(\P_{\Hm},\EuScript{P}\EuScript C(\lambda_0,\mathbf V)\big)$ by means of Theorem \ref{Approximation_compound_Poisson_th}. The following bound results from it.
\begin{proposition}\label{DNA_count_word_th}
Let $\lambda_0=(n-h+1)(1-\alpha)\mu(\Wm)$ and $\EuScript{P}\EuScript C(\lambda_0,\mathbf V)$ the random variable written as a Poisson of mean $\lambda_0$ compounded by the distribution $\mathbf V$. Then
\begin{align*}
\mathrm{dist}_{\mathrm{TV}}\big(\P_{\mathfrak T(\Wm)},\EuScript{P}\EuScript C(\lambda_0,\mathbf V)\big)
&\pp \mathrm{dist}_{\mathrm{TV}}(\P_{\mathfrak T(\Wm)},\P_{\overline{\mathfrak T}(\Wm)})+\mathrm{dist}_{\mathrm{TV}}(\P_{\overline{\mathfrak T}(\Wm)},\P_\Hm)\\
&\quad+\mathrm{dist}_{\mathrm{TV}}(\P_\Hm,\EuScript{P}\EuScript C(\lambda_0,\mathbf V))\\
&\pp 2h\mu(\Wm)+(n-h+1)\mathfrak d_{\EuScript P\EuScript C}\mu(\Wm)^2
\end{align*}
where $\mathfrak d_{\EuScript P\EuScript C}$ is defined by \eqref{Stein_class_bound_psi}.
\begin{remark}
The convergence occurs since the assumption on the order of the length $h$ (in $\log n$) entails that $n\mu(\Wm)=\mathrm O(1)$ (see Schbath \cite{Schbath1997}).
We retrieve the rate of convergence of this approximation in $\log n/n$ , without the additional assumptions made on the size of the "neighbourhood of dependence" (see Schbath \cite{Schbath1997}) or on the order of the magnitude of the maximal overlap (see Geske \textit{et al.} \cite{GeskeGodboleSchaffnerSkolnickWallstrom}). As noted in several works, in particular Robin and Schbath \cite{RobinSchbath}, the compound Poisson approximation is an excellent choice (especially with respect to the Gaussian approximation and to a lesser extent to the Poisson one) to describe the asymptotic behavior of a long and rare word in an "infinite" DNA sequence.
\end{remark}
\end{proposition}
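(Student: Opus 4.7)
The plan is to exploit the triangle inequality decomposition already displayed in the statement. The first summand, $\mathrm{dist}_{\mathrm{TV}}(\P_{\mathfrak T(\Wm)},\P_{\overline{\mathfrak T}(\Wm)})\pp 2h\mu(\Wm)$, is justified in the remark preceding the statement using the classical comparison between the original and the declumped sequences (cf.\ \cite{BarbourChryssaphinou2001} or \cite{ReinertSchbath1998}). The second summand, $\mathrm{dist}_{\mathrm{TV}}(\P_{\overline{\mathfrak T}(\Wm)},\P_\Hm)=0$, follows by direct identification: the geometric weights $\V_j$ of parameter $(1-\alpha)$ combined with the Bernoulli indicators $\D\Nm_j$ of mean $(1-\alpha)\mu(\Wm)$ reproduce exactly the clump-size law \eqref{Proba_pk_DNA}. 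All the analytical content therefore lies in bounding the third summand $\mathrm{dist}_{\mathrm{TV}}(\P_\Hm,\EuScript{P}\EuScript C(\lambda_0,\gV))$.

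For this I would apply Proposition~\ref{Approximation_compound_Poisson_th} with $\Fm=\Hm$ viewed as a marked binomial functional on $\Xb$, underpinned by the intensity $\lambda=(1-\alpha)\mu(\Wm)$ and mark distribution $\gV$. The mean-matching hypothesis is immediate: $\E[\Hm]=(n-h+1)\mu(\Wm)=\lambda_0\E[\V_1]$ since $\E[\V_1]=1/(1-\alpha)$. The decisive simplification is that $\Hm$ lives in the first chaos: the representative $g(t,k)=k\car_{\id{1}{n-h+1}}(t)$ yields $\Hm-\E[\Hm]=\Jt_1(g;\mathcal Z)$, and via Proposition~\ref{Chaos_Z_prop} and the matrix $\mathfrak M$ of \eqref{matrix_basis_eq} we obtain a corresponding $\mathcal R$-integral representation placing $\Hm-\E[\Hm]$ in $\mathcal H_1$. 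Consequently $\L^{-1}(\Hm-\E[\Hm])=-(\Hm-\E[\Hm])$, and through the identity \eqref{L_widetildeL_eq} one reads off $\Dm^{+}_{(t,k)}\widetilde{\mathrm L}^{-1}(\Hm-\E[\Hm])$ as an affine function of $k\car_{\id{1}{n-h+1}}(t)$.

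Inserting this into \eqref{Approximation_compound_Poisson_eq} is the technical heart of the argument. The second summand then collapses up to a lower-order residual because the $k$ appearing in the Malliavin expression matches, up to a conversion factor coming from $\mathfrak M^{-1}$, the $k$ being subtracted; the mean-matching condition $\E[\Hm]=\lambda_0\E[\V_1]$ is exactly what is needed to kill the surviving integral. For the first summand, a mean-value estimate on $\psi_\A(\fk(\pi_t(\eta)+\delta_{(t,k)}))-\psi_\A(\fk(\eta)+k)$ combined with the uniform bound $\|\nabla\psi_\A\|_\infty\pp\mathfrak d_{\EuScript P\EuScript C}$ from \eqref{Stein_class_bound_psi} controls the integrand by a constant multiple of $\mathfrak d_{\EuScript P\EuScript C}\,k\,\mu(\Wm)\,\car_{\id{1}{n-h+1}}(t)$. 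Integrating against $\nu=\lambda\#\otimes\gV$ on $\Xb_{n-h+1}$ and using $\lambda\E[\V_1]=\mu(\Wm)$ gives exactly $(n-h+1)\mathfrak d_{\EuScript P\EuScript C}\,\mu(\Wm)^2$, which added to the $2h\mu(\Wm)$ contribution of the first summand yields the announced bound.

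The principal obstacle will be the bookkeeping at the interface between the $\L$-conventions (in which the first-chaos identity $\L^{-1}\Fm=-\Fm$ is transparent) and the $\widetilde{\mathrm L}$-conventions in which Proposition~\ref{Approximation_compound_Poisson_th} is stated, because the change of basis governed by $\mathfrak M$ mixes the marks $k\in\E$. Once this translation is completed, the cancellation in the second summand, and the Lipschitz-type bound in the first summand, reduce to routine computation.
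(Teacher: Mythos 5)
Your proposal is correct and follows essentially the same route as the paper: the triangle inequality with the first two summands handled by the preceding remarks, then Proposition \ref{Approximation_compound_Poisson_th} applied to the first-chaos functional $\Hm$, for which $\Dm^+\widetilde\L^{-1}(\Hm-\esp{\Hm})=\Dm^+\Hm$ and $\Dm^+_{(t,k)}\Hm=k$ make the second integral vanish exactly (no $\mathfrak M^{-1}$ bookkeeping or residual is actually needed, since the add-one-cost operator acts directly on the representative), and the first integral is bounded by conditioning on the mark charged at time $t$ and using $\|\nabla\psi_\A\|_\infty\pp\mathfrak d_{\EuScript P\EuScript C}$, yielding $(n-h+1)\mathfrak d_{\EuScript P\EuScript C}\mu(\Wm)^2$ exactly as you describe.
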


\subsection{Portfolio optimisation in the trinomial model}

\noindent
We consider a simple financial market modelled by two assets i.e. a couple of $\R_+$-valued processes $(\A_t,\S_t)_{t\in\mathbf \T}$, defined on the same probability space $(\O,\F,\P)$ where $\F=(\f{t})_{t\in\T}$ is a filtration (generally that generated by the canonical process) and $\T=\Z_+\cap[0,T]$ ($T\in\N$) is called the trading interval. Denote also $\T^*=\T\setminus\{0\}$.
The riskless asset $(\A_t)_{t\in \T}$ is deterministic with initial value $\A_0=a_0$ and is defined for $r\in\R_+$ (generally smaller than $1$) by
\begin{equation}\label{risklessdisc}
\A_t=a_{0}(1+r)^t,
\end{equation}
whereas the stock price which models the risky asset, is the $\F$-adapted process $(\S_t)_{t\in\T}$ with (deterministic) initial value $\S_0=1$ and  such that for any $t\in\T^*$,
\begin{equation}\label{riskternary}
\Delta \S_t=\eta_t\,\S_{t-1}\,\Delta \Nm_t,
\end{equation}
where $\eta_t=b\car_{\{\Wm_t=1\}}+a\car_{\{\Wm_t=-1\}}$, $a$ and $b$ are real numbers such that $-1<a<r<b$ and $\{\Wm_t,\, t\in\T^*\}$ is a family of i.i.d. $\{-1,1\}$-valued random variables such that $\P(\Wm_t=1)=p$ ($p\in(0,1)$, $q=1-p$). The sequence of discounted prices $(\St_t)_{t\in\mathbf T}$ is defined by $\St_t=\A_t^{-1}\S_t$ ($t\in\mathbf T$).
\begin{remark}[Trinomial and ternary models: differences and equivalence in law]\label{Equi_CRR_ternary_rem}The price process defined in the ternary model has the same law as the one of a well-chosen trinomial model (for more details on classical trinomial model see for instance Delbaen \cite{Delbaen} or Runggaldier \cite{runggaldier2006}). As a reminder, the stock price $(\Tm_t)_{t\in\T}$ is defined in this latter model by $\Tm_0=1$ and verifies the recurrent relation
\begin{equation*}
\Tm_{t}=(1+b)\Tm_{t-1}\car_{\{\Xm_t=1\}}+(1+a)\Tm_{t-1}\car_{\{\Xm_t=-1\}}+\Tm_{t-1}\car_{\{\Xm_t=0\}},
\end{equation*}
where the process $(\Xm_t)_{t\in \T^*}$  is distributed according to the measure $\P$ such that
\begin{equation*}
\P(\Xm_t=1)=\bar p,\; \P(\Xm_t=-1)=\bar q\; \text{ and }\P(\Xm_t=0)=1-\bar p-\bar q,
\end{equation*}
and $(\bar p,\bar q)\in(0,1)^2$ such that $1-\bar p-\bar q\in(0,1)$. Let $\lambda\in(0,1)$, $\bar p=\lambda p$ and $\bar q=\lambda (1-p)$ such that $1-\bar p-\bar q=1-\lambda$. Then,
\begin{equation*}
\mathbf E\Big[s^{\frac{\S_t}{\S_{t-1}}}\Big]=\esp{s^{\eta_t\Delta \Nm_t+1}}
=s^{1+b}\,\bar p+s^{1+a}\,\bar q+s(1-\lambda)=\mathbf E\Big[s^{\frac{\Tm_t}{\Tm_{t-1}}}\Big],
\end{equation*}
and $\S_0=\Tm_0$. Thus the trinomial and the ternary models are equivalent in law. The introduction of the second one is motivated by the following remark. As explained in Halconruy's PhD dissertation (see \cite{Halconruy2020}, conclusion of chapter 4), it turned out to be impossible to derive a Karatzas-Ocone-type hedging formula for replicable claims in the trinomial model (underlying by a sequence of $\{-1,0,1\}$-valued independent variables)  by the Clark-Ocone formula stated in Decreusefond and Halconruy (see \cite{DecreusefondHalconruy}, Theorem 3.3).  Indeed, the $\f{k}$-measurability of the term $\Dm_k\esp{\Fm|\f{k}}$ appearing in this prevents from deriving the expected $\F$-predictable drift process. This observation was prone to replace the trinomial model by a ternary model, based on a jump process, and, as we will see, lends itself more easily to the statement of a hedging formula, directly derived from Clark's one \eqref{Clark_formula_eq}.
\end{remark}

\begin{remark}[Incompleteness of the ternary model]
As explained in Runggaldier \cite{runggaldier2006}, the trinomial tree model is an incomplete market; as expected, so does the ternary model. Indeed, the measure with respect to which the sequence of discounted prices is a $\F$-martingale, is not unique. Considering the process $(\S_t)_{t\in\T}$ defined by \eqref{riskternary} and that is identically distributed to the one of the trinomial model, we expect to reach the same incompleteness result.
\noindent
By writing  for any $t\in\id{1}{T}$,
\begin{equation*}
\Delta \St_t=\frac{\S_t-(1+r)\S_{t-1}}{(1+r)^t}=\frac{[b\car_{\{\Wm_t=1\}}+a\car_{\{\Wm_t=-1\}}]\,\,\Delta \Nm_t-r}{(1+r)^t}\times \S_{t-1}=(b\D\Zm_{(t,1)}+a\D\Zm_{(t,-1)})\St_{t-1},
\end{equation*}
it appears that the discounted price sequence is a $\F$-martingale within the condition
$\lambda(bp+aq)-r=0$. As expected, the system  
\begin{displaymath}
\left\{
\begin{array}{rcl}
\lambda(bp+aq)&=&r\\
p+q&=&1
\end{array}\right.
\end{displaymath}
admits infinitely many solutions $(\lambda,p,q)\in(0,1)^3$ such that any triplet $(\lambda,p,q)$ forms a convex $\mathscr M$ set (here a segment) characterized by its extremal points, i.e. the measures $\P^0=(1,(r-a)/(b-a),(b-r)/(b-a)) $ and $=\P^1=(r/b,1,0), $which are not equivalent to $\P$ but such that any convex combination
$\P^{\gamma}=\gamma \P^0+(1-\gamma)\P^1$ is. Any measure defined on $\O$ and with respect to which the sequence $\St$ is a $\F$-martingale is called a \textit{$\F$-martingale measure}.
\end{remark}

\noindent
The value of the portfolio at time $t\in \T$ is given by the random variable
\begin{equation*}
\V_t=\alpha_{t}\,\A_t+\vp_{t}\,\S_t,
\end{equation*}
where $(\alpha_t,\vp_t)_{t\in \T}$ is a couple of $\F$-predictable processes modelling respectively the amounts of riskless and risky assets held in the portfolio. Its discounted value at time is $\Vt_t:=\V_t/\A_t$.

\noindent
The aim of this subsection is to exhibit a hedging formula; this is, given a nonnegative $\f{T}$-measurable random variable  $\Fm$ (called \textit{claim}), to find an \textit{admissible} strategy $\psi=(\alpha,\varphi)$ that is \textit{self-financed} in the sense where for any $t\in \T\setminus\{T\}$,
\begin{equation}\label{Self_financing_cond}
\A_t\,(\alpha_{t+1}-\alpha_{t})+\S_t\,(\vp_{t+1}-\vp_{t})=0,
\end{equation}
and which corresponding portfolio value satisfies $
\V_0>0, \; \V_t\pg 0$ for all $t\in \T\setminus\{T\}$, and  $\V_{T}=\Fm$.
In an incomplete market, there is no systematic hedging formula, since all claims are not reachable; they have an \textit{intrisic risk}. Face to the impossibility to perform a perfect hedge in the general case, we can only hope to reduce the a priori risk to this minimal
component.  The question of hedging in an incomplete market has been widely investigated for years (see for instance Dalang \cite{dalang1990equivalent}, F\"ollmer and Schweizer \cite{FollmerSchweizer1991} in continuous time, Schweizer \cite{Schweizer1995} in discrete time). As explained in Remark \ref{Equi_CRR_ternary_rem} the ternary model is not complete; we choose to deal with the optimization problem in return:
\begin{equation}\label{Opt_hedging_pb_eq}
\underset{\psi\in\mathscr S}\min\,\esp{(\Fm-x-\Vt_T(\psi))^2},
\end{equation}
where the claim $\Fm$ and the initial capital $x\in\R_+^*$ are given, and  $\mathscr S$ is the set of $\F$-predictable admissible strategies.
\noindent
The \textit{mean-variance tradeoff} process $(\mathrm K_t)_{t\in\T}$ is defined by
\begin{equation*}
\mathrm K_t=\sum_{s=1}^t\frac{\big(\esp{\Delta\St_s\,|\,\f{s-1}}\big)^2}{\mathrm{var}[\Delta\St_s\,|\,\f{s-1}]}\; ; \;  t\in\T.
\end{equation*}
	
\noindent 
Introduce also the discrete analogue of the \textit{minimal martingale measure} (see Föllmer and Schweizer \cite{FollmerSchweizer1991}), i.e. the signed measure $\widehat\P$  defined on $(\O,\F)$ such that 
\begin{equation}\label{Mart_mesure_def}
\dfrac{\d\widehat{\P}}{\d\P}=\prod_{t=1}^{\T}\frac{1-\theta_t\D\St_t}{1-\theta_t\esp{\D\St_t|\f{t-1}}},
\end{equation}
where $(\theta_t)_{t\in\T^*}$ is the $\F$-predictable process such that $\theta_t=\esp{\D\St_t\,|\,\f{t-1}}/\esp{(\D\St_t)^2\,|\,\f{t-1}}$, for any $t\in\T^*$. Last, consider the Kunita-Watanabe decomposition of $\Fm$ (see Metivier \cite{Metivier1982} or Schweizer \cite{Schweizer1995}) i.e. the unique couple of processes $(\xi^{\Fm},\L^{\Fm})$ where $\xi^{\Fm}$ is a square-integrable admissible strategy  and $\L^{\Fm}$ is a $\F$-martingale, strongly orthogonal to $\S$, with null initial value and such that
 \begin{equation*}
 \Fm=\Fm_0+\sum_{t\in\T}\xi_t^{\Fm}\D\St_t+\L_T^{\Fm} \quad \P\text{-a.s.}
 \end{equation*}
Within previous notations, M. Schweizer gives in (\cite{Schweizer1992}, Proposition 4.3) an expression of the quadratic-loss minimizing strategy.
\begin{theorem}[Schweizer, 1992]\label{Schweizer_opt_th}
Provided $(\mathrm K_t)_{t\in\T}$ is deterministic, the solution of \eqref{Opt_hedging_pb_eq} is given by
\begin{equation}\label{Schweizer_opt_strat_eq}
 \varphi_t^*=\xi_t^{\Fm}+\frac{\esp{\Delta\St_t\,|\,\f{t-1}}}{\esp{(\Delta\St_t)^2\,|\,\f{t-1}}}(\widehat{\mathbf E}\big[\Fm|\f{t}\big]-x-\Vt_{t-1}(\varphi^*))
\end{equation}
where $\widehat{\mathbf E}$ denotes the expectation with respect to the measure $\widehat\P$ i.e., the minimal martingale measure defined by \eqref{Mart_mesure_def}. Moreover, the quota of the riskless asset $(\A_t)_{t\in\T}$ is given by  $\alpha_0=\esph{\Fm}/\S_0$ and for any $t\in\T^*$,
\begin{equation*}
\alpha_t=\alpha_{t-1}-(\varphi_t-\varphi_{t-1})\St_{t-1}.
\end{equation*}
\end{theorem}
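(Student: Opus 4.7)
The plan is to reconstruct the classical dynamic programming argument of Schweizer, specialized to this discrete-time setting. The starting point is the Kunita-Watanabe decomposition $\Fm = \Fm_0 + \sum_{t\in\T^*}\xi_t^{\Fm}\D\St_t + \L_T^{\Fm}$, together with the preliminary observation that, under the deterministic-$\mathrm K$ hypothesis, the change of measure \eqref{Mart_mesure_def} preserves both the martingale property of $\L^{\Fm}$ and its strong orthogonality to $\St$; this yields the identification $\widehat{\mathbf E}[\Fm\,|\,\f{t}] = \Fm_0 + \sum_{s\pp t}\xi_s^{\Fm}\D\St_s + \L_t^{\Fm}$, which will be the bridge between the $\P$-cost functional and the $\widehat{\P}$-conditional expectation appearing in \eqref{Schweizer_opt_strat_eq}.

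I would then proceed by backward induction on $t\in\T$. For any self-financing admissible strategy $\varphi$, define the residual $\Rm_t(\varphi) := \widehat{\mathbf E}[\Fm\,|\,\f{t}] - x - \Vt_t(\varphi)$; the terminal error splits as $\Fm - x - \Vt_T(\varphi) = \Rm_{t-1}(\varphi) + \sum_{s\pg t}(\xi_s^{\Fm} - \varphi_s)\D\St_s + (\L_T^{\Fm} - \L_{t-1}^{\Fm})$. Squaring, taking $\esp{\,\cdot\,|\,\f{t-1}}$, and exploiting the strong orthogonality of $\L^{\Fm}$ to $\St$, the cross terms between the $(\xi_s^{\Fm}-\varphi_s)\D\St_s$ increments and the martingale $\L^{\Fm}$ vanish; the determinism of $(\mathrm K_t)$ ensures that the remaining dependence on $\varphi_t$ is purely quadratic. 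The first-order condition in $\varphi_t$ then gives exactly \eqref{Schweizer_opt_strat_eq}, while the self-financing constraint \eqref{Self_financing_cond} produces the claimed recursion for $\alpha_t$ starting from the boundary value $\alpha_0 = \widehat{\mathbf E}[\Fm]/\S_0$ which encodes $\V_0 = \widehat{\mathbf E}[\Fm]$.

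The main technical obstacle lies in ensuring that the one-step optimization recipe produces a globally optimal strategy for \eqref{Opt_hedging_pb_eq} rather than merely a locally optimal one. This is precisely where the deterministic character of $(\mathrm K_t)_{t\in\T}$ becomes decisive: it allows the predictable ratio $\esp{\D\St_t\,|\,\f{t-1}}/\esp{(\D\St_t)^2\,|\,\f{t-1}}$ to be pulled out of successive conditional expectations without producing uncontrollable remainder terms, and aligns the Föllmer-Schweizer decomposition under $\P$ with the Kunita-Watanabe decomposition under $\widehat{\P}$. Without this hypothesis one would face coupled recursions between the quadratic cost and the drift of $\St$ that cannot be solved in closed form, and the elegant formula \eqref{Schweizer_opt_strat_eq} would be replaced by a genuinely path-dependent quantity.
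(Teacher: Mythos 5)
You should first be aware that the paper does not prove this statement: it is imported verbatim from Schweizer (1992, Proposition 4.3) — the text introducing it says exactly that — and the only proofs the paper supplies for this subsection concern Lemma \ref{Kunita_lem} and Theorem \ref{Hedging_th}, the latter consisting of plugging that lemma into the quoted result. So there is no internal proof to compare yours against; what can be assessed is whether your sketch faithfully reconstructs Schweizer's argument.

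In outline it does. The ingredients you list — the Kunita-Watanabe/F\"ollmer-Schweizer decomposition, the fact that the minimal martingale measure makes $\St$ a martingale while preserving the martingale property and orthogonality of $\L^{\Fm}$ (whence $\widehat{\mathbf E}[\Fm\,|\,\f{t}]=\Fm_0+\sum_{s\pp t}\xi_s^{\Fm}\D\St_s+\L_t^{\Fm}$), and a backward induction on the conditional quadratic cost — are precisely those of Schweizer's proof. Two caveats. First, the step you defer to the final paragraph as "the main technical obstacle" is not a side issue but the entire content of the proof: since $\esp{\D\St_s\,|\,\f{s-1}}\neq 0$, the cross terms between the increments $(\xi_s^{\Fm}-\varphi_s)\D\St_s$ for $s>t$ and the time-$(t-1)$ residual do not vanish, and the optimal $\varphi_s^*$ for $s>t$ depend on $\varphi_t$ through $\Vt_{s-1}(\varphi^*)$. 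Schweizer closes the induction by showing that the minimal conditional expected squared cost from time $t$ onward equals a \emph{deterministic} constant — a product of factors $1-\esp{\D\St_s\,|\,\f{s-1}}^2/\esp{(\D\St_s)^2\,|\,\f{s-1}}$, deterministic exactly because $(\mathrm K_t)_{t\in\T}$ is — times the square of the current residual, plus terms not involving $\varphi_t$; your sketch asserts the conclusion of this computation ("the remaining dependence on $\varphi_t$ is purely quadratic") rather than performing it, so as written it establishes only the local optimality you yourself warn about. Second, carrying out the first-order condition conditionally on $\f{t-1}$ necessarily yields an $\f{t-1}$-measurable optimizer, i.e. the residual must read $\widehat{\mathbf E}[\Fm\,|\,\f{t-1}]-x-\Vt_{t-1}(\varphi^*)$; the $\f{t}$-conditioning displayed in \eqref{Schweizer_opt_strat_eq} is incompatible with predictability of $\varphi^*$ and appears to be a typo in the paper, which your derivation, once completed, would in fact correct.
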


\begin{remark}
If the contingent claim $\Fm$ is reachable, then $\varphi^*=\xi^{\Fm}$. The term $\xi^{\Fm}$ in \eqref{Schweizer_opt_strat_eq} can be interpreted as a pure hedging demand, whereas the second one can be viewed as a demand for mean-variance purposes (see Schweizer \cite{Schweizer1992}).
\end{remark}

\noindent 
These results are slot to our formalism to solve \eqref{Opt_hedging_pb_eq} in the ternary model.
\begin{lemma}\label{Mean_variance_deter_lem}
The mean-variance tradeoff process of the ternary model is deterministic.
\end{lemma}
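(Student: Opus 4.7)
The plan is a direct computation. I would first write the discounted increment $\D\St_t$ explicitly. Starting from $\S_t=\S_{t-1}(1+\eta_t\D\Nm_t)$ with $\eta_t=b\car_{\{\Wm_t=1\}}+a\car_{\{\Wm_t=-1\}}$ and $\A_t=a_0(1+r)^t$, one gets
\begin{equation*}
\D\St_t=\frac{\S_{t-1}}{(1+r)^t}\bigl(\eta_t\D\Nm_t-r\bigr)=\frac{\St_{t-1}}{1+r}\bigl(\eta_t\D\Nm_t-r\bigr).
\end{equation*}
The key point is that the prefactor $\St_{t-1}/(1+r)$ is $\f{t-1}$-measurable, while $(\D\Nm_t,\Wm_t)$ is independent of $\f{t-1}$, so both the conditional mean and the conditional variance factor as $\St_{t-1}^2/(1+r)^2$ times a deterministic constant.

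Next I would compute those constants using $\P(\Wm_t=1)=p$, $\P(\Wm_t=-1)=q=1-p$ and $\P(\D\Nm_t=1)=\lambda$, together with the independence of $\Wm_t$ and $\D\Nm_t$. Setting $\mu:=\lambda(bp+aq)$ and $\sigma^2:=\lambda(b^2p+a^2q)-\mu^2$, the disjointness of the events $\{\Wm_t=1,\D\Nm_t=1\}$ and $\{\Wm_t=-1,\D\Nm_t=1\}$ gives
\begin{equation*}
\mathbf E[\eta_t\D\Nm_t\,|\,\f{t-1}]=\mu\quad\text{and}\quad \mathbf E[(\eta_t\D\Nm_t)^2\,|\,\f{t-1}]=\lambda(b^2p+a^2q),
\end{equation*}
so that
\begin{equation*}
\mathbf E[\D\St_t\,|\,\f{t-1}]=\frac{\mu-r}{1+r}\St_{t-1}\quad\text{and}\quad \mathrm{var}[\D\St_t\,|\,\f{t-1}]=\frac{\sigma^2}{(1+r)^2}\St_{t-1}^2.
\end{equation*}

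Finally, the ratio appearing in the definition of $\mathrm K_t$ collapses to
\begin{equation*}
\frac{\bigl(\mathbf E[\D\St_t\,|\,\f{t-1}]\bigr)^2}{\mathrm{var}[\D\St_t\,|\,\f{t-1}]}=\frac{(\mu-r)^2}{\sigma^2},
\end{equation*}
the random factor $\St_{t-1}^2/(1+r)^2$ cancelling identically (note $\sigma^2>0$ since $a\neq b$, and one must also check $\St_{t-1}>0$ almost surely, which is immediate from $1+\eta_t\D\Nm_t\in\{1,1+a,1+b\}\subset(0,\infty)$). Summing in $s$ yields $\mathrm K_t=t\,(\mu-r)^2/\sigma^2$, which is manifestly deterministic. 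There is no real obstacle; the only subtlety worth flagging is the cancellation of $\St_{t-1}^2$, which is precisely the mechanism that makes the tradeoff deterministic and which later allows Theorem \ref{Schweizer_opt_th} to apply.
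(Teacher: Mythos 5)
Your proof is correct and takes essentially the same route as the paper: the $\f{t-1}$-measurable prefactor $\St_{t-1}^2/(1+r)^2$ cancels in the ratio, leaving the deterministic constant $(\lambda(bp+aq)-r)^2/\sigma^2$, so $\mathrm K_t$ is deterministic. Incidentally, your $\sigma^2=\lambda(b^2p+a^2q)-\lambda^2(bp+aq)^2$ is the correct conditional variance, whereas the denominator displayed in the paper's proof, $\lambda p(1-\lambda p)b^2+a^2\lambda q(1-\lambda q)$, omits the covariance cross-term $-2\lambda^2 abpq$ between the two mutually exclusive indicators; this slip does not affect the conclusion.
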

\begin{proof}
For any $t\in\T$, 
\begin{equation*}
\frac{\big(\esp{\Delta\St_t\,|\,\f{t-1}}\big)^2}{\mathrm{var}[\Delta\St_t\,|\,\f{t-1}]}
=\frac{(\esp{\eta_t\D\Nm_t-r|\f{t-1}})^2}{\mathrm{var}[\eta_t\D\Nm_t-r|\f{t-1}]}=\frac{(\lambda(bp+aq)-r)^2}{\lambda p(1-\lambda p)b^2+a^2\lambda q(1-\lambda q)},
\end{equation*}
is a deterministic constant. Hence the result.
\end{proof}
\noindent
The family $\mathcal R$ is provided by Gram-Schmidt process \eqref{family_R_eq} such that
\begin{equation*}
\D\Rm_{(t,1)}=\D\Zm_{(t,1)} \quad \text{and} \quad \D\Rm_{(t,-1)}=\D\Zm_{(t,-1)}+\frac{\lambda^2pq}{\lambda p(1-\lambda p)}\D\Rm_{(t,1)}=\D\Zm_{(t,-1)}+\rho \D\Zm_{(t,1)},
\end{equation*}
where $\rho:=\lambda q/(1-\lambda p).$

\begin{lemma}[Kunita-Watanabe decomposition in the ternary model]\label{Kunita_lem} For any claim $\Fm\in\L^2(\P)$ there exist a square-integrable admissible strategy $\xi^{\Fm}$ and a $\F$-martingale $\L^{\Fm}$, strongly orthogonal to $\St$, with null intial value such that
\begin{equation*}
\Fm=\Fm_0+\sum_{t\in\T}\xi_t^{\Fm}\D\St_t+\L_T^{\Fm} \quad \P\text{-a.s.}
\end{equation*}
Moreover, for any $t\in\T^*$,
\begin{equation}\label{Kunita_ternary_eq}
\xi_t^{\Fm}=\frac{1}{\St_{t-1}}\Big(\displaystyle\sum_{k\in\E}w_{t,k}\widehat{\mathbf E}\big[\Dm_{(t,k)}\Fm|\f{t-1}\big]\Big)\quad \text{and}\quad \L_t^{\Fm}=\mathbf E\bigg[\Fm-\sum_{s\in\T}\xi_s^{\Fm}\D\St_s\,\Big|\,\f{t}\bigg]-\mathbf E\bigg[\Fm-\sum_{s\in\T}\xi_s^{\Fm}\D\St_s\bigg],
\end{equation}
where $\esp{\L_0^{\Fm}}=0$, the sequence $w=(w_{t,k})_{(t,k)\in\Xb}$ is defined by
\begin{equation*}
w_{t,1}=\frac{(b-a\rho)\kappa_1}{(b-a\rho)^2\kappa_1+a^2\kappa_{-1}},\quad w_{t,-1}=\frac{a\kappa_{-1}}{(b-a\rho)^2\kappa_1+a^2\kappa_{-1}}.
\end{equation*}
The minimal martingale measure $\widehat \P$, equivalent to $\P$ can be explicitly given by
\begin{equation}\label{Min_mart_mesure_bin_eq}
\dfrac{\d\widehat{\P}}{\d\P}=\prod_{t\in\T}\frac{1-\theta_t\D\St_t}{1-\theta_t\esp{\D\St_t|\f{t-1}}},
\end{equation}
with  
\begin{equation*}
\theta_t=\frac{\St_{t-1}(\lambda(bp+aq)-r)}{\St_{t-1}^2(\lambda^2(b^2p+a^2q)+r^2-2\lambda(bp+aq))}=\frac{\lambda(bp+aq)-r}{\St_{t-1}((b-a\rho)^2\kappa_1+a^2\kappa_{-1})}\;;\; t\in\T.
\end{equation*}
\end{lemma}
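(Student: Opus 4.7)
The plan is to combine Clark's formula with a conditional orthogonal projection, taking advantage of the fact that at each time step the gradient $\Dm_{(t,\cdot)}\Fm$ lives in a two-dimensional subspace.

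First I would note that, by Girsanov's Theorem \ref{Girsanov_th}, $\eta$ remains a marked binomial process under $\widehat\P$ (with modified intensity $\tilde\lambda\#\otimes\widetilde\Q$), so the Malliavin apparatus transfers verbatim and Clark's formula \eqref{Clark_formula_eq} applies under $\widehat\P$:
\begin{equation*}
\Fm=\widehat{\mathbf E}[\Fm]+\sum_{(t,k)\in\Xb}\widehat{\mathbf E}\bigl[\Dm_{(t,k)}\Fm\,\big|\,\f{t-1}\bigr]\,\D\Rm_{(t,k)},
\end{equation*}
where the Gram-Schmidt basis $\mathcal R$ (a purely algebraic construction) may still be used. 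Combining \eqref{riskternary} with \eqref{family_R_eq} gives the structural expansion
\begin{equation*}
\D\St_t=\St_{t-1}\bigl[(b-a\rho)\,\D\Rm_{(t,1)}+a\,\D\Rm_{(t,-1)}\bigr].
\end{equation*}

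Next, for each $t\in\T$ I would project the $t$-th term of the Clark sum orthogonally onto $\mathrm{span}(\D\St_t)$ inside the $\f{t-1}$-conditional $\L^2(\P)$-subspace generated by $(\D\Rm_{(t,1)},\D\Rm_{(t,-1)})$: the projection coefficient $\xi_t^{\Fm}$ is recovered by solving the scalar orthogonality condition
\begin{equation*}
\esp{\Bigl(\textstyle\sum_{k}\widehat{\mathbf E}[\Dm_{(t,k)}\Fm|\f{t-1}]\D\Rm_{(t,k)}-\xi_t^{\Fm}\D\St_t\Bigr)\D\St_t\,\big|\,\f{t-1}}=0,
\end{equation*}
which, after using $\esp{(\D\Rm_{(t,k)})^2}=\kappa_k$ and the $\P$-orthogonality of $(\D\Rm_{(t,1)},\D\Rm_{(t,-1)})$, produces exactly the claimed weights $w_{t,1},w_{t,-1}$. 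Defining $\D\L_t^{\Fm}$ as the orthogonal residual, the process $\L^{\Fm}$ of \eqref{Kunita_ternary_eq} is a $\F$-martingale (each $\D\Rm_{(t,k)}$ is a centred $\f{t-1}$-martingale increment with zero initial value) and is by construction strongly $\P$-orthogonal to $\St$.

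For the minimal martingale measure, I would compute $\theta_t$ directly from its definition using the identities $\esp{\D\St_t|\f{t-1}}=\St_{t-1}(\lambda(bp+aq)-r)$ and $\esp{(\D\St_t)^2|\f{t-1}}=\St_{t-1}^2[(b-a\rho)^2\kappa_1+a^2\kappa_{-1}]$ (the second deriving from the orthogonal expansion above), and verify equivalence of $\widehat\P$ and $\P$ by matching the Girsanov density \eqref{Girsanov_drift_eq} with an admissible pair $(\tilde\lambda,\widetilde\Q)$. The principal difficulty in the argument is the interplay between the two measures: consistency between the $\widehat{\mathbf E}$-conditional expectations appearing in $\xi^{\Fm}$ and the $\P$-orthogonality enforced on the residual rests on the characteristic minimality property of $\widehat\P$, namely that $\P$-strongly-orthogonal martingales remain $\widehat\P$-martingales, which is what allows Clark's formula to be applied under $\widehat\P$ while preserving the Gram-Schmidt basis $\mathcal R$ inherited from $\P$.
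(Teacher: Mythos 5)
Your computational endgame coincides with the paper's and is correct: expanding $\D\St_t=\St_{t-1}\big[(b-a\rho)\D\Rm_{(t,1)}+a\D\Rm_{(t,-1)}\big]$ and using the $\P$-orthogonality of $\mathcal R$ together with $\esp{(\D\Rm_{(t,k)})^2}=\kappa_k$ does yield the weights $w_{t,k}$ and the stated $\theta_t$. The gap sits upstream, in the step you yourself flag as the ``principal difficulty'' and then dispose of by assertion: the claim that Clark's formula holds under $\widehat\P$ \emph{with the basis $\mathcal R$ inherited from $\P$}. The family $\mathcal R$ is produced by the Gram--Schmidt procedure \eqref{family_R_eq} using $\P$-expectations, and the increments $\D\Zm_{(t,k)}=\car_{\{\cdot\}}-\lambda\Q(\{k\})$ are centred only under $\P$; under $\widehat\P$ (intensity $\tilde\lambda\#\otimes\widetilde\Q$) the $\D\Rm_{(t,k)}$ are neither centred nor mutually orthogonal, so the chaotic decomposition and Clark formula under $\widehat\P$ are written in a \emph{different} family $\widehat{\mathcal R}$. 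The minimality property of $\widehat\P$ (that $\P$-strongly-orthogonal martingales remain $\widehat\P$-martingales) is a consequence you would like to obtain, not a justification for keeping $\mathcal R$. Without the expansion $\esph{\Fm|\f{t}}-\esph{\Fm|\f{t-1}}=\sum_k\widehat{\mathbf E}\big[\Dm_{(t,k)}\Fm|\f{t-1}\big]\D\Rm_{(t,k)}$ you can neither identify your projection numerator with the claimed one, nor conclude that the residual $\D\L_t^{\Fm}$ has zero $\P$-conditional expectation: the one-step increments of the $\widehat\P$-martingale $t\mapsto\esph{\Fm|\f{t}}$ are not $\P$-centred in general, so the orthogonal residual need not be a $\P$-martingale increment.

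The paper avoids having to construct the decomposition at all: it starts from Schweizer's Lemma 2.7 in \cite{Schweizer1995}, which, because the mean-variance tradeoff process is deterministic (Lemma \ref{Mean_variance_deter_lem}), already guarantees existence of $(\xi^{\Fm},\L^{\Fm})$ with the required martingale and orthogonality properties and supplies the closed form $\xi_t^{\Fm}=\esp{\D\esph{\Fm|\f{t}}\,\D\St_t\,|\,\f{t-1}}/\esp{(\D\St_t)^2\,|\,\f{t-1}}$; the Clark decomposition is then used only to make the numerator explicit. To make your self-contained derivation work you would have to either carry out the Clark expansion in the $\widehat\P$-adapted family $\widehat{\mathcal R}$ and translate back to $\mathcal R$, or prove directly that the increment $\esph{\Fm|\f{t}}-\esph{\Fm|\f{t-1}}$ lies in $\mathrm{Span}\{\D\Rm_{(t,1)},\D\Rm_{(t,-1)}\}$ with the stated coefficients; as written, the projection is performed on an expansion that has not been established.
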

\begin{remark}
The expression \eqref{Kunita_ternary_eq} of $\xi^\Fm$ which is the replicating strategy when $\Fm$ is reachable, is not so dissimilar to that of the hedging strategy in the binomial model (see Privault \cite{Privault_stochastica}, proposition 1.14.4). 
\end{remark}

\begin{theorem}[Loss quadratic minimizing strategy in the ternary model]\label{Hedging_th}
Let $\widehat{\P}$ be the minimal martingale measure defined by \eqref{Min_mart_mesure_bin_eq} and let a claim $\Fm$.  The quadratic loss minimizing hedge $\varphi^*$ is given by
\begin{equation*}
 \varphi_t^*=\xi_t^{\Fm}+\theta_t\big(\widehat{\mathbf E}\big[\Fm|\f{t}\big]-x-\Vt_{t-1}(\varphi^*)\big),
\end{equation*}
where $\xi^\Fm\in\mathscr S$ is given by the Kunita-Watanabe decomposition.
\end{theorem}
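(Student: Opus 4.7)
The plan is to deduce the statement directly from Schweizer's Theorem \ref{Schweizer_opt_th} by verifying its hypotheses in the ternary model and then identifying the ingredients $\xi^{\Fm}$, $\theta_t$ and $\widehat{\P}$ with the explicit expressions provided by Lemma \ref{Kunita_lem}. The main work is therefore organisational: assemble the pieces already in place and check that they slot correctly into \eqref{Schweizer_opt_strat_eq}.

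First, I would invoke Lemma \ref{Mean_variance_deter_lem}, which precisely states that the mean-variance tradeoff process $(\mathrm K_t)_{t\in\T}$ in the ternary model is deterministic; this is the sole structural hypothesis required to apply Schweizer's theorem. Once that is in hand, Theorem \ref{Schweizer_opt_th} yields the representation
\begin{equation*}
\varphi_t^*=\xi_t^{\Fm}+\frac{\esp{\Delta\St_t\,|\,\f{t-1}}}{\esp{(\Delta\St_t)^2\,|\,\f{t-1}}}\bigl(\widehat{\mathbf E}[\Fm|\f{t}]-x-\Vt_{t-1}(\varphi^*)\bigr),
\end{equation*}
provided one identifies the terms $\xi^\Fm$, $\widehat{\P}$ and the predictable factor with ternary-model analogues.

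Second, I would invoke Lemma \ref{Kunita_lem}, which provides all three ingredients simultaneously in our framework: the Kunita--Watanabe decomposition of any $\Fm\in\L^2(\P)$ with its explicit Malliavin-type expression of $\xi^\Fm$, the minimal martingale measure $\widehat{\P}$ defined by \eqref{Min_mart_mesure_bin_eq}, and the predictable process $\theta_t$. In particular, the identity
\begin{equation*}
\theta_t=\frac{\esp{\Delta\St_t\,|\,\f{t-1}}}{\esp{(\Delta\St_t)^2\,|\,\f{t-1}}}=\frac{\lambda(bp+aq)-r}{\St_{t-1}\bigl((b-a\rho)^2\kappa_1+a^2\kappa_{-1}\bigr)}
\end{equation*}
matches the coefficient appearing in Schweizer's formula, since $\esp{(\Delta\St_t)^2\,|\,\f{t-1}} = \mathrm{var}[\Delta\St_t\,|\,\f{t-1}] + (\esp{\Delta\St_t\,|\,\f{t-1}})^2$ reduces, after a short computation using the ternary dynamics \eqref{riskternary}, to the denominator of the ternary-model $\theta_t$.

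The only genuine verification, and thus the likely obstacle, is a brief check that the signed measure defined by \eqref{Min_mart_mesure_bin_eq} is in fact a true probability measure equivalent to $\P$ on $\f{T}$: one must ensure that the density $\prod_{t\in\T}(1-\theta_t\D\St_t)/(1-\theta_t\esp{\D\St_t|\f{t-1}})$ is strictly positive, which reduces to checking that $\theta_t\D\St_t<1$ for the admissible jumps $\eta_t\in\{a,b\}$ under the constraint $-1<a<r<b$. Once positivity is confirmed, all of Schweizer's hypotheses are met, the substitution of $\xi^\Fm$ and $\theta_t$ from Lemma \ref{Kunita_lem} into \eqref{Schweizer_opt_strat_eq} produces exactly the announced formula, and the theorem follows.
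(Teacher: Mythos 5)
Your proposal is correct and follows exactly the paper's own route: verify via Lemma \ref{Mean_variance_deter_lem} that the mean-variance tradeoff process is deterministic, then substitute the Kunita--Watanabe ingredients $\xi^{\Fm}$, $\theta_t$ and $\widehat{\P}$ from Lemma \ref{Kunita_lem} into Schweizer's formula \eqref{Schweizer_opt_strat_eq}. The extra remarks (matching $\theta_t$ with Schweizer's coefficient, which holds by the very definition of $\theta_t$, and positivity of the density of $\widehat{\P}$) are harmless refinements of the same argument.
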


\begin{proof}
Since the mean-variance process is deterministic by Lemma \ref{Mean_variance_deter_lem}, it suffices to incorporate the result of Lemma \ref{Kunita_lem} to Theorem \ref{Schweizer_opt_th}. The process $(\alpha_t)_{t\in\T}$ is defined by the self-financing condition \eqref{Self_financing_cond}.
\end{proof}

\section{Proofs}\label{Proofs_sec}
\subsection{Proofs of the section \ref{Binomial_marked_analysis_sec}}

\begin{proof}[Proof of Proposition \ref{Isometry_integral_prop}]
Let $u,v\in\mathcal P$; there exists $T\in\N$ such that $u$ and $v$ are of the form \eqref{simpleprocess}. For any $t\in\id{1}{T}$, \\
\begin{align*}
\mathbf E\Big[\Jt_1&(u\car_{[t,\infty)}\,;\mathcal R)\Jt_n(v\car_{[t,\infty)}\,;\mathcal R)\,\Big|\,\f{t-1}\Big]\\
&=\sum_{(s,k)\in\id{t}{T}\times\E}\sum_{(r,\l)\in\{t:T\}\times\E}\esp{\uk(\eta,(s,k))\vk(\eta,(r,\l))\D\Rm_{(s,k)}\D\Rm_{(r,\l)}\,\big|\,\f{t-1}}\\
&=\sum_{(s,k,\l)\in\id{t}{T}\times\E^2}\esp{\uk(\eta,(s,k))\vk(\eta,(s,\l))\esp{\D\Rm_{(s,k)}\D\Rm_{(s,\l)}\,|\,\f{s-1}}\,\big|\,\f{t-1}}\\
&\qquad+\sum_{(s,k)\in\{t:T\}\times\E}\sum_{\underset{r>s}{(r,\l)\in\id{t}{T}\times\E}}\esp{\uk(\eta,(s,k))\vk(\eta,(r,\l))\D\Rm_{(s,k)}\esp{\D\Rm_{(r,\l)}|\f{r-1}}\,\big|\,\f{t-1}}\\
&\qquad+\sum_{(r,\l)\in\id{t}{T}\times\E}\sum_{\underset{s>r}{(s,k)\in\id{t}{T}\times\E}}\esp{\uk(\eta,(s,k))\vk(\eta,(r,\l))\D\Rm_{(r,\l)}\esp{\D\Rm_{(s,k)}|\f{s-1}}\,\big|\,\f{t-1}}\\
&=\sum_{(s,k)\in\id{t}{T}\times\E}\kappa_k\esp{\uk(\eta,(s,k))\vk(\eta,(s,k))\,\big|\,\f{t-1}}=\esp{\textless u\car_{[t,\infty)}, v\car_{[t,\infty)}\textgreater_{\L^2(\Xb,\tilde\nu)}\,\big|\,\f{t-1}},
\end{align*}
where the second and third sums in the second equality vanish as $\esp{\D\Rm_{(t,k)}}=0$ for all $(t,k)\in\Xb_T$.
The extension of the stochastic integral to the set of square-integrable adapted processes comes from a Cauchy sequence argument. Define the sequence $(u^n)_{n\in\N}$ of simple predictable processes by
\begin{equation*}
u^n(\eta,(t,k))=u(\eta,(t,k))\car_{\{t\in\{1:n\}\}}\car_{\{|u(\eta,(t,k))|\pp n\}.}
\end{equation*}
Thus,  $(\Jt_1(u^n)\,;\mathcal R)_{n\in\N}$ is Cauchy and converges in $\L^2(\P)$. Let then
\begin{equation*}
\Jt_1(u\,;\mathcal R)=\underset{n\rightarrow \infty}\lim \Jt_1(u^n\,;\mathcal R).
\end{equation*} 
The limit is independent of the approximating sequence by applying the isometry property \eqref{isometry_proc_eq}  with $t=1$. Hence the result.
\end{proof}

\begin{proof}[Proof of Proposition \ref{Isometry_mul_prop}]
Assume with no loss of generality that $m>n$. Let $(f_n,g_m)\in\mathcal \L^2(\Xb,\tilde\nu)^{\circ n}\times\L^2(\Xb,\tilde\nu)^{\circ m}$. For any $(\mathbf t_n,\mathbf s_m)\in\N^{n,<}\times\N^{m,<}$, there exists $j_0\in\N$ such that $s_{j_0}\in\mathbf s_m\setminus\mathbf t_n$. By independence of the random variable $\D\Rm_{(s_{j_0},k_{j_0})}$ with respect to the  $\sigma$-algebra $\g{s_{j_0}}=\sigma\big\{\sum_{(s,k)}\eta(s,k),\, s\neq s_{j_0},\,k\in\E\big\}$, 
\begin{align*}
\esp{\Jt_n(f_n\,;\mathcal R)\,\Jt_m(g_m\,;\mathcal R)}
&=n!\,m!\sum_{(\mathbf t_n,\mathbf k_n)\in \Xb^{n,<}}\sum_{(\mathbf s_m,\mathbf l_m)\in \Xb^{m,<}}\,f_n(\mathbf t_n,\mathbf k_n)\,g_m(\mathbf s_m,\mathbf l_m)\\
&\qquad\qquad\qquad\qquad\qquad\qquad\qquad\qquad\times\esp{\prod_{i=1}^{n}\prod_{j=1}^m\D\Rm_{(t_i,k_i)}\D\Rm_{(s_j,\l_j)}}\\
&=n!\,m!\sum_{(\mathbf t_n,\mathbf k_n)\in \Xb^{n,<}}\sum_{(\mathbf s_m,\mathbf l_m)\in \Xb^{m,<}}\,f_n(\mathbf t_n,\mathbf k_n)\,g_m(\mathbf s_m,\mathbf l_m)\\
&\qquad\qquad\qquad\times\esp{\D\Rm_{(s_{j_0},k_{j_0})}}
\,\prod_{i=1}^{n}\prod_{\underset{j\neq j_0}{j=1}}^{m}\esp{\D\Rm_{(t_i,k_i)}\D\Rm_{(s_j,\l_j)}}=0,
\end{align*}
and for $m=n$,
\begin{align*}
\esp{\Jt_n(f_n\,;\mathcal R)\Jt_n(g_n\,;\mathcal R)}
&=(n!)^2\sum_{(\mathbf t_n,\mathbf k_n)\in \Xb^{n,<}}\sum_{(\mathbf s_n,\mathbf l_n)\in \Xb^{n,<}}\,f_n(\mathbf t_n,\mathbf k_n)\,g_n(\mathbf s_n,\mathbf l_n)\\
&\qquad\qquad\qquad\qquad\qquad\qquad\qquad\qquad\times\esp{\prod_{i,j=1}^{n}\D\Rm_{(t_i,k_i)}\D\Rm_{(s_j,\l_j)}}\\
&=(n!)^2\sum_{\underset{\mathbf l_n\in\E^{n,<}}{(\mathbf t_n,\mathbf k_n)\in\Xb^{n,<}}}\,f_n(\mathbf t_n,\mathbf k_n)\,g_n(\mathbf t_n,\mathbf l_n)\esp{\prod_{i,j=1}^{n}\D\Rm_{(t_i,k_i)}\D\Rm_{(t_i,\l_i)}}\\
&=n!\,\textless f_n,g_n\textgreater_{\L^2(\Xb,\tilde\nu)^{\circ n}},
\end{align*}
since $\esp{\D\Rm_{(t,k)}\,\D\Rm_{(t,\l)}}=\kappa_k\car_{\{k\}}(\l)$. Besides, for any $f_n\in\L^2(\Xb,\tilde\nu)^{\circ n}$,
\begin{align*}
\Jt_n(f_n\,;\mathcal R)
&=n\sum_{(t,k)\in \Xb}\Jt_{n-1\,}(\pi^n_{(t,k)}f_n;\mathcal R)\,\D\Rm_{(t,k)}\\
&=n!\sum_{(t,k)\in\Xb}\sum_{(\mathbf t_{n-1},\mathbf k_{n-1})\in \Xb^{n,<}}\,f_n\big((\mathbf t_{n-1},\mathbf k_{n-1}),(t,k)\big)\,\D\Rm_{(t,k)}\prod_{i=1}^{n-1}\D\Rm_{(t_i,k_i)}\\
&=n!\sum_{(\mathbf t_n,\mathbf k_n)\in \Xb^{n,<}}\,f_n(\mathbf t_n,\mathbf k_n)\,\prod_{i=1}^{n}\D\Rm_{(t_i,k_i)},
\end{align*}
that completes the proof.
\end{proof}

\begin{proof}[Proof of Lemma \ref{Int_tensor_lem}]
Let $g\in\L^2(\Xb_{T'})$ and $f_n\in\L^2(\Xb_{T''})^{\circ n}$ for some $T',T''\in\N$, the definition of the symmetric tensor product implies $g\circ f_n\in\L^2(\Xb_T)^{\circ n+1}$ where $T:=\max(T',T'')$, and
\begin{align*}
\Jt_{n+1}(g\circ f_n\,;\mathcal R)
&=n!\sum_{i=1}^{n+1}\sum_{(\mathbf t_{n+1},\mathbf k_{n+1})\in\Xb_T^{n+1,<}}g(t_i,k_{i})f_n^{\neg i}(\mathbf t_{n+1},\mathbf k_{n+1})\prod_{i=1}^{n+1}\D\Rm_{(t_i,k_i)}\\
&=n!\sum_{i=1}^{n}\sum_{(\mathbf t_{n+1},\mathbf k_{n+1})\in\Xb_T^{n+1,<}}g(t_i,k_{i})f_n^{\neg i}(\mathbf t_{n+1},\mathbf k_{n+1})\prod_{i=1}^{n+1}\D\Rm_{(t_i,k_i)}\\
&\qquad+ n!\sum_{(t,k)\in\Xb_T}\sum_{\underset{(t,k)\notin(\mathbf t_{n},\mathbf k_{n})}{(\mathbf t_{n},\mathbf k_{n})\in\Xb_T^{n,<}:}}g(t,k)f_n^{\neg i}(\mathbf t_{n},\mathbf k_{n})\,\D\Rm_{(t,k)}\prod_{i=1}^{n}\D\Rm_{(t_i,k_i)}\\
&=n!\sum_{(t,k)\in\Xb_T}\sum_{i=1}^{n}\sum_{\underset{(t,k)\notin(\mathbf t_{n},\mathbf k_{n})}{(\mathbf t_{n},\mathbf k_{n})\in\Xb_T^{n,<}:}}g(t_i,k_{i})f_n^{\neg i}((\mathbf t_{n},\mathbf k_{n}),(s,k))\D\Rm_{(s,k)}\prod_{i=1}^{n}\D\Rm_{(t_i,k_i)}\\
&\qquad+\sum_{(t,k)\in\Xb_T}g(t,k)\Jt_n(f_n\car_{\id{1}{t-1}^n}\,;\mathcal R)\D\Rm_{(t,k)}\\
&=n\sum_{(t,k)\in\Xb_T}\Jt_n\big(f_n(\star,(t,k))\circ g(\cdot)\car_{\id{1}{t-1}^n}(\star,\cdot)\,;\mathcal R\big)\D\Rm_{(t,k)}\\
&\qquad+ \sum_{(t,k)\in\Xb_T}g(t,k)\Jt_n(f_n\car_{\id{1}{t-1}^n}\,;\mathcal R)\D\Rm_{(t,k)}.
\end{align*}
The result is then extended to $g\in\L^2(\Xb)$ and $f_n\in\L^2(\Xb)^{\circ n}$ by a standard Cauchy argument.
\end{proof}

\begin{proof}[Proof of Lemma \ref{Cond_int_lem}]
Let $T\in\N$ and $f_n\in\L^2(\Xb_T)^{\circ n}$. For any $t\in\N$ such that $t<T$,
\begin{align*}
\esp{\Jt_n(f_n\,;\mathcal R)\,|\,\f{t}}
&=n!\,\sum_{(\mathbf t_n,\mathbf k_n)\in (\Xb_T)^{n,<}}\,f_n(\mathbf t_n,\mathbf k_n)\,\esp{\prod_{i=1}^{n}\D\Rm_{(t_i,k_i)}\,\bigg|\,\f{t}}\\
&=n!\,\sum_{(\mathbf t_n,\mathbf k_n)\in (\Xb_t)^{n,<}}\,f_n(\mathbf t_n,\mathbf k_n)\,\esp{\prod_{i=1}^{n}\D\Rm_{(t_i,k_i)}\,\bigg|\,\f{t}}=\Jt_n\big(f_n\car_{\id{1}{t}}\,;\mathcal R\big),
\end{align*}
since the independence of the centered variables $\{\D\Rm_{(t_i,k_i)},\; (t_i,k_i)\in\Xb,\,i\in\id{1}{n} \}$ implies that $\mathbf E\big[\prod_{i=1}^{n}\D\Rm_{(t_i,k_i)}\big|\f{t}\big]=0$ if there exists $i_0\in\id{1}{T}$ such that $t_{i_0}>t$. The result is extended to $\L^2(\Xb)^{\circ n}$ by a limit procedure.
\end{proof}

\begin{proof}[Proof of Lemma \ref{chaos_L0_lem}]
It suffices to note that $\mathcal H_s\cap \mathcal L^0(\P,\f{t})$ for some $(s,t)\in\N$, $s\pp t$, is generated by the orthogonal family
\begin{equation}\label{orthbasis}
\{1\}\cup\left\{\prod_{i=1}^{s}\,\D\Rm_{(t_i,k_i)}, \; 1\pp t_1<\cdots<t_s\pp t, \, (k_1,\dots,k_s)\in \E^s \right\}.
\end{equation}
Indeed any element of $\overline{\mathcal R}_t=\mathrm{Span}\{\D\Rm_{(s,k)},\, (s,k)\in\Xb_t\}$ can be expressed in terms of multiple integrals as
\begin{equation*}
\prod_{i=1}^{s}\,\D\Rm_{(t_i,k_i)}=\Jt_s\Big(\car_{\{(t_1,k_1),\dots,(t_s,k_s)\}}^{<}\car_{\id{0}{t}^s}\Big).
\end{equation*}
We conclude by noting that the dimensions of $\overline{\mathcal R}_t$ and $\mathcal L^0(\P,\f{t})$ in \eqref{chaos_L0_lem_eq} are both equal to
\begin{equation*}
1+\sum_{s=1}^t\,|\E|^s\times\binom{t}{s}=(1+|\E|)^t.
\end{equation*}
The proof is thus complete.
\end{proof}

\begin{proof}[Proof of Proposition \ref{Chaos_Z_prop}]
Let, for notation purposes, $\mathfrak m_{k^i,k^j}^{-1}$ designate the $(i,j)$-th entry of matrix $\mathfrak M^{-1}$, that is the inverse of matrix $\mathfrak M$ defined by \eqref{matrix_basis_eq}.
It suffices to state it for any random variable $\Fm\in\cyl$. Let $\E=\{k^1,\dots,k^{\overline{\mathfrak m}}\}$. By Theorem \eqref{Chaos_R_th}, The chaotic decomposition of $\Fm$ reads
\begin{equation*}
\Fm=\esp{\Fm}+\sum_{\mathbf t_n }\sum_{i_1=1}^{\overline{\mathfrak m}}\cdots\sum_{i_n=1}^{\overline{\mathfrak m}}f_n((t_1,k_1^{i_1}),\dots,(t_n,k_n^{i_n}))\prod_{j=1}^n\D\Rm_{(t_j,k_j^{i_j})}.
\end{equation*}
Since $\D\Rm_{(t_j,k_j^{\l})}=\sum_{p=1}^\l \mathfrak m_{k^\l k^p}^{-1}\,\D\Zm_{(t_j,k_j^{p})}$, we get 
\begin{align*}
\Fm-\esp{\Fm}
&=
\sum_{\mathbf t_n }\sum_{i_1=1}^{\overline{\mathfrak m}}\cdots\sum_{i_n=1}^{\overline{\mathfrak m}}f_n((t_1,k_1^{i_1}),\dots,(t_n,k_n^{i_n}))\prod_{j=1}^n\Big(\sum_{p=1}^{i_j} \mathfrak m_{k^{i_j}k^p}^{-1}\,\D\Zm_{(t_j,k_j^{p})}\Big)\\
&=
\sum_{\mathbf t_n }\sum_{i_1=1}^{\overline{\mathfrak m}}\cdots\sum_{i_n=1}^{\overline{\mathfrak m}}f_n((t_1,k_1^{i_1}),\dots,(t_n,k_n^{i_n}))\Big(\sum_{p_1=1}^{i_1}\cdots\sum_{p_n=1}^{i_n} \prod_{j=1}^n\mathfrak m_{k^{i_j}k^{p_j}}^{-1}\D\Zm_{(t_j,k_j^{p_j})}\Big)\\
&=
\sum_{\mathbf t_n }\sum_{p_1=1}^{\overline{\mathfrak m}}\cdots\sum_{p_n=1}^{\overline{\mathfrak m}}\sum_{i_1=p_1}^{\overline{\mathfrak m}}\cdots\sum_{i_n=p_n}^{\overline{\mathfrak m}} f_n((t_1,k_1^{i_1}),\dots,(t_n,k_n^{i_n}))\prod_{j=1}^n\Big(\mathfrak m_{k^{i_j}k^{p_j}}^{-1}\D\Zm_{(t_j,k_j^{p_j})}\Big)\\
&=
\sum_{\mathbf t_n }\sum_{p_1=1}^{\overline{\mathfrak m}}\cdots\sum_{p_n=1}^{\overline{\mathfrak m}}\Big(\sum_{i_1=p_1}^{\overline{\mathfrak m}}\cdots\sum_{i_n=p_n}^{\overline{\mathfrak m}}\prod_{j=1}^n\mathfrak m_{k^{i_j}k^{p_j}}^{-1} f_n((t_1,k_1^{i_1}),\dots,(t_n,k_n^{i_n}))\Big)\prod_{j=1}^n\D\Zm_{(t_j,k_j^{p_j})},
\end{align*}
where we summed over the set of $\{\mathbf t_n\in\Xb_T^{n,<}\,:\,\mathbf t_n=(t_1,\cdots,t_n)\}$. The result is extended to $\L^2(\P)$ by density of $\cyl$.
\end{proof}	

\begin{proof}[Proof of Proposition \ref{Doleans_prop}]
For any $T\in\N,t\in \id{1}{T}$ define
\begin{equation*}
\zeta_t^{T}=1+\sum_{n=1}^{T}\frac{1}{n!}\Jt_{n}(h^{\otimes n}\car_{\id{1}{t}^{n}}).
\end{equation*}
where we assume with no loss of generality that $\esp{\zeta_t^{T}}=1$.
Consider $T$ large enough such that $T>t$; then
\begin{align*}
1&+\sum_{s=1}^{t}\sum_{k\in \E}h(s,k)\zeta_{s-1}^{T}\D\Rm_{(s,k)}\\
&=1+\sum_{s=1}^{t}\sum_{k\in \E}h(s,k)\bigg(1+\sum_{n=1}^{T}\frac{1}{n!}\Jt_n\big(h^{\otimes n}\car_{\id{1}{s-1}^n}\big)\bigg)\D\Rm_{(s,k)}\\
&=1+\sum_{s=1}^{t}\sum_{k\in \E}h(s,k)\D\Rm_{(s,k)}+\sum_{n=1}^{T}\frac{1}{n!}\Jt_{n+1}\big(h^{\otimes n+1}\car_{\id{1}{t}^{n+1}}\big)\\
&\qquad-\sum_{n=1}^{T}\sum_{k\in \E}\frac{n}{n!}\sum_{s=1}^{t}\Jt_n\big(h^{\otimes n}(\star,(s,k))\circ h(\cdot)\car_{\id{1}{s-1}^n}(\star,\cdot)\big)\D\Rm_{(s,k)}\\
&=1+\Jt_1(h\car_{\id{1}{t}})+\sum_{n=1}^{T}\frac{1}{n!}\Jt_{n+1}\big(h^{\otimes n+1}\car_{\id{1}{t}^{n+1}}\big)-\sum_{n=1}^{T}\sum_{k\in \E}\frac{n}{n!}\Jt_{n}\big(h^{\otimes n+1}(\star,(s,k))\car_{\id{1}{t}^{n+1}}(\star)\big)\D\Rm_{(s,k)}\\
&=1+\Jt_1(h\car_{\id{1}{t}})+\sum_{n=1}^{T}\frac{1}{n!}\Jt_{n+1}\big(h^{\otimes n+1}\car_{\id{1}{t}^{n+1}}\big)-\sum_{n=1}^{T}\frac{n}{(n+1)!}\Jt_{n+1}(h^{\otimes n+1}\car_{\id{1}{t}^{n+1}})\\
&=1+\Jt_1(h\car_{\id{1}{t}})+\sum_{n=2}^{T+1}\frac{1}{n!}\Jt_{n}\big(h^{\otimes n}\car_{\id{1}{t}^{n}}\big)=\zeta_t^{T+1},
\end{align*}
where we used Lemma \ref{Int_tensor_lem} in the second line and the definition of the multiple integral \eqref{Jn_mult_int_rec_eq} in the penultimate one.
\noindent
Since by the very definition of Doléans exponential \eqref{Doleans_def_eq}, for all $t\in\N$ $\zeta_t^{T}$ tends to $\xi_t(h)$ almost surely when $T$ goes ton infinity, we get
\begin{equation*}
\xi_t(h)=1+\sum_{s=1}^{t}\Big(\sum_{k\in \E}h(s,k)\D\Rm_{(s,k)}\Big)\xi_{s-1}(h).
\end{equation*}
Besides, the sequence $(\zeta_t)_{t\in\N}$ satisfies the equation in differences
\begin{equation*}
\xi_t(h)-\xi_{t-1}(h)=\xi_{t-1}(h)\sum_{k\in\E}g(t,k)\big(\car_{(t,k)}-\lambda\Q(\{k\})\big)
\end{equation*}
where $\Jt_1(h)=\J_1(g\,;\mathcal Z)$.
On the other hand, provided the product converges, define the sequence of exponential products  $(\xi_t^{\mathcal Z}(g))_{t\in\N}$, that stand for the Doléan exponentials with respect to the family $\mathcal Z$, by
\begin{equation*}
\xi_t(h)=\xi_t^{\mathcal Z}(g)=\prod_{t\in\N}\Big(1+\sum_{k\in\E}g(t,k)\big(\car_{(t,k)}-\lambda\Q(\{k\})\big)\Big)
\end{equation*}
and so that for all $t\in\N$,
\begin{equation*}
\xi_t^{\mathcal Z}(g)=1+\sum_{s=1}^{t}\Big(\sum_{k\in \E}h(s,k)\D\Rm_{(s,k)}\Big)\xi_{s-1}^{\mathcal Z}(g).
\end{equation*}
By uniqueness of the decomposition, provided the series and product converge, $\xi_t^{\mathcal Z}(g)=\xi_t(h)$ for any $t\in\N$; that leads to the conclusion.
\end{proof}

\subsection{Proofs of Section 3}
\subsubsection{Proofs of Subsection 3.1}

\begin{proof}[Proof of Lemma \ref{Mecke_formula_th}]
Let $\mathbf K$ be a probability measure on $\N$, $\V_1,\V_2,\dots$ independent random elements in $\E$ with distribution $\Q$, and $\mathrm K$ a random variable with distribution $\mathbf K$ supposed to be independent of $(\V_n,\, n\in\N)$. Recall that 
\begin{equation}\label{Mixing_bin_def_eq}
\varpi=\sum_{j=1}^{\mathrm K}\delta_{\V_j}
\end{equation}
is called a mixed binomial process with mixing distribution $\mathbf K$ and sampling distribution $\Q$. Let $\eta\in\widehat{\mathfrak N_{\Xb}}$; there exists $T\in\N$ such that $\eta$ is a marked binomial process on $\Xb_T$. By its very definition any marked binomial process on $\Xb_T$ of intensity measure $\nu$ is a mixed binomial process with mixing distribution $\mathcal B\mathrm{in}(T,\lambda)$ and sampling distribution $\Q$. Moreover, for any $n\in\id{1}{T}$, $\varpi_{|\Km=n}$ is a binomial process of intensity measure $n\Q$. Then, as a special case of the Georgii-Nguyen-Zessin formula (see \cite{DaleyVere-Jones2007}, Proposition 15.5.II with $\mathbf x=\varpi_{|\Km=n}$ and $\rho=n\nu$), for $\uk$ measurable application from $\mathfrak N_\Xb\times\Xb$ into $[0,+\infty]$,
\begin{align*}
\mathbf E\bigg[\sum_{(t,k)\in\eta}\uk\big(\eta,(t,k)\big)\bigg]
&=\sum_{n=1}^{T}\esp{\sum_{k\in\E}\esp{\uk(\varpi_{|\mathrm K},k)\,\varpi(k)\,\Big|\,\mathrm K=n}}\\
&=\sum_{n=1}^{T}n\sum_{k\in\E}\esp{\uk(\varpi_{|\mathrm K-1}+\delta_{k},k)}\Q(\{k\})\\
&=\sum_{n=1}^{T}\sum_{j=1}^n\sum_{k\in\E}\mathbf E\bigg[\big(\sum_{i\neq j}\delta_{\V_i}+\delta_k,\V_j\big)\bigg]\bigg|_{\V_j=k}\\
&=\esp{\int_{\Xb_T}\uk\big(\pi_{t}(\eta)+\delta_{(t,k)},(t,k)\big)\d\nu(t,k)},
\end{align*}
where we have used the mixed binomial representation of $\eta$ in the second line and in the last one. By replacing where necessary $\eta$ by $\eta-\delta_{(t,k)}$, we can state 
\begin{equation*}
\mathbf E\bigg[\sum_{(t,k)\in\eta}\uk\big(\eta-\delta_{(t,k)},(t,k)\big)\bigg]=\esp{\int_{\Xb_T}\uk\big(\pi_t(\eta),(t,k)\big)\d\nu(t,k)}
\end{equation*}
Hence the result. 
\end{proof}

\begin{proof}[Proof of Proposition \ref{IPP_L1_prop}]
Under the hypotheses of the proposition, by noting that $\Dm_{(t,k)}^+\Fm-\overline{\Dm}_t\Fm=\fk(\pi_t(\eta)+\delta_{(t,k)})-\fk(\eta)$, we have
\begin{align*}
\mathbf E\Big[\int_\Xb&\Big(\Dm_{(t,k)}^+\Fm-\overline{\Dm}_t\Fm\Big)\,\d\nu(t,k)\Big]\\
&=\mathbf E\Big[\int_\Xb[\fk(\pi_t(\eta)+\delta_{(t,k)})-\fk(\eta)]\,\uk(\eta,(t,k))\,\d\nu(t,k)\Big]\\
&=\mathbf E\Big[\int_\Xb\fk(\pi_t(\eta)+\delta_{(t,k)})\,\uk(\eta,(t,k))\,\d\nu(t,k)\Big]-\mathbf E\Big[\int_\Xb[\fk(\eta)\,\uk(\eta,(t,k))\,\d\nu(t,k)\Big]\\
&=\mathbf E\Big[\int_\Xb\fk(\eta)\,\uk(\pi_t(\eta),(t,k))\,\d\eta(t,k)\Big]-\mathbf E\Big[\int_\Xb[\fk(\eta)\,\uk(\eta,(t,k))\,\d\nu(t,k)\Big]\\
&=\mathbf E\Big[\int_\Xb\fk(\eta)\,\uk(\eta,(t,k))\,\d\eta(t,k)\Big]-\mathbf E\Big[\int_\Xb[\fk(\eta)\,\uk(\eta,(t,k))\,\d\nu(t,k)\Big]=\mathbf E\big[\Fm\widetilde{\delta}(u)\big],
\end{align*}
where we have used the Mecke formula in the third line and that $u$ is $\F$-predictable to get the last one. Hence the result. 
\end{proof}

\subsubsection{Proofs of Subsection 3.2}

\begin{proof}[Proof of Theorem \ref{IPP_S_prop}] The proof is identical to that of Theorem 1.8.2 in Privault \cite{Privault_stochastica}. Let $\Fm=\Jt_n(f_n)$ for some $f_n\in\L^2(\Xb)^{\circ n}$ and $u_{\cdot}=\Jt_m(g_{m+1}(\star,(\cdot)))\in\mathcal U$ for some $g_{m+1}\in\L^2(\Xb)^{\circ m}\otimes \L^2(\Xb)$. Then,  
\begin{align*}
&\mathbf E\bigg[\textless\Dm_{\cdot}\Jt_n(f_n),\Jt_m(g_{m+1}(\star,\cdot))\textgreater_{\L^2(\Xb,\tilde\nu)}\bigg]\\
&=n\esp{\textless\Jt_{n-1}(f_n(\star,\cdot))\car_{\Delta_n(\star,\cdot)},\Jt_m(g_{m+1}(\star,\cdot))\textgreater_{\L^2(\Xb,\tilde\nu)}}\\
&=n!\car_{\{n-1\}}(m)\int_\Xb\esp{\Jt_{n-1}(f_n(\star,(t,k)))\car_{\Delta_n(\star,(t,k))},\Jt_m(g_{m+1}(\star,(t,k)))}\,\d\tilde\nu(t,k)\\
&=n!\car_{\{n\}}(m+1)\textless\car_{\Delta_n(\star,(t,k))}f_n(\star,(t,k)),\tilde g_{m+1}(\star,(t,k))\textgreater_{\L^2(\Xb,\tilde\nu)}\\
&=\esp{\Jt_n(f_n)\Jt_m(\tilde g_{m+1})}=\esp{\Fm\,\delta u}.
\end{align*}
Hence the result.
\end{proof}

\begin{proof}[Proof of Corollary \ref{Closability_corollary}]
Let $(\Fm_n)_{n\in\Z_+}$ be a sequence of random variables defined on
$\cyl$ such that $\Fm_n$ converges to 0 in $\L^2(\P)$ and the sequence
$(\Dm\Fm_n)_{n\in\Z_+}$ converges to $\Lambda$ in $\L^2(\P\otimes\tilde\nu)$. Let $u$ be a simple process.
From the integration by parts formula (\ref{IPP_S_eq}),
\begin{align*}
\mathbf E\bigg[\sum_{(t,k)\in\Xb}\Dm_{(t,k)}\Fm_n\ u_{(t,k)}\bigg]
&=\mathbf E\bigg[\Fm_n\sum_{(t,k)\in\Xb}u_{(t,k)}\D\Rm_{(t,k)}\bigg],
\end{align*}
where $\sum_{(t,k)\in\Xb}u_{(t,k)}\D\Rm_{(t,k)}\in \L^2(\P)$. Indeed, the process $(\D\Rm_{(t,k)}u_{(t,k)})_{(t,k)\in\Xb_T}$ belongs to $\L^2(\O\times\Xb,\P\otimes\tilde\nu)$ since, by the Cauchy-Schwarz inequality,
\begin{equation*}
\mathbf E\bigg[\sum_{(t,k)\in \Xb_T}\big|u_{(t,k)}\D\Rm_{(t,k)}\big|^2\bigg]\pp \sum_{(t,k)\in \Xb_T}\kappa_k\mathbf E\bigg[u_{(t,k)}^2\delta_{(t,k)}\bigg]<\infty.
\end{equation*}
 Then,
\begin{equation*}
\langle \Lambda, u\rangle_{\L^2(\P\otimes\tilde\nu)}
=\underset{n\rightarrow \infty}\lim\mathbf E\bigg[\Fm_n\sum_{(t,k)\in\Xb}u_{(t,k)}\D\Rm_{(t,k)}\bigg]=0,
\end{equation*}
for  simple process $u$. It follows that $\Lambda=0$ and then the operator $\Dm$
is closable from $\L^2(\P)$ to $\L^2(\O\times\Xb,\P\otimes\tilde\nu)$. By equivalence of the norms $\|\cdot\|_{\L^2(\Xb,\tilde\nu)}$ and $\|\cdot\|_{\L^2(\Xb,\nu)}$, this result can be extended to $\L^2(\O\times\mathscr \Xb,\P\otimes\nu)$.
\end{proof}

\subsubsection{Proofs of Subsection 3.3}

\begin{proof}[Proof of Proposition \ref{Grad_difference_prop} ]
The application of $\Dm^+$ to $\Fm=\Jt_n(f_n)\in\cyl$, and $(t,k)\in\Xb_T$, gives
\begin{align*}
\Dm_{(t,k)}^+\Jt_n(f_n)
&=\,n!\sum_{(\mathbf t_n,\mathbf k_n)\in (\Xb_T)^{n,<}}\,f_n\big((t_1,k_1),\dots,(t_n,k_n)\big)\,\prod_{i=1}^{n}\Dm_{(t,k)}^+\, \D\Rm_{(t_i,k_i)}\\
&=n!\sum_{(\mathbf t_n,\mathbf k_{n}^{\neg k})\in (\Xb_T^{\neg t})^{n-1,<}}\,f_n\big((t_1,k_1),\dots,(t,k),\dots,(t_{n},k_{n})\big)\,\prod_{\underset{t_i\neq t}{i=1}}^{n}\D\Rm_{(t_i,k_i)}\\
&=n!\sum_{(\mathbf t_{n-1},\mathbf k_{n-1})\in(\Xb_T^{\neg t})^{n,<}}\,f_n\big((\mathbf t_{n-1},\mathbf k_{n-1}),(t,k)\big)\,\prod_{\underset{t_i\neq t}{i=1}}^{n}\D\Rm_{(t_i,k_i)}\\
&=n\Jt_{n-1}\big(f_n(\star,(t,k))\car_{\Delta_n^{<}}\big)=\Dm_{(t,k)}\Jt_n(f_n),
\end{align*}
where $\Xb_T^{\neg t}=\Xb_T\setminus\cup \{(t,k),\,k\in\E\}$.
Thus, for any $\Fm\in\cyl$, $\Dm_{(t,k)}\Fm=\big[\Fm(\pi_t(\eta)+\delta_{(t,k)})-\Fm(\pi_t(\eta))\big]$. The result is then extended to $\DD$ by a density argument relying on the closability of $\Dm$ (see Corollary \ref{Closability_corollary}).
\end{proof}

\begin{proof}[Proof of Lemma \ref{Mult_D_esp_lem}]
It suffices to state the result for $\Fm=\xi(h)$, with $h\in\L^2(\Xb)$. By \eqref{Doleans_def_eq}, 
\begin{equation*} \xi(h)=\esp{\xi(h)}+\sum_{m\in\N}\sum_{\underset{|\J|=m}{\J\subset\N}}\prod_{i\in\J}h(t_i,k_i)\D\Rm_{(t_i,k_i)}.
\end{equation*}
Follows from the definition of $\Dm^{(n)}$ \eqref{Mult_D_eq} that for any $(\mathbf t_n,\mathbf k_n)\in\Xb^n$ and any set $\big\{(t_i,k_i),\,i\in\J,|\J|=m\big\}$ with $m>n$, there exists $i_0\in\J$ such that $(t_{i_0},k_{i_0})\notin\{(t_i,k_i),\,i\in\id{1}{n}\}$. Then, by independence of the $\Delta\Rm_{(t,k)}$, 
\begin{align*}
\esp{\Dm_{(\mathbf t_n,\mathbf k_n)}^{(n)}\Big(\prod_{i\in\J}h(t_i,k_i)\D\Rm_{(t_i,k_i)}\Big)}=\esp{\D\Rm_{(t_{i_0},k_{i_0})}}\mathbf E\bigg[\Dm_{(\mathbf t_n,\mathbf k_n)}^{(n)}\Big(\prod_{i\in\J\setminus\{i_0\}}h(t_i,k_i)\D\Rm_{(t_i,k_i)}\Big)\bigg].
\end{align*}
For any $(t,k)\in\Xb$ let $\rk_{(t,k)}$ be the representative of  $\D\Rm_{(t,k)}$. 
With a similar argument we can prove the same result for any set $\big\{(t_i,k_i),\,i\in\J,|\J|=n\big\}$ different of $(\mathbf t_n,\mathbf k_n)$ so that
\begin{align*}
\mathbf E\Big[\Dm_{(\mathbf t_n,\mathbf k_n)}^{(n)}\Big(\prod_{i\in\J}h(t_i,k_i)\D\Rm_{(t_i,k_i)}\Big)\Big]
&=\mathbf E\Big[\sum_{\J\subset\N}\car_{\{\J=\id{1}{n}\}}\Big(\prod_{i\in\J}h(t_i,k_i)\D\Rm_{(t_i,k_i)}\big(\pi_{(\mathbf t_n,\mathbf k_n)}(\eta)+\delta_{(\mathbf t_n,\mathbf k_n)}\big)\\
&\qquad\qquad\qquad\qquad\qquad-\prod_{i\in\J}h(t_i,k_i)\D\Rm_{(t_i,k_i)}\big(\pi_{(\mathbf t_n,\mathbf k_n)}(\eta)\big)\Big)\Big]\\
&=\sum_{\J\subset\N}\car_{\{\J=\id{1}{n}\}}\prod_{i\in\J}h(t_i,k_i)=n! \prod_{i=1}^nh(t_i,k_i)=\esp{\Dm_{(\mathbf t_n,\mathbf k_n)}^{(n)}\Fm}.
\end{align*}
On the other hand, by using the alternative characterization of $\Fm=\xi(h)$, and the orthogonality of the centred variables $\D\Rm$,
\begin{multline*}
\mathbf E\Big[\Fm\prod_{i=1}^n\frac{1}{\kappa_i}\D\Rm_{(t_i,k_i)}\Big]=\mathbf E\Big[\esp{\xi(h)}\prod_{i=1}^n\frac{1}{\kappa_i}\D\Rm_{(t_i,k_i)}\Big]+\mathbf E\Big[\prod_{s\in\N}\Big(1+\sum_{k\in\E}h(s,k)\D\Rm_{(s,k)}\Big)\prod_{i=1}^n\frac{1}{\kappa_i}\D\Rm_{(t_i,k_i)}\Big]\\
=\esp{\prod_{i=1}^n\Big(1+\sum_{k\in\E}h(t_i,k)\D\Rm_{(t_i,k)}\Big)\frac{1}{\kappa_i}\D\Rm_{(t_i,k_i)}}=\prod_{i=1}^nh(t_i,k_i).
\end{multline*}
The result is extended to $\L^2(\P)$ by density of the Doléans exponential family.
\end{proof}

\begin{proof}[Proof of Lemma \ref{Stroock_formula_esp_lem}]
It suffices to state the equality for $\Fm=\xi(f)$, $\Gm=\xi(g)$, where $f,g\in\L^2(\Xb)$. On the one hand, there exists $T\in\N$ such that
\begin{align*}
\esp{\Fm\Gm}&-\esp{\Fm}\esp{\Gm}
=\prod_{t\in\id{1}{T}}\prod_{s\in\id{1}{T}}\esp{\Big(1+\sum_{k\in\E}f(t,k)\D\Rm_{(t,k)}\Big)\Big(1+\sum_{\l\in\E}g(s,\l)\D\Rm_{(s,\l)}\Big)}\\
&=\prod_{t\in\id{1}{T}}\Big(1+\sum_{k\in\E}\kappa_kf(t,k)g(t,k)\Big)\\
&=\sum_{n\in\id{1}{T}}\sum_{\underset{|\J|=n}{\J\subset\N}}\prod_{j\in\J}\Big(\sum_{k\in\E}\kappa_kf(t_j,k)g(t_j,k)\Big)=\sum_{n\in\id{1}{T}}\sum_{\underset{|\J|=n}{\J\subset\id{1}{T}}}\prod_{j\in\J}\textless f(t_j,\cdot),g(t_j,\cdot)\textgreater_{\L^2(\Xb,\tilde\nu)^{\otimes n}}.
\end{align*}
On the other hand, for any $n\in\id{1}{T}$ and $\mathrm I_n\subset\id{1}{T}$ of cardinality $n$, denoted $\mathrm I_n=\{(t_j^{\mathrm I_n},k_j^{\mathrm I_n}),\, j\in\id{1}{n}\}$ ,
\begin{align*}
\esp{\Dm_{\mathrm I_n}^{(n)}\Fm}
=\prod_{j\in\mathrm I_n}f(t_j^{\mathrm I_n},k_j^{\mathrm I_n})\mathbf E\bigg[\prod_{j\in\id{1}{T}\setminus\id{1}{n}}\Big(1+\sum_{k\in\E}f(t,k)\D\Rm_{(t,k)}\Big)\bigg]=\prod_{j\in\mathrm I_n}f(t_j,k_j).
\end{align*}
Then, by denoting by $\mathrm I_n^{<}$ the ordered sets $\mathrm I_n$ with respect to the jump times $t_j's$,
\begin{align*}
\sum_{n\in\id{1}{T}}\frac{1}{n!}\textless\mathbf E[\Dm^{(n)}\Fm]\,,\,\mathbf E[\Dm^{(n)}\Gm]\textgreater_{\L^2(\Xb,\tilde\nu)^{\otimes n}}
&=\sum_{n\in\id{1}{T}}\sum_{\underset{|\mathrm I_n^<|=n}{\mathrm I_n^<\subset\id{1}{T}}}\prod_{j\in\mathrm I_n^<}\textless f(t_j,\cdot),g(t_j,\cdot)\textgreater_{\L^2(\Xb,\tilde\nu)^{\otimes n}}.
\end{align*}
The result is extended to $\L^2(\P)$ by density of the class of Doléans exponentials.
\end{proof}

\begin{proof}[Proof of Lemma \ref{Stroock_formula_lem}] The proof follows closely that of Last and Penrose (\cite{LastPenrose2011}, Theorem 1.3).
 Define for any $\Fm\in\L^2(\P)$ and $n\in\Z_+$ the application $\theta_n^\Fm$ by
\begin{equation*}
\theta^\Fm_n(\mathbf s_n,\mathbf l_n)=\esp{\Dm^{(n)}_{(\mathbf s_n,\mathbf l_n)}\Fm}\;;\;\forall (\mathbf s_n,\mathbf l_n)\in\Xb^n.
\end{equation*}
Let $\Fm\in\L^2(\P)$.
The idea is to state the identity for any random variable of the form $\Gm=\xi(g)$ with $g\in\L^2(\Xb)$, well chosen to approximate $\Fm$. Indeed follows from the isometry property \eqref{isometry_eq} that for $(\mathbf s_m,\mathbf l_m)\in\Xb^m$, 
\begin{align*}
\mathbf E\bigg[\Gm\prod_{i=1}^m\frac{\D\Rm_{(s_j,\l_j)}}{\kappa_i}\bigg]
&=\mathbf E\bigg[\Big(\esp{\Gm}+\sum_{n\in\N}\sum_{(\mathbf t_n,\mathbf k_n)\in \Xb^{n}}g_n(\mathbf t_n,\mathbf k_n)\prod_{i=1}^{n}\D\Rm_{(t_i,k_i)}\Big)\prod_{j=1}^m\frac{\D\Rm_{(s_j,\l_j)}}{\kappa_j}\bigg]
\end{align*}
which is equal to $g_m(\mathbf s_m,\mathbf l_m)$, whereas by Lemma \ref{Mult_D_esp_lem}, the right member is also equal to $(m!)^{-1}\theta_m^\Gm((\mathbf s_m,\mathbf l_m))$.  Now, from Lemma \ref{Stroock_formula_esp_lem} together with the isometry identity \eqref{isometry_proc_eq}, follows 
\begin{equation*}
\sum_{n=0}^{\infty}\esp{\frac{1}{n!} \Jt_n(f_n)}^2=\sum_{n=0}^{\infty}\frac{1}{n!}\|f_n\|_{\L^2(\Xb)^{\circ n}}^2=\esp{\Fm^2}<\infty.
\end{equation*}
Hence the infinite series of orthogonal terms
$\S:=\sum_{n\in\Z_+}\frac{1}{n!}\Jt_n(\theta_n^\Fm )$
converges in $\L^2(\P)$. Then,
\begin{equation*}
\esp{(\S-\xi(g))^2}=\sum_{n\in\Z_+}\frac{1}{n!}\|\theta_n^\Fm-\theta_n^\Gm\|_{\L^2(\Xb)^{n}}^2=\esp{(\Fm-\xi(g))^2},
\end{equation*}
so that, since the set of Doléans exponentials is dense in $\L^2(\P)$ and $\S$ converges in $\L^2(\P)$, the equality $\Fm=\S$ stands $\P$-almost surely. To prove uniqueness, assume there exists $\Hm\in\L^2(\P)$ which decomposition satisfies \eqref{Dom_D_condition_chaos_eq} and such that $\S':=\sum_{n\in\Z_+}(n!)^{-1}\Jt_n(h_n)$ converges in $\L^2(\P)$ to $\Fm$. Taking the expectation entails $h_0=\esp{\Fm}=\theta_0^\Fm$. For $n\in\N$, follows  from Lemma \ref{Stroock_formula_esp_lem} that $\esp{\Fm\Jt_n(g)}=n!\textless \theta_n^\Fm,g\textgreater_{\L^2(\Xb,\tilde\nu)^{\otimes n}}$ and by replacing $\theta_n^\Fm$ by $h_n$ (since the convergence of $\S'$ holds in $\L^2(\P)$) that $\esp{\Fm\Jt_n(g)}=n!\textless h_n,g\textgreater_{\L^2(\Xb,\tilde\nu)^{\otimes n}}$. Then $\|\theta_n^\Fm-h_n\|_{\L^2(\Xb,\tilde\nu)^{\otimes n}}^2=\textless\theta_n^\Fm-h_n,g\textgreater_{\L^2(\Xb,\tilde\nu)^{\otimes n}}$ is equal to zero by taking $g=\theta_n^\Fm-h_n$. The proof is thus complete.   
\end{proof}

\begin{proof}[Proof of Proposition \ref{Mehler_formula_prop}]
It suffices to prove it for $\Fm=\xi(h)=\fk(\eta)$ where, as $\eta$ is finite, there exists $T\in\N$ such that 
\begin{equation*}
\fk(\eta)=\prod_{s\in\id{1}{T}}\Big(1+\sum_{k\in \E}g(s,k)(\car_{\{(s,k)\in\eta\}}-\lambda\Q(\{k\}))\Big).
\end{equation*}
where $g\in\L^2(\Xb_T)$ is such that $\J_1(h)=\J_1(g\,;\,\mathcal Z)$.
On the one hand, by action of the semi-group $\Pm$ on the quasi-chaotic decomposition \eqref{Chaos_R_eq}, 
\begin{equation*}
\Pm_\tk\Fm=\xi(e^{-\tk}u)
=\prod_{s\in\N}\Big(1+e^{-\tk}\sum_{k\in \E}g(s,k)(\car_{\{(s,k)\in\eta\}}-\lambda\Q(\{k\}))\Big)
\end{equation*}
On the other hand, by definition of $\eta^{\tk,0}$ \eqref{wtk_def_eq} and $\tilde\eta$, which law given $\eta$ is  provided by \eqref{Law_tilde_eta}, 
\begin{align*}
\esp{\fk(\eta^{\tk,0}+\varepsilon\tilde\eta)\big|\eta}
&=\prod_{s\in\id{1}{T}}\esp{1+\sum_{k\in \E}g(s,k)\big(\car_{\{(s,k)\in(\eta^{\tk,0}+\varepsilon_s^\tk\tilde\eta)\}}-\lambda\Q(\{k\})\big)\,\bigg|\eta}\\
&=\prod_{s\in\id{1}{T}}\Big(1+\sum_{k\in \E}g(s,k)((1-e^{-\tk})\lambda\Q(\{k\}))+e^{-\tk}\car_{\{(s,k)\in\eta\}}-\lambda\Q(\{k\})\Big),\\
&=\prod_{s\in\id{1}{T}}\Big(1+e^{-\tk}\sum_{k\in \E}g(s,k)\big(\car_{\{(s,k)\in\eta\}}-\lambda\Q(\{k\})\big)\Big)=\Pm_\tk\Fm.
\end{align*}
Since $\eta$ is finite, the result holds in $\L^2(\P)$. The proof is complete.
\end{proof}

\begin{proof}[Proof of Proposition \ref{Commutation_PtF_prop}] Let $\Fm=\xi(h)=\fk(\eta)$ such that $\esp{\Fm}=1$ and
\begin{equation*}
\fk(\eta)=\prod_{s\in\N}\Big(1+\sum_{k\in \E}g(s,k)\big(\car_{\{(s,k)\in\eta\}}-\lambda\Q(\{k\})\big)\Big),
\end{equation*}
where $g\in\L^2(\Xb)$ is such that $\J_1(h)=\J_1(g\,;\,\mathcal Z)$.
Then, from Mehler's formula \eqref{Mehler_formula_prop_eq},
\begin{equation*}
\Pm_\tk\fk(\eta)=\xi(e^{-\tk}h)
=\prod_{s\in\N}\Big(1+e^{-\tk}\sum_{k\in \E}g(s,k)\big(\car_{\{(s,k)\in\eta\}}-\lambda\Q(\{k\})\big)\Big).
\end{equation*}
On the one hand, for any $(s,k)\in\Xb$,
\begin{multline*}
\Dm_{(s,k)}\Pm_\tk\fk(\eta)=\prod_{r\in\N}\Big(1+e^{-\tk}\sum_{\l\in \E}g(r,\l)\big(\car_{\{(r,\l)\in(\pi_s(\eta)+\delta_{(s,k)})\}}-\lambda\Q(\{k\})\big)\Big)\\
-\prod_{r\in\N}\Big(1+e^{-\tk}\sum_{\l\in \E}g(r,\l)\big(\car_{\{(r,\l)\in\pi_s(\eta)\}}-\lambda\Q(\{k\})\big)\Big)=e^{-\tk}g(s,k)\,\Pm_\tk\fk(\pi_s(\eta)).
\end{multline*}
\noindent
On the other hand, follows from
\begin{equation*}
\Dm_{(s,k)}\fk(\eta)=g(s,k)\prod_{r\in\N\setminus{\{s\}}}\Big(1+\sum_{k\in \E}g(r,k)\big(\car_{\{(r,k)\in\pi_s(\eta)\}}-\lambda\Q(\{k\})\big)\Big)=g(s,k)\fk(\pi_s(\eta)),
\end{equation*}
that for any $(s,k)\in\Xb$,
\begin{align*}
\Pm_\tk(\Dm_{(s,k)}\fk(\pi_s(\eta)))
&=g(s,k)\prod_{r\in\N\setminus{\{s\}}}\Big(1+e^{-\tk}\sum_{k\in \E}g(r,k)\big(\car_{\{(r,k)\in\pi_s(\eta)\}}-\lambda\Q(\{k\})\big)\Big)\\
&=g(s,k)\,\Pm_\tk\fk(\pi_s(\eta)).
\end{align*}
Hence the result.
\end{proof}

\begin{proof}[Proof of Proposition \ref{Inverse_L_cor}]
Let $\Fm\in\L^2(\P)$ such that $\esp{\Fm}=0$. For any $m\in\N$,
\begin{equation}\label{inverseLtheoremeq1}
\L^{-1}\left(\sum_{n=1}^m\Jt_n(f_n)\right)
=-\sum_{n=1}^m\frac{1}{n}\Jt_n(f_n)=-\int_0^{\infty}\sum_{n=1}^me^{-n\tk}\Jt_n(f_n)\,\d\tk.
\end{equation}
Moreover, the random variable $\Rm_m$ defined by
\begin{equation*}
\Rm_m:=\int_0^{\infty}\Big(\Pm_\tk\Fm-\sum_{n=1}^me^{-n\tk}\Jt_n(f_n)\Big)\d \tk=\int_0^{\infty}\Big(\sum_{n=m+1}^{\infty}e^{-n\tk}\Jt_n(f_n)\Big)\d\tk
\end{equation*}
converges to zero in $\L^2(\P)$ by noting that $\Jt_0(f_0)=\esp{\Fm}=0$ and 
\begin{equation*}
\esp{\Rm_m^2}\pp \int_0^{\infty}\esp{\Big(\sum_{n=m+1}^{\infty}e^{-n\tk}\Jt_n(f_n\Big)^2}\d\tk=\sum_{n=m+1}^{\infty} n!\|f_n\|_{\L^2(\Xb,\tilde\nu)^{\otimes n}}^2\int_0^{\infty}e^{-2n\tk}\,\d\tk.
\end{equation*}
Checking that
\begin{equation*}
\esp{\Big(\int_{0}^{\infty}\Pm_\tk\Fm\,\d \tk\Big)^2}\pp \esp{\int_{0}^{\infty}|\Pm_\tk\Fm|^2\,\d \tk}=\sum_{n=1}^{\infty} n!\|f_n\|_{\L^2(\Xb,\tilde\nu)^{\otimes n}}^2\int_0^{\infty}e^{-2n\tk}\,\d\tk
\end{equation*}
is finite, the proof of the first point is complete by letting $n$ go to infinity in \eqref{inverseLtheoremeq1}. As for \eqref{Inverse_L2_eq}, the commutation \eqref{Commutation_PtF_eq} and contractivity \eqref{Contractivity_PtF_eq}  properties satisfied by $(\Pm_\tk)_{\tk\in\R_+}$ ensure that 
\begin{equation*}
\esp{\int_{0}^{\infty}|\Dm_{(s,\l)}\Pm_\tk\Fm|\,\d \tk}=\esp{\int_{0}^{\infty}e^{-\tk}|\Pm_\tk\Dm_{(s,\l)}\Fm|\,\d \tk}\pp \esp{|\Dm_{(s,\l)}\Fm|}
\end{equation*}
is finite for $\nu$-a.e. $(s,\l)\in\Xb$. The result follows by applying the operator $\Dm$ to each side of equality \eqref{Inverse_L_eq} and using of the commutation property \eqref{Commutation_PtF_eq}. 
\end{proof}

\subsection{Proofs of section \ref{Functional_identities_sec}}

\begin{proof}[Proof of Theorem \ref{Girsanov_th}] Let $\Q$ be an equivalent measure to $\P$ on $\F$. Assume that $\E=\{k^i,\, i\in\Z\}$. Then there exist a real number $\beta$ and a collection of real numbers $\{\alpha_k,\,k\in\E\}$ all in $(0,1)$, satisfying  $\beta+\sum_{k\in\E}\alpha_k=1$, such that
\begin{equation*}
\Q_1
=\beta\delta_{\mathbf 0}+\sum_{k\in\E}\alpha_k\delta_{(1,k)}
=\frac{\beta}{1-\lambda}(1-\lambda)\delta_{\mathbf 0}+\sum_{k\in\E}\frac{\alpha_k}{\lambda \Q(\{k\})}\lambda \Q(\{k\})\delta_{(1,k)}
=\esp{\L_1\car_{\cdot}},
\end{equation*}
where $\L_1$ is the random variable defined by
\begin{equation*}
\L_1=\frac{1-\tilde\lambda}{1-\lambda}\car_{\{\eta(\Xb)= 0\}}+\sum_{k\in\E}\frac{\tilde\lambda\widetilde \Q(\{k\})}{\lambda \Q(\{k\})}\car_{\{(1,k)\in\eta\}}=\frac{1-\tilde\lambda}{1-\lambda}+\sum_{k\in\E}\Big(\frac{\tilde\lambda\widetilde \Q(\{k\})}{\lambda \Q(\{k\})}-\frac{1-\tilde\lambda}{1-\lambda}\Big)\car_{\{(1,k)\in\eta\}},
\end{equation*}
with $\tilde\lambda=1-\beta$ and $\widetilde \Q(\{k\})=\alpha_k/\tilde\lambda$, to get the $\Q(\{k\})$ summoned to $1$.
Let now $\L$ be the $(\f{t})_{t\in\id{1}{T}}$-martingale such that 
\begin{equation*}
\frac{\d\widetilde\P}{\d\P}\Big|_{\f{t}}=\L_t \;\;; \; t\in\id{1}{T}.
\end{equation*}
Since the increments of the jump process $(\Nm_t)_{t\in\id{1}{T}}$ are independent and identically distributed, we can show by induction that $\L$ is defined for any $t\in\id{1}{T}$ by
\begin{equation*}
\L_t=\prod_{s=1}^t\L_s=\prod_{s=1}^t\left(\frac{1-\tilde\lambda}{1-\lambda}+\sum_{i\in\Z}\Big(\frac{\tilde\lambda\widetilde \Q(\{k^i\})}{\lambda \Q(\{k^i\})}-\frac{1-\tilde\lambda}{1-\lambda}\Big)\car_{\{(s,k^i)\in\eta\}}\right).
\end{equation*}
Let $u$ be the process defined by \eqref{Girsanov_drift_eq}. For any $t\in\id{1}{T}$, by using $\car_{\{(s,k^i)\in\eta\}}=\D\Zm_{(s,k^i)}+\lambda \Q(\{k^i\})$ and noting that 
\begin{equation*}
\sum_{i\in\Z}\Big(\frac{\tilde\lambda\widetilde \Q(\{k^i\})}{\lambda \Q(\{k^i\})}-\frac{1-\tilde\lambda}{1-\lambda}\Big)\lambda \Q(\{k^i\})=\tilde\lambda-\frac{\lambda(1-\tilde\lambda)}{1-\lambda}=1-\frac{1-\tilde\lambda}{1-\lambda},
\end{equation*}
we can check that, by the definition of $h$ given in the theorem, that
\begin{equation*}
\xi_t(h)
=\prod_{s=1}^t\Big(1+\sum_{i\in\Z}g(s,k^i)\D\Zm_{(s,k^i)}\Big)=\prod_{s=1}^t\left(\frac{1-\tilde\lambda}{1-\lambda}+\sum_{i\in\Z}\Big(\frac{\tilde\lambda\widetilde \Q(\{k^i\})}{\lambda \Q(\{k^i\})}-\frac{1-\tilde\lambda}{1-\lambda}\Big)\car_{\{(s,k^i)\in\eta\}}\right)=\L_t.
\end{equation*}
Hence the result.
\end{proof}	

\begin{proof}[Proof of Corollary \ref{Girsanov_cor}]
Let $\varphi$ and $\widetilde\P$ as defined in the theorem. Let $\widetilde{\mathbf E}$ denote the expectation taken under the probability measure $\widetilde\P$. For any $s\in\R^*$, $t\in\N$,
\begin{align*}
\widetilde{\mathbf E}[s^{\Y_t}]
&=\left(\frac{1-\tilde\lambda}{1-\lambda}\right)^t\sum_{n=0}^t\esp{s^{\Y_t}\prod_{r=1}^{n}(1+\varphi(\V_r))\Big| \Nm_t=n}\P(\Nm_t=n)\\
&=\left(\frac{1-\tilde\lambda}{1-\lambda}\right)^t\sum_{n=0}^t\binom{t}{n}\lambda^n(1-\lambda)^{t-n}\esp{\prod_{r=1}^{n}(1+\varphi(\V_r))s^{\V_r}\Big| \Nm_t=n}\\
&=(1-\tilde\lambda)^t\sum_{n=0}^t\binom{t}{n}\left(\frac{\lambda}{1-\lambda}\right)^n\left(\frac{\tilde\lambda(1-\lambda)}{\lambda(1-\tilde\lambda)}\right)^n\prod_{r=1}^{n}\left(\sum_{k\in\E}\frac{\widetilde \Q(\{k\})}{\Q(\{k\})}\cdot \Q(\{k\})s^k\right)\\
&=\sum_{n=0}^t\binom{t}{n}\widetilde\lambda^n(1-\tilde\lambda)^{t-n}
\bigg(\sum_{k\in\E}\widetilde \Q(\{k\})s^k\bigg)^n=\Big(1-\tilde\lambda+\tilde\lambda\widetilde{\mathbf E}_{\Q}[s^{\V}]\Big)^t,
\end{align*}
where $\V$ is a $\E$-valued random variable and $\widetilde{\mathbf E}_{\Q}$ is the expectation taken under the probability measure $\widetilde\Q$.
Hence the result.
\end{proof}

\begin{proof}[Proof of Theorem \ref{Clark_formula_prop}]
Let $\Fm\in\cyl$; follows from both its chaotic decomposition \eqref{Chaos_R_eq} and the definition of the  gradient operator \eqref{Gradient_def_eq} that for some $T\in\N$,
\begin{align*}
\Fm
&=\esp{\Fm}+\sum_{n\in\N}\Jt_n(f_n\car_{\id{1}{T}^n})\\
&=\esp{\Fm}+\sum_{n\in\N}n\sum_{(t,k)\in\Xb_T}\Jt_{n-1}\big(f_n(\star,(t,k))\car_{\id{1}{t-1}^{n-1,<}}\big)\,\D\Rm_{(t,k)}\\
&=\esp{\Fm}+\sum_{(t,k)\in\Xb_T}\sum_{n\in\N}\esp{n\Jt_{n-1}\big(f_n(\star,(t,k))\big|\f{t-1}}\,\D\Rm_{(t,k)}\\
&=\esp{\Fm}+\sum_{(t,k)\in\Xb_T}\esp{\Dm_{(t,k)}\Fm\,|\,\f{t-1}}\,\D\Rm_{(t,k)},
\end{align*}
where we have used lemma \ref{Cond_int_lem} to get the third line. As noticed in Remark \ref{Boundness_D_rem}, the operator $\Fm\in\L^2(\P)\mapsto\big(\mathbf E[\Dm_{(t,k)}\Fm],\,(t,k)\in\Xb\big)$ is bounded with norm equal to $1$; the result can be thus extended to any random variable $\Fm\in \L^2(\P)$  using a standard Cauchy argument. 
\end{proof}

\begin{proof}[Proof of Corollary \ref{Poincare}]
According to~\eqref{Clark_formula_eq}, we have
\begin{align*}
\mathrm{var} (\Fm)
&=\mathbf E\Bigg[\bigg|\sum_{(t,k)\in\Xb}\esp{\Dm_{(t,k)}\Fm\,|\,\f{t-1}}\,\D\Rm_{(t,k)}\bigg|^2\Bigg]\\
& =\mathbf E\Bigg[\sum_{(t,k)\in\Xb}\kappa_k\Big|\esp{\Dm_{(t,k)}\Fm\,|\,\f{t-1}}\Big|^2\Bigg]\\
&\pp \mathbf E\Bigg[\sum_{(t,k)\in\Xb}\kappa_k\esp{|\Dm_{(t,k)}\Fm|^2|\f{t-1}}\Bigg]
=\mathbf E\bigg[\int_{\Xb}|\Dm_{(t,k)}\Fm|^2\,\d\tilde\nu(t,k)\bigg],
\end{align*}
where the inequality follows from Jensen's.
\end{proof}

\subsection{Proofs of section \ref{Applications_sec}}

\subsubsection{Proofs of Subsection 5.1}

\begin{proof}[Proof of Theorem \ref{Approximation_Poisson_th}]
Let $\Fm\in\DD$ be a $\Z_+$-valued random variable such that $\esp{\Fm}=\lambda_0$. Let $\A\subset\Z_+$ and $\varphi_\A$ be the solution of the Stein equation \eqref{Stein_Poisson_eq}. The uniform boundedness of $\nabla\varphi_\A$ ensures that $\varphi_\A(\Fm)\in\DD$, whereas
\begin{equation*}
\big|\varphi_\A(\Fm)\Dm^+\L^{-1}(\Fm-\esp{\Fm})\big|\pp\|\varphi\|_{\infty}|\Dm^+\L^{-1}(\Fm-\esp{\Fm}))|
\end{equation*}
implies that $\varphi_\A(\Fm)\Dm^+\L^{-1}(\Fm-\esp{\Fm})\in\L^1(\P\otimes\nu)$. Under this condition together with the hypotheses of the theorem, the combination of $\L^1$ and $\L^2$ theories performs; in particular, since $\Fm\in\DD$, $\Dm\Fm=\Dm^+\Fm$ for all $\Fm\in\L^2(\P)$, almost surely. Moreover, as $\E=\{1\}$, follows from remark \ref{E_singleton_remark} that $\widetilde{\L}\Fm=\L\Fm$ $\P$-almost surely and that the integration by parts formula \eqref{IPP_gen_eq} holds. Note besides that here $\pi_{t}(\eta)+\delta_{t}=\eta+\delta_{t}$ so that $\widetilde\Dm_{t}\Fm=\fk(\eta+\delta_{t})-\fk(\eta)$, $\P$-almost surely. 
By definition of the operators $\L$ and $\L^{-1}$ and using \eqref{IPP_tildeD_eq}, we get
\begin{align*}
\esp{\Fm\varphi_\A(\Fm)-\lambda_0\varphi_\A(\Fm+1)}
&=\esp{(\Fm-\esp{\Fm})\varphi_\A(\Fm)-\lambda_0\nabla\varphi_\A(\Fm)}\\
&=\esp{(\widetilde \L\widetilde{\L}^{-1}(\Fm-\esp{\Fm}))\varphi_\A(\Fm)}-\esp{\lambda_0\nabla\varphi_\A(\Fm)}\\
&=-\esp{\textless\widetilde\Dm(\varphi_\A(\Fm)),\Dm{\L}^{-1}(\Fm-\esp{\Fm})\textgreater_{\L^2(\Xb,\nu)}}-\esp{\lambda_0\nabla\varphi_\A(\Fm)}\\
&=-\esp{\nabla\varphi_\A(\Fm)\textless\widetilde\Dm\Fm,\Dm{\L}^{-1}(\Fm-\esp{\Fm})\textgreater_{\L^2(\Xb,\nu)}+\rem}-\esp{\lambda_0\nabla\varphi_\A(\Fm)},
\end{align*}
where we have used to get the third line that
\begin{multline*}
\textless\widetilde\Dm(\varphi_\A(\Fm)),\Dm{\L}^{-1}(\Fm-\esp{\Fm})\textgreater_{\L^2(\Xb,\nu)}=\nabla\varphi_\A(\Fm)\int_{\N}\big(\widetilde\Dm_{t}\Fm\big)\big(\Dm_{t}{\L}^{-1}(\Fm-\esp{\Fm})\big)\,\nu(\d t)\\+\int_{\N}\mathfrak R_t\big(\Dm_{t}{\L}^{-1}\Fm\big)\,\nu(\d t),
\end{multline*}
where $\mathfrak R_t$ is a residual random function such that $\mathfrak R_t\pp\|\nabla^2\varphi_\A\|_{\infty}\big|(\widetilde\Dm_{t}\Fm)(\widetilde\Dm_{t}\Fm-1)\big|/2$.
By using inequality \eqref{Peccati_ineq},  with $k=\fk(\eta)+\widetilde\Dm_{t}\Fm$ and $a=\fk(\eta)$,  we get
\begin{multline*}
\esp{|\rem|}=\esp{\Big|\int_{\N}\mathfrak R_t(\Dm_{t}{\L}^{-1}(\Fm-\esp{\Fm}))\,\nu(\d t)\Big|}
\\\pp\frac{\|\nabla^2\varphi_\A\|_{\infty}}{2}\esp{\int_{\N}|(\widetilde\Dm_{t}\Fm)(\widetilde\Dm_{t}\Fm-1)\big|\big|\Dm_{t}\L^{-1}(\Fm-\esp{\Fm})\big|\,\nu(\d t)}.
\end{multline*}
Then,
\begin{align*}
\big|\esp{\Fm\varphi_\A(\Fm)-\lambda_0\varphi_\A(\Fm+1)}\big|
&\pp\|\nabla\varphi_\A\|_{\infty}\esp{\big|\lambda_0-\textless\widetilde\Dm\Fm,-\Dm\L^{-1}(\Fm-\esp{\Fm})\textgreater_{\L^2(\Xb,\nu)}\big)\big|}\\
&+\frac{\|\nabla^2\varphi_\A\|_{\infty}}{2}\esp{\int_{\N}\big|(\widetilde\Dm_{t}\Fm)(\widetilde\Dm_{t}\Fm-1)\big|\big|\Dm_{t}\L^{-1}(\Fm-\esp{\Fm})\big|\,\nu(\d t)}.
\end{align*}
The result is then stated by taking the supremum over the set $\{\A\subset\Z_+\}$ and using the uniform bounds (over the class ) estimates on $\nabla_\cdot\varphi$ and $\nabla_\cdot^2\varphi$.
\end{proof}

\begin{proof}[Proof of Proposition \ref{Approximation_compound_Poisson_th}]
Let $\Fm\in\DD$ be a $\Z_+$-valued random variable such that $\esp{\Fm}=\lambda_0\esp{\V_1}$. Via Stein's method and the definition of the Stein operator for Poisson compound approximation, we are led to control
\begin{equation*}
\mathbf E\Big[\Fm\psi_\A(\Fm)-\int_{\Xb}k\psi_{\A}(\Fm+k)\d\nu(t,k)\Big]
=\mathbf E\Big[(\Fm-\esp{\Fm})\psi_\A(\Fm)-\int_{\Xb}k(\psi_\A(\Fm+k)-\psi_\A(\Fm))\d\nu(t,k)\Big],
\end{equation*} 
where, by definition of the operators $\widetilde{\L}$ and $\widetilde{\L}^{-1}$, we have
\begin{equation*}
\esp{(\Fm-\esp{\Fm})\psi_\A(\Fm)}=\mathbf E\big[(\widetilde\L\widetilde\L^{-1})(\Fm-\esp{\Fm})\psi_\A(\Fm)\big]=\mathbf E\Big[\textless \Dm\widetilde\L^{-1}(\Fm-\esp{\Fm}),\widetilde\Dm\psi_\A(\Fm) \textgreater_{\L^2(\Xb,\nu)}\Big].
\end{equation*}
On the other hand,
\begin{multline*}
\textless \Dm^+\widetilde\L^{-1}(\Fm-\esp{\Fm}),\widetilde\Dm\psi_\A(\Fm) \textgreater_{\L^2(\Xb,\nu)}-\int_{\Xb}k(\psi_\A(\Fm+k)-\psi_\A(\Fm))\,\d\nu(t,k)\\
=\int_{\Xb}\,\Dm^+\widetilde\L^{-1}\big(\fk(\eta)-\esp{\fk(\eta)}\big)\big[\psi_\A\big(\fk(\pi_t(\eta)+\delta_{(t,k)})\big)-\psi_\A(\fk(\eta))\big]\,\d\nu(t,k)\\
\qquad\qquad\qquad\qquad\qquad\qquad\qquad\qquad\qquad\qquad-\int_{\Xb}k\big[\psi_\A(\fk(\eta)+k)-\psi_\A(\fk(\eta))\big]\,\d\nu(t,k)\\
=\int_{\Xb}\,\big[\Dm^+\widetilde\L^{-1}\big(\fk(\eta)-\esp{\fk(\eta)}\big)\psi_\A\big(\fk(\pi_t(\eta)+\delta_{(t,k)})\big)-k\psi_\A(\fk(\eta)+k)\big]\,\d\nu(t,k)\\
\quad-\int_{\Xb}\psi_\A(\fk(\eta))\big[\Dm^+\widetilde\L^{-1}\big(\fk(\eta)-\esp{\fk(\eta)}\big)-k\big]\,\d\nu(t,k).
\end{multline*}
The conclusion follows by taking the expectation and then the supremum over the class $\{\psi_\A,\;\A\subset \Z_+\}$. 
\end{proof}

\begin{proof}[Proof of Theorem \ref{longest_head_run_th}]
Since $\Um$ is a simple binomial functional (since the space mark boils down to a singleton), $\L^1$ and $\L^2$ theories combine perfectly, $\Dm\Um=\Dm^+\Um$, $\widetilde\Dm_t\Um=\car_{\{t\notin\eta\}}\Dm^+\Um$, and we get $\widetilde\L\Um=\L\Um$ $\P$-almost surely. Note that $\lambda_0^2=p^{2m}+2(n-1)qp^{2m}+(n-1)^2q^2p^{2m}$. Then,
\begin{align}\label{Bound_Gamma_head_run}
\nonumber
\mathbf E\big[\textless\widetilde\Dm\Um,-\Dm{\L}^{-1}(\Um-\esp{\Um})\textgreater_{\L^2(\Xb,\nu)}\big]
&=\mathbf E\big[(\L\L^{-1}(\Um-\esp{\Um}))\Um\big]=\var[\Um]\\
\nonumber
&=\lambda_0+2\sum_{i=m+1}^{n-1}\esp{\Um_0\Um_i}+2\sum_{1\pp i<j}\esp{\Um_i\Um_j}-\lambda_0^2\\
\nonumber
&=\lambda_0+2(n-m-1)qp^{2m}+(n-1)(n-2m-2)q^2p^{2m}-\lambda_0^2\\
&=\lambda_0-2mqp^{2m}-(2m-1)q^2p^{2m}-p^{2m}.
\end{align}
Then,
\begin{equation*}
\big|\lambda_0-\mathbf E\big[\textless\widetilde\Dm\Um,-\Dm{\L}^{-1}(\Um-\esp{\Um})\textgreater_{\L^2(\Xb,\nu)}\big]\big|=p^{2m}[2(m-1)q^2+2mq+1].
\end{equation*}
\noindent
On the other hand, using Corollary \ref{Inverse_L_cor} 
\begin{align*}
\mathbf E \bigg[\int_{\N}\big|(\widetilde\Dm_{s}\Um)(\widetilde\Dm_{s}\Um-1)\big|\big|&\Dm_{s}\L^{-1}(\Um-\esp{\Um})\big|\nu(\d s)\bigg]\\
&\pp\esp{ \int_{\id{1}{n}}\Big(\car_{\{t\notin\eta\}}\big|(\Dm_s\Um)^2-(\Dm_s\Um)\big|\Big|\int_0^{\infty}\Dm_s\Pm_{\tk}\Um\,\d \tk\Big|\Big)\nu(\d s)}\\
&=\esp{ \int_{\id{1}{n}}\Big(\big|(\Dm_s\Um)^2-(\Dm_s\Um)\big|\Big|e^{-s}\Dm_s\Um\,\d \tk\Big|\Big)\nu(\d s)},
\end{align*}
where we have used that $\Dm_t\Um$ is $\g{t}$-measurable. Moreover,
\begin{equation*}
(\Dm_t\Um)^3
=\bigg(\prod_{i=1,i\neq t}^{m}\D\Nm_i+\sum_{i=t-m}^{t-1}(1-\D\Nm_{i})\prod_{\l=1,i+\l\neq t}^{m}\D\Nm_{i+\l}-\prod_{\l=1}^m\D\Nm_{t+\l}\bigg)^3=:(\A+\B-\Cm)^3,
\end{equation*}
\\
\noindent
Note first that for $t>m$, $\Dm_t\big(\prod_{i=1}^m\D\Nm_i\big)=0$ so that 
$\Dm_t\Um=\B-\Cm$.
Assume there exists $i_0\in\{t-m,t-1\}$ such that $\B_{i_0}=1$ where $\B_{i}:=(1-\D\Nm_{i})\prod_{\l=1,i+\l\neq t}^{m}\D\Nm_{i+\l}$. Then, $\D\Nm_{i_0}=0$  implies $\B_i=0$ for any $i\in\{\max(0,i_0-m),i_0-1\}$ whereas  $\D\Nm_{i_0+\l}=1$ leads to $\B_i=0$ for any $i\in\id{i_0+1}{\min(i_0+m,t-1)}$; this entails $\B=\sum_{i=t-m}^{t-1}\V_i=0+\B_{i_0}=1$. Thus $\B\in\{0,1\}$ and $\P(\{\B\pp1\})=1$.
Besides,
\begin{align*}
(\Dm_t\Um)^3
&=\B^3-3\B^2\Cm+3\B\Cm^2-\Cm^3\\
&=(\B-2\B\Cm+\Cm)+2\B\Cm-2\Cm=(\Dm_t\Um)^2+2\Cm(\B-1).
\end{align*}
Since $\B,\Cm\in\{0,1\}$, this proves that  $(\Dm_{t}\Um)^3\pp(\Dm_t\Um)^2$ $\P$-p.s.
On the event $\{\Cm=1\}$, $\prod_{\l=1}^{m}\D\Nm_{t+\l}=1$ so that $\B_{t-1}$ is equal to $0$  provided $\D\Nm_{t-1}=1$. This entails $\prod_{\l=1,\l\neq 2}^{m}\D\Nm_{t-2+\l}$ is equal to $1$ and then $\B_{t-2}$ is equal to $0$ if and only if $\D\Nm_{t-2}=1$. By induction, we can prove that, on $\{\Cm=1\}$, $\B=0$ if and only if $\D\Nm_i=1$ for any $i\in\{t-m,\dots,t-1\}$ (with probability $p^m$). Then 
\begin{equation*}
\esp{\Cm(1-\B)}=\esp{\car_{\{\B=0\}}\big|\{\Cm=1\}}\P(\{\Cm=1\})=p^{2m},
\end{equation*}
so that we get for $t>m$,
\begin{equation*}
\esp{(\Dm_t\Um)^2-(\Dm_t\Um)^3}
= p^{2m},
\end{equation*}
On the other hand for $t\pp m$, $\Dm_t\Um=(\A+\B+\Cm)$ and $\A\B=0$. In that case, since $\A,\Cm\in\{0,1\}$,
\begin{equation*}
\esp{((\Dm_t\Um)^3-(\Dm_t\Um)^2)\car_{\{\A=1\}}}=\esp{(1-\Cm)^2\Cm\car_{\{\A=1\}}}=0.
\end{equation*}
On $\{\A=0\}$, $\Dm_t\Um=\B-\Cm$ and we can reason as above to prove that for $t\pp m$, \\ $\esp{((\Dm_t\Um)^3-(\Dm_t\Um)^2)}=(1-p^{m-1})\esp{((\Dm_t\Um)^3-(\Dm_t\Um)^2)\car_{\{\A=0\}}}=(1-p^{m-1})p^{2m}\pp p^{2m}$. Finally,
\begin{multline*}
\displaystyle\int_{\id{1}{n-m-1}}\mathbf E \Big[\big|(\widetilde\Dm_{t}\Um)(\widetilde\Dm_{t}\Um-1)\big|\big|\Dm_{t}\L^{-1}(\Um-\esp{\Um})\big|\Big]\,\nu(\d t)
\\=\displaystyle\int_{\id{1}{n-m-1}}\mathbf E \Big[\car_{\{t\notin\eta\}}\big|(\Dm_{t}\Um)(\Dm_{t}\Um-1)\big|\big|\Dm_{t}\L^{-1}(\Um-\esp{\Um})\big|\Big]\,\nu(\d t)\pp (n-m-1)(1-p)p^{2m+1}.
\end{multline*}
This together with \eqref{Bound_Gamma_head_run} provides the bound in Theorem  \ref{Approximation_compound_Poisson_th}. 
\end{proof}

\begin{proof}[Proof of Theorem \ref{DNA_count_word_th}]
Consider the random variable $\Hm=\sum_{j\in\I}\V_j\D\Nm_j$, as defined in the theorem. Since $\Hm$ belongs to $\mathcal  H_1$, then $\widetilde \L\Hm=\Hm$ so that $\Dm\widetilde\L^{-1}(\Hm-\esp{\Hm})=\Dm^+\widetilde\L^{-1}(\Hm-\esp{\Hm})=\Dm^+\Hm$.  Moreover  for any $(t,k)\in\I\times\N$, $\Dm_{(t,k)}^+\Hm=k$ and $\wDm_{(t,k)}\Hm=(k-\l)\car_{\{(t,\l)\in\eta\}}$ $\P$-almost surely. Then the second term in \eqref{Approximation_compound_Poisson_eq} vanishes and it remains to control
\begin{equation*}
\int_{\Xb}\,k\esp{\psi_\A(\hk(\pi_t(\eta)+\delta_{(t,k)}))-\psi_\A(\hk(\eta)+k)}\d\nu(t,k).
\end{equation*} 
On the other hand, by denoting $\Hm^{\neg t}=\sum_{j\in\I\setminus\{t\}}\V_j\D\Nm_j$,
\begin{align*}
\mathbf E\big[\psi_\A(\hk(\pi_t(\eta)+&\delta_{(t,k)}))-\psi_\A(\hk(\eta)+k)\big]\\
&=\sum_{\l\in\N}\esp{\Big(\psi_\A\big(\hk(\pi_t(\eta)+\delta_{(t,k)})\big)-\psi_\A\big(\hk(\eta)+k\big)\Big)\car_{\{(t,\l)\in\eta\}}}\\
&=\sum_{\l\in\N}\esp{\Big(\psi_\A\big(\hk(\pi_t(\eta)+\delta_{(t,k)})\big)-\psi_\A\big(\hk(\pi_t(\eta)+\delta_{(t,\l)})+k\big)\Big)\car_{\{(t,\l)\in\eta\}}}\\
&\pp \|\nabla\psi_\A\|_{\infty}\sum_{\l\in\N}\esp{\Big(\Hm^{\neg t}+k-\big(\Hm^{\neg t}+k+\l\big)\Big)\car_{\{(t,\l)\in\eta\}}}\\
&=\|\nabla\psi_A\|_{\infty}\sum_{\l\in\N}\l\mu(\Wm)(1-\alpha)^2\alpha^{\l-1}=\|\nabla\psi_A\|_{\infty}\mu(\Wm).
\end{align*}
Then,
\begin{equation*}
\Big|\int_{\Xb}\,k\esp{\psi_\A[\hk(\pi_t(\eta)+\delta_{(t,k)})]-\psi_\A[\hk(\eta)+k]}\,\d\nu(t,k)\Big|\pp (n-h+1)\mathfrak d_{\EuScript P\EuScript C}\mu(\Wm)^2.
\end{equation*}
This provides a bound for $\mathrm{dist}_{\mathrm{TV}}(\P_\Hm,\EuScript{P}\EuScript C(\lambda_0,\mathbf V))$ and the conclusion follows by using the triangular inequality together with the previous bounds.
\end{proof}

\subsubsection{Proofs of subsection 5.2}

\begin{proof}[Proof of Lemma \ref{Kunita_lem}]
From Schweizer (\cite{Schweizer1995}, proof of Lemma 2.7), $\xi_t^\Fm$ can be simply written 
\begin{equation*}
\xi_t^\Fm=\frac{\esp{\D\esph{\Fm|\f{t}}\D\St_t\big|\F_{t-1}}}{{\esp{(\D\St_t)^2\,|\,\f{t-1}}}} \; ; \; t\in\T.
\end{equation*}
The application of the Clark decomposition to $\esph{\Fm|\f{t}}-\esph{\Fm|\f{t-1}}$ yields
\begin{align*}
\xi_t^{\Fm}
&=\frac{1}{{\esp{(\D\St_t)^2\,|\,\f{t-1}}}}\bigg(\sum_{k\in\E}\mathbf E\Big[\widehat{\mathbf E}\big[\Dm_{(t,k)}\Fm|\f{t-1}\big]\D\Rm_{(t,k)}\St_{t-1}\big((b-a\rho)\D\Rm_{(t,1)}+a\D\Rm_{(t,-1)}\big)\,\Big|\,\f{t-1}\Big]\bigg)\\
&=:\frac{1}{\St_{t-1}v_{t}}\displaystyle\sum_{k\in\E}u_{t,k}\widehat{\mathbf E}\big[\Dm_{(t,k)}\Fm|\f{t-1}\big],
\end{align*}
where we have used that $\esp{\D\Rm_{(t,\l)}\D\Rm_{(t,k)}|\f{t-1}}=0$ for $\ell\neq k$ due to the orthogonality of the family $\mathcal R$. The sequence $v=(v_{t})_{t\in\T}$ is defined by $v_{t}=\lambda^2(b^2p+a^2q)+r^2-2\lambda(bp+aq)=(b-a\rho)^2\kappa_{1}+a^2\kappa_{-1}$, by using that $b\D\Zm_{(t,1)}+a\D\Zm_{(t,-1)}=(b-a\rho)\D\Rm_{(t,1)}+b\D\Rm_{(t,-1)}$. The sequence $u=(u_{t,k})_{(t,k)\in\Xb}$ is given by
\begin{equation*}
u_{t,1}=(b-a\rho)\kappa_1\quad\text{and}\quad u_{t,-1}=a\kappa_{-1}.
\end{equation*}
Last, let $w_{t,k}=u_{t,k}/v_t$ for $(t,k)\in\Xb$. The proof is complete.
\end{proof}

\noindent
\textbf{Acknowledgements.} This project has received funding from the European Union’s Horizon 2020 research and innovation programme under grant agreement $\Nm^\circ 811017$. I am also grateful to Giovanni Peccati for motivating discussions.


\begin{thebibliography}{10}

\bibitem{ArratiaGoldsteinGordon1989Twomoments}
R.~Arratia, L.~Goldstein, and L.~Gordon.
\newblock Two moments suffice for {P}oisson approximations: the {C}hen-{S}tein
  method.
\newblock {\em The Annals of Probability}, 17(1):9--25, 1989.

\bibitem{ArratiaGoldsteinGordon1990}
R.~Arratia, L.~Goldstein, and L.~Gordon.
\newblock Poisson approximation and the {C}hen-{S}tein method.
\newblock {\em Statistical Science}, pages 403--424, 1990.

\bibitem{AzmoodehCampesePoly}
E.~Azmoodeh, S.~Campese, and G.~Poly.
\newblock Fourth moment theorems for {M}arkov diffusion generators.
\newblock {\em Journal of Functional Analysis}, 266(4):2341–2359, Feb 2014.

\bibitem{AzmoodehMalicetMijoulePoly2016}
E.~Azmoodeh, D.~Malicet, G.~Mijoule, and G.~Poly.
\newblock Generalization of the {N}ualart-{P}eccati criterion.
\newblock {\em Annals of Probability}, 44(2):924--954, 2016.

\bibitem{barbour_introduction}
A.~D. Barbour and L.~H.~Y. Chen.
\newblock {\em An introduction to {S}tein's method}, volume~4 of {\em Lecture
  {Notes} {Series}}.
\newblock National University of Singapore, 2005.

\bibitem{BarbourChenLoh1992}
A.~D. Barbour, L.~H.~Y. Chen, and W.-L. Loh.
\newblock Compound {P}oisson approximation for nonnegative random variables via
  {S}tein's method.
\newblock {\em The Annals of Probability}, pages 1843--1866, 1992.

\bibitem{BarbourChryssaphinou2001}
A.~D. Barbour and O.~Chryssaphinou.
\newblock Compound {P}oisson approximation: a user's guide.
\newblock {\em Annals of Applied Probability}, pages 964--1002, 2001.

\bibitem{BarbourLarsJanson1992}
A.~D. Barbour, L.~Holst, and S.~Janson.
\newblock {\em Poisson approximation}, volume~2.
\newblock The Clarendon Press Oxford University Press, 1992.

\bibitem{BarbourUtev1998}
A.~D. Barbour and S.~Utev.
\newblock Solving the {S}tein equation in compound {P}oisson approximation.
\newblock {\em Advances in Applied Probability}, pages 449--475, 1998.

\bibitem{bichteler1987malliavin}
K.~Bichteler.
\newblock Malliavin calculus for processes with jumps.
\newblock {\em Stochastics Monographs}, 1987.

\bibitem{BourguinPeccati2016}
S.~Bourguin and G.~Peccati.
\newblock The {M}alliavin-{S}tein method on the {P}oisson space.
\newblock In {\em Stochastic analysis for Poisson point processes}, pages
  185--228. Springer, 2016.

\bibitem{BoyleTan}
P.~Boyle and T.~Wang.
\newblock Pricing of new securities in an incomplete market: the catch 22 of
  no-arbitrage pricing.
\newblock {\em Mathematical Finance}, 11(3):267--284, 2001.

\bibitem{Chen1974}
L.~H.~Y. Chen.
\newblock On the convergence of {P}oisson binomial to {P}oisson distributions.
\newblock {\em Ann. Probability}, 2(1):178--180, 1974.

\bibitem{Chernoff1981note}
H.~Chernoff.
\newblock A note on an inequality involving the {N}ormal distribution.
\newblock {\em The Annals of Probability}, pages 533--535, 1981.

\bibitem{cox1979option}
J.~Cox, S.~Ross, A, and M.~Rubinstein.
\newblock Option pricing: A simplified approach.
\newblock {\em Journal of financial Economics}, 7(3):229--263, 1979.

\bibitem{dalang1990equivalent}
R.~Dalang, A.~Morton, and W.~Willinger.
\newblock Equivalent martingale measures and no-arbitrage in stochastic
  securities market models.
\newblock {\em Stochastics: An International Journal of Probability and
  Stochastic Processes}, 29(2):185--201, 1990.

\bibitem{DaleyVere-Jones2007}
D.~J. Daley and D.~Vere-Jones.
\newblock {\em An introduction to the theory of point processes: volume II:
  general theory and structure}.
\newblock Springer Science \& Business Media, 2007.

\bibitem{DecreusefondSDM}
L.~Decreusefond.
\newblock The {S}tein-{D}irichlet-{M}alliavin method.
\newblock {\em ESAIM: Proc.}, 51:49--59, 2015.

\bibitem{Decreusefondflint}
L.~Decreusefond and I.~Flint.
\newblock Moment formulae for general point processes.
\newblock {\em Comptes Rendus Mathematique}, 352(4):357 -- 361, 2014.

\bibitem{DecreusefondHalconruy}
L.~Decreusefond and H.~Halconruy.
\newblock Malliavin and {D}irichlet structures for independent random
  variables.
\newblock {\em Stochastic Processes and their Applications},
  129(8):2611–2653, Aug 2019.

\bibitem{DecreusefondMoyal2012}
L.~Decreusefond and P.~Moyal.
\newblock {\em Stochastic modeling and analysis of telecom networks}.
\newblock John Wiley \& Sons, 2012.

\bibitem{Delbaen}
F.~Delbaen and W.~Schachermayer.
\newblock {\em The Mathematics of Arbitrage}.
\newblock Springer-Verlag Berlin Heidelberg, 2006.

\bibitem{DoblerPeccati2018}
C.~D{\"o}bler and G.~Peccati.
\newblock The fourth moment theorem on the {P}oisson space.
\newblock {\em Ann. Probab.}, 46(4):1878--1916, 07 2018.

\bibitem{DuerinckxGloriaOtto2020}
M.~Duerinckx, A.~Gloria, and F.~Otto.
\newblock The structure of fluctuations in stochastic homogenization.
\newblock {\em Communications in Mathematical Physics}, pages 1--48, 2020.

\bibitem{Erhardsson2005}
T.~Erhardsson.
\newblock Stein's method for {P}oisson and compound {P}oisson.
\newblock {\em An introduction to Stein’s method}, 4.

\bibitem{FollmerSchweizer1991}
H.~F{\"ollmer} and M.~Schweizer.
\newblock Hedging of contingent claims under incomplete information.
\newblock {\em Applied stochastic analysis}, 5(389-414):19--31, 1991.

\bibitem{FollmerSondermann}
H.~F{\"o}llmer and D.~Sondermann.
\newblock Hedging of non-redundant contingent claims.
\newblock 1985.

\bibitem{Frittelli2000}
M.~Frittelli.
\newblock The minimal entropy martingale measure and the valuation problem in
  incomplete markets.
\newblock {\em Mathematical finance}, 10(1):39--52, 2000.

\bibitem{GeskeGodboleSchaffnerSkolnickWallstrom}
M.~Geske, A.~Godbole, A.~Schaffner, A.~Skolnick, and G.~Wallstrom.
\newblock Compound {P}oisson approximations for word patterns under markovian
  hypotheses.
\newblock {\em Journal of applied probability}, pages 877--892, 1995.

\bibitem{Hakansson1979}
N.~Hakansson.
\newblock The fantastic world of finance: Progress and the free lunch.
\newblock {\em Journal of Financial and Quantitative Analysis}, pages 717--734,
  1979.

\bibitem{Halconruy2020}
H.~Halconruy.
\newblock {\em Calcul de Malliavin et structures de Dirichlet pour des
  variables al{\'e}atoires ind{\'e}pendantes}.
\newblock PhD thesis, Institut polytechnique de Paris, 2020.

\bibitem{jacod1998local}
J.~Jacod and A.~Shiryaev.
\newblock Local martingales and the fundamental asset pricing theorems in the
  discrete-time case.
\newblock {\em Finance and stochastics}, 2(3):259--273, 1998.

\bibitem{Krokowski2015}
K.~Krokowski.
\newblock Poisson approximation of {R}ademacher functionals by the
  {C}hen-{S}tein method and {M}alliavin calculus.
\newblock {\em arXiv preprint arXiv:1505.01417}, 2015.

\bibitem{lachiezereypeccati}
R.~Lachi{\`e}ze-Rey and G.~Peccati.
\newblock {Fine {G}aussian fluctuations on the {P}oisson space II: rescaled
  kernels, marked processes and geometric {U}-statistics}.
\newblock {\em {Stochastic Processes and their Applications}},
  123(12):4186--4218, December 2013.

\bibitem{Last2016}
G.~Last.
\newblock Stochastic analysis for {P}oisson processes.
\newblock In {\em Stochastic analysis for {P}oisson point processes}, pages
  1--36. Springer, 2016.

\bibitem{LastPeccatiSchulte2016}
G.~Last, G.~Peccati, and M.~Schulte.
\newblock Normal approximation on {P}oisson spaces: {M}ehler{\textquoteright}s
  formula, second order {P}oincar{\'e} inequalities and stabilization.
\newblock August 2016.

\bibitem{LastPenrose2011}
G.~Last and M.~Penrose.
\newblock Martingale representation for {P}oisson processes with applications
  to minimal variance hedging.
\newblock {\em Stochastic Processes and their Applications}, 121(7):1588 --
  1606, 2011.

\bibitem{LastPenrose2017}
G.~Last and M.~Penrose.
\newblock {\em Lectures on the {P}oisson Process}.
\newblock Institute of Mathematical Statistics Textbooks. Cambridge University
  Press, 2017.

\bibitem{Ledoux2012}
M.~Ledoux.
\newblock Chaos of a {M}arkov operator and the fourth moment condition.
\newblock {\em The Annals of Probability}, 40(6):2439--2459, 2012.

\bibitem{HarrisonKreps1979}
J~Michael M.~Harrison and D.~Kreps.
\newblock Martingales and arbitrage in multiperiod securities markets.
\newblock {\em Journal of Economic theory}, 20(3):381--408, 1979.

\bibitem{Metivier1982}
M.~M{\'e}tivier.
\newblock Semimartingales, volume 2 of de gruyter studies in mathematics, 1982.

\bibitem{Nash1958continuity}
J.~Nash.
\newblock Continuity of solutions of parabolic and elliptic equations.
\newblock {\em American Journal of Mathematics}, 80(4):931--954, 1958.

\bibitem{Nourdin_2008}
I.~Nourdin and G.~Peccati.
\newblock Stein’s method on {W}iener chaos.
\newblock {\em Probability Theory and Related Fields}, 145(1-2):75–118, June
  2008.

\bibitem{NourdinPeccati2009}
I.~Nourdin and G.~Peccati.
\newblock Noncentral convergence of multiple integrals.
\newblock {\em The Annals of Probability}, 37(4):1412--1426, 2009.

\bibitem{Nourdin:2012fk}
I.~Nourdin and G.~Peccati.
\newblock {\em Normal Approximations with Malliavin Calculus: From Stein's
  Method to Universality}.
\newblock Cambridge University Press, 2012.

\bibitem{NourdinPeccatiReinert2009}
I.~Nourdin, G.~Peccati, and G.~Reinert.
\newblock Invariance principles for homogeneous sums: universality of
  {G}aussian {W}iener chaos.
\newblock {\em The Annals of Probability}, 38(5):1947--1985, 2010.

\bibitem{NourdinPeccatiReinert}
I.~Nourdin and G~Peccati, G.and~Reinert.
\newblock Stein's method and stochastic analysis of {R}ademacher functionals.
\newblock {\em Electronic Journal of Probability}, 15:1703--1742, 2010.

\bibitem{nualart.book}
D.~Nualart.
\newblock {\em The {M}alliavin Calculus and Related Topics}.
\newblock Springer-Verlag, 1995.

\bibitem{Nualart2014finitechaos}
D.~Nualart.
\newblock Normal approximation on a finite {W}iener chaos.
\newblock In Dan Crisan, Ben Hambly, and Thaleia Zariphopoulou, editors, {\em
  Stochastic Analysis and Applications 2014}, pages 377--395, Cham, 2014.
  Springer International Publishing.

\bibitem{NualartSchoutens}
D.~Nualart and W.~Schoutens.
\newblock Chaotic and predictable representations for {L}{\'e}vy processes.
\newblock {\em Stochastic Processes and their Applications}, 90(1):109 -- 122,
  2000.

\bibitem{nualart88_1}
D.~Nualart and J.~Vives.
\newblock Anticipative calculus for the {P}oisson process based on the {F}ock
  space.
\newblock In {\em S\'eminaire de probabilit\'es XXIV}, pages 154--165.
  Springer-Verlag, 1988.

\bibitem{DiNunnoOksendalProske2004}
G.~Di Nunno, B.~Oksendal, and F.~Proske.
\newblock White noise analysis for {L}{\'e}vy processes.
\newblock {\em Journal of Functional Analysis}, 206:109--148, 2004.

\bibitem{Peccati2011chen}
G.~Peccati.
\newblock The {C}hen-{S}tein method for {P}oisson functionals.
\newblock {\em arXiv preprint arXiv:1112.5051}, 2011.

\bibitem{peccati2010}
G.~Peccati, J.~L. Solé, M.~S. Taqqu, and F.~Utzet.
\newblock Stein’s method and {N}ormal approximation of {P}oisson functionals.
\newblock {\em Ann. Probab.}, 38(2):443--478, 03 2010.

\bibitem{privault93}
N.~Privault.
\newblock Chaotic and variational calculus in discrete and continuous time for
  the {P}oisson process.
\newblock {\em Stoch. \& Stoch. Rep.}, 51:83--109, 1994.

\bibitem{Privault_stochastica}
N.~Privault.
\newblock {\em Stochastic {A}nalysis in {D}iscrete and {C}ontinuous Settings}.
\newblock 1982. Springer-Verlag Berlin Heidelberg, 2009.

\bibitem{Privault2013market}
N.~Privault.
\newblock {\em Stochastic finance: An introduction with market examples}.
\newblock CRC Press, 2013.

\bibitem{Privault2018jump}
N.~Privault.
\newblock Chapter 19 {S}tochastic calculus for jump processes.
\newblock 2018.

\bibitem{PrivaultTorrisi2015}
N.~Privault and G.~Torrisi.
\newblock The {S}tein and {C}hen-{S}tein methods for functionals of
  non-symmetric {B}ernoulli processes.
\newblock {\em ALEA Lat. Am. J. Probab. Math. Stat}, 12(1):309--356, 2015.

\bibitem{ReinertSchbath1998}
G.~Reinert and S.~Schbath.
\newblock Compound {P}oisson and {P}oisson process approximations for
  occurrences of multiple words in markov chains.
\newblock {\em Journal of Computational Biology}, 5(2):223--253, 1998.

\bibitem{RendlemanBartter1979}
R.~Rendleman and B.~Bartter.
\newblock Two-state option pricing.
\newblock {\em The Journal of Finance}, 34(5):1093--1110, 1979.

\bibitem{RobinSchbath}
S.~Robin and S.~Schbath.
\newblock Numerical comparison of several approximations of the word count
  distribution in random sequences.
\newblock {\em Journal of Computational Biology}, 8(4):349--359, 2001.

\bibitem{runggaldier2006}
W.~Runggaldier.
\newblock {\em Portfolio optimization in discrete time}.
\newblock Accademia delle Scienze dell'Istituto di Bologna, 2006.

\bibitem{Schachermayer1992hilbert}
W.~Schachermayer.
\newblock A {H}ilbert space proof of the fundamental theorem of asset pricing
  in finite discrete time.
\newblock {\em Insurance: Mathematics and Economics}, 11(4):249--257, 1992.

\bibitem{Schbath1997}
S.~Schbath.
\newblock Compound poisson approximation of word counts in {D}{N}{A} sequences.
\newblock {\em ESAIM: probability and statistics}, 1:1--16, 1997.

\bibitem{Schulte2016Kolomogorov}
M.~Schulte.
\newblock {N}ormal approximation of {P}oisson functionals in {K}olmogorov
  distance.
\newblock {\em Journal of theoretical probability}, 29(1):96--117, 2016.

\bibitem{Schweizer1991semimartingales}
M.~Schweizer.
\newblock Option hedging for semimartingales.
\newblock {\em Stochastic Processes and their Applications}, 37(2):339 -- 363,
  1991.

\bibitem{Schweizer1992}
M.~Schweizer.
\newblock Mean-variance hedging for general claims.
\newblock {\em The annals of applied probability}, pages 171--179, 1992.

\bibitem{Schweizer1995}
M.~Schweizer.
\newblock Variance-optimal hedging in discrete time.
\newblock {\em Mathematics of Operations Research}, 20(1):1--32, 1995.

\bibitem{stein1972}
C.~Stein.
\newblock A bound for the error in the normal approximation to the distribution
  of a sum of dependent random variables.
\newblock In {\em Proceedings of the Sixth Berkeley Symposium on Mathematical
  Statistics and Probability, Volume 2: Probability Theory}, pages 583--602,
  Berkeley, Calif., 1972. University of California Press.

\bibitem{Torrisi2017Poisson}
G.~Torrisi.
\newblock Poisson approximation of point processes with stochastic intensity,
  and application to nonlinear {H}awkes processes.
\newblock In {\em Annales de l'Institut Henri Poincar{\'e}, Probabilit{\'e}s et
  Statistiques}, volume~53, pages 679--700. Institut Henri Poincar{\'e}, 2017.

\bibitem{Viquez2018normal}
J.~J. V{\'\i}quez~R.
\newblock Normal convergence using {M}alliavin calculus with applications and
  examples.
\newblock {\em Stochastic Analysis and Applications}, 36(2):341--372, 2018.

\bibitem{Wu:2000lr}
L.~Wu.
\newblock A new modified logarithmic {S}obolev inequality for {P}oisson point
  processes and several applications.
\newblock {\em Probability Theory and Related Fields}, 118(3):427--438, 2000.

\bibitem{Zheng2017NormalAA}
G.~Zheng.
\newblock Normal approximation and almost sure central limit theorem for
  non-symmetric rademacher functionals.
\newblock 2017.

\end{thebibliography}
\end{document}